    \crefname{enumi}{}{}
    \Crefname{enumi}{Item}{Items}
    \crefname{equation}{}{}
    \Crefname{equation}{Equation}{Equations}
\newtheorem{theo}{Theorem}
\newtheorem{coro}[theo]{Corollary}
\newtheorem{theorem}{Theorem}[section]
\newtheorem{prop}[theorem]{Proposition}
\newtheorem{lemma}[theorem]{Lemma}
\newtheorem{cor}[theorem]{Corollary}
\theoremstyle{definition}
\newtheorem{definition}[theorem]{Definition}
\newtheorem{rem}[theorem]{Remark}
\numberwithin{equation}{section}
\def\<{\langle}
\def\>{\rangle}
\newcommand{\cM}{\mathcal{M}}
\newcommand{\cI}{\mathcal{I}}
\newcommand{\proj}{\mathsf{proj}}
\newcommand{\kar}{\mathsf{char}\,}
\newcommand{\disp}{\mathsf{disp}}
\newcommand{\Cl}{\mathsf{Cl}}
\newcommand{\JJoin}{\!\!\Join\!\!}
\newcommand{\PG}{\mathsf{PG}}
\renewcommand{\H}{\mathbb{H}}
\newcommand{\K}{\mathbb{K}}
\renewcommand{\L}{\mathbb{L}}
\newcommand{\A}{\mathbb{A}}
\newcommand{\B}{\mathbb{B}}
\newcommand{\cL}{\mathcal{L}}
\newcommand{\PSL}{\mathsf{PSL}}
\newcommand{\Res}{\mathsf{Res}}
\newcommand{\Sym}{\mathsf{Sym}}
\newcommand{\Norm}{\mathsf{N}}
\newcommand{\pperp}{\perp\hspace{-0.15cm}\perp}
\def\sE{\mathsf{E}}
\def\sA{\mathsf{A}}
\def\ZZ{\mathbb{Z}}
\def\sD{\mathsf{D}}
\def\sF{\mathsf{F}}
\def\sG{\mathsf{G}}
\def\su{\mathsf{u}}
\def\KK{\mathbb{K}}
\def\OOpp{\mathsf{Opp}}
\def\FF{\mathbb{F}}
\let\OLDthebibliography\thebibliography
\renewcommand\thebibliography[1]{
  \OLDthebibliography{#1}
  \setlength{\parskip}{0pt}
  \setlength{\itemsep}{0pt plus 0.3ex}
}
\begin{document}

\author{J.~Parkinson\thanks{
The first author is supported by the Australian Research Council Discovery Project~DP200100712.}  \and H.~Van Maldeghem\thanks{The second author is supported by the Fund for Scientific Research, Flanders, through the project G023121N}}
\title{Automorphisms and opposition in spherical buildings of exceptional type, V: The $\mathsf{E_8}$ case}
\date{\today}
\maketitle

\begin{abstract}
An automorphism of a spherical building is called \textit{domestic} if it maps no chamber to an opposite chamber. In previous work the classification of domestic automorphisms in large spherical buildings of types $\sF_4$, $\sE_6$, and $\sE_7$ have been obtained, and in the present paper we complete the classification of domestic automorphisms of large spherical buildings of exceptional type of rank at least~$3$ by classifying such automorphisms in the $\sE_8$ case. Applications of this classification are provided, including Density Theorems showing that each conjugacy class in a group acting strongly transitively on a spherical building intersects a very small number of $B$-cosets, with $B$ the stabiliser of a fixed choice of chamber.  
\end{abstract}

\tableofcontents 

\section{Introduction}

The \textit{opposite geometry} of an automorphism~$\theta$ of a spherical building~$\Omega$ is the set $\OOpp(\theta)$ consisting of those elements mapped to opposite elements by~$\theta$. Recently a detailed theory of this geometry has been developed (see, for example,~\cite{AB:09,Lam-Mal:24,PVM:19,PVMexc,PVMclass}). A starting point is the fundamental result of Abramenko and Brown~\cite[Proposition 4.2]{AB:09}, stating that if $\theta$ is a nontrivial automorphism of a thick spherical building then the opposite geometry of~$\theta$ is necessarily nonempty. Indeed the generic situation is that $\OOpp(\theta)$ is rather large, and typically contains many chambers of the building (\textit{chambers} are the simplices of maximal dimension). The more special situation is when $\OOpp(\theta)$ contains no chamber, in which case $\theta$ is called \textit{domestic}. 

In \cite{PVMexc} we initiated the classification of domestic automorphisms of spherical buildings of exceptional type. In particular we classified all domestic automorphisms of thick buildings of type $\mathsf{E_6}$, and split buildings of types $\sF_4$ and $\sG_2$. In \cite{PVMexc2} we obtained the classification of domestic automorphisms for all Moufang hexagons, and in \cite{Lam-Mal:24} Lambrecht and the second author obtained the classification of domestic automorphisms for all large thick buildings of type~$\sF_4$. In \cite{npvv} we, together with Neyt and Victoor, classified domestic automorphisms of large $\sE_7$ buildings (an irreducible thick spherical building of rank at least $3$ is called \textit{large} if it contains no Fano plane residues, and \textit{small} otherwise).

The purpose of this paper is to complete the classification of domestic automorphisms of large spherical buildings of exceptional type by classifying domestic automorphisms of large buildings of type~$\sE_8$. The fact that makes type $\sE_8$ harder and different from the other exceptional types, is that no ``simpler'' geometry than the long root subgroup geometry exists for buildings of type $\sE_8$. For types $\sE_6$ and $\sE_7$, for example, we have the so-called minuscule geometries (with below notation, these are the geometries $\mathsf{E}_{6,1}(\K)$ and $\mathsf{E}_{7,7}(\K)$).  Our new technique relies on a more systematic use of the \emph{equator geometries} (for a definition, see below), which allows one to use the classification of domestic automorphisms of spherical buildings of lower rank. A typical example is  the proof of  \cref{classIisdom}, which is one of the crucial results of the present paper. 

By Tits' classification of spherical buildings~\cite{Tits:74}, a large building of type $\sE_8$ is any building $\sE_8(\KK)$ over a field $\KK$ with at least $3$ elements (the unique small $\sE_8$ building corresponds to the field with $2$ elements). By the main result of \cite{PVM:19}, each automorphism $\theta$ of a large spherical building is \textit{capped}, meaning that it satisfies the following property: If $\theta$ maps a flag of type $J_1$ to an opposite flag, and another flag of type $J_2$ to an opposite flag, then $\theta$ maps a flag of type $J_1\cup J_2$ to an opposite flag. Hence in a large spherical building, in order to know the types of all flags mapped to an opposite it suffices to know the types of the minimal ones. Since these minimal types are orbits of the induced action of the automorphism on the Dynkin (or Coxeter) diagram, and since that action is always trivial in the case of $\mathsf{E_8}$, it suffices to know the types of the vertices mapped to an opposite. Encircling those types on the Coxeter diagram gives the \emph{opposition diagram} of the automorphism. In \cite{PVM:19}, all possible opposition diagrams are classified, and the list for $\mathsf{E_8}$ is given in Figure~\ref{fig:Dynkin}. 

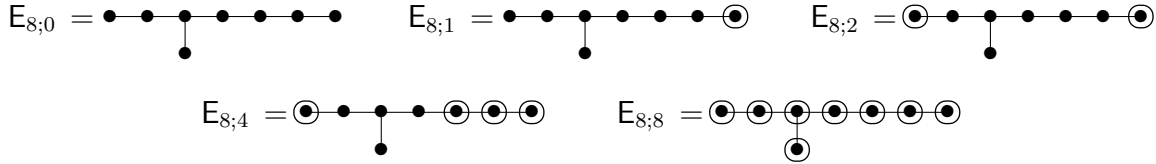
\begin{figure}[h!]
\begin{center}
$\sE_{8;0}\,=\,$\begin{tikzpicture}[scale=0.5,baseline=-0.75ex]
\node [inner sep=0.8pt,outer sep=0.8pt] at (-2,0) (1) {$\bullet$};
\node [inner sep=0.8pt,outer sep=0.8pt] at (-1,0) (3) {$\bullet$};
\node [inner sep=0.8pt,outer sep=0.8pt] at (0,0) (4) {$\bullet$};
\node [inner sep=0.8pt,outer sep=0.8pt] at (1,0) (5) {$\bullet$};
\node [inner sep=0.8pt,outer sep=0.8pt] at (2,0) (6) {$\bullet$};
\node [inner sep=0.8pt,outer sep=0.8pt] at (3,0) (7) {$\bullet$};
\node [inner sep=0.8pt,outer sep=0.8pt] at (4,0) (8) {$\bullet$};
\node [inner sep=0.8pt,outer sep=0.8pt] at (0,-1) (2) {$\bullet$};
\draw (-2,0)--(4,0);
\draw (0,0)--(0,-1);
\end{tikzpicture}\qquad
$\sE_{8;1}\,=\,$\begin{tikzpicture}[scale=0.5,baseline=-0.75ex]
\node [inner sep=0.8pt,outer sep=0.8pt] at (-2,0) (1) {$\bullet$};
\node [inner sep=0.8pt,outer sep=0.8pt] at (-1,0) (3) {$\bullet$};
\node [inner sep=0.8pt,outer sep=0.8pt] at (0,0) (4) {$\bullet$};
\node [inner sep=0.8pt,outer sep=0.8pt] at (1,0) (5) {$\bullet$};
\node [inner sep=0.8pt,outer sep=0.8pt] at (2,0) (6) {$\bullet$};
\node [inner sep=0.8pt,outer sep=0.8pt] at (3,0) (7) {$\bullet$};
\node [inner sep=0.8pt,outer sep=0.8pt] at (4,0) (8) {$\bullet$};
\node [inner sep=0.8pt,outer sep=0.8pt] at (0,-1) (2) {$\bullet$};
\draw (-2,0)--(4,0);
\draw (0,0)--(0,-1);
\draw [line width=0.5pt,line cap=round,rounded corners] (8.north west)  rectangle (8.south east);
\end{tikzpicture}\qquad
$\sE_{8;2}\,=\,$\begin{tikzpicture}[scale=0.5,baseline=-0.75ex]
\node [inner sep=0.8pt,outer sep=0.8pt] at (-2,0) (1) {$\bullet$};
\node [inner sep=0.8pt,outer sep=0.8pt] at (-1,0) (3) {$\bullet$};
\node [inner sep=0.8pt,outer sep=0.8pt] at (0,0) (4) {$\bullet$};
\node [inner sep=0.8pt,outer sep=0.8pt] at (1,0) (5) {$\bullet$};
\node [inner sep=0.8pt,outer sep=0.8pt] at (2,0) (6) {$\bullet$};
\node [inner sep=0.8pt,outer sep=0.8pt] at (3,0) (7) {$\bullet$};
\node [inner sep=0.8pt,outer sep=0.8pt] at (4,0) (8) {$\bullet$};
\node [inner sep=0.8pt,outer sep=0.8pt] at (0,-1) (2) {$\bullet$};
\draw (-2,0)--(4,0);
\draw (0,0)--(0,-1);
\draw [line width=0.5pt,line cap=round,rounded corners] (1.north west)  rectangle (1.south east);
\draw [line width=0.5pt,line cap=round,rounded corners] (8.north west)  rectangle (8.south east);
\end{tikzpicture}
\end{center}
\begin{center}
$\sE_{8;4}\,=\,$\begin{tikzpicture}[scale=0.5,baseline=-0.75ex]
\node [inner sep=0.8pt,outer sep=0.8pt] at (-2,0) (1) {$\bullet$};
\node [inner sep=0.8pt,outer sep=0.8pt] at (-1,0) (3) {$\bullet$};
\node [inner sep=0.8pt,outer sep=0.8pt] at (0,0) (4) {$\bullet$};
\node [inner sep=0.8pt,outer sep=0.8pt] at (1,0) (5) {$\bullet$};
\node [inner sep=0.8pt,outer sep=0.8pt] at (2,0) (6) {$\bullet$};
\node [inner sep=0.8pt,outer sep=0.8pt] at (3,0) (7) {$\bullet$};
\node [inner sep=0.8pt,outer sep=0.8pt] at (4,0) (8) {$\bullet$};
\node [inner sep=0.8pt,outer sep=0.8pt] at (0,-1) (2) {$\bullet$};
\draw (-2,0)--(4,0);
\draw (0,0)--(0,-1);
\draw [line width=0.5pt,line cap=round,rounded corners] (1.north west)  rectangle (1.south east);
\draw [line width=0.5pt,line cap=round,rounded corners] (6.north west)  rectangle (6.south east);
\draw [line width=0.5pt,line cap=round,rounded corners] (7.north west)  rectangle (7.south east);
\draw [line width=0.5pt,line cap=round,rounded corners] (8.north west)  rectangle (8.south east);
\end{tikzpicture}
\qquad
$\sE_{8;8}\,=\,$\begin{tikzpicture}[scale=0.5,baseline=-0.75ex]
\node [inner sep=0.8pt,outer sep=0.8pt] at (-2,0) (1) {$\bullet$};
\node [inner sep=0.8pt,outer sep=0.8pt] at (-1,0) (3) {$\bullet$};
\node [inner sep=0.8pt,outer sep=0.8pt] at (0,0) (4) {$\bullet$};
\node [inner sep=0.8pt,outer sep=0.8pt] at (1,0) (5) {$\bullet$};
\node [inner sep=0.8pt,outer sep=0.8pt] at (2,0) (6) {$\bullet$};
\node [inner sep=0.8pt,outer sep=0.8pt] at (3,0) (7) {$\bullet$};
\node [inner sep=0.8pt,outer sep=0.8pt] at (4,0) (8) {$\bullet$};
\node [inner sep=0.8pt,outer sep=0.8pt] at (0,-1) (2) {$\bullet$};
\draw (-2,0)--(4,0);
\draw (0,0)--(0,-1);
\draw [line width=0.5pt,line cap=round,rounded corners] (1.north west)  rectangle (1.south east);
\draw [line width=0.5pt,line cap=round,rounded corners] (2.north west)  rectangle (2.south east);
\draw [line width=0.5pt,line cap=round,rounded corners] (3.north west)  rectangle (3.south east);
\draw [line width=0.5pt,line cap=round,rounded corners] (4.north west)  rectangle (4.south east);
\draw [line width=0.5pt,line cap=round,rounded corners] (5.north west)  rectangle (5.south east);
\draw [line width=0.5pt,line cap=round,rounded corners] (6.north west)  rectangle (6.south east);
\draw [line width=0.5pt,line cap=round,rounded corners] (7.north west)  rectangle (7.south east);
\draw [line width=0.5pt,line cap=round,rounded corners] (8.north west)  rectangle (8.south east);
\end{tikzpicture}
\end{center}
\caption{The opposition diagrams of type $\mathsf{E_8}$}\label{fig:Dynkin}
\end{figure}

The \textit{fix diagram} of an automorphism of an $\sE_8$ building is given by encircling the types of the vertices of the building fixed by~$\theta$. We will use the same symbols $\sE_{8;j}$ for fixed diagrams. They are extensions of the \emph{indices} defined by Tits in \cite{Tits:66} in the sense of~\cite{MPW}. 

The opposition diagram $\sE_{8;8}$ is the opposition diagram of any non-domestic automorphism, and hence we shall not be concerned with it. Also, since the opposite geometry of a nontrivial automorphism is never empty (by \cite{AB:09}), the only automorphism with opposition diagram $\sE_{8;0}$ is the identity automorphism. Moreover, in \cite[Theorems~1 and 4]{PVMexc} we proved that each automorphism with opposition diagram $\sE_{8;1}$ is a nontrivial central collineation (and vice-versa), and that each automorphism with opposition diagram $\sE_{8;2}$ is the product of two nontrivial perpendicular root elations (and vice-versa). Thus the focus of the present paper is to classify the automorphisms with opposition diagram~$\sE_{8;4}$. 

In \cite[Theorems~5 and~6]{PVMexc} we provided examples of automorphisms with opposition diagram~$\sE_{8;4}$. These examples were certain products of $4$ perpendicular nontrivial central collineations, or certain certain homologies. In particular, all of these examples fix a chamber of the building (equivalently, they are conjugate to a member of the Borel subgroup~$B$). Moreover, in the case of $\mathsf{E}_6$, it is shown in \cite[Theorem 8]{PVMexc} that all domestic automorphisms of a thick $\sE_6$ building fix a chamber of the building. In contrast to these examples, we shall see that there exist automorphisms with opposition diagrams $\sE_{8;4}$ fixing no chamber, provided the underlying field admits certain extensions. The complete classification of such automorphisms for large buildings is given in the following theorem. We work in the non-strong parapolar space and also long root subgroup geometry $\Delta=\mathsf{E_{8,8}}(\K)$ (see Section~\ref{sec2} for the notation and terminology). We say that an automorphism of $\Delta$ is of \emph{Class} I if it pointwise fixes precisely a fully and isometrically embedded subspace isomorphic to $\mathsf{F_{4,1}}(\K,\H)$, for some quaternion division algebra $\H$ over $\K$, and \emph{of Class} II if it pointwise fixes precisely an equator geometry, while acting fixed point freely on its set of poles.

\begin{theo}\label{nofixedchamber}
Let $\theta$ be an automorphism of the building $\mathsf{E}_8(\mathbb{K})$ with $|\mathbb{K}|> 2$, and suppose that $\theta$ fixes no chamber. The following are equivalent.
\begin{compactenum}[$(1)$]
\item $\theta$ is domestic;
\item $\theta$ has opposition diagram $\mathsf{E}_{8;4}$;
\item $\theta$ is an automorphism of Class \emph{I} or Class \emph{II}. 
\end{compactenum}
Also, the building $\mathsf{E_8}(\K)$ admits an automorphism of Class \emph{I} if, and only if, $\K$ admits an associative quadratic division algebra of dimension~$4$, whereas it admits an automorphism of Class \emph{II} if, and only if, it admits a quadratic extension. 
\end{theo}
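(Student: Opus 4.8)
By the equivalence $(1)\Leftrightarrow(3)$ already in hand, it suffices to decide, separately for each class, exactly when $\mathsf{E}_8(\K)$ admits an automorphism of that class. I would treat the two biconditionals independently. In each case one direction (necessity) is read off almost directly from the definition, because the prescribed fixed structure \emph{names} the relevant algebra or extension; the substance lies in the converse (sufficiency), namely exhibiting an automorphism with \emph{precisely} the stated fixed geometry and no fixed chamber. A single principle guides both halves: an automorphism of the prescribed shape fixes no chamber if and only if the coordinatising datum is \emph{anisotropic} — a genuine quadratic field extension for Class II, a genuine division algebra for Class I — rather than the split object $\K\times\K$, respectively $M_2(\K)$.

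For Class II I would exploit the structure of the equator geometry and its poles set up in Section~\ref{sec2}. A Class II automorphism fixes an equator geometry $E$ pointwise and permutes the poles of $E$ without fixed points. The key geometric input, established in Section~\ref{sec2}, is that this pole set is parametrised by a projective line $\mathbb{P}^1(\K)$ on which the stabiliser of $E$ induces $\mathsf{PGL}_2(\K)$ — concretely the $\sA_1$-factor of an $\sA_1\sE_7$-subsystem, the $\sE_7$-factor fixing $E$. Necessity is then the elementary fact that $\mathsf{PGL}_2(\K)$ contains a fixed-point-free element on $\mathbb{P}^1(\K)$ only when some quadratic over $\K$ is irreducible, i.e. when $\K$ admits a quadratic extension $\LL/\K$: the element is multiplication by a generator of $\LL^\times$ on $\LL\cong\K^2$. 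For sufficiency one runs this in reverse: given $\LL/\K$, let $\theta$ act on the poles as this fixed-point-free element and trivially on $E$, and check that the resulting automorphism of $\Delta$ lies in the $\sA_1$-factor, fixes $E$ pointwise, fixes no chamber (being fixed-point-free on the poles), and has opposition diagram $\sE_{8;4}$.

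For Class I the corresponding datum is the quaternion division algebra $\H$, which appears intrinsically as the algebra coordinatising the short-root residues (equivalently, the symplecta) of the fixed geometry $\mathsf{F}_{4,1}(\K,\H)$. Hence if a Class I automorphism exists then $\H$ exists as a $4$-dimensional associative division algebra over $\K$ — division precisely because, were $\H$ split ($\cong M_2(\K)$), the fixed geometry would be a larger split $\sF_4$-geometry and $\theta$ would fix a chamber, against hypothesis. For the converse I would, given $\H$, construct $\theta$ as an involution of $\mathsf{E}_8(\K)$ whose fixed-point geometry is exactly $\mathsf{F}_{4,1}(\K,\H)$: the quaternion algebra supplies the twisting datum, producing an anisotropic form of the relevant centraliser, and one verifies — as in Class II — that the division hypothesis forces anisotropy strong enough that no chamber is fixed while nothing larger than $\mathsf{F}_{4,1}(\K,\H)$ is fixed pointwise.

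The main obstacle is the explicit construction in the two sufficiency directions, and in particular controlling the fixed geometry \emph{exactly}. For Class II this is comparatively benign, reducing to the $\mathsf{PGL}_2$-computation above together with the identification of the pole set with $\mathbb{P}^1(\K)$. For Class I it is genuinely delicate: one must pass from the abstract algebra $\H$ to a concrete automorphism of the split building $\mathsf{E}_8(\K)$ — most naturally through a grading of the $\sE_8$ Lie algebra or an embedding of the twisted centraliser — and then prove that the pointwise-fixed subspace is neither smaller nor larger than $\mathsf{F}_{4,1}(\K,\H)$ and that $\theta$ fixes no chamber. Verifying this last ``precisely $\mathsf{F}_{4,1}(\K,\H)$'' clause, rather than merely ``contains'' such a subspace, is where the detailed geometry of Section~\ref{sec2} and the capping property will carry the weight.
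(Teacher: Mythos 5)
Your proposal does not prove the theorem: its opening move, ``by the equivalence $(1)\Leftrightarrow(3)$ already in hand,'' assumes exactly the part of the statement that carries all the weight. That equivalence is not a prior result --- it is the main content of the theorem, and the paper spends three sections on it. Concretely, $(3)\Rightarrow(1)$ requires proving that every Class I automorphism is domestic (Proposition~\ref{classIisdom}, singled out in the introduction as one of the crucial results, proved by a delicate analysis of the fixed structure induced in the equator geometry $E(p,p^\theta)$ for a point $p$ mapped to an opposite) and that every Class II automorphism is domestic (Theorem~\ref{thm:equatordomestic}, via the displacement bound $\disp(g)\leq 2\ell(s_{\varphi})=114<\ell(w_0)$). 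The converse $(1)\Rightarrow(3)$ is Theorem~\ref{classification}: for a non-domestic point $p$ one classifies the induced automorphism $\theta_p$ of $\Res_\Delta(p)\cong\mathsf{E_{7,7}}(\K)$ into three types using the $\sE_7$ classification, rules out Types 2 and 3 by contradiction, and shows Type 1 forces Class I or Class II. None of this appears in your proposal, and it cannot be bypassed: without it, domesticity and the Class I/II dichotomy are unrelated conditions. (You also never address item $(2)$, though that step is easy: an automorphism with diagram $\sE_{8;0}$, $\sE_{8;1}$ or $\sE_{8;2}$ fixes a chamber by the earlier classifications in \cite{PVMexc}.)

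Even granting the equivalence, the existence criteria are only half done. Your Class II sketch is essentially the paper's argument: the pole set is a projective line over $\K$, a Class II automorphism acts on it without fixed points, fixed-point-freeness amounts to irreducibility of a quadratic (in the paper, $aX^2+aX-1$), and ``no fixed pole implies no fixed chamber'' follows by projecting a putative fixed chamber onto the stabilised rank-one residue; this part is acceptable as a sketch. But for Class I, the sufficiency direction --- given a quaternion division algebra $\H$, produce an automorphism of $\mathsf{E_8}(\K)$ whose fixed-point structure is \emph{precisely} an isometrically embedded $\mathsf{F_{4,1}}(\K,\H)$ --- is exactly where the paper spends all of Section~\ref{sec3}: the embedding is built residue by residue (quaternion Veronese varieties inside $\mathsf{E_{6,1}}(\K)$, then $\mathsf{C_{3,3}}(\H,\K)$ inside $\mathsf{E_{7,7}}(\K)$), proved isometric and convex in the building-theoretic sense, and only then can \cite[Proposition~4.16]{Tits:74} be invoked to manufacture the pointwise stabiliser $G(\K,\H)$, whose nontriviality is what existence means. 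Your paragraph about ``a grading of the $\sE_8$ Lie algebra or an embedding of the twisted centraliser'' names a plausible route but proves nothing: it neither constructs the automorphism, nor shows the fixed structure is no larger than $\mathsf{F_{4,1}}(\K,\H)$, nor shows that no chamber is fixed. So the proposal has two genuine gaps: the assumed equivalence $(1)\Leftrightarrow(3)$, and the Class I existence construction.
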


To complete the classification of domestic automorphisms of large spherical building of type $\sE_8$ we must also classify the automorphisms with opposition diagram $\mathsf{E}_{8;4}$ fixing a chamber. The techniques in this classification are more algebraic, and we give precise formulae in terms of Chevalley generators for representatives of the conjugacy classes (see Section~\ref{fixingchamber} for the notation). 

\begin{theo}\label{fixedchambers}
Let $\theta$ be an automorphism of the building $\mathsf{E}_8(\mathbb{K})$ with $|\mathbb{K}|> 2$, and suppose that $\theta$ fixes a chamber. Then $\theta$ has opposition diagram $\mathsf{E}_{8;4}$ if, and only if, $\theta$ is conjugate to one of the following elements of the standard $\sD_4$ subgroup (with $a\neq 0$ arbitrary and $c\neq 0,1$ arbitrary):
\begin{compactenum}[$(1)$]
\item $x_{0100}(a)x_{1110}(1)x_{1101}(1)x_{0111}(1)$ (a certain product of $4$ orthogonal long root elations); 
\item $x_{1111}(1)x_{0100}(a)x_{1110}(1)x_{1101}(1)x_{0111}(1)$; 
\item $h_{\varphi}(c)$ (a homology fixing a non-thick subbuilding with thick frame of type $\mathsf{E}_7$ if $c\neq -1$ and type $\mathsf{E}_7\times\mathsf{A}_1$ if $\mathsf{char}(\mathbb{K})\neq 2$ and $c=-1$);
\item $x_{\varphi}(1)h_{\varphi}(-1)$ with $\kar(\K)\neq 2$.
\end{compactenum}
\end{theo}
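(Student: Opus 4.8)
The plan is to prove the two implications separately, reducing each to explicit Chevalley-generator calculations inside a Borel subgroup $B=TU$ and its standard $\sD_4$ subgroup, and using throughout that the opposition diagram is a conjugacy invariant. Since $\theta$ fixes a chamber, after conjugation it lies in $B$ (reducing first to the inner-diagonal case). The key structural observation is that the nodes $\{2,3,4,5\}$ complementary to the encircled nodes $\{1,6,7,8\}$ of $\sE_{8;4}$ span a sub-$\sD_4$, whose highest root $\varphi$ is a long root of $\sE_8$ with orthogonal complement of type $\sE_7$; this is what localises the classification to the standard $\sD_4$.

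For the backward implication I would verify that each representative in (1)--(4) has opposition diagram $\sE_{8;4}$; all four visibly lie in the standard $\sD_4$ subgroup, hence fix a chamber. In (1) the roots $0100,1110,1101,0111$ are pairwise orthogonal, forming a $4\sA_1$ subsystem, so the four elations commute and $\theta$ is a product of four perpendicular long root elations; its diagram is $\sE_{8;4}$ by \cite[Theorems~5 and~6]{PVMexc}, where such products and homologies of the shape (3) are given as examples. For (3) one uses that $h_\varphi(c)$ pointwise fixes the subbuilding attached to the $\sE_7$ complement of $\varphi$; a weight computation with $\varphi^\vee$ returns $\{1,6,7,8\}$ as the types mapped to opposites, and the fixed frame enlarges to $\sE_7\times\sA_1$ precisely when $c=-1$ and $\kar(\K)\neq 2$, since then $h_\varphi(-1)$ additionally fixes the $\varphi$-root group (as $\langle\varphi,\varphi^\vee\rangle=2$). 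Families (2) and (4) I would settle by a direct displacement computation, normalising each relevant vertex via the commutator relations and testing oppositeness; the crucial point for (4) is that $h_\varphi(-1)$ centralises $x_\varphi$, so the factor $x_\varphi(1)$ cannot be conjugated away and the class is genuinely mixed, forcing $\kar(\K)\neq 2$.

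For the forward (classification) implication I write $\theta=tu$ with $t\in T$, $u\in U$, and first constrain the semisimple part $t$. A fix-diagram analysis of the subbuilding fixed by $\theta$ shows that the centraliser of $t$ must contain the $\sE_7$ complement of $\varphi$, which forces $t=1$ or $t=h_\varphi(c)$. If $t=1$, then $\theta$ is unipotent, and applying the commutator relations in $U_{\sD_4}$ and conjugating by $T_{\sD_4}$ and by the triality automorphisms of $\sD_4$ (which cyclically permute $1110,1101,0111$ while fixing $0100$), I would reduce $\theta$ to the form (1) or (2); the two are separated by whether commuting the factors produces a nontrivial highest-root term $x_\varphi$ (note $0100+1111=\varphi$), present in (2) but not in (1). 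If $t=h_\varphi(c)$, then the relation $x_\varphi(s)h_\varphi(c)=h_\varphi(c)x_\varphi(c^{-2}s)$ lets me conjugate away any residual unipotent part whenever $c^2\neq 1$, giving the pure homology (3), whereas the single value $c=-1$ in characteristic $\neq2$ centralises $x_\varphi$ and yields the mixed class (4).

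It remains to fix the parameter ranges and confirm irredundancy. Conjugation by the torus alters the elation parameter in (1) only by a square, since $0100+1110+1101+0111=2\varphi$ makes the four scaling characters multiply to the square $\varphi^2$; hence $a$ persists as a genuine invariant in $\K^\times/(\K^\times)^2$ (the algebraic source of the field-dependence), while $c$ in (3) is intrinsic up to the inversion $c\leftrightarrow c^{-1}$ induced by the reflection in $\varphi$. The four families are pairwise non-conjugate, being separated by their semisimple parts together with the unipotent invariant distinguishing (1) from (2). I expect the main obstacle to be this localisation: showing that the diagram $\sE_{8;4}$ forces the semisimple part to the single homology $h_\varphi(c)$ and the whole nontrivial action into the standard $\sD_4$, while controlling the interplay of $t$ and $u$ so that the diagram stays \emph{exactly} $\sE_{8;4}$ rather than growing. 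Once inside $\sD_4$, the remaining work is delicate but essentially mechanical commutator bookkeeping.
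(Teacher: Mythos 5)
Your forward (classification) direction has a genuine gap, and it sits exactly where you yourself locate "the main obstacle". The whole reduction rests on two unproved claims: that the torus part $t$ of $\theta=tu\in B$ is forced to be $1$ or $h_{\varphi}(c)$ by a "fix-diagram analysis", and that the nontrivial action then localises into the standard $\sD_4$ subgroup. Neither is an argument. Note first that the $T$-component of an element of $B$ is not a conjugacy invariant (it is not "the semisimple part"), so it is not even clear what object you are constraining. More substantively, nothing you say excludes, for example, $t=h_{\varphi}(c)$ multiplied by unipotent factors $x_{\alpha}(\cdot)$ with $\alpha$ orthogonal to $\varphi$: such factors commute with $h_{\varphi}(c)$, so your conjugation identity $x_{\varphi}(s)h_{\varphi}(c)=h_{\varphi}(c)x_{\varphi}(c^{-2}s)$ cannot remove them, and deciding whether the resulting mixed element still has diagram $\sE_{8;4}$ is precisely the hard computation. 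The paper's proof never places $\theta$ in $B$ at all. Instead it works from a chamber realising the maximal displacement $w_{\sD_4}w_0$: by \cite[Theorems~8.1 and~8.4]{npvv}, any $\theta$ with diagram $\sE_{8;4}$ lies in the Chevalley group and is conjugate to $\theta_1=u's_{\varphi_1}^{-1}s_{\varphi_2}^{-1}s_{\varphi_3}^{-1}s_{\varphi_4}^{-1}h_{\omega_1}(c_1)h_{\omega_6}(c_2)h_{\omega_7}(c_3)h_{\omega_8}(c_4)$ with $u'\in U_{\Psi^+}$ --- an element \emph{not} in $B$. The twelve coefficients of $u'$ are then pinned down by $\mathsf{MAGMA}$-assisted displacement estimates (any violation produces a chamber at Weyl distance of length $\ell(w_{S\setminus J}w_0)+1=109$, contradicting the diagram), the result is transported into the standard $\sD_4$ subgroup by conjugating with the explicit Weyl element $\su$ of Lemma~\ref{lem:magicelementE8}, and only then does the $\sD_4$ analysis of \cite[Theorem~8.10]{npvv} apply --- which is also where the chamber-fixing hypothesis actually enters, separating forms (1)--(4) from the chamber-free Class I and II automorphisms of Theorem~\ref{nofixedchamber}. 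Your proposal contains no substitute for any of these steps, so the "essentially mechanical commutator bookkeeping" you defer to is in fact the theorem.

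The backward direction is closer in spirit to the paper's, but it too has gaps. For family (1), \cite[Theorems~5 and~6]{PVMexc} cover \emph{certain} products of four perpendicular elations, not arbitrary ones; the specific quadruple $\{0100,1110,1101,0111\}$ inside the standard $\sD_4$ is not directly among them, which is why the paper must first conjugate via \cite[Lemma~8.9]{npvv} to $x_{1211}(\pm a)x_{1000}(1)x_{0010}(1)x_{0001}(1)$ and then by $\su$ to reach $x_{\varphi_1}(\pm a)x_{\varphi_2}(1)x_{\varphi_3}(1)x_{\varphi_4}(1)$, the configuration covered by \cite[Theorem~3.1]{PVMexc}. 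For families (2) and (4), "a direct displacement computation" is a plan, not a proof: domesticity is a universally quantified statement over all chambers, and the paper instead invokes \cite[Lemma~3.4(3)(b)]{PVMexc} and \cite[Proposition~8.11]{npvv} respectively. One device from the paper worth adopting: each of the four families maps the base chamber to Weyl distance $s_{\varphi_1}s_{\varphi_2}s_{\varphi_3}s_{\varphi_4}=w_{\sD_4}w_0$, so some type $\{1,6,7,8\}$ simplex goes to an opposite; by the classification of opposition diagrams it then suffices to prove \emph{domesticity} alone, and exactness of the diagram $\sE_{8;4}$ comes for free.
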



The unique small $\sE_8$ building $\mathsf{E}_8(2)$ (over the field with $2$ elements) is excluded from the above theorems. See Remark~\ref{rem:remain} below for a discussion.

Our results translate into group theoretic statements concerning conjugacy classes in Chevalley groups of exceptional type. To put these results into context, recall that by the \textit{Density Theorem} (see \cite[Section 22.2]{Hum:75}), if $G$ is a connected linear algebraic group over an algebraically closed field then the union of all conjugates of a Borel subgroup~$B$ is equal to~$G$. Equivalently, if $\mathcal{C}$ is a conjugacy class in $G$ then $\mathcal{C}\cap B\neq\emptyset$. This theorem is a cornerstone in the theory of algebraic groups, for example simple corollaries include the important facts that the centres of $G$ and $B$ coincide, and that the Cartan subgroups of $G$ are precisely the centralisers of maximal tori.

The statement of the Density Theorem is clearly false in the general setting of a Chevalley group $G$ over an arbitrary field, as there typically exist elements $\theta\in G$ fixing no chamber of the building $\Omega=G/B$. However our classification theorems allow us to provide analogues of the Density Theorem in this setting, showing that every conjugacy class in $G$ intersects a union of a very small number of $B$-cosets. For example, in \cite[Corollary~11]{PVMexc} we showed that in a Chevalley group of type $\sE_6$ or $\sF_4$ over a field $\FF$ we have $\mathcal{C}\cap (B\cup w_0B)\neq\emptyset$ for all conjugacy classes~$\mathcal{C}$. The existence of domestic automorphisms of $\sE_7$ and $\sE_8$ buildings fixing no chamber implies that this result does not extend to the cases $\sE_7$ and $\sE_8$, however in this paper we are able to prove the following analogue of the Density Theorem for Chevalley groups of type~$\sE_8$ (see Theorem~\ref{thm:density} for a general density theorem for all types). 

\begin{coro}\label{cor:C}
Let $G$ be a Chevalley group of type $\mathsf{E}_8$ over a field~$\mathbb{K}$ with $|\K|>2$, and let $\mathcal{C}$ be a conjugacy class in $G$. Then
\begin{compactenum}[$(1)$]
\item $\mathcal{C}\cap (B\cup w_{\mathsf{D}_4}w_0B\cup w_0B)\neq\emptyset$;
\item if $\mathbb{K}$ is quadratically closed then $\mathcal{C}\cap (B\cup w_0B)\neq\emptyset$. 
\end{compactenum}
Here $w_0$ is the longest element of~$W$, and $w_{\sD_4}$ is the longest element of the standard $\sD_4$ parabolic subgroup.
\end{coro}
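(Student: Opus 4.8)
The plan is to translate the statement into the building $\Omega=G/B=\sE_8(\K)$ via the standard dictionary between $B$-double cosets and Weyl distances: for $w\in W$ and any representative $\theta$ of $\mathcal{C}$, one has $\mathcal{C}\cap BwB\neq\emptyset$ if and only if $\theta$ maps some chamber $C$ to a chamber $\theta C$ with $\delta(C,\theta C)=w$ (conjugating $\theta$ amounts to moving the base chamber, and $\theta\in BwB$ reads off $\delta(C_0,\theta C_0)$). In particular $\mathcal{C}\cap B\neq\emptyset$ exactly when $\theta$ fixes a chamber, and $\mathcal{C}\cap w_0B\neq\emptyset$ exactly when $\theta$ maps some chamber to an opposite one, i.e.\ when $\theta$ is not domestic. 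Thus part $(1)$ reduces to producing, for each $\theta$, a chamber $C$ with $\delta(C,\theta C)\in\{\mathrm{id},\,w_{\sD_4}w_0,\,w_0\}$. I would first split according to domesticity: if $\theta$ is not domestic then $\mathcal{C}\cap w_0B\neq\emptyset$; if $\theta$ is domestic and fixes a chamber then $\mathcal{C}\cap B\neq\emptyset$. By \cref{nofixedchamber} the only remaining possibility is that $\theta$ is domestic with opposition diagram $\sE_{8;4}$ and fixes no chamber, hence is of Class I or Class II, and for such $\theta$ neither $\mathrm{id}$ (no fixed chamber) nor $w_0$ (domestic) can occur. So everything hinges on producing a chamber $C$ with $\delta(C,\theta C)=w_{\sD_4}w_0$.

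To produce such a chamber I would exploit the opposition diagram. Its circled set is $K=\{1,6,7,8\}$, whose complement $J=\{2,3,4,5\}$ spans the standard $\sD_4$ subdiagram (node $4$ central), so $w_{\sD_4}=w_{0,J}$ and, since $w_0=-\mathrm{id}$ is central, $w_{\sD_4}w_0=w_0w_{0,J}$. As the building is large, $\theta$ is capped, so with diagram $\sE_{8;4}$ some flag $F$ of type $K$ is mapped to an opposite flag. Writing $R=\Res(F)\cong\sD_4(\K)$ and letting $\bar\theta=\proj_{F}\circ\theta$ be the induced automorphism of $R$, a computation in a common apartment (using that $w_0$ centralises $W$ and stabilises $J$) gives, for every chamber $C\supseteq F$, a length-additive factorisation $\delta(C,\theta C)=w_0w_{0,J}\cdot\delta_R(\bar C,\bar\theta\,\bar C)$. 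Hence $\delta(C,\theta C)=w_{\sD_4}w_0$ precisely when $\bar\theta$ fixes the chamber $\bar C$ of $R$, whereas $\delta(C,\theta C)=w_0$ would force $\delta_R(\bar C,\bar\theta\,\bar C)=w_{0,J}$, i.e.\ $\bar\theta$ mapping $\bar C$ to an opposite chamber. Since $\theta$ is domestic the latter never happens, so $\bar\theta$ is a domestic automorphism of the $\sD_4$ residue, and it remains to locate a chamber of $R$ fixed by $\bar\theta$.

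The main obstacle is exactly this last point: showing that for Class I and Class II automorphisms the induced $\sD_4$-automorphism on the residue of some type-$K$ flag mapped to opposite fixes a chamber. I would settle it by returning to the explicit geometric models underlying \cref{nofixedchamber}: for Class I the pointwise-fixed $\sF_{4,1}(\K,\H)$, and for Class II the pointwise-fixed equator geometry, each meet such a residue in a subconfiguration that visibly carries a $\bar\theta$-fixed chamber; equivalently, one records from the determination of these opposition diagrams that the attained displacement of a Class I or Class II automorphism is exactly $w_{\sD_4}w_0$. Either route yields a chamber $C$ with $\delta(C,\theta C)=w_{\sD_4}w_0$, hence $\mathcal{C}\cap w_{\sD_4}w_0B\neq\emptyset$, completing part $(1)$.

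Finally, for part $(2)$ I would argue that over a quadratically closed field there are no automorphisms of Class I or Class II. Such a field admits no quadratic extension, ruling out Class II by \cref{nofixedchamber}; and over it every quaternion algebra splits (its norm form is isotropic once every element is a square), so there is no associative quadratic division algebra of dimension $4$, ruling out Class I. By \cref{nofixedchamber} every domestic automorphism then fixes a chamber, so only the non-domestic and chamber-fixing cases of the trichotomy survive, giving $\mathcal{C}\cap(B\cup w_0B)\neq\emptyset$.
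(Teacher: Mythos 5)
Your overall architecture coincides with the paper's: exhibit, for each $\theta$, a chamber $C$ with $\delta(C,\theta C)\in\{1,\,w_{\sD_4}w_0,\,w_0\}$, then use strong transitivity and conjugation to land in the corresponding coset. Two remarks on the framing. First, your dictionary only gives intersection with the double coset $BwB$, while the statement requires the single coset $wB$; this needs one further twist (write $g^{-1}\theta g=b_1wb_2$ and conjugate once more by $b_1$ to get $wb_2b_1\in wB$), which the paper spells out and you omit, though it is routine. Second, for part (1) your detour through \cref{nofixedchamber} is unnecessary: the paper needs only the classification of opposition diagrams together with the fact from \cite{PVMexc} that diagrams $\sE_{8;1}$ and $\sE_{8;2}$ force a fixed chamber. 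Your part (2) matches the paper's argument.

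The genuine gap is the step you yourself single out as the crux: proving that the Weyl distance $w_{\sD_4}w_0$ is actually attained. Your residue factorisation is correct, and it correctly reduces the problem to showing that, for some type-$\{1,6,7,8\}$ flag $F$ mapped to an opposite, the induced automorphism $\bar\theta=\proj_F\circ\theta$ of the $\sD_4$-residue fixes a chamber. But neither of your proposed resolutions establishes this. Domesticity of $\bar\theta$ alone does not suffice: point-domestic, fixed-point-free collineations of hyperbolic polar spaces exist whenever $\K$ admits a quadratic extension (the map $\theta_{\xi_1}$ appearing in the paper's Lemma~\ref{inres} is exactly of this kind), so a domestic automorphism of a large $\sD_4$ building need not fix any chamber. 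And the claim that the Class I/II fixed structures ``visibly'' carry a fixed chamber in such a residue is asserted, not proved; nothing in the paper's analysis of these automorphisms produces such a chamber directly. What closes this step is the displacement theorem for capped automorphisms, \cite[Theorem~2.6]{PVMclass}: an automorphism with opposition diagram $\Gamma$ attains the Weyl distance $w_{S\setminus J(\Gamma)}w_0$ exactly. This is precisely what the paper invokes, through its general \cref{thm:density}, and it is not something ``recorded in the determination of the opposition diagrams'': knowing the diagram tells you which types of simplices are sent to opposites, not that the extremal Weyl distance is achieved. With that citation inserted at this point, your argument becomes complete and is then essentially the paper's proof.
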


In \cite{Ney-Par-Mal:23} we introduced the notion of \textit{uniclass automorphisms} of spherical buildings. These automorphisms are defined by the property that the displacement spectra $$\mathsf{disp}(\theta)=\{\delta(C,C^{\theta})\mid C\in\Omega\}$$ is contained in a single conjugacy class in the Weyl group (and it then turns out that $\mathsf{disp}(\theta)$ is necessarily equal to a conjugacy class in the Weyl group). In \cite[Theorem~1]{Ney-Par-Mal:23} we gave the classification of uniclass automorphisms, with the results for type $\mathsf{E}_8$ conditional on the classification of domestic automorphisms of buildings of type $\mathsf{E}_8$. The results of the present paper provide these results, thus completing the proof of \cite[Theorem~1]{Ney-Par-Mal:23}. In particular, the only uniclass automorphisms of buildings of type $\mathsf{E}_8$ are the identity, anisotropic automorphisms, and automorphisms of Class~I.

Theorems~\ref{nofixedchamber} and~\ref{fixedchambers} constitute the final step in the complete classification of domestic automorphisms of large spherical buildings of exceptional type with rank at least~$3$ (see \cite{PVMexc} for the classification in type $\sE_6$ and for the opposition diagrams $\sE_{7;1}$, $\sE_{7;2}$, $\sE_{8;1}$, and $\sE_{8;2}$, \cite{Lam-Mal:24} for the $\sF_4$ case, and \cite{npvv} for opposition diagrams $\sE_{7;3}$ and $\sE_{7;4}$). In the final  section of this paper we record some applications and consequences of this classification. The first application is a general version of the Density Theorem stated above (see Theorem~\ref{thm:density}). We also prove further spectral properties of Class I automorphisms of large $\sE_8$ buildings (see Section~\ref{sec:kangaroo}), and in Section~\ref{sec:biclass} we give a natural extension of the concept of uniclass automorphism, and use the classification of domestic automorphisms to classify those type preserving automorphisms of Moufang spherical buildings whose displacement spectra contains the identity and only one other conjugacy class in the Weyl group.

\begin{rem}\label{rem:remain}
As noted above, the present paper completes the classification of domestic automorphisms of large spherical buildings of exceptional type, of rank at least~$3$. We now discuss the small buildings, and the rank~$2$ case. 

The small buildings of exceptional type of rank at least $3$ are the buildings $\sE_6(2)$, $\sE_7(2)$, $\sE_8(2)$, $\sF_4(2)$, and $\sF_4(2,4)$. In these buildings the analysis is complicated by the existence of uncapped automorphisms, and indeed for these buildings additional examples of domestic automorphisms exist. For types $\sF_4$ and $\sE_6$ the classification is given in~\cite{PVMsmall} (via computation), however for the buildings $\sE_7(2)$ and $\sE_8(2)$ a different approach is required due to the enormous size of these groups, and currently the classification is unknown. 

In rank~$2$ (generalised polygons) it is not feasible to classify domestic automorphisms (because the polygons themselves do not admit a classification). However one may reasonably ask for the classification of domestic automorphisms of all Moufang polygons. For exceptional types this involves the Moufang hexagons and the Moufang octagons. The complete classification of domestic automorphisms in Moufang hexagons is given in~\cite{PVMexc2}, and it remains to classify domestic automorphisms of Moufang octagons. An approach similar to~\cite{PVMexc2} should be possible for this task, however currently the classification is unknown.

Finally, we note that for spherical buildings of classical type, partial classifications (and for some opposition diagrams complete classifications) and characterisations of domestic automorphisms are given in~\cite{PVMclass}. 
\end{rem}

\textbf{Structure of the paper.} The classification of domestic automorphisms fixing at least one chamber (Theorem~\ref{fixedchambers}) is carried out in \cref{fixingchamber}, with the preliminaries required for the proof being introduced in that same section. Essentially the approach to Theorem~\ref{fixedchambers} is via calculations in the Chevalley group using Chevalley's commutation relations. The classification of the domestic automorphisms fixing no chamber (Theorem~\ref{nofixedchamber}) is achieved using the underlying so-called \emph{long root subgroup geometry}. The necessary preliminaries on these geometries are introduced in \cref{sec2}, and the automorphisms of Class I and Class~II are described in \cref{sec3} and \cref{sec:equator} respectively. Moreover, in these sections, we characterise these automorphisms in terms of their fixed structure, proving existence and uniqueness results, and show that they are domestic with opposition diagram~$\sE_{8;4}$. In \cref{sec:class}, we show that each domestic automorphism of a large building of type $\mathsf{E_8}$ fixing no chamber is of Class I or Class II, completing the proof of Theorem~\ref{nofixedchamber}. Finally, in Section~\ref{sec:applications} we discuss applications of the classification of domestic automorphisms.

\section{Proof of Theorem~\ref{fixedchambers}}\label{fixingchamber}

In this section we classify the domestic automorphisms of a large building of type $\sE_8$ with opposition diagram~$\sE_{8;4}$ and fixing a chamber, proving Theorem~\ref{fixedchambers}. The techniques here are algebraic, making use of commutator relations in the associated Chevalley group. These techniques build on those developed in \cite{PVMexc,npvv}, and to perform the required calculations we make use of the Groups of Lie Type package in $\mathsf{MAGMA}$~\cite{MAGMA,CMT:04}. 

We first describe the algebraic setup of this section. This setup will also be used at two later occasions in the paper (Theorem~\ref{thm:equatordomestic} and Lemma~\ref{3opposite}).

Let $G_0$ be the adjoint Chevalley group of type $\sE_8$ over a field~$\K$. We adopt the sign conventions from the Groups of Lie Type package in $\mathsf{MAGMA}$ for the commutator relations. We will use the notation and conventions outlined in \cite[Section~1.1]{PVMexc}, and so in particular $\Phi$ is the root system of $G_0$, with simple roots $\alpha_1,\ldots,\alpha_8$ and $(W,S)$ is the associated Coxeter system. The fundamental coweights of $\Phi$ are denoted $\omega_1,\ldots,\omega_8$, and the coweight lattice of $\Phi$ is $P=\ZZ\omega_1+\cdots+\ZZ\omega_8$. The highest root of $\Phi$ is $\varphi=(2,3,4,6,5,4,3,2)$ (in the basis of simple roots). Let $G=G_{\Phi}(\K)$ be the subgroup of $\mathrm{Aut}(G_0)$ generated by the inner automorphisms of $G_0$ and the diagonal automorphisms, as in \cite{Hum:69,St:60}, and let $x_{\alpha}(a)$, $s_{\alpha}(t)$, and $h_{\lambda}(t)$ be the elements of $G$ described in \cite[Section~1.1]{PVMexc} (with $\alpha\in\Phi$, $\lambda\in P$, $a\in\K$, and $t\in\K^{\times}$). In particular, we have 
$
s_{\alpha}(t)=x_{\alpha}(t)x_{-\alpha}(-t^{-1})x_{\alpha}(t)
$ and $h_{\alpha}(t)=s_{\alpha}(t)s_{\alpha}(1)^{-1}$ 
for $\alpha\in\Phi$ and $t\in\K^{\times}$. We write $s_{\alpha}=s_{\alpha}(1)$ (however note that $s_{\alpha}$ is not necessarily an involution). We record the following relations for later use: $h_{\lambda}(t)h_{\mu}(t')=h_{\mu}(t')h_{\lambda}(t)$, $h_{\lambda}(t)h_{\lambda}(t')=h_{\lambda}(tt')$, and 
\begin{align}\label{eq:somerelations}
h_{\lambda}(t)x_{\alpha}(a)h_{\lambda}(t)^{-1}&=x_{\alpha}(at^{\langle\lambda,\alpha\rangle})&
s_{\alpha}h_{\lambda}(t)s_{\alpha}^{-1}&=h_{s_{\alpha}\lambda}(t)
\end{align}
for $a\in\K$ and $t\in\K^{\times}$.

For each $\alpha\in\Phi$ let $U_{\alpha}$ be the subgroup of $G$ generated by the elements $x_{\alpha}(a)$ with $a\in \K$. For $A\subseteq \Phi$ let $U_{A}$ be the subgroup generated by the groups $U_{\alpha}$, $\alpha\in A$, and let $U^+=U_{\Phi^+}$. The elements of $U^+$ and their conjugates will be called \emph{unipotent elements}. Let $H$ be the subgroup generated by the diagonal elements $h_{\lambda}(t)$ with $\lambda\in P$ and $t\in\K^{\times}$. Let $B$ be the subgroup of $G$ generated by $U^+$ and $H$. It is easy to see that an element of $B$ belongs to $U^+$ if, and only if, it fixes each or exactly one chamber of  every panel residue it stabilises (a \emph{panel} is a codimension~1 simplex).

Let $\Omega=\Omega_{\Phi}(\K)$ be the standard split spherical building associated to $G$. Thus $\Omega$ is of type $\sE_8$ with chamber set $G/B$ and Weyl distance function given by $\delta(gB,hB)=w$ if and only if $g^{-1}h\in BwB$, and by Tits' classification of spherical buildings~\cite{Tits:74} every thick building of type $\sE_8$ arises in this way for some field~$\K$, and $\Omega$ is large if and only if $|\K|>2$. 

Let $\varphi_1=\varphi_{\sE_8}$, $\varphi_2=\varphi_{\sE_7}$, $\varphi_3=\varphi_{\sD_6}$, and $\varphi_4=\alpha_7$, and let $J=\{1,6,7,8\}$ (the nodes encircled in the diagram $\sE_{8;4}$). Define
\begin{align*}
\Psi=\{\beta\in\Phi\mid \langle\alpha,\beta\rangle=0\text{ for all $\alpha\in\Phi_{\sD_4}$}\},
\end{align*}
where $\Phi_{\sD_4}$ is the parabolic subroot system of type $\sD_4$ generated by $\{\alpha_2,\alpha_3,\alpha_4,\alpha_5\}$. Let $\Psi^+=\Psi\cap \Phi^+$. Note that $\varphi_1,\varphi_2,\varphi_3,\varphi_4\in\Psi^+$.

\begin{lemma}\label{lem:magicelementE8}
The set $\Psi$ is a root system of type $\sD_4$ with positive system $\Psi^+$ and simple system $\gamma_1=\varphi_{\sE_7}$, $\gamma_2=\alpha_8$, $\gamma_3=\varphi_{\sD_6}$, and $\gamma_4=\alpha_7$. 
The element 
$$
\su=134265423456765423143546876542314354265431765876
$$
satisfies $\su^{-1}\gamma_1=\alpha_2$, $\su^{-1}\gamma_2=\alpha_4$, $\su^{-1}\gamma_3=\alpha_3$, and $\su^{-1}\gamma_4=\alpha_5$, and thus $\su^{-1}$ maps $\Psi^+$ to the positive system $\Phi_{\sD_4}^+$ of the standard $\sD_4$ parabolic subsystem.
\end{lemma}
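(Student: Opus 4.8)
The plan is to verify the two assertions — that $\Psi$ is a $\sD_4$ root system with the claimed simple system, and that the explicit word $\su$ maps the simple roots correctly — essentially by direct root-system computation in the $\sE_8$ root lattice, exploiting the orthogonality condition that defines $\Psi$.

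First I would establish that $\Psi$ is a root system of type $\sD_4$. By definition $\Psi$ consists of the roots orthogonal to every root of $\Phi_{\sD_4}=\langle\alpha_2,\alpha_3,\alpha_4,\alpha_5\rangle$, so $\Psi$ is the root subsystem of $\Phi$ perpendicular to a $\sD_4$ subsystem. Since $\Phi_{\sD_4}$ is the parabolic subsystem whose nodes form the central fork of the $\sE_8$ diagram, its orthogonal complement inside the $\sE_8$ lattice is a rank-$4$ subsystem, and a dimension count (the orthogonal complement of a $4$-dimensional subspace in the $8$-dimensional ambient space) together with the fact that $\sE_8$ contains $\sD_4 \perp \sD_4$ as a maximal-rank subsystem forces $\Psi$ to have rank $4$. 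I would confirm $\Psi$ is of type $\sD_4$ (and not, say, $\sA_1^4$ or $\sB_4/\sC_4$) either by checking that it is simply-laced with the correct number of roots, or more concretely by verifying that the four proposed simple roots $\gamma_1=\varphi_{\sE_7}$, $\gamma_2=\alpha_8$, $\gamma_3=\varphi_{\sD_6}$, $\gamma_4=\alpha_7$ lie in $\Psi$, are linearly independent, and have Gram matrix equal to the $\sD_4$ Cartan matrix (pairwise angles giving the central-node-plus-three-leaves pattern). Checking $\gamma_i\in\Psi$ amounts to computing $\langle\gamma_i,\alpha_j\rangle=0$ for $j\in\{2,3,4,5\}$, which is routine once one writes each $\gamma_i$ in the basis of simple roots; here $\varphi_{\sE_7}$ and $\varphi_{\sD_6}$ denote the highest roots of the indicated standard parabolic subsystems and have standard coordinate expressions. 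That $\{\gamma_1,\gamma_2,\gamma_3,\gamma_4\}$ is a simple system with positive system $\Psi^+=\Psi\cap\Phi^+$ follows once I check each $\gamma_i\in\Phi^+$ and that every element of $\Psi^+$ is a nonnegative integer combination of the $\gamma_i$ (equivalently, that $\Psi^+$ is the set of roots in $\Psi$ lying on the correct side of the walls determined by the $\gamma_i$).

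Next I would verify the claim about $\su$. The word $\su$ is given as a string of indices encoding a product of simple reflections $s_{i}$, and the assertion is the system of four equalities $\su^{-1}\gamma_1=\alpha_2$, $\su^{-1}\gamma_2=\alpha_4$, $\su^{-1}\gamma_3=\alpha_3$, $\su^{-1}\gamma_4=\alpha_5$. These are checked by applying the reflections in $\su^{-1}$ one at a time to each $\gamma_i$ and confirming the output; this is a finite, entirely mechanical computation. Since the four $\alpha_j$ ($j\in\{2,3,4,5\}$) are exactly the simple roots of $\Phi_{\sD_4}$, and since $\su^{-1}$ sends the simple system $\{\gamma_i\}$ of $\Psi$ to the simple system $\{\alpha_2,\alpha_3,\alpha_4,\alpha_5\}$ of $\Phi_{\sD_4}$, it follows immediately that $\su^{-1}$ carries $\Psi^+$ onto $\Phi_{\sD_4}^+$, giving the final clause of the lemma.

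The main obstacle is not conceptual but bookkeeping: the word $\su$ has length around $48$, so carrying each $\gamma_i$ through $\su^{-1}$ by hand is long and error-prone, and the natural route is to let the $\mathsf{MAGMA}$ Groups of Lie Type package perform the reflection computations, as is already done elsewhere in the paper. The only genuinely delicate point is ensuring that $\su^{-1}$, rather than $\su$, is applied and that the indexing/sign conventions match those fixed in \cite[Section~1.1]{PVMexc}; once the conventions are pinned down, both the $\sD_4$-identification and the four root equalities reduce to verified finite calculations.
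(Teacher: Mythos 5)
Your proposal is correct and matches the paper's approach: the paper's entire proof is ``This follows from a direct calculation,'' and your plan — checking that the four $\gamma_i$ lie in $\Psi$ with Gram matrix of type $\sD_4$, then mechanically applying the reflections of $\su^{-1}$ to each $\gamma_i$ (by hand or via $\mathsf{MAGMA}$) — is precisely that calculation spelled out. The points you flag (verifying $\Psi^+=\Psi\cap\Phi^+$ is the positive system for $\{\gamma_i\}$, and getting the $\su$ versus $\su^{-1}$ convention right) are the right things to be careful about, and your final deduction that mapping simple systems to simple systems carries $\Psi^+$ onto $\Phi_{\sD_4}^+$ is exactly how the paper concludes.
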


\begin{proof}
This follows from a direct calculation.
\end{proof}

To fix conventions, we identify the simple roots $\alpha_2,\alpha_3,\alpha_4,\alpha_5$ of the $\sE_8$ root system with simple roots $\alpha_1',\alpha_3',\alpha_2',\alpha_4'$ of the $\sD_4$ system (respectively). Let $G_{\sD_4}$ be the subgroup of the $\sE_8$ Chevalley group $G$ generated by the root subgroups $U_{\alpha}$ with $\alpha\in\Phi_{\sD_4}$ (we shall use the sign conventions in $\sD_4$ inherited from the sign conventions in $\sE_8$ in $\mathsf{MAGMA}$, however since the elements below are listed up to conjugation, the particular choices turn out to be irrelevant). Let $w_{\sD_4}$ be the longest element of the $\sD_4$ Coxeter group.

%
We now prove Theorem~\ref{fixedchambers}. 

\begin{proof}[Proof of Theorem~\ref{fixedchambers}]
Suppose that $\theta$ has opposition diagram $\sE_{8;4}$. By \cite[Theorem~8.1]{npvv}, and following the first paragraph of the proof of \cite[Theorem~8.4]{npvv}, we see that $\theta$ lies in the Chevalley group $G$, and is conjugate to an element of the form
\begin{align}\label{eq:originalform}
\theta_1&=u's_{\varphi_1}^{-1}s_{\varphi_2}^{-1}s_{\varphi_3}^{-1}s_{\varphi_4}^{-1}h_{\omega_1}(c_1)h_{\omega_6}(c_2)h_{\omega_7}(c_3)h_{\omega_8}(c_4),
\end{align}
where $c_1,c_2,c_3,c_4\in\K\backslash\{0\}$, $u'\in U_{\Psi^+}$ and $\varphi_1=\varphi_{\sE_8}$, $\varphi_2=\varphi_{\sE_7}$, $\varphi_3=\varphi_{\sD_6}$, and $\varphi_4=\alpha_7$. 

We will now make explicit calculations to further restrict~$u'$. Write
\begin{align*}
u'&=x_{22343210}(a_1) x_{2 2 3 4 3 2 1 1}(a_2) x_{2 2 3 4 3 2 2 1}(a_3) x_{2 3 4 6 5 4 3 1}(a_4) x_{2 3 4 6 5 4 3 2}(a_5) x_{0 1 1 2 2 2 1 0}(a_6) x_{0 1 1 2 2 2 1 1}(a_7)\\
&\times x_{0 1 1 2 2 2 2 1}(a_8) x_{0 0 0 0 0 0 1 0}(a_9) x_{2 3 4 6 5 4 2 1}(a_{10})x_{0 0 0 0 0 0 1 1}(a_{11}) x_{0 0 0 0 0 0 0 1}(a_{12})
\end{align*}
with $a_1,\ldots,a_{12}\in\K$ (the roots appearing here are the 12 roots of $\Psi^+$). For $\alpha,\beta\in\Phi^+$ we consider the element $v\in W$ given by 
$$
x_{-\beta}(-1)Bx_{-\alpha}(-1)\theta' x_{-\alpha}(1)x_{-\beta}(1)B=BvB.
$$
Since $
v=\delta(x_{-\alpha}(1)x_{-\beta}(1)B,\theta' x_{-\alpha}(1)x_{-\beta}(1)B)$, if $\ell(v)>\ell(w_{S\backslash J}w_0)=108$ (where $J=\{1,6,7,8\}$) then we have a contradiction with the fact that $\theta$ has opposition diagram $\sE_{8;4}$ (see the proof of \cite[Theorem~8.6]{npvv}). By direct calculation (using $\mathsf{MAGMA}$) if $\alpha=(1 0 1 0 0 0 0 0)$ and $\beta=(1 2 3 4 3 2 1 0)$ with $a_6\neq c_4a_1$ then $v=s_3w_{S\backslash J}w_0$ and so $\ell(v)=\ell(w_{S\backslash J}w_0)+1$, a contradiction. Thus $a_6=c_4a_1$. Similarly, taking $\alpha=(1 0 1 0 0 0 0 0)$ and $\beta=(1 2 3 4 3 2 1 1)$ forces $a_7=c_4a_2$, taking $\alpha=(1 0 1 0 0 0 0 0)$ and $\beta=(1 2 3 4 3 2 2 1)$ forces $a_8=c_4a_3$, taking $\alpha=(0 0 0 0 1 1 0 0)$ and $\beta=(0 1 1 2 2 1 1 0)$ forces $a_9=c_3c_4a_1$, taking $\alpha=(0 0 0 0 1 1 0 0)$ and $\beta=(0 1 1 2 2 1 1 1)$ forces $a_{11}=c_3c_4a_2$, taking $\alpha=(0 0 0 0 1 1 0 0)$ and $\beta=(2 3 4 6 5 3 2 1)$ forces $a_{10}=c_3^{-1}a_3$, taking $\alpha=(0 0 0 0 1 1 1 0)$ and $\beta=(0 1 1 2 2 1 1 1)$ forces $a_{12}=-c_2c_3c_4a_3$, taking $\alpha=(0 0 0 0 1 1 1 0)$ and $\beta=(2 3 4 6 5 3 2 1)$ forces $a_4=-c_2^{-1}c_3^{-1}a_2$, and taking $\alpha=(0 0 0 0 1 1 1 1)$ and $\beta=(2 3 4 6 5 3 2 1)$ forces $a_5=c_1^{-1}c_2^{-1}c_3^{-1}a_1$.

Conjugating $\theta_1$ by the element $\su$ from Lemma~\ref{lem:magicelementE8} it follows that $\theta$ is conjugate to an element of the form $
\theta_2=uw_{\sD_4}h,
$
where $u,h\in G_{\sD_4}$ are of the form
\begin{align*}
u&=x_{1000}(a_1) x_{1100}(a_2) x_{1101}(-a_3) x_{1111}(c_2^{-1}c_3^{-1}a_2) x_{1211}(-c_1^{-1}c_2^{-1}c_3^{-1}a_1)x_{0010}(c_4a_1)\\
&\times x_{0110}(c_4a_2) x_{0111}(-c_4a_3) x_{0001}(c_3c_4a_1)x_{1110}(c_3^{-1}a_3)x_{0101}(-c_3c_4a_2)x_{0100}(-c_2c_3c_4a_3)\\
h&=h_{1000}(-c_1c_2^2c_3^3c_4^2)h_{0100}(c_1^2c_2^3c_3^4c_4^2)h_{0010}(-c_1c_2^2c_3^3c_4)h_{0001}(-c_1c_2^2c_3^2c_4)
\end{align*} 

Considering $h^{-1}\theta_2h$, and setting $a=t_3t_4a_1$, $b=t_3t_4a_2$, and $c=t_2t_3t_4a_3$, we see that $\theta$ is conjugate to an element of the form $uw_{\sD_4}h$ with
\begin{align*}
u&=x_{1000}(t_2t_3t_4a)x_{1100}(-t_1t_2t_3t_4b)x_{1101}(t_1t_2t_3t_4c)x_{1111}(-t_1t_2^2t_3^2t_4b)x_{1211}(t_1t_2^2t_3^2t_4a)x_{0010}(t_2t_3a)\\
&\qquad x'_{0110}(t_1t_2t_3b)x'_{0111}(t_1t_2t_3c)x'_{0001}(t_2a)x'_{1110}(t_1t_2t_3^2t_4c)x'_{0101}(t_1t_2b)x'_{0100}(t_1c)\\
h&=h_{1000}(t_1t_2^2t_3^3t_4^2)h_{0100}(t_1^2t_2^3t_3^4t_4^2)h_{0010}(t_1t_2^2t_3^3t_4)h_{0001}(t_1t_2^2t_3^2t_4).
\end{align*}
This element of $G_{\sD_4}$ appears in \cite[Theorem~8.6]{npvv} (in the $\sD_4$ subgroup of the $\sE_7$ Chevalley group, with superficial differences in signs reflecting the different sign choices in $\sE_7$ and $\sE_8$). The $\sD_4$ analysis in \cite[Theorem~8.10]{npvv} applies and it follows that $\theta$ is conjugate to an element of the form listed in the statement of the theorem. 

It remains to show that each element $\theta$ of the form (1), (2), (3), or (4) in the statement of the theorem is domestic with opposition diagram~$\sE_{8;4}$. Reversing the conjugations performed in \cite[Theorem~8.10]{npvv} shows that $\theta$ is conjugate to an element of the form~(\ref{eq:originalform}), and such an element maps the base chamber to Weyl distance $s_{\varphi_1}s_{\varphi_2}s_{\varphi_3}s_{\varphi_4}=w_{\sD_4}w_0$. This shows that $\theta$ maps some type $\{1,5,6,7\}$ simplex to an opposite, and hence $\theta$ is either domestic with diagram~$\sE_{8;4}$, or $\theta$ is not domestic (by the classification of possible opposition diagrams). Thus it is sufficient to prove that elements $\theta$ of the form (1), (2), (3), or (4) are domestic. 

Consider $\theta=x_{0100}(a)x_{1110}(1)x_{1101}(1)x_{0111}(1)$ with $a\neq 0$. By \cite[Lemma~8.9]{npvv} this element is conjugate to an element of the form $\theta'=x_{1211}(\pm a)x_{1000}(1)x_{0010}(1)x_{0001}(1)$, and then with $\su$ as in Lemma~\ref{lem:magicelementE8} we have $\su\theta'\su^{-1}=x_{\varphi_1}(\pm a)x_{\varphi_2}(1)x_{\varphi_3}(1)x_{\varphi_4}(1)$. This element is domestic with opposition diagram~$\sE_{8;4}$ by \cite[Theorem~3.1]{PVMexc}. 

Consider $\theta=x_{1111}(1)x_{0100}(a)x_{1110}(1)x_{1101}(1)x_{0111}(1)$ with $a\neq 0$. Then
$$
\su\theta\su^{-1}=x_{23465431}(\pm 1)x_{00000001}(\pm a)x_{23465421}(\pm 1)x_{22343221}(\pm 1)x_{01122221}(\pm 1),
$$
and by \cite[Lemma~3.4(3)(b)]{PVMexc} this element is domestic (and hence has diagram~$\sE_{8;4}$).

The element $h_{\varphi}(c)$ with $c\neq 0,1$ is domestic (with diagram $\sE_{8;4}$) by \cite[Theorem~4.7]{PVMexc} (note that $\varphi=\omega_8$). Finally, the element $x_{\varphi}(1)h_{\varphi}(-1)$ with $\kar(\K)\neq 2$ is domestic with diagram $\sE_{8;4}$ by \cite[Proposition~8.11]{npvv}, completing the proof. 
\end{proof}

\section{Geometric preliminaries}\label{sec2}

The remainder of this paper is focused on proving Theorem~\ref{nofixedchamber}, and the techniques used are primarily geometric. Indeed the classification and description of the domestic collineations that do not fix any chamber makes use of the so-called \emph{long root subgroup geometry} of type $\mathsf{E_8}$, which is a \emph{parapolar space}. We will use the language of the theory of parapolar spaces, and we provide a brief introduction below. 

Note that we, in \cref{nofixedchamber}, not only provide a necessary and sufficient condition for the existence of domestic collineations fixing no chamber in terms of fixed substructures, but also give a necessary and sufficient algebraic condition on the field of definition for such substructures to exist. All domestic collineation corresponding to a given fix structure isomorphic to a metasymplectic space  form a group, which we describe abstractly below.
\subsection{Quaternion algebras}\label{quatalg}
A building of type $\mathsf{E_8}$ is completely determined by a given field $
\K$ (and then each residue of type $\mathsf{A_2}$ conforms to a projective plane naturally corresponding to a $\K$-vector space of dimension 3). 

Let $\L/\K$ be a quadratic extension (not necessarily separable) and let $\L\to\L:x\mapsto\overline{x}$ be the unique automorphism of $\L$ pointwise fixing $\K$ and such that $\Norm(x):=x\overline{x}\in\K$, for all $x\in \L$. Note that this map, which we call the \emph{standard involution} is trivial in the inseparable case, and is really an involution in the separable case. An element $\Norm(x)$, $x\in\L$, is called a \emph{norm}. Suppose now there exists $a\in\K$  such that $a$ is not a norm. Then we can define a division algebra $\H$ over $
K$ (via the so-called Cayley--Dickson process), consisting of all pairs $(x,y)\in\L\times\L\cong\K\times\K\times\K\times\K$ with multiplication
\[(x,y)\cdot(x',y')=(xx'+a\overline{y}y',\overline{x}y'+x'y), \mbox{ for all }x,x',y,y'\in\K.\]
If $\L/\K$ is separable, then $\H$ is a proper skew field, called a \emph{quaternion division algebra}. If $\L/\K$ is inseparable (this can only happen if the characteristic of $\K$ is $2$), then $\H$ is just a field extension of $\K$ of degree $4$, which, for ease of formulation, we will also call a \emph{quaternion division algebra over $\K$}, sometimes preceded by the adjective \emph{inseparable}.  

Now let $\H$ be a quaternion division algebra over $\K$, defined via $\L/\K$ as above. We embed $\L$ in $\H$ by taking the first component of each pair. We embed $\K$ in $\H$ via the natural embedding of $\K$ in $\L$. We can extend the standard involution as follows. For $z=(x,y)\in\H$ we set $\overline{z}=(\overline{x},-y)$. Then $\Norm(z):=z\cdot\overline{z}=\overline{z}\cdot z\in\K$. In the inseparable case we again have $\Norm(z)=z^2$. It follows that, in that case, an element has norm $1$ if and only if it is itself equal to $1$. However, in the separable case there are many elements with norm $1$ and these form a (non-commutative) subgroup of the multiplicative group of $\H$, which we denote by $G(\K,\H)$. If $\H$ is inseparable, then $G(\K,\H)$ denotes the additive group (of exponent $2$) of elements $(x,y)\in\H$ such that $x=x^2+ay^2$ (with above notation). 

Let $\H$ be a quaternion division algebra over $\K$. Then the map
\[\rho\colon \H\times\H\times\H\to\K\times\K\times\K\times\H\times\H\times\H\colon(x,y,z)\mapsto(x\overline{x},y\overline{y},z\overline{z},\overline{y}z,\overline{z}x,\overline{x}y)\] is called the \emph{Veronesean map}. Its image in the associated projective space $\PG(14,\K)$ is called the \emph{quaternion Veronese variety}.  If we consider the triple $(x,y,z)$ as homogeneous coordinates of a point of the projective plane $\PG(2,\H)$ over $\H$, then the image of a line of $\PG(2,\H)$ through the Veronese map represents a quadric of Witt index $1$ in a subspace of dimension $5$ with standard equation $X_{-1}X_1=\Norm(X_0)$, where $X_{-1},X_1\in\K$ and $X_0\in\H$, the latter viewed as $4$-dimensional vector space over $\K$.  

A quaternion division algebra over $\K$ is a special case of a \emph{quadratic alternative} division algebra over $\K$, that is, an algebra over $\K$ satisfying the alternative laws $a(ab)=a^2b$ and $(ab)b=ab^2$ and such that every element $x$ satisfies a quadratic equation $x^2-t(x)x+n(x)=0$, where $t$ is a linear form and $n$ a quadratic form. In our case, we have $n\equiv\Norm$ and $t(x)=x+\overline{x}$. Also a quadratic extension of $\K$ is in a natural way a quadratic algebra over $\K$. The quadratic alternative division algebras are classified, but we will not need this result (however, see for instance Chapter 10 of \cite{Tits:74}). One of the examples is the split Cayley algebra $\mathbb{O}'$ over $\K$, a non-associative $8$-dimensional algebra over $\K$. One can define a Veronese variety over $\mathbb{O}'$ (as in \cite{Sch-Sch-Mal-Vic:23}). If $\mathbb{O}'$ contains a subalgebra $\H$ isomorphic to a quaternion algebra, then one can restrict the coordinates from $\mathbb{O}'$ down to $\H$ in the definition of this variety to obtain a \emph{standard embedding} of the quaternion Veronese variety in the Veronese variety over $\mathbb{O}'$.

\subsection{Parapolar spaces}
We assume the reader to be familiar with the basic theory of buildings from the simplicial complex point of view as explained in the first three chapters of \cite{Tits:74}. By \cite[Theorem~6.13]{Tits:74}, we know that, up to isomorphism, there exists exactly one building of type $\mathsf{E}_n$, $n=6,7,8$, with ground field $\K$, that is, such that all irreducible residues of rank $2$ are projective planes over $\K$. We shall denote that building by $\mathsf{E}_n(\K)$. By \cite[Theorem~10.2]{Tits:74}, a building of type $\mathsf{F_4}$ is uniquely determined by a field $\K$ and a quadratic alternative division algebra $\A$. We denote such a building by $\mathsf{F_4}(\K,\A)$. By convention, the types $1$ and $2$ are such that the residues of simplices of cotype $\{1,2\}$ correspond to projective planes over $\K$, see \cref{fig2}. 

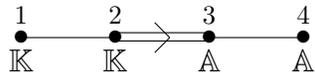
\begin{figure}[h]
\begin{center}
\begin{tikzpicture}[scale=0.5]
\node at (0,0.3) {};
\node [inner sep=0.8pt,outer sep=0.8pt] at (-3.75,0) (4) {$\bullet$};
\node [inner sep=0.8pt,outer sep=0.8pt] at (-3.75,0.6) (4) {\footnotesize 1};
\node [inner sep=0.8pt,outer sep=0.8pt] at (-3.75,-0.6) (4) {$\mathbb{K}$};
\node [inner sep=0.8pt,outer sep=0.8pt] at (-1.25,0) (3) {$\bullet$};
\node [inner sep=0.8pt,outer sep=0.8pt] at (-1.25,0.6) (3) {\footnotesize 2};
\node [inner sep=0.8pt,outer sep=0.8pt] at (-1.25,-0.6) (3) {$\mathbb{K}$};
\node [inner sep=0.8pt,outer sep=0.8pt] at (1.25,0) (3) {$\bullet$};
\node [inner sep=0.8pt,outer sep=0.8pt] at (1.25,0.6) (2) {\footnotesize 3};
\node [inner sep=0.8pt,outer sep=0.8pt] at (1.25,-0.6) (2) {$\mathbb{A}$};
\node [inner sep=0.8pt,outer sep=0.8pt] at (3.75,0) (2) {$\bullet$};
\node [inner sep=0.8pt,outer sep=0.8pt] at (3.75,0.6) (1) {\footnotesize 4};
\node [inner sep=0.8pt,outer sep=0.8pt] at (3.75,-0.6) (1) {$\mathbb{A}$};
\draw (-3.75,0)--(-1.25,0);
\draw (-1.25,0.13)--(1.25,0.13);
\draw (-1.25,-0.09)--(1.25,-0.09);
\draw (1.25,0)--(3.75,0);
\draw (.2,0)--(-.2,0.4);\draw (.2,0)--(-.2,-0.4); 
\end{tikzpicture}\hspace{2cm}
\end{center}\caption{The Dynkin diagram of type $\mathsf{F_4}$ with Bourbaki labelling\label{fig2}}
\end{figure}

Likewise, a building of type $\mathsf{A}_n$, $n\geq 3$, or $\mathsf{D}_n$, $n\geq 4$, is, up to an automorphism of the Coxeter diagram, completely determined by a skew field $\L$ and a field $\K$, respectively, and we denote the building as $\mathsf{A}_n(\L)$ and $\mathsf{D}_n(\K)$, respectively.

We now briefly explain how to define point-line geometries from spherical buildings. A \emph{point-line geometry} $\Gamma$ is a pair $(X,\cL)$, where $X$ is the \emph{point set} and $\cL$ is the \emph{line set}; for our purposes it suffices that each line is a subset of the point set. We us the Greek letters $\Gamma$ and $\Delta$, possibly furnished with subscripts, for geometries. A point-line geometry is called a \emph{partial linear space} if any pair of distinct points $x,y$ is contained in at most one line, which is then denoted by $xy$. It is called \emph{thick} if each line contains at least three points. Two (not necessarily distinct) points $x,y$ contained in a common line are said to be \emph{collinear}, in symbols $x\perp y$. The set of points collinear to a given point $x$ is denoted by $x^\perp$. A permutation $\theta$ of the point set such that triples of collinear points are mapped to triples of collinear points by both $\theta$ and $\theta^{-1}$, is called a \emph{collineation}. 

Let $\Omega$ be a (simplicial) building over some type set $S$, and pick $j\in S$. There is an associated geometry $\Delta$ with point set the set of vertices of $\Omega$ of type $j$. The lines are the sets of vertices completing the simplices of type $S\setminus\{j\}$ to a chamber (several simplices of type $S\setminus\{j\}$ can give rise to the same line). If $\Omega$ is thick, then so is $\Delta$. If $\Omega$ is irreducible and spherical, say of type $\mathsf{X}_n$, then we say that $\Delta$ has type $\mathsf{X}_{n,j}$. Using Bourbaki labelling \cite{Bourbaki}, the most popular geometries are those of type $\mathsf{A}_{n,1}$ (projective spaces) and $\mathsf{B}_{n,1},\mathsf{D}_{n,1}$ (polar spaces, see below). In the exceptional cases we denote the point-line geometry of type $\mathsf{E}_{n,j}$ over the field $\K$ by $\mathsf{E}_{n,j}(\K)$. For the building $\mathsf{F_4}(\K,\A)$, we denote the geometry of type $\mathsf{F}_{4,j}$ by $\mathsf{F}_{4,j}(\K,\A)$.  Also, geometries of type $\mathsf{A}_{n,j}$ and $\mathsf{D}_{n,j}$ corresponding to the buildings $\mathsf{A}_n(\K)$ and $\mathsf{D}_{n}(\K)$ are denoted as $\mathsf{A}_{n,j}(\K)$ and $\mathsf{D}_{n,j}(\K)$, respectively. 

Geometries arising from thick spherical buildings as explained in the previous paragraph are called \emph{Lie incidence geometries} (see \cite{BCN} for further details).

We now recall some terminology from the theory of parapolar spaces (see for instance \cite[Chapter~16]{Shu:11} for more background). 

Let $\Gamma=(X,\cL)$ be a point-line geometry. A \emph{subspace} is a set of points which contains each line with which it shares at least $2$ distinct points. A \emph{hyperplane} is a subspace intersecting each line in at least one point, called \emph{proper} when it does not coincide with $X$ itself.  A \emph{singular} subspace is one for which every pair of points is collinear. The \emph{point graph} of $\Gamma$ is the graph with vertices the members of $X$, with distinct vertices adjacent if they are collinear. A subspace is called \emph{convex} if the corresponding set of vertices in the point graph is a convex set (that is, closed under taking shortest paths between its vertices).  This notion should not be confused with the notion of convexity of subcomplexes of buildings, viewed as a simplicial complex. We will also need the latter notion, and we shall always make it very clear which kind of convexity is meant. Convexity in the sense of simplicial complexes and buildings means that a set is closed under taking projections in the building-theoretic sense (and we follow here Abramenko \& Brown \cite[Section~4.11.2, Definition~4.120]{Abr-Bro:08}, and not Tits \cite[Section~1.5]{Tits:74}). 

A \emph{polar space} is a point-line geometry $\Gamma=(X,\cL)$ in which $x^\perp$ is a proper hyperplane for all $x\in X$ (then $\Gamma$ is automatically a partial linear space, see~\cite{Bue-Shu:74}). We will always assume that a polar space is thick, unless explicitly stated otherwise, in which case we talk about \emph{weak} polar spaces. Polar spaces in this paper will always have some finite rank $r$, that is,  each maximal singular subspace is a projective space with finite dimension equal to $r-1$.

\begin{definition}
A \emph{parapolar space} is a point-line geometry with connected point graph such that 
(1) and two points at distance $2$ in the point graph either they have a unique common neighbour in that graph, or are contained in a convex subgeometry isomorphic to a polar space (called a \emph{symplecton}, or \emph{symp} for short), and (2) every line is contained in a symp. 
A parapolar space is called \emph{strong} if the first possibility in (1) never occurs. It is called \emph{proper} if it is not a polar space itself. A convex subspace which, when endowed with the lines contained in it is itself a proper parapolar space, is called a \emph{para}.  
\end{definition}

If $p,q$ are points of a parapolar space we write $p\JJoin q$ to indicate that $p,q$ are at distance~$2$ in the point graph, and have exactly one common neighbour, and we write $p^{\Join}$ for the set of all points $q'$ with $p\JJoin q'$. If $p\JJoin q$ then we say that that $p$ and $q$ are \textit{special}, and we denote the common collinear point by $\mathfrak{c}(p,q)$. If $p$ and $q$ are not collinear but are contained in a symp then we write $p\pperp q$, and we denote the (unique) symp containing $p$ and $q$ by $\xi(p,q)$, and we call $p$ and $q$ \emph{symplectic}. Two symps of the same rank intersecting in a maximal singular subspace of both are called \emph{adjacent}. If all symps of a parapolar space have the same rank $r$, then the polar space is said to have \emph{rank $r$}.

Local properties of parapolar spaces, that is, properties involving subspaces and symps that contain a common point, can best be seen and proved in the corresponding \emph{point residual geometry}, which we define now. Let $x$ be a point of some parapolar space $\Delta$ of rank at least $3$. Then $\Res_\Delta(x)$ is the point-line geometry with point set the set of lines of $\Delta$ containing $x$, and the lines of $\Res_\Delta(x)$ are the planar line pencils with vertex $x$, that is, the sets of lines through $x$ in a given plane (a singular subspace of dimension~$2$) containing~$x$. Point residuals of Lie incidence geometries are also Lie incidence geometries, and they correspond to the star $\mathsf{St}(x)$ defined for buildings in \cite[\S1.1]{Tits:74}. 

It turns out that most Lie incidence geometries are parapolar spaces. We now introduce in some more detail the ones we will need in the present paper. 

We already used the notation $\PG(n,\A)$ for the projective space of dimension $n$ over an associative division algebra $\A$, and this is nothing else than the Lie incidence geometry $\mathsf{A}_{n,1}(\A)$. Also, $\mathsf{A}_{n,n}(\A)$ is isomorphic to $\PG(n,\B)$, where $\B$ is the opposite division ring of~$\A$. 

The Lie incidence geometries $\mathsf{D}_{n,1}(\K)$, for $\K$ a field, are polar spaces, said to be of \emph{hyperbolic type}. These (thick) polar spaces are \emph{top-thin}, meaning that every singular subspace of dimension $n-2$ is contained in precisely two singular subspaces of dimension $n-1$ (these are the maximal singular subspaces). Note that singular subspaces are projective spaces and so we can indeed speak about their dimension. A polar space $\mathsf{D}_{n,1}(\K)$ of hyperbolic type can be identified with a hyperbolic quadric in $\PG(2n-1,\K)$ with standard equation (using standard notation) $X_{-n}X_n+X_{-n+1}X_{n-1}+\cdots+X_{-1}X_1=0$. They have rank  $n$, as defined above (it is also the Witt index of the quadric).

The parapolar spaces $\mathsf{F_{4,1}}(\K,\A)$ and $\mathsf{F_{4,4}}(\K,\A)$ are sometimes called \emph{metasymplectic spaces}, and we shall also use this name.   We add the adjective \emph{quaternion} if $\A$ is quaternion over $\K$. The stars of vertices of type $1$ in $\mathsf{F_4}(\K,\A)$ are buildings corresponding to polar spaces and denoted $\mathsf{C_3}(\A,\K)$, in conformity with the direction of the arrow in the related Dynkin diagrams, and hence the point residuals of $\mathsf{F_{4,1}}(\K,\A)$ are Lie incidence geometries $\mathsf{C_{3,3}}(\A,\K)$. Dually, the stars of vertices of type $4$ in $\mathsf{F_4}(\K,\A)$ are buildings, also corresponding to polar spaces, and denoted $\mathsf{B_3}(\K,\A)$.  Hence the point residuals of $\mathsf{F_{4,4}}(\K,\A)$ are Lie incidence geometries $\mathsf{B_{3,3}}(\K,\A)$. It follows that the symps of $\mathsf{F_{4,1}}(\K,\A)$ are polar spaces isomorphic to $\mathsf{B_{3,1}}(\K,\A)$, and the symps of $\mathsf{F_{4,4}}(\K,\A)$ are polar spaces isomorphic to $\mathsf{C_{3,1}}(\A,\K)$. The geometries  $\mathsf{B_{3,3}}(\K,\A)$ and  $\mathsf{C_{3,3}}(\A,\K)$ are called \emph{dual polar spaces (of rank $3$)}. If $\A$ is a quadratic extension of $\K$, we more exactly call it a \emph{quadratic dual polar space}; if $\A$ has dimension $4$ over $\K$, then we call it a \emph{quaternion dual polar space}. 

The parapolar space $\mathsf{E_{6,1}}(\K)$ is strong and has diameter $2$ (that is, its point graph has diameter $2$). The symps are all isomorphic to $\mathsf{D_{5,1}}(\K)$, and the point residuals to $\mathsf{D_{5,5}}(\K)$. The parapolar space $\mathsf{E_{7,7}}(\K)$ is strong and has diameter $3$. The symps are all isomorphic to $\mathsf{D_{6,1}}(\K)$ and the point residuals to $\mathsf{E_{6,1}}(\K)$. The geometries $\mathsf{E_{6,1}}(\K)$ and $\mathsf{E_{7,7}}(\K)$ are the so-called \emph{minuscule geometries} for $\mathsf{E_{6}}(\K)$ and $\mathsf{E_{7}}(\K)$, respectively. They contain maximal singular subspaces (of dimension $5$ and $6$, respectively) that are not contained in any symp. Also, no hyperplane of such maximal singular subspaces is contained in any symp, and we refer to these hyperplanes as $4'$-spaces and $5'$-spaces, respectively.  

We also note that the Veronese variety over the split Cayley algebra $\mathbb{O}'$ over $\K$ defines an embedding, called the \emph{standard inclusion}, of $\mathsf{E_{6,1}}(\K)$ in $\PG(27,\K)$. 

The parapolar spaces $\mathsf{E_{6,2}}(\K)$, $\mathsf{E_{7,1}}(\K)$, $\mathsf{E_{8,8}}(\K)$ and $\mathsf{F_{4,1}}(\K,\A)$ are the so-called \emph{long root subgroup geometries} of exceptional type. They are non-strong parapolar spaces of diameter $3$. 
 
In the next subsection we collect some facts which hold for both metasymplectic spaces and the long root subgroup geometries mentioned above, noting that these structures are  \emph{hexagonic geometries} as introduced by Kasikova \& Shult \cite{Kas-Shu:02}; the name was coined by Shult in \cite[\S13.7]{Shu:11}. These geometries are all parapolar spaces of diameter 3 which are not strong. Points at mutual distance $3$ represent opposite vertices in the corresponding building, hence they shall be called \emph{opposite points}. If a subspace $\Gamma'$ of such a geometry $\Gamma$ is (when endowed with the lines of $\Gamma$ completely contained in $\Gamma'$) a non-strong parapolar space of diameter $3$ itself, then we say that $\Gamma'$ \emph{is isometrically embedded in $\Gamma$} if any pair of points of $\Gamma'$ is symplectic, special or opposite in $\Gamma'$ if, and only if, it is symplectic, special or opposite, respectively, in $\Gamma$. 

When a subgeometry $\Gamma'$ of a Lie incidence geometry $\Gamma$ is also a Lie incidence geometry (for instance when $\Gamma'$ is a symp) with a different underlying opposition relation, then we use the notion of \emph{$\Gamma'$-opposition} to denote elements of $\Gamma'$ which are opposite in $\Gamma'$, but not necessarily in $\Gamma$. 

We already mentioned that an automorphism of a spherical building mapping no chamber to an opposite is called domestic. More specifically, if $J$ is a set of types of a spherical building, then an automorphism of it is called \emph{$J$-domestic} if it does not map any simplex of type $J$ to an opposite. Moreover, given an automorphism $\theta$ of a spherical building, a simplex is called \emph{domestic with respect to $\theta$} if it is not mapped onto an opposite (or simply \textit{domestic} if $\theta$ is implicit).

\subsection{Some properties of long root subgroup geometries and metasymplectic spaces}
In this subsection, $\Gamma=(X,\cL)$ is a geometry isomorphic to either some metasymplectic space, or to one of the long root subgroup geometries $\mathsf{E_{6,2}}(\K)$, $\mathsf{E_{7,1}}(\K)$ or $\mathsf{E_{8,8}}(\K)$, with $\K$ an arbitrary field. We collect some facts in the lemmas below. 

We already discussed the point residuals of metasymplectic spaces. Now we consider those of the other exceptional hexagonic geometries (c.f. \cite[\S13.7]{Shu:11}) of rank at least~$3$.

\begin{lemma}\label{stars}
The symps and point residuals of $\mathsf{E_{6,2}}(\K)$, $\mathsf{E_{7,1}}(\K)$ and $\mathsf{E_{8,8}}(\K)$ are Lie incidence geometries isomorphic to $\mathsf{D_{4,1}}(\K)$ and $\mathsf{A_{5,3}}(\K)$, $\mathsf{D_{5,1}}(\K)$ and $\mathsf{D_{6,6}}(\K)$, and $\mathsf{D_{7,1}}(\K)$ and $\mathsf{E_{7,7}}(\K)$, respectively.  
\end{lemma}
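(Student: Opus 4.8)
The plan is to read off both the symps and the point residuals directly from the Dynkin diagram, using the standard dictionary between residues of the building and subgeometries of the associated Lie incidence geometry. Recall (as noted above) that $\Res_\Delta(x)$ for a point $x$ is the star $\mathsf{St}(x)$, which is again a building, and that a symp is a convex polar subspace realised as a residue of a suitable simplex. So in each of the three cases the task reduces to a diagram computation: delete the relevant nodes, identify the type of the residual building, and locate the correct point-line node in it.

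For the point residuals I would delete the point-node $p$ from the $\mathsf{E}_n$ diagram and take the geometry on the node adjacent to $p$ in what remains. For $\mathsf{E_{6,2}}(\K)$, deleting node $2$ leaves the path $1$-$3$-$4$-$5$-$6$ of type $\mathsf{A_5}$, and the neighbour of node $2$ is the central node $4$, giving $\mathsf{A_{5,3}}(\K)$. For $\mathsf{E_{7,1}}(\K)$, deleting node $1$ leaves a diagram of type $\mathsf{D_6}$ (with fork at the old node $4$), and the neighbour of node $1$ is node $3$, a fork-node of this $\mathsf{D_6}$; the two fork-nodes are interchanged by the diagram automorphism, so the residual is $\mathsf{D_{6,6}}(\K)$. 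For $\mathsf{E_{8,8}}(\K)$, deleting node $8$ leaves a diagram of type $\mathsf{E_7}$ and the neighbour of node $8$ is the arm-end node $7$, giving $\mathsf{E_{7,7}}(\K)$.

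For the symps I would locate, through the point-node $p$, the maximal connected subdiagram of type $\mathsf{D}$ in which $p$ sits at the end of the long tail (so that the induced point-line geometry is a genuine polar space $\mathsf{D_{m,1}}$ rather than a half-spin geometry); the symp is then the residue of the simplex whose type is the complementary set of nodes. In $\mathsf{E_{6,2}}(\K)$ this subdiagram is $\{2,3,4,5\}$ of type $\mathsf{D_4}$ (deleting $\{1,6\}$), yielding $\mathsf{D_{4,1}}(\K)$; in $\mathsf{E_{7,1}}(\K)$ it is $\{1,2,3,4,5\}$ of type $\mathsf{D_5}$ (deleting $\{6,7\}$), yielding $\mathsf{D_{5,1}}(\K)$; and in $\mathsf{E_{8,8}}(\K)$ it is $\{2,3,4,5,6,7,8\}$ of type $\mathsf{D_7}$ (deleting node $1$), yielding $\mathsf{D_{7,1}}(\K)$. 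A useful internal consistency check is that each such symp $\xi$ through $x$ induces in $\Res_\Delta(x)$ its own point residual $\Res_\xi(x)$, a polar space of one smaller rank, which must match the symps of the residual geometries computed above (e.g.\ $\mathsf{D_{7,1}}$ restricts to $\mathsf{D_{6,1}}$, a symp of $\mathsf{E_{7,7}}(\K)$). Since the $\mathsf{E}_n$ diagrams are simply laced there are no $\mathsf{B}/\mathsf{C}$ residues, so the only polar-space residues through $p$ are exactly these $\mathsf{D}$-residues, which pins down the symp type.

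The main obstacle I anticipate is not the diagram bookkeeping but the verification that these $\mathsf{D}$-residues are genuinely the symps — that they are convex in the point graph, are maximal such polar subspaces, and exhaust all symps — rather than merely polar subspaces sitting inside $\Delta$. For this I would invoke the fact, established in the cited literature, that each of $\mathsf{E_{6,2}}(\K)$, $\mathsf{E_{7,1}}(\K)$ and $\mathsf{E_{8,8}}(\K)$ is a (hexagonic) parapolar space of diameter $3$ in which the symps are precisely the convex polar subspaces of maximal rank, together with the standard identification of symps of Lie incidence geometries via residues, so that uniqueness and convexity come for free. A secondary point requiring care is the Bourbaki relabelling when passing from $\mathsf{E}_n$ to a $\mathsf{D_m}$ or $\mathsf{A_m}$ residue, in particular the appeal to the $\mathsf{D_m}$ diagram automorphism to identify the fork-node geometry $\mathsf{D_{6,5}}(\K)$ with $\mathsf{D_{6,6}}(\K)$.
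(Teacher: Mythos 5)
Your proposal is correct and takes essentially the same approach as the paper: the paper's proof likewise reads the point residuals off the Coxeter diagram by deleting the point node and taking the adjacent node as the new point set, and identifies the symps by deleting the unique set of nodes that makes the point-type node into the point node of a polar-space (type $\mathsf{D}$) subdiagram, yielding exactly your computations $\mathsf{D_{4,1}}(\K)$, $\mathsf{A_{5,3}}(\K)$, $\mathsf{D_{5,1}}(\K)$, $\mathsf{D_{6,6}}(\K)$, $\mathsf{D_{7,1}}(\K)$, $\mathsf{E_{7,7}}(\K)$. Your extra consistency checks and the appeal to the parapolar-space literature for convexity and maximality of the symps go beyond what the paper records, but the core argument is identical.
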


\begin{proof}
This can be read off the corresponding Coxeter diagram $D$ noting that the type of the residue of a point of type $t$ is given by the Coxeter diagram obtained from $D$ by removing the vertex corresponding to type $t$, and taking as new point set the type adjacent to $t$. The diagrams of the symps in $\mathsf{X}_{n,t}(\mathbb{K})$ (with $\mathsf{X}_{n,t}\in\{\sE_{6,2},\sE_{7,1},\sE_{8,8}\}$) are obtained by deleting the (always unique) set of vertices from $D$ such that the node corresponding to type $t$ represents the point set of a polar space in the modified diagram. For instance, the node labelled $1$ in the diagram $\mathsf{E_7}$ only becomes the node corresponding to the point set of a polar space if one deletes nodes $6$ and $7$ from the $\sE_7$ diagram, and then a $\mathsf{D_5}$ diagram remains. 
\end{proof}

\begin{lemma}\label{KasShu}
Let $p\in X$ be a point and $\pi$ a plane of $\Gamma$ containing some line $L\in\cL$. If $p$ is special to some point $x\in L$, and $\mathfrak{c}(p,x)\in\pi$, then there is a unique point $y\in L$ symplectic to $p$; all other points of $L$ are special to $p$.
\end{lemma}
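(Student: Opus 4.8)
The plan is to work in the point residual $\Res_\Gamma(\mathfrak{c}(p,x))$, which by the theory of hexagonic geometries is again a Lie incidence geometry of the appropriate type, and to reformulate the collinearity/symplecticity conditions there. Write $c=\mathfrak{c}(p,x)$ for the unique common neighbour of the special pair $p,x$. The hypotheses give us that $c\in\pi$ and $c$ is collinear with $p$; moreover $x\in L\subseteq\pi$ and $c\in\pi$, so the line $cx$ lies in $\pi$ and meets $L$ in $x$. The key structural fact I would invoke is that for a point $q$ on the line $L$, the relation of $q$ to $p$ (collinear, special, symplectic, or opposite) is controlled by how the line $pq$-direction sits relative to the fixed data at $c$; since $p\Join x$ with witness $c$, and $p$ is collinear to $c$, the point $p$ determines a line $pc$ through $c$, i.e.\ a point of $\Res_\Gamma(c)$.

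**Translating to the residue.** First I would record that for any point $y\in L$ distinct from $x$, the pair $\{p,y\}$ cannot be collinear: if $p\perp y$ then, together with $p\perp c$ and $c\perp y$ (as $c,y\in\pi$), the points $p,c,x,y$ would force $p$ collinear to $x$ as well, contradicting $p\Join x$; more precisely one uses that $p$ lies at distance $2$ from $x$ in the point graph with $c$ the unique common neighbour. So every $y\in L\setminus\{x\}$ is at distance $2$ from $p$, and the question is only whether such a $y$ is special or symplectic to $p$. Now the plane $\pi$ containing $L$ and $c$ corresponds, in $\Res_\Gamma(c)$, to a line (a planar line pencil with vertex $c$), and the points of $L$ correspond to lines through $c$ in $\pi$. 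The hypothesis that $c$ is the unique common neighbour of $p$ and $x$ says, in the residue, that the $\Res_\Gamma(c)$-points corresponding to $pc$ and to $xc$ are non-collinear in the residue; in a parapolar space of diameter $3$ the residue is itself a parapolar (or polar) space, and I would argue that the locus of lines $cy$, $y\in L$, for which $p\pperp y$ holds is exactly those $y$ whose residue-point is collinear in $\Res_\Gamma(c)$ to the residue-point of $pc$.

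**The counting step.** The heart of the argument is then a rank-$2$ (polar-space or projective) computation inside $\Res_\Gamma(c)$: the image of $L$ is a line $\ell$ in the residue, the image of $pc$ is a point $p'$ not on $\ell$, and I claim there is exactly one point of $\ell$ collinear (in the residue sense corresponding to symplecticity in $\Gamma$) with $p'$. In the residues that arise here — which by Lemma~\ref{stars} and the metasymplectic case are dual polar spaces $\mathsf{C_{3,3}}$, $\mathsf{B_{3,3}}$, or geometries $\mathsf{A_{5,3}},\mathsf{D_{6,6}},\mathsf{E_{7,7}}$ — a point off a line is collinear to exactly one point of that line (this is precisely the ``polar pair'' property of the symp through $p$, i.e.\ that $p^\perp$ meets each line of the symp in exactly one point). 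This singles out the unique $y\in L$ with $p\pperp y$, and forces all remaining points of $L$ to be special to $p$, since they are at distance $2$ but not symplectic.

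**Main obstacle.** The step I expect to be delicate is justifying that the correspondence between the relation of $p$ to $y$ in $\Gamma$ and the collinearity of their images in $\Res_\Gamma(c)$ is the right one — i.e.\ that ``symplectic in $\Gamma$'' translates to ``collinear in the residue'' and ``special in $\Gamma$'' to ``non-collinear at residue-distance $2$ with a common neighbour'', uniformly across all the geometries $\Gamma$ under consideration. This requires knowing that the symp $\xi$ through $p$ and a symplectic $y$ intersects $\pi$ (equivalently $L$) controllably, and that passing to the residue at $c$ does not lose the distinction between the special and symplectic cases. Once the dictionary between $\Gamma$-relations and residue-relations is established, the uniqueness of the symplectic point $y$ reduces to the standard fact that in a polar space a point off a singular line is collinear to exactly one point of it, which is immediate.
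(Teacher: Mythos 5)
The paper does not really prove this lemma at all: as the label \texttt{KasShu} suggests, its proof is a one-line citation, since the statement is a rephrasing of axiom (H2) in the definition of hexagonic geometries of Kasikova and Shult \cite[\S 13.7]{Shu:11}, and all the geometries $\Gamma$ under consideration (metasymplectic spaces and the exceptional long root subgroup geometries) are hexagonic, as the paper observes when introducing them. Your attempt to give an actual proof via the residue at $c=\mathfrak{c}(p,x)$ is therefore a genuinely different route, but it contains two errors that break it. The first is your dictionary between relations in $\Gamma$ and relations in $\Res_\Gamma(c)$: two residue-points $cp$ and $cy$ are collinear in $\Res_\Gamma(c)$ precisely when the lines $cp$ and $cy$ lie in a common singular plane of $\Gamma$, and this forces $p\perp y$, not $p\pperp y$ (a symplectic pair is by definition non-collinear). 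The correct translation is shifted by one step: $p\perp y$ corresponds to residue-collinearity of $cp$ and $cy$; $p\pperp y$ corresponds to $cp$ and $cy$ being non-collinear points of a common symp of $\Res_\Gamma(c)$ (symps of $\Gamma$ through $c$ correspond exactly to symps of the residue); and $p\JJoin y$ corresponds to $cp$ and $cy$ lying in no common residue symp, i.e.\ being opposite in the residue. Compare the analogous one-step shift in \cref{perp-perp}. So the set you must show consists of a single point of the residue line $\ell$ is the set of points \emph{symplectic} in the residue to $p'=cp$, not the set collinear to it.

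The second, decisive gap is the counting step. The fact you invoke --- that in $\mathsf{C_{3,3}}$, $\mathsf{B_{3,3}}$, $\mathsf{A_{5,3}}$, $\mathsf{D_{6,6}}$, $\mathsf{E_{7,7}}$ ``a point off a line is collinear to exactly one point of that line'' --- is false: the geometries $\mathsf{A_{5,3}}(\K)$, $\mathsf{D_{6,6}}(\K)$, $\mathsf{E_{7,7}}(\K)$ contain singular planes, so a point off a line may be collinear to \emph{every} point of it, and in all five geometries a point may be collinear to \emph{no} point of a given line (the near-hexagon property of the dual polar spaces only gives ``at most one''; likewise, in a polar space the correct statement is ``one or all''). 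Once the dictionary is corrected, what you actually need is: if a residue line $\ell$ contains a point opposite $p'$ (this is what the hypothesis $p\JJoin x$ translates to), then $\ell$ carries exactly one point non-opposite $p'$, which is then automatically at residue-distance two --- equivalently, the set of points non-opposite $p'$ is a geometric hyperplane of the residue geometry. That is a nontrivial property of these Lie incidence geometries, of essentially the same depth as the hexagonic axiom you are trying to reprove, and replacing it by the false ``unique collinear point'' claim leaves the heart of the lemma unproved. Finally, your reduction silently assumes $c\notin L$: if $c\in L$ the map $y\mapsto cy$ from $L$ to a residue line collapses, so this degenerate configuration would need to be excluded or handled separately.
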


\begin{proof}
This is a rephrasing of axiom (H2) of hexagonic geometries in \cite[\S13.7]{Shu:11}.
\end{proof}

\begin{lemma}[{\cite[Lemma 2(v)]{Coh-Iva:07}}]\label{spspop}
If $a\perp b\perp c\perp d$ is a path in $\Gamma$ with $a\JJoin c$ and $b\JJoin d$, then $a$ is opposite~$d$.
\end{lemma}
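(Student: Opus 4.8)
The plan is to use that $\Gamma$ has diameter $3$, so it suffices to rule out $d(a,d)\in\{0,1,2\}$ in the point graph. First I record two facts coming from the uniqueness of common neighbours of special pairs: since $a\perp b\perp c$ and $a\JJoin c$, the point $b$ is a common neighbour of $a$ and $c$, whence $b=\mathfrak{c}(a,c)$; similarly $c=\mathfrak{c}(b,d)$. These two identifications drive the whole argument. The cases $d(a,d)\in\{0,1\}$ are then immediate: if $a=d$ then $c\perp d=a$ contradicts $a\JJoin c$; and if $a\perp d$ then $a$ is collinear with both $b$ and $d$, so $a$ is a common neighbour of the special pair $\{b,d\}$, forcing $a=\mathfrak{c}(b,d)=c$, which again contradicts $a\JJoin c$ (this forces $a\neq c$).

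It remains to exclude $d(a,d)=2$, where $a$ and $d$ are either symplectic or special. Throughout I would exploit the gate property of symps: a symp $\Sigma$ is a convex subspace, for a point $p\notin\Sigma$ the set $p^{\perp}\cap\Sigma$ is a (possibly empty) singular subspace, and $d(p,x)=1+d_{\Sigma}(p^{\perp}\cap\Sigma,x)$ for all $x\in\Sigma$ whenever $p^{\perp}\cap\Sigma\neq\emptyset$. Suppose first $a\pperp d$ and put $\Theta=\xi(a,d)$. Since $b\JJoin d$ and $d\in\Theta$ we cannot have $b\in\Theta$ (two points of a common symp are collinear or symplectic, never special), so $b\notin\Theta$ and $a\in b^{\perp}\cap\Theta$. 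No point $m\in b^{\perp}\cap\Theta$ can be collinear with $d$: such an $m$ would be a common neighbour of $\{b,d\}$, hence $m=c$, which is impossible because $a,c\in\Theta$ would make $a\JJoin c$ symplectic. As $d\notin b^{\perp}$ as well, we get $d_{\Theta}(b^{\perp}\cap\Theta,d)=2$, and the gate formula yields $d(b,d)=1+2=3$, contradicting $b\JJoin d$. This settles the symplectic subcase cleanly.

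The special subcase $a\JJoin d$ is where I expect the main obstacle, precisely because there is now no symp containing both $a$ and $d$ on which to run the previous argument. Let $e=\mathfrak{c}(a,d)$; an easy check gives $e\neq b,c$ and $e\not\perp b$, $e\not\perp c$ (otherwise $e$ becomes a second common neighbour of $\{a,c\}$ or of $\{b,d\}$). I would instead work in a symp $\Sigma_{cd}$ through the line $cd$. Here $a\notin\Sigma_{cd}$ (as $a\JJoin c$ with $c\in\Sigma_{cd}$), and I claim $a^{\perp}\cap\Sigma_{cd}=\emptyset$: if this set were nonempty then, since any of its points collinear with $c$ would be a common neighbour of $\{a,c\}$, hence equal to $b\notin\Sigma_{cd}$, it would contain no point collinear with or equal to $c$, giving $d_{\Sigma_{cd}}(a^{\perp}\cap\Sigma_{cd},c)=2$ and hence $d(a,c)=1+2=3$ by the gate formula, contradicting $a\JJoin c$. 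Thus $a$ lies at distance $2$ from $\Sigma_{cd}$, with both $c$ and $d$ among its nearest points, while $e\notin\Sigma_{cd}$.

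What remains is to show that this configuration, together with $a\perp b$, $b^{\perp}\cap\Sigma_{cd}\ni c=\mathfrak{c}(b,d)$, and the facts $e\not\perp b,c$, is contradictory. I expect this last step to require either a more delicate study of the projection of $a$ onto $\Sigma_{cd}$ and of the positions of $b$ and $e$ relative to it (for instance comparing $\Sigma_{cd}$ with a symp through the line $de$, where $a^{\perp}$ does meet the symp in $e$), or an appeal to the root filtration space axioms of Cohen--Ivanyos~\cite{Coh-Iva:07}: in that framework the five point relations obey filtration (sub- and superadditivity) conditions which forbid a length-$3$ path with the two special diagonals $a\JJoin c$ and $b\JJoin d$ from closing up at distance $2$. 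This is exactly the point at which the specific structure of hexagonic geometries (or the cited lemma) is genuinely needed, whereas the diameter reduction, the distance $0$, $1$ cases, and the symplectic subcase are elementary consequences of the gate property and the uniqueness of common neighbours.
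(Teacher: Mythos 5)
Your proposal has a genuine gap, and it is the one you flag yourself: the subcase $d(a,d)=2$ with $a\JJoin d$ is never excluded. The reduction to distances $0,1,2$, the identifications $b=\mathfrak{c}(a,c)$ and $c=\mathfrak{c}(b,d)$, the degenerate cases, and (modulo the point below) the symplectic subcase are all sound, but they are the elementary part; ruling out the special relation is exactly where the axioms specific to hexagonic geometries/root filtration spaces must enter, and for that step you offer only the hope of ``a more delicate study'' or an appeal to the root filtration axioms of \cite{Coh-Iva:07} --- that is, to the very result being proved, since the statement \emph{is} \cite[Lemma 2(v)]{Coh-Iva:07}. Note that the paper gives no proof either: the lemma is imported verbatim from Cohen--Ivanyos, where it is derived from the root filtration space axioms. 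So you have not missed a ``paper argument'' --- there is none --- but a blind standalone proof must close this case, and yours does not; what is missing is the heart of the matter. (For what it is worth, \cref{lemadd} gives some traction: if $a$ were special to \emph{every} point of the line $cd$, the centres $\mathfrak{c}(a,\cdot)$ would form a line containing both $b$ and $e=\mathfrak{c}(a,d)$, contradicting $b\not\perp e$; but since points of $cd$ could a priori also be opposite $a$, this observation alone does not finish the case.)

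A secondary defect: both your symplectic subcase and your claim $a^{\perp}\cap\Sigma_{cd}=\emptyset$ lean on a ``gate property'' $d(p,x)=1+d_{\Sigma}(p^{\perp}\cap\Sigma,x)$ that you never justify and that is not a general fact about parapolar spaces. In the geometries at hand it can simply be bypassed: by \cref{pointline}, since $a\in b^{\perp}\cap\Theta$, the set $b^{\perp}\cap\Theta$ contains a line $L$; by the one-or-all axiom of polar spaces \cite{Bue-Shu:74}, the point $d\in\Theta$ is collinear to some $m\in L$; as $m\neq b,d$, the point $m$ is a common neighbour of the special pair $\{b,d\}$, whence $m=c$, so $c\in\Theta$ together with $a$, contradicting $a\JJoin c$ (two points of a symp are collinear or symplectic). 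The same substitution repairs the argument that $a^{\perp}\cap\Sigma_{cd}=\emptyset$. These fixes are routine; the missing $a\JJoin d$ case is not.
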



\begin{lemma}\label{perp-perp}
Let $p$ and $q$ be opposite points. Set $X_p=p^\perp\cap q^{\Join}$ and $X_q=q^\perp\cap p^{\Join}$. Then both $X_p$ and $X_q$, furnished with the lines contained in it, are geometries isomorphic to $\Res_\Delta(p)$. Also,  $\rho:X_p\to X_q:x\mapsto x^\perp\cap X_q$ is an isomorphism of Lie incidence geometries. Finally, we have the correspondences for all $x,y\in X_p$:
\[\begin{array}{rcl}
x=y & \iff & x\perp \rho(y),\\
x\perp y & \iff & x\pperp \rho(y),\\
x\pperp y & \iff & x\Join \rho(y),\\
x\Join y & \iff & x\mbox{ is opposite }\rho(y).
\end{array}\] 
\end{lemma}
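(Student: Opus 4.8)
The plan is to realise $X_p$ and $X_q$ concretely through the operation of taking the common neighbour of a special pair, and to read off the four correspondences from a single ``shift by one'' phenomenon forced by opposition. I would begin with the basic observation that \emph{no point collinear to $p$ is symplectic to $q$}: if $x\perp p$ and $x\pperp q$ lay in a symp $\xi=\xi(x,q)$, then $p\notin\xi$ (otherwise $d(p,q)\le 2$), so $p^\perp\cap\xi$ is a geometric hyperplane of the polar space $\xi$ and hence meets the singular subspace $q^\perp\cap\xi$, producing a common neighbour of $p$ and $q$ and contradicting that they are opposite. Since every point collinear to $p$ is at distance at most $2$ from $q$, this identifies $X_p=p^\perp\cap q^{\Join}$ as exactly the set of points collinear to $p$ that are \emph{not} opposite $q$. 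I would then set $\rho(x)=\mathfrak{c}(x,q)$ for $x\in X_p$, and check $\mathfrak{c}(x,q)\in X_q$: it is collinear to $q$ by construction, and along $p\perp x\perp\mathfrak{c}(x,q)\perp q$ the point $\mathfrak{c}(x,q)$ is neither collinear to $p$ (this would give $d(p,q)\le 2$) nor symplectic to $p$ (the same symp/hyperplane trick with $q$), hence $\mathfrak{c}(x,q)\JJoin p$. Uniqueness of the common neighbour of the special pair $(x,q)$ gives $x^\perp\cap X_q=\{\mathfrak{c}(x,q)\}$, so $\rho$ is single-valued; the map $X_q\to X_p,\ y\mapsto\mathfrak{c}(y,p)$, is its two-sided inverse (if $\rho(x)=y$ then $x$ is a common neighbour of $y$ and $p$, so $x=\mathfrak{c}(y,p)$), and thus $\rho$ is a bijection.

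Next I would identify $X_p$ with $\Res_\Delta(p)$ via $x\mapsto px$. By the first paragraph every line $L$ through $p$ has its points at distance $2$ or $3$ from $q$ and none symplectic to $q$; the gate/projection behaviour of $L$ with respect to $q$, governed by the hexagonic structure (\cref{KasShu}) and the point-residual description (\cref{stars}), then yields exactly one point of $L$ special to $q$, the remaining points being opposite $q$. Hence $x\mapsto px$ is a bijection from $X_p$ onto the lines through $p$, i.e. onto the points of $\Res_\Delta(p)$, and a line of $X_p$ (a line of $\Gamma$ inside a plane $\pi\ni p$) corresponds to the planar line pencil in $\pi$, i.e. to a line of $\Res_\Delta(p)$; thus $X_p\cong\Res_\Delta(p)$, and symmetrically $X_q\cong\Res_\Delta(q)\cong\Res_\Delta(p)$ since $p$ and $q$ have the same type.

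For the four correspondences I would use that $x,y\in X_p$ are both collinear to $p$ and hence at distance at most $2$, so exactly one of $x=y$, $x\perp y$, $x\pperp y$, $x\JJoin y$ holds; on the other side $x$ and $\rho(y)$ are distinct (as $X_p\cap X_q=\emptyset$, since a common neighbour of $p$ and $q$ is excluded) and exactly one of $x\perp\rho(y)$, $x\pperp\rho(y)$, $x\JJoin\rho(y)$, or ``$x$ opposite $\rho(y)$'' holds. It therefore suffices to prove the four forward implications and invoke exhaustiveness: $x=y\Rightarrow x\perp\rho(y)$ is immediate from $\mathfrak{c}(x,q)\perp x$; the remaining three come from \cref{spspop} applied to the paths joining $x$ to $\rho(y)$ through the relevant common neighbours, each implication raising the ``remoteness'' by exactly one step (e.g. $x\JJoin y$ gives a path $x\perp\mathfrak{c}(x,y)\perp y\perp\rho(y)$ to which \cref{spspop} applies). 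Symmetry of the construction then upgrades $x\perp y\iff x\pperp\rho(y)$ to $x\perp y\iff\rho(x)\perp\rho(y)$, so $\rho$ preserves collinearity in both directions and is an isomorphism of the Lie incidence geometries $X_p$ and $X_q$.

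The main obstacle I expect is the identification $X_p\cong\Res_\Delta(p)$ — precisely the claim that each line through $p$ meets $X_p$ in exactly one point and that these distinguished points assemble into the planar pencils. This is where the non-strong hexagonic structure must be used in full, combining the local axiom \cref{KasShu} with the residue types of \cref{stars}; by contrast, the well-definedness of $\rho$, its bijectivity, and the four correspondences all reduce to repeated use of the symp/hyperplane intersection trick and \cref{spspop}, and so should be routine once the residue identification is in place.
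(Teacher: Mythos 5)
Your strategy is genuinely different from the paper's proof, which consists of a single citation: \cref{spspop} combined with Tits' result that projection between opposite simplices induces an isomorphism of their stars (\cite[Proposition~3.29]{Tits:74}). Your attempt to replace that citation by a self-contained geometric argument could in principle be made to work, but as written it has a genuine gap precisely at the step you yourself flag as the crux. The identification $X_p\cong\Res_\Delta(p)$ needs two facts: (a) every line $L$ through $p$ carries \emph{exactly one} point special to $q$, all other points of $L$ being opposite $q$; and (b) inside any plane through $p$ these distinguished points form a line, so that planar pencils correspond to lines of $X_p$. Neither follows from \cref{KasShu} together with \cref{stars}: \cref{KasShu} concerns the configuration in which the centre of a special pair lies in a plane through $L$, and its conclusion (one symplectic point, all others special) is not the dichotomy you need (one special point, all others opposite). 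What (a) and (b) actually rest on is exactly the projection/apartment machinery you are trying to avoid: either \cite[Propositions~3.28 and~3.29]{Tits:74}, or the apartment-based \cref{pointsymp} (take a symp $\xi\supseteq L$; it contains $p$, which is opposite $q$, so there is a unique $z\in\xi$ symplectic to $q$; the geometric hyperplane $z^\perp\cap\xi$ meets $L$, and cannot contain $L$, since $z\perp L$ would force $z\perp p$, contradicting your first observation; this gives (a), and intersecting $z^\perp$ with a plane through $p$ gives (b)). Without one of these inputs, the heart of your argument is an assertion, not a proof.

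Two further points. First, in your opening observation the roles of hyperplane and singular subspace are interchanged: for $p\notin\xi$ the set $p^\perp\cap\xi$ is a \emph{singular subspace} of $\xi$ (two non-collinear points of it would make $p$ a common neighbour of a symplectic pair, forcing $p\in\xi$ by convexity), whereas the geometric hyperplane is $q^\perp\cap\xi$, since $q\in\xi$. Because a hyperplane need not contain a prescribed point, the argument only closes once you know that $p^\perp\cap\xi$ contains a line, which is \cref{pointline} (equivalently, \cref{stars} plus \cref{lemadd}) --- a lemma you never invoke. Second, the correspondence table is less routine than you suggest. Your application of \cref{spspop} to the path $x\perp\mathfrak{c}(x,y)\perp y\perp\rho(y)$ requires $\mathfrak{c}(x,y)\JJoin\rho(y)$, which you do not verify; it does hold, but only after one notes that for $x\JJoin y$ with $x,y\in X_p$ the unique common neighbour is $p$ itself, and $p\JJoin\rho(y)$ by the definition of $X_q$. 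More seriously, in the implication $x\pperp y\Rightarrow x\JJoin\rho(y)$, ruling out $x\pperp\rho(y)$ and $x$ opposite $\rho(y)$ is not achieved by \cref{spspop} at all (its conclusion only ever produces oppositeness); these exclusions again require \cref{pointline} or \cref{pointsymp}-type arguments, i.e.\ the same missing machinery as above.
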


\begin{proof}
This follows from Lemma~\ref{spspop} and \cite[Proposition~3.29]{Tits:74}.
\end{proof}

\begin{lemma}[{\cite[Lemma~4.10]{Sch-Sch-Mal:23}}] \label{lemadd}
If a point $p$ is special to all points of a line $L$, then there exists a line $M$ all points of which are collinear to $p$ and $M$ is opposite $L$ in a symp $\xi$. The map $L\to M:x\mapsto \mathfrak{c}(p,x)$ is a bijection.
\end{lemma}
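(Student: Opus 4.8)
The plan is to analyse the common neighbours $a_z:=\mathfrak{c}(p,z)$ of $p$ with the points $z\in L$, to show that they all lie in a single symp together with $L$, that they form a line $M$, and that the resulting configuration is exactly that of two opposite lines of a polar space. First I would pin down the local relations. By definition $a_z\perp p$, so every point of the prospective line lies in $p^\perp$. Fix distinct $x,y\in L$ and put $a=a_x$. If $a\perp y$ then, as also $a\perp x$, the point $a$ is collinear to two points of $L$, hence to all of $L$; since $a\notin L$ (otherwise $p$ would be collinear to a point of $L$), the set $\langle a,L\rangle$ is a plane $\pi\supseteq L$ containing $\mathfrak{c}(p,x)=a$, and \cref{KasShu} then produces a point of $L$ symplectic to $p$, contradicting that $p$ is special to every point of $L$. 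Thus $a\not\perp y$, and in particular $a_x\neq a_y$, giving injectivity of $z\mapsto a_z$. Next, were $a\JJoin y$, then by the symmetry $x\leftrightarrow y$ also $a_y\JJoin x$, and applying \cref{spspop} to the path $a\perp x\perp y\perp a_y$ (with $a\JJoin y$ and $x\JJoin a_y$) would force $a$ opposite $a_y$, which is impossible since $a\perp p\perp a_y$. Hence $a_x\pperp y$ for all $y\in L\setminus\{x\}$: each $a_z$ is collinear to $z$ and symplectic to the remaining points of $L$.

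Second, I would assemble everything into one symp and establish the crucial collinearity $a_x\perp a_y$. Since $a_x\pperp y$, set $\xi=\xi(a_x,y)$; every point of $L$ is a common neighbour of the symplectic pair $a_x,y$, so by convexity $L\subseteq\xi$, and as each further $y'\in L\setminus\{x\}$ is symplectic to $a_x$ and lies in $\xi$ we have $\xi(a_x,y')=\xi$, so $\xi$ is independent of the chosen $y$. Moreover $p\notin\xi$ (two points of a symp are never special). The heart of the argument, and the step I expect to be the main obstacle, is to prove $a_x\perp a_y$ — equivalently, that $M$ does not degenerate to a single point, the issue being that a line can lie in many symps so that collinearity must be forced by $p$ itself. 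For this I would invoke \cref{perp-perp}: choose a point $q$ opposite $p$ with $L\subseteq q^\perp$. Then $x,y\in X_q=q^\perp\cap p^{\Join}$, and $\rho^{-1}(x)\in X_p=p^\perp\cap q^{\Join}$, being collinear to both $p$ and $x$, is the unique common neighbour $\mathfrak{c}(p,x)=a_x$; likewise $\rho^{-1}(y)=a_y$. The correspondence $u\perp v\iff u\pperp\rho(v)$ of \cref{perp-perp}, applied to $u=a_x$ and $v=a_y$, reads $a_x\perp a_y\iff a_x\pperp y$, whose right-hand side holds by the first step; hence $a_x\perp a_y$. The delicate point here is the \emph{existence} of such a $q$, namely a point collinear to all of $L$ and opposite $p$, which I would settle by a general-position argument in $\Gamma$ (note $L\subseteq p^{\Join}$, and no point of $L^\perp$ is collinear to $p$, since such a point would be the common neighbour $a_x\notin L^\perp$). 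With $a_x\perp a_y$ in hand, convexity gives $a_y\in\xi(a_x,y)=\xi$ (as $a_y$ is collinear to both $a_x$ and $y$, which are non-collinear in $\xi$); thus every $a_z\in\xi$, and $M:=\{a_z:z\in L\}\subseteq p^\perp\cap\xi$.

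Finally I would identify $M$ as a line opposite $L$. The set $M_0:=p^\perp\cap\xi$ is a singular subspace, for two non-collinear points of it would be symplectic with symp $\xi$, and then $p$, being a common neighbour of them, would lie in $\xi$. Because $p$ and $x$ are special, $p^\perp\cap x^\perp=\{a_x\}$, so $x^\perp\cap M_0=\{a_x\}$; as $x\in\xi\setminus M_0$, the polar-space axioms make $x^\perp\cap M_0$ a geometric hyperplane of the projective space $M_0$, and a hyperplane reduced to a single point forces $M_0$ to be a line (it is not a point, since $a_x\neq a_y$ both lie in it). Hence $M\subseteq M_0$ with $M_0$ a line, and the bijection $z\mapsto a_z$ then forces $M=M_0$. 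The relations from the first step — each $z\in L$ is collinear to exactly one point of $M$, namely $a_z=\mathfrak{c}(p,z)$, and symplectic to the others — are precisely the condition that $L$ and $M$ are opposite lines of the polar space $\xi$, with $z\mapsto\mathfrak{c}(p,z)$ the associated bijection, which completes the proof.
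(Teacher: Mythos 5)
The paper offers no proof of this lemma at all (it is imported verbatim from \cite[Lemma~4.10]{Sch-Sch-Mal:23}), so your proposal stands or falls on its own correctness, and as written it has two genuine gaps. The first is the step ruling out $a_x\JJoin y$: ``were $a\JJoin y$, then by the symmetry $x\leftrightarrow y$ also $a_y\JJoin x$'' is not a valid inference. Symmetry of the hypothesis in $x$ and $y$ lets you transfer anything you have \emph{proved} about the pair $(a_x,y)$ to the pair $(a_y,x)$; it does not transfer an \emph{assumption} made for contradiction, and a priori one could have $a_x\JJoin y$ while $a_y\pperp x$, in which case the path you feed to \cref{spspop} fails that lemma's hypotheses. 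The conclusion is nevertheless true, and the fix uses the same lemma on a different path: apply \cref{spspop} to $p\perp a_x\perp x\perp y$, where $p\JJoin x$ holds by hypothesis and $a_x\JJoin y$ by assumption, giving $p$ opposite $y$ and contradicting $p\JJoin y$.

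The second, more serious gap is that your entire proof of the key collinearity $a_x\perp a_y$ rests on a point $q$ opposite $p$ with $L\subseteq q^\perp$, whose existence you never establish: ``a general-position argument'' is a placeholder, and the observation you do prove (no point of $L^\perp$ is collinear to $p$) concerns collinearity, not opposition, so it produces no such $q$. Existence is in fact true --- take an apartment of the building containing the vertices corresponding to $p$ and $L$; the two points of $L$ lying in this apartment are special to $p$, hence correspond to roots at $120^\circ$ from the root $\varphi$ of $p$, so the apartment point corresponding to $-\varphi$ is collinear to both of them, hence to all of $L$, and is opposite $p$ --- but some such argument has to be supplied. Alternatively, $q$ can be dispensed with entirely: once you know $a_y\pperp x$, set $\xi_2=\xi(x,a_y)$; then $y\in\xi_2$ by convexity, so $L\subseteq\xi_2$, while $p\notin\xi_2$ since $p\JJoin x$; as $p\perp a_y\in\xi_2$, \cref{pointline} yields a line $N\subseteq p^\perp\cap\xi_2$, the one-or-all axiom in the polar space $\xi_2$ yields $n\in x^\perp\cap N$, and $n\perp p$, $n\perp x$ force $n=\mathfrak{c}(p,x)=a_x$, so $a_x\in\xi_2$; finally $a_x\pperp a_y$ is impossible, since then $\xi(a_x,a_y)=\xi_2$ would contain the common neighbour $p$ by convexity, whence $a_x\perp a_y$. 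Two further, repairable, slips: it is false that ``every point of $L$ is a common neighbour of the symplectic pair $a_x,y$'' (only $x$ is; $L\subseteq\xi$ instead follows because $x\in\xi$ by convexity and $\xi$ is a subspace containing $x$ and $y$), and at the end you need \emph{surjectivity} of $z\mapsto a_z$ onto $M_0=p^\perp\cap\xi$, which injectivity alone does not give over an infinite field; it follows because any $m\in M_0$ is collinear in $\xi$ to some point $z$ of $L$ but not to all of $L$ (else $m\in L^\perp\cap p^\perp$, which your own observation shows is empty), whence $m=\mathfrak{c}(p,z)$.
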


Combining \cref{stars} and \cref{lemadd} gives

\begin{lemma}\label{pointline}
Let $p$ be a point collinear to at least one point of a symp to which it does not belong. Then $p$ is collinear to at least one line of the symp.
\end{lemma}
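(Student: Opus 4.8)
The plan is to reduce the statement to producing two \emph{distinct collinear} points of $\xi$ inside $p^\perp$, and then to hunt for such a pair inside a plane of $\xi$ through the given collinear point. Fix $x\in\xi$ with $p\perp x$ (which exists by hypothesis) and recall $p\notin\xi$. By \cref{stars} every symp of $\Gamma$ has rank at least $3$ (the symps are $\mathsf{D_{4,1}},\mathsf{D_{5,1}},\mathsf{D_{7,1}},\mathsf{B_{3,1}}$ or $\mathsf{C_{3,1}}$), so $\xi$ contains planes through $x$; fix one, say $\pi$. The first observation is that it suffices to produce a single point $y\in\xi$ with $y\perp x$ (collinearity \emph{inside} $\xi$) and $p\perp y$. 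Indeed, since a parapolar space is a gamma space, $p^\perp\cap L$ is a single point or all of $L$ for every line $L$; as $p$ is then collinear to the two distinct points $x,y$ of the line $xy$ it is collinear to all of $xy$, and as $\xi$ is a convex subspace it contains the line $xy$. That line is the required line of $\xi$ lying in $p^\perp$.

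With this reduction in hand, consider the trace $p^\perp\cap\pi$. Being the intersection of two subspaces it is a subspace of the projective plane $\pi$ containing $x$, hence equal to $\{x\}$, to a line through $x$, or to $\pi$; in the last two cases the previous paragraph finishes the proof. So assume for contradiction that $p^\perp\cap\pi=\{x\}$ for every plane $\pi\subseteq\xi$ through $x$. Then for each $y\in(x^\perp\cap\xi)\setminus\{x\}$ we have $p\not\perp y$ while $p\perp x\perp y$, so $y$ is special or symplectic to $p$ and $x$ is a common neighbour. I first rule out the special case using the hexagonic axiom: if $p\JJoin y$ then $\mathfrak{c}(p,y)=x$, and applying \cref{KasShu} to a line $L\ni x,y$ inside a plane $\pi\supseteq L$ (note $\mathfrak{c}(p,y)=x\in\pi$) would force every point of $L$ but one to be special to $p$, contradicting the fact that $x\in L$ is collinear to $p$. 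Hence \emph{every} point of $(x^\perp\cap\xi)\setminus\{x\}$ is symplectic to $p$.

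It remains to contradict this all-symplectic picture, and this is where \cref{lemadd} is meant to enter. For $y\in(x^\perp\cap\xi)\setminus\{x\}$ the symp $\xi(p,y)$ is convex and contains the common neighbour $x$, hence contains the line $xy$; running $y$ along a line $L\subseteq\xi$ through $x$ and using uniqueness of the symp on two symplectic points shows these symps coincide in a single symp $\zeta\ni p$ with $\zeta\cap\xi\supseteq L$. The intended finish is to upgrade $\zeta\cap\xi$ from a line to a plane (letting $L$ vary inside $\pi$) and then invoke that two distinct symps of $\Gamma$ meet in at most a line, forcing $\zeta=\xi$ and contradicting $p\notin\xi$; equivalently, one exhibits a line $L'\subseteq\xi$ disjoint from $x^\perp\cap\xi$ all of whose points are special to $p$ \emph{with distinct common neighbours}, whereupon \cref{lemadd} yields a line $M\perp p$ opposite $L'$ in the symp spanned by $L'$ and $M$, and identifying that symp with $\xi$ gives $M\subseteq\xi$ and hence the desired collinear line. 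The main obstacle is exactly this last step: controlling the family of symps $\xi(p,y)$ simultaneously and certifying that the line produced by \cref{lemadd} lands \emph{inside} $\xi$ rather than in some neighbouring symp. Once that control is secured, the combination of the rank bound from \cref{stars} and the line-producing mechanism of \cref{lemadd} closes the argument.
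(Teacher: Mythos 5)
Your argument breaks at the step where you rule out special pairs, and this gap is fatal. You apply \cref{KasShu} to the line $L=xy$ joining $y$ to the centre $x=\mathfrak{c}(p,y)$, i.e.\ in the degenerate situation where the centre of the special pair lies \emph{on} the line $L$. Observe that this application makes no use of your reductio hypothesis on $p^\perp\cap\xi$: run verbatim, it would show that \emph{no} special pair exists in $\Gamma$ at all, since every special pair $(p,y)$ has its centre $x$ collinear to $y$, and the line $xy$ always lies in some plane (every line lies in a symp, and symps have rank at least $3$ by \cref{stars}). That conclusion is false: the long root subgroup geometries and metasymplectic spaces are \emph{non-strong} parapolar spaces, so special pairs abound. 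What has gone wrong is that the hexagonic axiom (H2) which \cref{KasShu} paraphrases presupposes that the centre is collinear to all points of $L$ but does not lie on $L$ (equivalently, that $p^\perp\cap L=\emptyset$); in the degenerate case you invoke, the ``conclusion'' of \cref{KasShu} is violated by the centre itself, so the lemma simply does not apply there. Consequently the all-symplectic picture is not a consequence of your hypothesis: configurations $p\perp x\perp y$ with $x,y\in\xi$, $p\JJoin y$ and $\mathfrak{c}(p,y)=x$ genuinely occur in these geometries (for instance whenever $p^\perp\cap\xi$ is exactly a line through $x$, which is one of the actual point--symp relations in $\mathsf{E_{8,8}}(\K)$), so they cannot be argued away.

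Even if that step were repaired, the proof would remain incomplete, as you concede yourself (``the main obstacle is exactly this last step''). Worse, the finish you sketch relies on the assertion that two distinct symps of $\Gamma$ meet in at most a line; this is false in $\mathsf{E_{8,8}}(\K)$, where two distinct symps can meet in a plane and even in a common maximal singular subspace (these are the \emph{adjacent} symps defined in the preliminaries and used, e.g., in \cref{notadjacentE8}). Note also that if you really could show that $\zeta\cap\xi$ contains a plane, you would not need $\zeta=\xi$ at all: since $p\in\zeta$ and $\zeta$ is a polar space, $p^\perp$ meets every plane of $\zeta$ in at least a line, and such a line inside $\zeta\cap\xi$ already finishes the proof. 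Finally, your route differs from the paper's: the paper obtains the statement directly from \cref{stars} together with \cref{lemadd}, the essential point being the \emph{bijectivity} clause of \cref{lemadd} --- for a line $L\subseteq x^\perp\cap\xi$ with $x\notin L$, if every point of $L$ were special to $p$ then every centre $\mathfrak{c}(p,\cdot)$ would equal the common neighbour $x$, which the bijection $L\to M$ forbids --- whereas your attempt routes the contradiction through \cref{KasShu}, which is exactly where it founders.
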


The next result follows from considering an apartment, as given in \cite{HVM-MV}.

\begin{lemma}\label{pointsymp}
Let $p$ be a point of $\Gamma$ and $\xi$ a symp containing a point opposite $p$. Then there is a unique point $x\in\xi$ symplectic to $p$. All points of $\xi$ collinear to but distinct from $x$ are special to $p$ and all points of $\xi$ not collinear to $x$ are opposite $p$.
\end{lemma}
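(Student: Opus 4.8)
\textbf{Proof plan for Lemma~\ref{pointsymp}.}

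The plan is to reduce the statement to a computation inside a single apartment, invoking the result of \cite{HVM-MV} referenced just before the lemma. First I would fix the point $p$ and the symp $\xi$, and choose a point $z\in\xi$ opposite $p$ (which exists by hypothesis). Working in the building $\Delta$ underlying $\Gamma$, I would select an apartment $\Sigma$ containing both $p$ and a chamber of $\xi$ incident to $z$; since $p$ and $z$ are opposite points, the building-theoretic structure of the apartment determines completely how $p$ relates, under the mutual distance function in the point graph, to every vertex of $\xi$ lying in $\Sigma$. The key observation is that the relations $\perp$, $\JJoin$, $\pperp$, and ``opposite'' between $p$ and the points of $\xi$ are governed by the Weyl-distance (gate/projection) of those points to $p$, and hence are constant on the cells of the apartment. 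This is exactly the kind of apartment-level bookkeeping carried out in \cite{HVM-MV}.

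The crucial step is to identify the unique point $x\in\xi$ that is symplectic to $p$, i.e.\ with $p\pperp x$. My plan is to take $x$ to be the \emph{gate} (building projection) of $p$ onto $\xi$: since $\xi$ is a convex subspace isomorphic to a polar space of rank $r$, the projection $x=\proj_\xi(p)$ is a well-defined point, and I would argue that it is symplectic to $p$ rather than collinear or equal, precisely because $\xi$ contains a point $z$ opposite $p$ (so $p$ does not lie in $\xi$ and is not collinear to all of $\xi$). Uniqueness of $x$ then follows from the convexity/gate property: any point of $\xi$ symplectic to $p$ must coincide with the projection, since in a hexagonic geometry two distinct points of a symp cannot both be symplectic to an exterior point $p$ at maximal distance. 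Here I expect to lean on \cref{pointline} and the gate property to rule out a second symplectic point.

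Next I would classify the remaining points $u\in\xi$ by their collinearity to $x$ within the polar space $\xi$. For a point $u\in\xi$ collinear to $x$ in $\xi$ with $u\neq x$, I claim $p\JJoin u$, with common neighbour the gate $x$: indeed the path $p\pperp x\perp u$ together with the apartment analysis forces $p$ and $u$ to be special with $\mathfrak c(p,u)=x$, and I would confirm there is no second common neighbour using \cref{KasShu} applied in a plane through $x$ and $u$. For a point $u\in\xi$ \emph{not} collinear to $x$ in $\xi$, the polar-space structure gives a point $w\in\xi$ collinear to both $x$ and $u$, producing a path $p\pperp x\perp w\perp u$; combining this with $p\JJoin w$ (already established) and the configuration of \cref{spspop} (a path $p\perp'\cdots$ of the stated shape yields opposition) shows $p$ is opposite $u$. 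The main obstacle, and where I would spend the most care, is ensuring these relations are \emph{exhaustive and mutually exclusive}: that every point of $\xi$ falls into exactly one of the three categories (equal-to/collinear-to-$x$ special, or opposite), with no ambiguity. I expect to settle this by the apartment argument of \cite{HVM-MV}, since an apartment meets the polar space $\xi$ in a sub-apartment on which the distance-to-$p$ function takes exactly the predicted values, and a general point of $\xi$ can be moved into such an apartment by the transitivity of the stabiliser of $p$ and $\xi$ on the relevant configurations.
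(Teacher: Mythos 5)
Your overall route is the same as the paper's: its entire proof consists of choosing an apartment containing $p$ and $\xi$ and reading the relations off the explicit apartment descriptions of \cite[Section~7]{HVM-MV}, and this reduction is also the backbone of your plan. However, the auxiliary arguments you add in order to execute it contain concrete errors. For $u\in\xi$ collinear to $x$ you claim $p\JJoin u$ with $\mathfrak{c}(p,u)=x$; this is impossible, because the centre of a special pair is by definition \emph{collinear} to both of its points, while $x$ is symplectic to $p$ and hence not collinear to it (the actual centre is a different point, collinear to $p$, found via \cref{pointline} on a line of $\xi(p,x)$ through $x$). With the centre misidentified, the subsequent appeal to \cref{KasShu} misfires as well. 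Similarly, your application of \cref{spspop} to obtain opposition does not typecheck: that lemma requires a path of collinearities $a\perp b\perp c\perp d$ with $a\JJoin c$ and $b\JJoin d$, whereas your path begins with the symplectic relation $p\pperp x$; repairing it as $p\perp\mathfrak{c}(p,w)\perp w\perp u$ still leaves the hypothesis $\mathfrak{c}(p,w)\JJoin u$ unverified, and establishing it is essentially of the same depth as the opposition claim you are trying to prove.

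Two further points need attention. The assertion that the building-theoretic projection of $p$ onto $\xi$ is a single \emph{point} does not follow from convexity alone: the projection of a vertex onto a residue is in general a simplex, and determining it in this relative position is precisely the apartment computation in question. More importantly, the exhaustiveness step is where the real glue is needed: the apartment argument produces, for each apartment $\Sigma$ containing $p$ and $\xi$, a point $x_\Sigma\in\xi\cap\Sigma$ symplectic to $p$ together with the stated trichotomy inside $\Sigma$; since every point of $\xi$ lies in some such apartment (any two simplices lie in a common apartment), you must show that $x_\Sigma$ does not depend on $\Sigma$ before the three classes can be centred at a single point $x$. Your closing sentence, invoking "transitivity of the stabiliser of $p$ and $\xi$ on the relevant configurations", presupposes knowledge of exactly those orbits on the points of $\xi$ that the lemma is meant to describe, so as stated it is circular. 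The projection idea, once properly justified, is the natural way to supply this uniqueness, since the projection lies in every apartment containing $p$ and $\xi$ and is apartment-independent.
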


\begin{proof}
This follows by choosing an apartment containing $p$ and $\xi$, and using the explicit descriptions of apartments given in \cite[Section~7]{HVM-MV}. 
\end{proof}

In our geometric context, the \emph{projection} of an element $x$ onto another element $y$ is the element $z$ where $z\neq y$ and $\{y,z\}$ is the building theoretic projection of $x$ onto $y$ (see \cite[\S 3.19]{Tits:74}). 

\begin{lemma}\label{projline}
The projection of a plane $\pi$ onto a line $L$ opposite some line $M\subseteq \pi$ is the unique symp $\xi$ containing $L$ and all centres of the special pairs $\{p,x\}$, with $x\in L$ and $p$ the unique point of $\pi$ special to al points of $L$. Conversely, the projection of a symp $\xi$ onto a line $L$ opposite some line $M\subseteq \xi$ is the plane spanned by $L$ and the unique point $p$ collinear to all points of $\xi$ that are symplectic to some point of $L$.
\end{lemma}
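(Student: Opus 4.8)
The plan is to construct the asserted symp (respectively plane) geometrically from the stated lemmas, and only then to identify it with the building-theoretic projection by a gate/apartment argument. For the first statement, I would first locate the point $p$: analysing the mutual positions of the points of $\pi$ relative to $L$ (using that $M\subseteq\pi$ is opposite $L$, together with Lemmas~\ref{pointsymp} and~\ref{KasShu}), I expect exactly one point $p\in\pi$, necessarily off $M$, to be special to every point of $L$. Applying Lemma~\ref{lemadd} to $p$ and $L$ then produces a line $M'=\{\mathfrak{c}(p,x)\mid x\in L\}$ whose points are all collinear to $p$ and which is opposite $L$ inside a symp $\xi$; since a symp is determined by two of its symplectic points, $\xi$ is the unique symp containing $L$ and all the centres $\mathfrak{c}(p,x)$, i.e.\ exactly the object described in the statement.

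It then remains to prove $\xi=\proj_L(\pi)$, in the sense that $\{L,\xi\}$ is the building projection of the plane-simplex of $\pi$ onto the line-simplex of $L$. Here the key facts are that any two simplices lie in a common apartment and that apartments are convex, hence closed under projection. I would choose an apartment $\Sigma$ containing the plane-simplex of $\pi$, the line-simplex of $L$, and the vertex $p$ (possible since $p$ is incident to $\pi$), so that $\proj_L(\pi)$ is computed inside the thin complex $\Sigma$ and lands there. Choosing two points of $L$ lying in $\Sigma$, the corresponding centres $\mathfrak{c}(p,x)$ also lie in $\Sigma$ and pin down $M'$, hence $\xi$, inside $\Sigma$; a direct comparison in $\Sigma$ (using the explicit apartment coordinates of \cite{HVM-MV}, as in the proof of Lemma~\ref{pointsymp}) then shows that the symp vertex of $\proj_L(\pi)$ is $\xi$. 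A short type-theoretic check on the Coxeter diagram confirms that $\proj_L(\pi)$ has precisely one vertex beyond $L$, of symp type, so the geometric projection is the single symp $\xi$.

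For the converse I would argue analogously inside an apartment containing the symp-simplex of $\xi$ and the line-simplex of $L$. Since $M\subseteq\xi$ is opposite $L$, every point $x\in L$ has a point of $\xi$ opposite to it, so Lemma~\ref{pointsymp} yields a unique point $\sigma(x)\in\xi$ symplectic to $x$; the set $\{\sigma(x)\mid x\in L\}$ is precisely the set of points of $\xi$ symplectic to some point of $L$, and I would verify that it is a line $M'$ of $\xi$ opposite $M$. One then shows there is a unique point $p$ collinear to all points of $M'$, and that this $p$ is in addition collinear to every point of $L$, so that $L$ and $p$ span a plane; the identification of $\langle L,p\rangle$ with $\proj_L(\xi)$ then proceeds exactly as above, now with the added vertex being of plane type.

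The main obstacle, in both directions, is the identification of the geometrically constructed object with the combinatorial projection. The existence and uniqueness of $p$ (respectively of $M'$ and then of $p$) is a finite position computation made transparent by the apartment description of \cite{HVM-MV}, but one must be careful that the relations ``special'', ``symplectic'' and ``opposite'', as well as the centres $\mathfrak{c}(p,x)$, are genuinely recovered inside $\Sigma$ — this is where the convexity and isometric embedding of apartments are essential — and that only the asserted single vertex is added by the projection. I expect no conceptual difficulty beyond organising this bookkeeping, since the gate property reduces the whole identification to one comparison inside the thin complex $\Sigma$.
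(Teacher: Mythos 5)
Your proposal is correct in outline, but it is not the paper's argument: the paper disposes of \cref{projline} in a single line, citing only \cref{spspop}, and leaves every detail implicit. You instead never invoke \cref{spspop} at all; you assemble the statement from \cref{pointsymp}, \cref{KasShu} and \cref{lemadd} to construct $p$, the line of centres $M'$ and the symp $\xi$ globally, and you then identify $\xi$ with the building-theoretic projection by computing inside an apartment containing $\{p,\pi\}$ and $L$ and invoking convexity of apartments (so that the projection, the centres, and hence $\xi$ are all visible in the thin complex, where the comparison is a finite check). This is very much within the paper's own toolkit -- \cref{pointsymp} is proved exactly by such an apartment argument, and the last paragraph of the paper's proof of \cref{projplane'} identifies a constructed symp with a projection precisely by showing it lies in every apartment containing the two simplices -- so your route can be viewed as the argument the paper compresses into its citation. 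What your version buys is an explicit, verifiable proof (including the existence and uniqueness of $p$, which the lemma's statement presupposes); what the paper's one-liner buys is brevity, at the cost of leaving the reconstruction to the reader.

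Two points need tightening before this is a complete proof. First, in the converse direction your intermediate claim that there is ``a unique point $p$ collinear to all points of $M'$'' is false as literally stated: every point of any plane containing $M'$ is collinear to all of $M'$. Uniqueness holds only for the joint condition that $p$ is collinear to all points of $M'$ \emph{and} to all points of $L$; the lemma's own wording has the same looseness and you do impose $p\perp L$ immediately afterwards, but the uniqueness assertion must be formulated for the joint condition. Second, your ``type-theoretic check on the Coxeter diagram'' that $\proj_L(\pi)$ consists of $L$ plus exactly one further vertex, of symp type, is not a property of the diagram alone: the type of a projection depends on the relative position of the two simplices (here, on the hypothesis that $L$ is opposite $M\subseteq\pi$), so this check is part of the same thin-complex computation rather than a separate diagram argument. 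Likewise the existence and uniqueness of $p$, which you flag with ``I expect,'' genuinely requires the position analysis (for instance, ruling out a point of $\pi$ symplectic to some point of $L$ uses \cref{KasShu}, and points of $M$ are excluded because each is opposite some point of $L$). None of these is a fatal gap; all are resolved by the apartment computation you already propose.
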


\begin{proof}
This follows from \cref{spspop}. 
\end{proof}

We will also need the following property. 

\begin{lemma}\label{projplane'}
The projection of a symp $\xi$ onto a plane $\pi$ opposite some plane $\alpha\subseteq \xi$ is the unique symp $\zeta$ containing $\pi$ and such that some plane of $\zeta$ is contained in a symp together with some plane of $\xi$.  
\end{lemma}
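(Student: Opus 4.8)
The plan is to identify $\zeta$ directly as the geometric building-projection $\proj_\pi\xi$ and then show that the stated ``common symp'' condition characterises it. First I would note that the building-theoretic projection of $\xi$ onto $\pi$ exists and is unique by the gate property (see \cite[\S3.19]{Tits:74}). From the type structure of these hexagonic geometries (the same structure underlying \cref{projline}), projecting a symp onto a plane yields a simplex whose non-$\pi$ part $\zeta$ is again a symp, necessarily containing $\pi$ since $\pi$ and $\zeta$ are incident. As the building projection is the unique simplex of the residue of $\pi$ nearest $\xi$, it then suffices to prove $(i)$ that this particular $\zeta$ satisfies the ``common symp'' condition, and $(ii)$ that no other symp through $\pi$ does.

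For $(i)$ I would reduce to the line case already settled in \cref{projline}. As $\pi$ is opposite $\alpha\subseteq\xi$, pick a line $L\subseteq\pi$ opposite a line $M\subseteq\alpha$. I expect $\proj_L\xi=\proj_L\zeta$, which follows from the compatibility of building projections with passage to the residue of $L$ (again \cite[\S3.19]{Tits:74}); by \cref{projline} this common value is then a plane $\pi_1\subseteq\zeta$, namely $\langle L,p\rangle$, where $p$ is collinear to every point of $\xi$ symplectic to some point of $L$. Choosing $x\in L$ and a point $q\in\xi$ with $x\pperp q$ (such a $q$ exists by \cref{pointsymp}, and then $p\perp q$) and invoking the local structure of the point residuals from \cref{stars} together with \cref{spspop}, I would exhibit a symp $\eta$ containing a plane of $\zeta$ through $\pi_1$ and a plane of $\xi$ through $q$, establishing the condition for $\zeta$.

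For the uniqueness $(ii)$ I would work inside an apartment $\Sigma$ containing $\xi$ and $\pi$, which exists because $\pi$ is opposite $\alpha\subseteq\xi$, using the explicit description of apartments in \cite{HVM-MV}. In $\Sigma$ the symps through $\pi$ are finite in number and the ``common symp'' relation with $\xi$ can be read off combinatorially; combined with the gate property, which forces the nearest symp to be unique, this pins down $\zeta$. The main obstacle is the combinatorial heart of both $(i)$ and $(ii)$: controlling precisely which planes of two given symps can be housed in a common third symp. This is a statement about the relative position of symps that must be settled in the point residuals (\cref{stars}) via \cref{spspop} and \cref{pointsymp}, and it is where the bulk of the work lies. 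Once that relative-position analysis is in hand, matching it against the gate property of $\proj_\pi\xi$ yields both the existence and the exclusivity of the characterising property.
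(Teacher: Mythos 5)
There is a genuine gap: the step you explicitly postpone (``exhibit a symp $\eta$ containing a plane of $\zeta$ through $\pi_1$ and a plane of $\xi$ through $q$'') is not a routine verification to be filled in later; it is the entire content of the lemma, and neither \cref{stars}, \cref{spspop} nor \cref{pointsymp} yields it without an explicit construction. The paper's proof consists precisely of this construction: by \cref{pointsymp} every point $x\in\pi$ is symplectic to a unique point $x^\theta\in\xi$, and these form a plane $\alpha'\subseteq\xi$; for a line $L\subseteq\pi$ with points $x_1,x_2$, \cref{pointline} produces the centre $c=\mathfrak{c}(x_2,x_1^\theta)$, which is shown to lie in every symp $\xi(x,x^\theta)$, $x\in L$, to be collinear to both $L$ and $L^\theta$, and (via \cref{KasShu}) \emph{not} collinear to $\pi$; the symp $\zeta$ is then the unique symp on $c$ and $\pi$, the centres of all special pairs $\{x,x'\}$ with $x\in\pi$, $x'\in\alpha'$, $x'\neq x^\theta$ fill a plane $\pi'\subseteq\zeta$, and $\pi'$ and $\alpha'$ lie in a common symp $\xi'$ --- this $\xi'$ is exactly the symp $\eta$ whose existence you defer. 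Only after this does a convexity argument (every apartment containing $\pi$ and $\xi$ contains $\alpha'$, hence $\pi'$, hence $\zeta$) identify $\zeta$ with the projection. (Also note the metasymplectic case must be, and in the paper is, treated separately: there $\zeta$ is simply the unique symp through $\pi$ not opposite $\xi$.)

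Two further steps of your plan are faulty as written. First, the identity $\proj_L\xi=\proj_L\zeta$ cannot hold: by \cref{projline} the left-hand side is a plane, whereas $\zeta$ is incident with $L$, so its projection onto $L$ is just the join $\{L,\zeta\}$. What you actually need is the containment $\proj_L\xi\subseteq\zeta$, which is true but requires identifying the point $p$ of \cref{projline} with the centre $c$ above --- again part of the missing construction, not a consequence of formal compatibility of projections. Second, your uniqueness argument is confined to a single apartment $\Sigma$ containing $\pi$ and $\xi$: only finitely many of the (infinitely many, since $|\K|\geq 3$) symps through $\pi$ lie in $\Sigma$, so no combinatorial analysis inside $\Sigma$ can exclude a symp outside $\Sigma$ from satisfying the common-symp condition; the gate property gives uniqueness of the projection, not uniqueness of symps with the stated property.
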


\begin{proof}
For metasymplectic spaces, this is obvious since $\zeta$ is the unique symp through $\pi$ that is not opposite $\xi$.

Now we argue in $\mathsf{E_{6,2}}(\K)$, $\mathsf{E_{7,1}}(\K)$ and $\mathsf{E_{8,8}}(\K)$. Since $\alpha$ contains points opposite any point of $\pi$, \cref{pointsymp} implies that each point $x$ of $\pi$ is symplectic to a unique point $x^\theta$ of $\xi$ and the set of those is a plane $\alpha'\subseteq\xi$. Now  let $L$ be a line of $\pi$ and select two distinct points $x_1,x_2\in L$. Then \cref{pointline} implies that $x_2$ is collinear to some line $L_1\subseteq\xi(x_1,x_1^\theta)$, and the point $c=L_1\cap (x_1^\theta)^\perp$ is the centre of the special pair $\{x_2,x_1^\theta\}$ (this is indeed a special pair and not a symplectic one as $x_2^\theta\neq x_1^\theta$ is the unique point of $\xi$ symplectic to $x_2$, and $x_1^\theta\perp x_2^\theta$).   Interchanging the roles of $L$ and $L^\theta$, we find that $c$ is also contained in $\xi(x_2,x_2^\theta)$, implying that $c$ is contained in $\xi(x_1,x_1^\theta)\cap\xi(x_2,x_2^\theta)$, and hence also in each $\xi(x,x^\theta)$, $x\in L$. Note that this construction of $c$ implies that $c\perp L$, and symmetrically, one also has $c\perp L^\theta$.  If $c$ were collinear to $\pi$, then \cref{KasShu} would imply that each point on $L^\theta$ is symplectic to at least two points of $\pi$, a contradiction. Hence $c$ and $\pi$ are contained in a unique symp $\zeta$. Now $x_1^\theta$ is collinear to a line $K_1$ in $\zeta$, and obviously $K_1$ contains the centres of the special pairs $\{x,x_1^\theta\}$, for all $x\in\pi$.  Similarly there is line $K_2$ in $\zeta$ through $c$ carrying all  the centres of the special pairs $\{x,x_2^\theta\}$, $x\in\pi$. Varying $L$ in $\pi$, we see that the centres of the special pairs $\{x,x'\}$, $x\in\pi$, $x'\in \alpha'$, $x'\neq x^\theta$, are contained in and fill the plane $\pi'$ spanned by $K_1$ and $K_2$. Moreover each point of $\alpha'$ is collinear to a line of $\pi'$ and vice versa, and this yields a unique symp $\xi'$ through $\pi'$ and $\alpha'$. 

Considering the corresponding building, it is clear that every apartment containing $\pi$ and $\xi$ contains $\alpha'$, and hence $\pi'$ and hence $\zeta$. This shows that $\zeta$ is the projection of $\xi$ onto $\pi$.     
\end{proof}

We conclude this subsection with two properties of the geometry $\mathsf{E_{7,7}}(\K)$. 

\begin{lemma}\label{pointsympE77}
Let $p$ be a point of $\mathsf{E_{7,7}}(\K)$ and $\xi$ a symp containing a point opposite $p$. Then there is a unique point $x\in\xi$ collinear to $p$. All points of $\xi$ collinear to but distinct from $x$ are symplectic to $p$ and all points of $\xi$ not collinear to $x$ are opposite $p$.
\end{lemma}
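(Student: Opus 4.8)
The plan is to follow the apartment method used for the sister statement \cref{pointsymp}, but first to reduce the three assertions to a single ``gate'' identity and to secure the uniqueness part by a direct argument that does not need apartments. Since $\mathsf{E_{7,7}}(\K)$ is a strong parapolar space of diameter $3$, no two points are special; hence for points $p,y$ the only possibilities are $p=y$, $p\perp y$, $p\pperp y$, or $p$ opposite $y$, corresponding to point-graph distances $0,1,2,3$. Writing $x$ for the (to-be-shown unique) point of $\xi$ collinear to $p$, and $d_\xi(x,y)\in\{0,1,2\}$ for the polar-space distance inside the symp $\xi\cong\mathsf{D_{6,1}}(\K)$ (which equals the point-graph distance in $\Gamma$ by convexity of $\xi$), the three assertions are together equivalent to the identity
\[
d(p,y)=1+d_\xi(x,y)\qquad\text{for all }y\in\xi .
\]
So it suffices to (a) prove that $p^\perp\cap\xi$ consists of a single point $x$, and (b) establish this gate identity.

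For (a), suppose $x_1\neq x_2$ both lie in $p^\perp\cap\xi$. If $x_1\perp x_2$ inside $\xi$, then, as $\Gamma$ is a gamma space, $p$ is collinear with the whole line $x_1x_2\subseteq\xi$; but the hypothesised point $q\in\xi$ opposite $p$ has $q^\perp\cap\xi$ a hyperplane of the polar space $\xi$, which meets the line $x_1x_2$ in a point $z$, and then $p\perp z\perp q$ forces $d(p,q)\le 2$, contradicting that $q$ is opposite $p$. If instead $x_1\not\perp x_2$ inside $\xi$, then $x_1\pperp x_2$ with $\xi(x_1,x_2)=\xi$; pick a common neighbour $z\in\xi$ of $x_1,x_2$. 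Should $p\perp z$, the previous case applies to the line $x_1z$; otherwise $p\perp x_1\perp z$ gives $p\pperp z$, and putting $\zeta=\xi(p,z)$, convexity of $\zeta$ places both common neighbours $x_1,x_2$ of the symplectic pair $p,z$ inside $\zeta$. Then $x_1,x_2\in\zeta\cap\xi$ with $\zeta\neq\xi$; since the symp through a symplectic pair is unique, two distinct symps cannot share two non-collinear points, so $\zeta\cap\xi$ is a clique and $x_1\perp x_2$, a contradiction. Hence $|p^\perp\cap\xi|\le 1$.

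It remains to produce the collinear point and to establish the gate identity (b), and here I would choose an apartment $\Sigma$ of $\mathsf{E_7}(\K)$ containing both $p$ and the type-$1$ vertex $v_\xi$ defining $\xi$; such an apartment exists because $\xi$ contains the point $q$ opposite $p$ and $q$ is incident to $v_\xi$. Reading off the explicit apartment description of \cite[Section~7]{HVM-MV} (equivalently, the combinatorics of the $56$ weights of the minuscule module) one sees that, among the $12$ points of $\xi\cap\Sigma$, exactly one is collinear to $p$ --- this must be the point $x$ singled out in (a) --- and one verifies the gate identity for the apartment points. The identity for an arbitrary $y\in\xi$ then follows from the gatedness of the residue $R=\Res(v_\xi)$: the projection $\proj_R(p)$ is a gate, so point-graph distance to $p$ is additive through $x$, and the opposite-point hypothesis forces this gate to meet $\xi$ in the single point $x$, at distance $1$. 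With the identity in hand, the cases $d_\xi(x,y)=0,1,2$ give precisely the collinear, symplectic, and opposite conclusions.

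The step I expect to be the main obstacle is exactly this passage from the finite apartment picture to the full symp, namely verifying that $d(p,y)=1+d_\xi(x,y)$ holds for every one of the (infinitely many) points $y\in\xi$ rather than only the $12$ apartment points. The honest content here is the translation between the building's gallery (Weyl) distance and the point-graph distance of $\mathsf{E_{7,7}}(\K)$, together with the fact that the opposite-point hypothesis pins the gate $\proj_R(p)$ to a chamber of $R$ whose type-$7$ shadow is a single vertex at point-distance $1$ (rather than a nearer or larger configuration). This is precisely the bookkeeping encapsulated by the apartment model of \cite{HVM-MV}; alternatively one can try to avoid it by completing the two missing cases of the trichotomy geometrically, the delicate one being to rule out $p\pperp y$ when $y\not\perp x$ in $\xi$, via the position of $x$ relative to the symp $\xi(p,y)$ and the uniqueness of symps through symplectic pairs established in (a).
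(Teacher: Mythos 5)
Your overall strategy is the paper's own: the paper proves this lemma exactly like its sister \cref{pointsymp}, namely by choosing apartments and reading off the explicit $\sE_7$ apartment description in \cite[Section~7.2]{HVM-MV}. Your part (a) --- that $p^\perp\cap\xi$ contains at most one point --- is correct and is a nice elementary supplement that the paper does not spell out (in your second case you could even shortcut: $p$ is a common neighbour of the symplectic pair $x_1,x_2$, whose unique symp is $\xi$, so convexity of $\xi$ forces $p\in\xi$, contradicting the existence of $q$). The single-apartment computation you describe is also correct: in any apartment containing $p$ and $v_\xi$ in the ``far'' position, exactly one of the twelve points of $\xi\cap\Sigma$ is collinear to $p$, ten are symplectic to $p$, and one is opposite.

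The genuine gap is the extension step. ``Gatedness of the residue $\Res(v_\xi)$'' is a statement about the gallery (Weyl) metric on chambers; it does not formally yield additivity of the \emph{point-graph} distance through $x$, and it cannot: if $p$ is close to $\xi$ (collinear to a $5'$-space $U$ of $\xi$), then $d(p,y)$ equals $1$ on $U$ and $2$ off $U$, so $\xi$ is not gated in the point graph, even though the residue is exactly as gated in the gallery metric in that position. Any deduction of the trichotomy from the formal gate property alone therefore proves too much; the dictionary between Weyl distances and point relations must enter, which is precisely the apartment bookkeeping you yourself flag as ``the honest content''. Your proposed geometric alternative (ruling out $p\pperp y$ when $y\not\perp x$) is likewise left incomplete, and the natural attempts are circular, since they invoke the very lemma for the symp $\xi(p,y)$. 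The standard repair --- and the intended reading of the paper's two-line proof --- is to let the apartment vary with $y$: for each $y\in\xi$ choose an apartment containing the simplices $\{p\}$ and $\{y,v_\xi\}$. The relative position of $p$ and $v_\xi$ is independent of the chosen apartment, and your hypothesis pins it to the ``far'' one, because in the incident and close positions every point of $\xi$ lies within distance $2$ of $p$ (the perp of any point of $\xi$ meets every maximal singular subspace of $\xi$). Hence the same twelve-point picture holds in each such apartment, now with $y$ among the twelve points, and your part (a) guarantees that the collinear point singled out is the same $x$ in all of them; the trichotomy for arbitrary $y\in\xi$ follows.
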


\begin{proof}
This follows in the same way as Lemma~\ref{pointsymp}, using the description of apartments of type $\sE_7$ in \cite[Section~7.2]{HVM-MV}. 
\end{proof}

\begin{lemma}\label{lemmaE78}
Let $\xi$ be a symp of $\mathsf{E_{7,7}}(\K)$, and let $Y$ be the set of points collinear to a unique point of $\xi$. Then every symp entirely contained in the complement of $Y$ is either adjacent to $\xi$ or coincides with it. 
\end{lemma}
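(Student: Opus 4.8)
The plan is to first rephrase the hypothesis purely in terms of opposition. By \cref{pointsympE77}, a point $p\notin\xi$ is collinear to a \emph{unique} point of $\xi$ precisely when $\xi$ contains a point opposite $p$; moreover no two points of $\xi$ are ever opposite, since any two of them lie at distance at most $2$ in the point graph (they are contained in the common symp $\xi$). Hence $Y$ is exactly the set of points $p$ for which some point of $\xi$ is opposite $p$, and consequently the condition ``$\zeta\subseteq X\setminus Y$'' is equivalent to the symmetric condition that \emph{no point of $\zeta$ is opposite any point of $\xi$}. If $\zeta=\xi$ there is nothing to prove, so I assume $\zeta\neq\xi$ and aim to show that $\xi\cap\zeta$ is a maximal singular subspace of both, i.e.\ that $\xi$ and $\zeta$ are adjacent.

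The second step is to record the local position of a single point with respect to $\xi$. Since $\zeta\neq\xi$ and two symps of equal rank cannot be properly contained one in the other, there is a point $p\in\zeta\setminus\xi$. As no point of $\xi$ is opposite $p$, the hypothesis of \cref{pointsympE77} is not met, and using \cref{pointline,lemadd} together with the explicit structure of $\xi\cong\mathsf{D_{6,1}}(\K)$ recorded in \cref{stars}, I would show that $p^\perp\cap\xi$ is a \emph{maximal singular subspace} (a generator) $M_p$ of $\xi$, with every point of $\xi$ off $M_p$ being symplectic to $p$. By the symmetry of the opposition condition noted above, the same statement holds with the roles of $\xi$ and $\zeta$ interchanged. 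This rigid ``each outside point sees a full generator'' picture is what ultimately forces the two symps together.

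For the main step I would choose a common apartment $\Sigma$ of the building containing the two type-$1$ vertices corresponding to $\xi$ and $\zeta$ (any two simplices lie in a common apartment), and read off the relative position of these vertices from the action of $W(\mathsf{E_7})$, i.e.\ from the double cosets $P_1\backslash W/P_1$. Opposition inside $\Sigma$ coincides with opposition in the building, so for each mutual position other than ``equal'' and ``adjacent'' I would exhibit, using the explicit description of $\mathsf{E_7}$ apartments in \cite[Section~7.2]{HVM-MV}, a point $z\in\zeta\cap\Sigma$ opposite a point $x\in\xi\cap\Sigma$, contradicting the hypothesis. The generator structure from the second step is used to prune this list: it rules out exactly the configurations in which $\xi\cap\zeta$ is empty or is a singular subspace strictly smaller than a generator, leaving adjacency as the only surviving possibility. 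I expect the case analysis over the mutual positions of two symps, and the production of an explicit opposite pair in each non-adjacent position, to be the main obstacle; by contrast, the reduction to opposition and the local generator computation are routine consequences of the cited lemmas.
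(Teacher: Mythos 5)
Your overall strategy is viable, but it is genuinely different from---and substantially heavier than---the paper's own argument, and its crux is left unexecuted. The paper's proof is two sentences: by \cref{pointsympE77}, every point of a symp opposite $\xi$ lies in $Y$; and since the symps of $\mathsf{E_{7,7}}(\K)$ are the points of the long root geometry $\mathsf{E_{7,1}}(\K)$, in which symplectic symps share a line, any symp symplectic or special to $\xi$ shares a line with some symp opposite $\xi$ and therefore meets $Y$. This disposes of all relative positions except equality and adjacency with no case analysis at all. You instead reduce the lemma---via the correct observation that an opposite pair $(p,x)$ with $p\in\zeta$, $x\in\xi$ would, by \cref{pointsympE77}, force $p\in Y$---to exhibiting an opposite point pair for each of the three relative positions of two symps other than ``equal'' and ``adjacent'', to be checked inside a common apartment using the double cosets $P_1\backslash W/P_1$. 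That reduction is valid (opposition inside an apartment agrees with opposition in the building, and the relevant positions are finitely many), and the needed opposite pairs do exist in each bad position, so the finite root-system verification would succeed. What your proposal buys is independence from the long root geometry facts the paper invokes (that symplectic symps share a line, and that a symp symplectic or special to $\xi$ shares a line with a symp opposite $\xi$); what it costs is exactly the computation you defer as ``the main obstacle'', which is the entire content of the lemma and which the paper's synthetic argument renders unnecessary.

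Two smaller points. First, your opening claims an equivalence (``collinear to a unique point of $\xi$ precisely when $\xi$ contains a point opposite $p$''), but \cref{pointsympE77} only gives one implication; the converse requires the full trichotomy of point--symp relations in $\mathsf{E_{7,7}}(\K)$ (incident, close, far), which the paper never states. Fortunately, only the implication actually provided by \cref{pointsympE77} is needed for your reduction, so this is harmless provided you phrase it as such. Second, the lemmas you cite for your ``generator'' step (\cref{pointline}, \cref{lemadd}, \cref{stars}) are stated for the hexagonic geometries $\Gamma$ (metasymplectic spaces and $\mathsf{E_{6,2}}$, $\mathsf{E_{7,1}}$, $\mathsf{E_{8,8}}$), not for the strong parapolar space $\mathsf{E_{7,7}}(\K)$, so they cannot be applied as stated; since that step plays no essential logical role once the apartment analysis is in place (the double-coset check alone suffices), you should either drop it or justify the close/far dichotomy by the same apartment method.
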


\begin{proof}
Each point of an opposite symp (to $\xi$) is collinear to a unique point of $\xi$ (c.f. Lemma~\ref{pointsympE77}). Since the symps are the points of a long root geometry, and symplectic symps share a line, each symp symplectic or special to $\xi$ has at least one line in common with an opposite symp, yielding a point collinear to a unique point of $\xi$. 
\end{proof}

\subsection{Equator geometries and imaginary lines}\label{equatorgeom}
We already discussed the obvious relation between different parapolar spaces by taking point residuals, which is equivalent to taking the star of some vertex in the corresponding building. 

One notices that the point residuals of long root subgroup geometries are not long root subgroup geometries. In order to stay within the class of hexagonic geometries, one has to consider the so-called \emph{equator geometries} instead of point residuals. Essentially, instead of looking what is close to a given point, one considers what is in the middle of two given opposite points. General definitions and background can be found in \cite{HVM-MV}; here we content ourselves with the specific definitions for long root subgroup geometries, and more exactly in the exceptional cases $\mathsf{E_6,E_7,E_8}$.  A thorough reference is \cite{DSVM2}, and for metasymplectic spaces we refer to \cite{Lam-Mal:24}. 

So again let $\Gamma=(X,\cL)$ be either a metasymplectic space, or one of the geometries $\mathsf{E_{6,2}}(\K)$, $\mathsf{E_{7,1}}(\K)$ or $\mathsf{E_{8,8}}(\K)$, with $\K$ a field. Let $p,q$ be a pair of opposite points. Then the set 
$$E(p,q)=\{r\in X\mid \text{$r$ is symplectic to both $p$ and $q$}\}$$ is called an \emph{equator}. Note that each symplecton through $p$ contains a unique point of $E(p,q)$. We turn $E(p,q)$ into a point-line geometry (also denoted $E(p,q)$) by taking lines to be the subsets of points of $E(p,q)$ corresponding to the symplecta through $p$ sharing a given common maximal singular subspace. If $\Gamma$ is one of $\mathsf{E_{6,2}}(\K)$, $\mathsf{E_{7,1}}(\K)$ or $\mathsf{E_{8,8}}(\K)$, then the lines of $E(p,q)$ are just the lines of $\Gamma$ completely contained in $E(p,q)$. For $\Gamma\cong\mathsf{E_{7,1}}(\K)$ or $\Gamma\cong\mathsf{E_{8,8}}(\K)$ the equator geometry $E(p,q)$ is isomorphic to $\mathsf{D_{6,2}}(\K)$ or $\mathsf{E_{7,1}}(\K)$, respectively (both long root subgroup geometries). If $\Gamma$ is a metasymplectic space, then each line of $E(p,q)$ is a set of points contained in a common symplecton as the intersection of the perps of two opposite lines (opposite in the symplecton). 

For a given equator geometry $E(p,q)$, as defined in the previous paragraph, we call the points $p$ and $q$ \emph{poles} of $E(p,q)$. The set of all poles of a given equator geometry is called an \emph{imaginary line}. Imaginary lines carry the structure of a projective line over $\K$ with induced faithful stabiliser $\PSL_2(\K)$ in its natural action in the cases  $\mathsf{E_{6,2}}(\K)$, $\mathsf{E_{7,1}}(\K)$, $\mathsf{E_{8,8}}(\K)$ and $\mathsf{F_{4,1}}(\K,\A)$ (see \cite{Jan-Mal:25}). An imaginary line is determined by any two of its points $p,q$ and denoted $\cI(p,q)$.

For opposite points $p,q$, the imaginary line $\cI(p,q)$ can also be defined as the union of $\{p\}$ with the orbit containing $q$ of the long root subgroup with centre $p$. Such a subgroup consists of central elations with centre $p$, that is, collineations
fixing all points collinear and symplectic to $p$, and stabilising each line containing a point collinear to $p$ (see \cite{Jan-Mal:25}). The next lemma asserts that the latter condition is superfluous for type $\mathsf{E_7}$.  (A similar proof applies for the other long root subgroup geometries of exceptional types $\mathsf{E_6}$ and $\mathsf{E_8}$, but we only need the case of $\mathsf{E_7}$.)

\begin{lemma}\label{elationsE7}
A collineation $\theta$ of $\mathsf{E_{7,1}}(\K)$ is a central elation with centre $p$ if, and only if, $\theta$ fixes each point of $p^\perp\cup p^{\pperp}$.
\end{lemma}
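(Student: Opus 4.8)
The proof splits into the two implications, and only one of them carries content. The \emph{only if} direction is immediate from the definition: a central elation with centre $p$ fixes, by definition, every point collinear or symplectic to $p$, i.e.\ every point of $p^\perp\cup p^{\pperp}$. So the plan is to establish the \emph{if} direction: assuming $\theta$ fixes $p^\perp\cup p^{\pperp}$ pointwise, I would show that $\theta$ stabilises every line meeting $p^\perp$, which is exactly the remaining defining condition of a central elation. Write $\Delta=\mathsf{E_{7,1}}(\K)$.

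First I would record two preliminary reductions. (i) $\theta$ fixes $p$: since $\theta$ fixes $p^\perp$ setwise and $p$ is the unique point collinear to every point of $p^\perp$ (a standard fact for thick Lie incidence geometries, obtained for instance by intersecting $p^\perp$ with two symps through $p$), any image $\theta(p)$ would again be collinear to all of $p^\perp$, forcing $\theta(p)=p$. (ii) $\theta$ fixes every symp $\xi$ through $p$ \emph{pointwise}. Indeed $\xi\cong\mathsf{D_{5,1}}(\K)$ is a convex polar space of diameter $2$, so every point $z\in\xi$ satisfies $d(p,z)\le 2$ in $\Delta$; if $d(p,z)=0$ then $z=p$, if $d(p,z)=1$ then $z\in p^\perp$, and if $d(p,z)=2$ then $p$ and $z$ lie in the common symp $\xi$ and are therefore symplectic, so $z\in p^{\pperp}$. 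In all cases $z$ is fixed. Consequently every line contained in a symp through $p$ is fixed pointwise by $\theta$, and such lines cause no trouble.

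The heart of the matter is a line $L$ meeting $p^\perp$ but lying in \emph{no} symp through $p$. Let $x\in L\cap p^\perp$ (by the gamma--space property either $L\subseteq p^\perp$, in which case $L$ is fixed pointwise, or $L\cap p^\perp=\{x\}$, which I now assume). Passing to the point residual $\Res_\Delta(x)\cong\mathsf{D_{6,6}}(\K)$ (\cref{stars}), lines through $x$ become points of this half--spin geometry; let $\overline{L}$ and $\overline{p}$ be the residue points corresponding to $L$ and to the line $px$. The coplanar lines through $x$ are exactly those in $p^\perp$ (hence fixed) and correspond to $\overline{p}^{\,\perp}$, while the lines through $x$ lying in a common symp of $\Delta$ with $px$ are exactly those contained in a symp through $p$ (hence fixed pointwise by (ii)) and correspond to $\overline{p}^{\,\pperp}$. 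Since $\mathsf{D_{6,6}}(\K)$ is a \emph{strong} parapolar space of diameter $3$, these two families exhaust the ball of radius $2$ about $\overline{p}$. Thus the collineation $\overline{\theta}$ induced by $\theta$ on $\Res_\Delta(x)$ fixes every point at distance at most $2$ from $\overline{p}$, and the hypothesis ``$L$ lies in no symp through $p$'' says precisely that $\overline{L}$ is \emph{opposite} $\overline{p}$.

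The whole lemma therefore reduces to the following rigidity statement, which I expect to be the main obstacle: \emph{a collineation of $\mathsf{D_{6,6}}(\K)$ fixing every point at distance at most $2$ from a given point is the identity.} The difficulty is genuine, since a two--fixed--points argument is not available: a line of $\mathsf{D_{6,6}}(\K)$ through a point $\overline{L}$ opposite $\overline{p}$ meets the radius--$2$ ball in only a single point (its gate towards $\overline{p}$), so $\overline{L}$ itself is the lone a priori unfixed point on each such line. The plan to overcome this is an apartment/convexity argument in the spirit of \cite{HVM-MV}: every point opposite $\overline{p}$ is recovered as a projection (or intersection of symps) determined by data in the fixed radius--$2$ ball, using that each symp through $\overline{L}$ is fixed setwise because $\overline{\theta}$ fixes pointwise the hyperplane cut out on it by the gate (a point of $\overline{p}^{\,\perp}$), together with the connectivity of $\mathsf{D_{6,6}}(\K)$; one then induces down within a symp, which is a polar space, to conclude that $\overline{L}$ is fixed. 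Granting this, $\overline{\theta}(\overline{L})=\overline{L}$ for every such $L$, i.e.\ $\theta(L)=L$; as $x\in p^\perp$ and $L$ were arbitrary, $\theta$ stabilises every line meeting $p^\perp$ and is a central elation with centre $p$. The same scheme applies verbatim in the $\mathsf{E_6}$ and $\mathsf{E_8}$ cases, the only change being the residue ($\mathsf{A_{5,3}}(\K)$ and $\mathsf{E_{7,7}}(\K)$ respectively), which is why \cref{pointsympE77,lemmaE78} are recorded above: they supply exactly the point--symp incidence data needed to run the rigidity argument in the $\mathsf{E_{7,7}}$ residue.
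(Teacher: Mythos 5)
Your overall reduction is exactly the paper's: fix $x\in p^\perp\setminus\{p\}$, observe that lines through $x$ coplanar with $px$ lie in $p^\perp$ and lines through $x$ in a common symp with $px$ lie in a symp through $p$ (which is pointwise fixed, being convex of diameter $2$), so the induced collineation of $\Res_\Delta(x)\cong\mathsf{D_{6,6}}(\K)$ fixes the ball of radius $2$ about $\overline{p}$ pointwise, and everything reduces to the rigidity statement that such a collineation is the identity. Where you part company with the paper is in proving that rigidity. The paper translates to the underlying polar space $\mathsf{D_{6,1}}(\K)$, where $\overline{p}$ becomes a maximal singular subspace $U$ and the hypothesis says that every maximal singular subspace of the same type meeting $U$ is stabilised; it then finishes in three elementary incidence steps (each point of $U$ is fixed; each line with exactly one point in $U$ is the intersection of two stabilised maximal singular subspaces; each point off $U$ is the intersection of two such lines). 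Your half-spin route is genuinely different and its scaffolding is sound: it is true that a symp $\sigma$ through a point opposite $\overline{p}$ has a unique point $g$ collinear with $\overline{p}$, that $g^\perp\cap\sigma$ is precisely the set of points of $\sigma$ at distance at most $2$ from $\overline{p}$, and that, since this set contains non-collinear pairs, $\sigma$ is the convex closure of fixed points and hence stabilised.

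The gap is in your final inference. ``One then induces down within a symp, which is a polar space, to conclude that $\overline{L}$ is fixed'' appeals to a false general principle: a collineation of a polar space fixing the perp of a point pointwise need not be the identity (symplectic polar spaces admit central elations doing exactly this). From a single stabilised symp $\sigma$ with $g^\perp\cap\sigma$ fixed pointwise you may only conclude $\overline{\theta}(\overline{L})\in\sigma$, not $\overline{\theta}(\overline{L})=\overline{L}$. Two repairs are available. Either make your parenthetical ``intersection of symps'' quantitative: in the polar model the symps through $\overline{L}=V$ correspond to lines $\ell\subseteq V$, any two of them share at least a whole line of the half-spin geometry, but three symps $\sigma_{\ell_1},\sigma_{\ell_2},\sigma_{\ell_3}$ with $\langle\ell_1,\ell_2,\ell_3\rangle=V$ intersect precisely in $\{\overline{L}\}$, which finishes the argument. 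Or prove the rigidity you implicitly invoked for the symps actually at hand: they are hyperbolic quadrics $\mathsf{D_{4,1}}(\K)$, and a collineation of a hyperbolic quadric fixing a tangent hyperplane section $g^\perp$ pointwise is the identity (a short computation with the bilinear form, reflecting the absence of central elations in orthogonal groups of type $\mathsf{D}$) --- the point being that this is special to type $\mathsf{D}$ and not a property of ``a polar space.'' With either repair your proof is complete; as written, the decisive step is missing. (Also, your preliminary (i) is superfluous: with the paper's reflexive convention $p\in p^\perp$, so $p$ is fixed by hypothesis.)
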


\begin{proof}
We only have to show the sufficient condition. So suppose $\theta$ fixes each point of $p^\perp\cup p^{\pperp}$.
Let $x\perp p$ be an arbitrary point distinct from $p$ and collinear to $p$. It suffices to show that every line through $x$ is stabilised, in other words, that the residue at $x$ is pointwise fixed. Our assumption implies that in the residue at $x$, all points collinear or symplectic to the point corresponding to $p$ are fixed. Translated to the associated polar space $\Gamma$ of type $\mathsf{D_6}$ we have a collineation that stabilises all maximal singular subspaces of fixed type not disjoint from a given maximal singular subspace $U$ of the same type. Clearly, each point of $U$ is fixed. Also, every point of $\Gamma$ outside $U$ is the intersection of two lines, each with a point in $U$.  Finally, each line with exactly one point in $U$ is the intersection of two maximal singular subspaces of the same type as $U$ (and not disjoint from $U$, obviously). 
\end{proof}

Finally we mention a property of paras of $\mathsf{E_{7,1}}(\K)$. Note that paras of $\mathsf{E_{7,1}}(\K)$ are parapolar spaces isomorphic to $\mathsf{E_{6,1}}(\K)$, see \cite{Meu-Mal:22}. Also, a point $p$ not contained in para $\Pi$ is either collinear to all points of a maximal singular $5$-space of $\Pi$ (in which case we say that $p$ is \emph{close to} $\Pi$), or $p$ is contained in a unique para $\Pi'$ intersecting $\Pi$ in a symplecton; in this case the set of points $\Pi\cap\Pi'$ coincides with the set of points of $\Pi$ symplectic to $p$ and we say that $p$ is \emph{far from} $\Pi$.

\begin{lemma}\label{paraimline}
Given a para $\Pi$ of $\mathsf{E_{7,1}}(\K)$, let $q$ and $q'$ be two points far from $\Pi$ and symplectic to the same points of $\Pi$. If $q'$ is not contained in the unique para containing $q$ and sharing a symp with $\Pi$, then $q$ and $q'$ are opposite and the imaginary line $\cI(q,q')$ has a unique point in common with $\Pi$. 
\end{lemma}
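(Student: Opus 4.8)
The plan is to first pin down the configuration, then establish opposition, and finally treat the intersection with $\Pi$ by separating uniqueness from existence. Throughout I write $\Sigma:=q^{\pperp}\cap\Pi=q'^{\pperp}\cap\Pi$ for the common symp and $\Pi_q$ for the unique para through $q$ with $\Pi\cap\Pi_q=\Sigma$. First I would analyse the position of $q'$ relative to $\Pi_q$. Since $q'$ is symplectic to every point of $\Sigma$, and $\Sigma$ is a symp of $\Pi_q$, the close/far dichotomy forces $q'$ to be far from $\Pi_q$: the close case is excluded because a point symplectic to an entire symp of $\Pi_q\cong\mathsf{E_{6,1}}(\K)$ cannot be collinear to a maximal singular $5$-space meeting $\Sigma$, as that would produce a point of $\Sigma$ both collinear and symplectic to $q'$. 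Hence $q'$ lies in a para $\Pi_{q'}$ with $\Pi_{q'}\cap\Pi_q=\Sigma$ (the far-position forces the symplectic set of $q'$ in $\Pi_q$ to be a symp containing $\Sigma$, hence equal to $\Sigma$), and $\Pi_{q'}$ is distinct from both $\Pi$ and $\Pi_q$ since $q'\notin\Pi_q$ and $q'\notin\Pi$. By symmetry $q$ is likewise far from $\Pi_{q'}$ with $q^{\pperp}\cap\Pi_{q'}=\Sigma$, so we obtain three pairwise distinct paras through the common symp $\Sigma$.

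Next I would prove that $q$ and $q'$ are opposite. Choosing a symp $\xi\subseteq\Pi_{q'}$ through $q'$ that contains a point opposite $q$ (available because $q$ is far from $\Pi_{q'}$, so generic points of $\Pi_{q'}$ are opposite $q$), \cref{pointsymp} yields a unique point $z\in\xi$ symplectic to $q$; as $q^{\pperp}\cap\Pi_{q'}=\Sigma$ we have $z\in\xi\cap\Sigma$, and every point of $\xi$ not collinear to $z$ is opposite $q$. Since $q'\notin\Sigma$ we have $q'\neq z$, so it remains to show $q'\not\perp z$; this is exactly where the hypothesis $q'\notin\Pi_q$ enters, as $q'\perp z$ together with $z\pperp q$ would force $q'$ back into the para $\Pi_q$. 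The trichotomy of \cref{pointsymp} then leaves only the possibility that $q'$ is opposite $q$. I expect this step — ruling out the borderline collinear/special cases by means of $q'\notin\Pi_q$ — to be among the more delicate parts of the argument.

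I would then dispatch uniqueness of the intersection. Any two distinct points $p_1,p_2$ of $\cI(q,q')$ satisfy $E(p_1,p_2)=E(q,q')$, and since an equator is only defined for an opposite pair, distinct poles are mutually opposite points of $\Gamma$. A para is a convex subspace isomorphic to $\mathsf{E_{6,1}}(\K)$, hence geodesically closed of diameter $2$, so it contains no pair of points at distance $3$; therefore $\Pi$ meets $\cI(q,q')$ in at most one point.

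Finally I would establish existence of a common point, which I expect to be the main obstacle. Here I would use that $\cI(q,q')$ is the union of $\{q\}$ with the orbit of $q'$ under the long root group $U_q$ with centre $q$. By \cref{elationsE7} each element of $U_q$ fixes $q^{\perp}\cup q^{\pperp}$ pointwise, hence fixes $\Sigma$ pointwise and so permutes the paras through $\Sigma$, carrying $q'$ along $\cI(q,q')$ and $\Pi_{q'}$ through the pencil of paras containing $\Sigma$. The assignment sending a pole to the unique para through $\Sigma$ containing it sends $q\mapsto\Pi_q$ and $q'\mapsto\Pi_{q'}$, and showing that it is onto the set of paras through $\Sigma$ produces a pole $p_0\in\Pi$, the desired common point. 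Proving this surjectivity — equivalently, transitivity of $U_q$ on the paras through $\Sigma$ other than $\Pi_q$ — is the crux; alternatively it can be obtained by exhibiting an apartment containing $q$, $q'$ and $\Pi$ and reading off the configuration from the explicit apartment descriptions in \cite{HVM-MV}, in the style of \cref{pointsymp,projplane'}.
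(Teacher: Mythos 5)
Your argument has two genuine gaps. The first is in the claim that $q'$ is far from $\Pi_q$ (and the mirrored claim, invoked ``by symmetry'', that $q$ is far from $\Pi_{q'}$ with $q^{\pperp}\cap\Pi_{q'}=\Sigma$, on which your opposition argument rests). You exclude the close case by arguing that a maximal singular $5$-space $M=q'^{\perp}\cap\Pi_q$ would meet $\Sigma$ and so produce a point of $\Sigma$ both collinear and symplectic to $q'$; but maximal singular $5$-spaces of $\Pi_q\cong\mathsf{E_{6,1}}(\K)$ disjoint from $\Sigma$ exist in abundance---indeed every maximal $5$-space of $\Pi_q$ through $q$ is disjoint from $\Sigma$, because $q$ is collinear to no point of $\Sigma$---so your argument does not exclude the close case. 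The detour is also unnecessary: the hypothesis already states that $q'$ is far from $\Pi$, so the dichotomy recalled before the lemma immediately places $q'$ in a unique para $\Pi_{q'}$ with $\Pi_{q'}\cap\Pi$ equal to the set of points of $\Pi$ symplectic to $q'$, that is, to $\Sigma$. This is exactly how the paper proceeds; it then deduces that $q$ and $q'$ are opposite from the fact that each is opposite $\Sigma$ inside the two \emph{distinct} paras $\Pi_q$ and $\Pi_{q'}$ through $\Sigma$ (distinctness is where the hypothesis $q'\notin\Pi_q$ enters), by an inspection of the standard apartment of type $\mathsf{E_7}$, and it never needs the position of $q$ relative to $\Pi_{q'}$. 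Note also that your ``delicate'' step $q'\not\perp z$ is in fact immediate, since $z\in\Sigma$ and $q'$ is symplectic to every point of $\Sigma$ by hypothesis, whereas your justification of it---that $q'\perp z$ together with $z\pperp q$ would force $q'\in\Pi_q$---is unfounded.

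The second, more serious, gap is that the existence of a point of $\cI(q,q')$ in $\Pi$, which is the heart of the lemma, is never proved. You correctly reduce it to the existence of a central elation with centre $q$ carrying $\Pi_{q'}$ onto $\Pi$ (equivalently, transitivity of the long root group $U_q$ on the paras through $\Sigma$ other than $\Pi_q$); this is precisely the paper's closing step. But you then declare this reduction ``the crux'' and leave it as an unproven assertion, merely listing two strategies that might establish it, so as written the proposal does not prove the lemma. By contrast, your uniqueness argument is correct and is a pleasant repackaging of the paper's: distinct points of an imaginary line are pairwise opposite, and a para, being convex of diameter $2$, cannot contain two opposite points (the paper instead obtains uniqueness together with existence from the uniqueness of the elation).
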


\begin{proof}
Let $\Pi'$ be the unique para containing $q$ and intersecting $\Pi$ in a symp $\xi$. It follows from the discussion preceding the statement of the lemma that $q'$ is contained in a (unique) para $\Pi''$ containing $\xi$. Since $q'$ is far from $\Pi$, we see that $q'$ is opposite $\xi$ in $\Pi''$. Since $q$ is also opposite $\xi$ in $\Pi'$, an inspection of the standard apartment of type $\mathsf{E_7}$ as given in \cite{HVM-MV} reveals that $q$ and $q'$ are opposite. Moreover, it is easy to see that there is a unique central elation with centre $q$ mapping $\Pi''$ onto $\Pi$, which implies that $\Pi$ contains a unique member of $\cI(q,q')$.    
\end{proof}

We will also need another type of equator geometry. Let $\Pi$ and $\Pi'$ be two opposite paras in $\mathsf{E_{7,1}}(\K)$. Then, following \cite[Definition~6.6]{DSV2}, we define the \emph{equator geometry $E(\Pi,\Pi')$} as the geometry induced in the set of points which are close to both $\Pi$ and $\Pi'$. It is isomorphic to $\mathsf{E_{6,2}}(\K)$ (see \cite{DSV2} again). Since paras correspond to vertices of type $7$ in the associated building, it is readily seen that, with the notation of    \cite[Section~1.1]{PVMexc}, the set of fixed points of a homology $h_{\omega_7}(c)$, with $c\in\K\setminus\{0,1\}$, is exactly the union of two opposite paras together with their equator. The set of fixed paras can be seen as the following set of points of $\mathsf{E_{7,7}}(\K)$: if $p$ and $p'$ correspond to $\Pi$ and $\Pi'$, respectively, then the other fixed points are $(p^\perp\cap {p'}^{\pperp})\cup (p^{\pperp}\cap {p'}^\perp)$. The set $p^\perp\cap{p'}^{\pperp}$ will be referred to as a \emph{trace}.

\section{Class I automorphisms}\label{sec3}

We now initiate the proof of Theorem~\ref{nofixedchamber}. The first step is to analyse Class I automorphisms of the building $\sE_{8}(\K)$ (as defined in the introduction; recall that these automorphisms pointwise fix precisely a fully and isometrically embedded subspace isomorphic to $\mathsf{F_{4,1}}(\K,\H)$, for some quaternion division algebra $\H$ over $\K$). The main result of this section is summarised in the following existence and uniqueness theorem, which in particular shows that Class I automorphisms are domestic with opposition diagram~$\sE_{8;4}$. Recall the definition of the group $G(\K,\H)$ from Section~\ref{quatalg}.

\begin{theorem}\label{f4ine8}
Let $\K$ be a field and let $\H$ be a quaternion division algebra over $\K$ (recall that this includes inseparable quadratic extensions of degree $4$ in characteristic $2$). Then there exists a projectively unique embedded metasymplectic space $\Gamma:=\mathsf{F_{4,1}}(\K,\H)$ in the long root subgroup geometry $\Delta:=\mathsf{E_{8,8}}(\K)$, provided that in the inseparable case at least one symp of  $\Gamma$ is not contained in a singular subspace of $\Delta$. This embedding is isometric, and also convex in the building-theoretic sense. The pointwise stabiliser of $\Gamma$ is a group abstractly isomorphic to $G(\K,\H)$, each nontrivial member of which acts domestically on the building $\sE_8(\K)$ and does not fix any chamber. Both the fix and the opposition diagram of each nontrivial member of  the pointwise stabiliser is~$\mathsf{E_{8;4}}$.  
\end{theorem}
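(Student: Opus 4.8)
The plan is to establish \cref{f4ine8} in several stages, treating existence, uniqueness, the structure of the pointwise stabiliser, and finally the domesticity/diagram assertion, which is the part requested here. For the \textbf{existence and projective uniqueness} of the embedded metasymplectic space $\Gamma=\mathsf{F_{4,1}}(\K,\H)$ in $\Delta=\mathsf{E_{8,8}}(\K)$, I would exploit the quaternion Veronese variety machinery set up in \cref{quatalg}: the split Cayley algebra $\mathbb{O}'$ yields the standard inclusion of $\mathsf{E_{6,1}}(\K)$ in $\PG(27,\K)$, and a quaternion subalgebra $\H\subseteq\mathbb{O}'$ cuts this down via the standard embedding of the quaternion Veronese variety. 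The idea is that the $\H$-geometry $\mathsf{F_{4,1}}(\K,\H)$ arises as the fixed structure of a suitable subalgebra inclusion, and its symps $\mathsf{B_{3,1}}(\K,\H)$ embed into the symps $\mathsf{D_{7,1}}(\K)$ of $\Delta$ as the quadrics $X_{-1}X_1=\Norm(X_0)$ described in \cref{quatalg}. The inseparable caveat (that some symp of $\Gamma$ is not singular in $\Delta$) is exactly the condition preventing the quaternion-field extension from collapsing the symp into a singular subspace.

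For \textbf{isometry and building-theoretic convexity}, I would verify that the symplectic, special, and opposite relations are preserved by checking them on symps and on point residuals, using \cref{stars} to identify $\Res_\Delta(x)\cong\mathsf{E_{7,7}}(\K)$ and the analogous residual structure in $\Gamma$, together with the local characterisations in \cref{KasShu,spspop,pointsymp}. Convexity in the building sense amounts to showing $\Gamma$ is closed under projections; this should follow from convexity of each symp plus the projection formulas in \cref{projline,projplane'}. To identify the \textbf{pointwise stabiliser} with $G(\K,\H)$, I would use the description of imaginary lines from \cref{equatorgeom}: each nontrivial fixing collineation acts along long root subgroups, and the group-theoretic identification of these subgroups with the norm-one group (separable case) or the additive exponent-$2$ group (inseparable case) gives the abstract isomorphism to $G(\K,\H)$ as defined there.

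The portion I would treat as the genuine target is the final sentence: \textbf{each nontrivial element $\theta$ of the pointwise stabiliser is domestic, fixes no chamber, and has fix diagram and opposition diagram $\sE_{8;4}$.} Since $\theta$ pointwise fixes the fully embedded $\Gamma$, no point of $\Gamma$ is mapped to an opposite, so the fixed structure already rules out the generic diagrams; the key is to show the opposition diagram is exactly $\sE_{8;4}$ and not larger. For this I would argue that $\theta$ maps \emph{some} vertex to an opposite (forcing at least diagram $\sE_{8;4}$) because the long root subgroup action moves poles of equator geometries to opposite poles, while simultaneously the capping property from \cite{PVM:19} combined with the trivial diagram automorphism in type $\sE_8$ reduces the diagram to the types of opposite vertices. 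I would then rule out each strictly larger admissible diagram from \cref{fig:Dynkin}: domesticity (no chamber to an opposite) excludes $\sE_{8;8}$, and the presence of the fully fixed $\mathsf{F_{4,1}}(\K,\H)$ blocks the diagrams $\sE_{8;0},\sE_{8;1},\sE_{8;2}$ by the characterisations recalled after \cref{fig:Dynkin} (those correspond respectively to the identity, a central collineation, and a product of two perpendicular root elations, none of which pointwise fix a metasymplectic space). That $\theta$ fixes no chamber follows because a fixed chamber would force a fixed flag of every type, contradicting the fixed-point-free action of $\theta$ on structures transverse to $\Gamma$ (concretely, on the poles of the equator geometries attached to $\Gamma$, via \cref{paraimline}).

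The \textbf{main obstacle} I anticipate is pinning down the opposition diagram as precisely $\sE_{8;4}$ rather than merely bounding it: one must produce an explicit vertex (equivalently a type) sent to an opposite to get the lower bound $\sE_{8;4}$, and simultaneously certify domesticity to get the upper bound, and these two directions pull against each other. Exhibiting a single opposite vertex is cleanest through the equator-geometry/imaginary-line description of \cref{equatorgeom}, where $\theta$ visibly sends a pole $q$ to an opposite pole, while domesticity is most naturally obtained by transporting to the Chevalley-group normal form of \cref{fixedchambers}(1) (a product of four orthogonal long root elations $x_{\varphi_1}(\pm a)x_{\varphi_2}(1)x_{\varphi_3}(1)x_{\varphi_4}(1)$), whose domesticity is already cited from \cite[Theorem~3.1]{PVMexc}. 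Reconciling the purely geometric fixed-structure description with this algebraic representative — i.e.\ showing the Class I collineation coincides up to conjugacy with that explicit element — is where the real work lies.
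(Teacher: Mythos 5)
Your proposal breaks down at the step you yourself identify as the crux: proving domesticity. You propose to obtain it by showing that a Class~I collineation ``coincides up to conjugacy'' with the element $x_{\varphi_1}(\pm a)x_{\varphi_2}(1)x_{\varphi_3}(1)x_{\varphi_4}(1)$ of \cref{fixedchambers}(1). This is impossible: that element lies in $U^+$, hence fixes the base chamber, whereas a nontrivial element of the pointwise stabiliser of $\Gamma$ fixes no chamber at all (its fixed vertices are exactly those of $\Gamma$, which have types $1,6,7,8$ only, so no vertex of type $2,3,4,5$ is fixed). Fixing a chamber is a conjugation invariant, so the two elements lie in different conjugacy classes; indeed \cref{nofixedchamber} and \cref{fixedchambers} are complementary precisely along this dichotomy, and the fixed-point set of a product of four perpendicular long root elations is not a metasymplectic space. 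With that route closed, your diagram-exclusion argument becomes circular: you exclude $\sE_{8;8}$ ``by domesticity'', but domesticity is exactly the assertion to be proved, and the fixed $\mathsf{F_{4,1}}(\K,\H)$ by itself only rules out $\sE_{8;0}$, $\sE_{8;1}$, $\sE_{8;2}$ --- it gives no upper bound on the displacement of chambers.

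What the paper actually does (\cref{classIisdom}) is a contradiction argument through equator geometries, which is the new technique the whole paper is built on, precisely because $\sE_8$ has no minuscule geometry to transport the problem to. Suppose a chamber $C$ is mapped to an opposite; then its point $p$ satisfies $p$ opposite $p^\theta$. Using the residual $\sE_7$ results of \cite{npvv} (packaged here as \cref{inres}), one shows that the fixed points of $\theta$ inside the equator geometry $E(p,p^\theta)\cong\mathsf{E_{7,1}}(\K)$ form a fully and isometrically embedded $\mathsf{F_{4,1}}(\K,\L)$ with $\K\leq\L\leq\H$; hence the induced collineation $\theta_p$ of $E(p,p^\theta)$ pointwise fixes such a structure and is domestic with opposition diagram $\mathsf{E_{7;3}}$ by \cite[Proposition~7.1]{npvv}, which via \cite[Theorem~3.28, Proposition~3.29]{Tits:74} contradicts $C$ being mapped to an opposite. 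Only then, since $\theta$ fixes no chamber, does the classification of the diagrams $\sE_{8;1}$ and $\sE_{8;2}$ from \cite{PVMexc} force the diagram to be exactly $\sE_{8;4}$. A secondary remark: your existence sketch also operates at the wrong level --- the quaternion Veronese variety enters the paper's proof two residual steps down (inside point residuals $\mathsf{E_{6,1}}(\K)$ of point residuals $\mathsf{E_{7,7}}(\K)$), and the passage from this local data to the global embedding in $\Delta$, together with uniqueness and the identification of the stabiliser with $G(\K,\H)$, is done by reduction to \cite[Proposition~6.17]{npvv} and gluing via \cite[Proposition~4.16]{Tits:74}, none of which appears in your outline.
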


The condition in the inseparable case is necessary in view of the fact, proved in \cite{DSV2}, that each inseparable metasymplectic space with planes over $\K$ embeds in $\mathsf{E_{6,1}}(\K)$ such that symps are contained in singular subspaces, and $\mathsf{E_{8,8}}(\K)$ contains $\mathsf{E_{6,1}}(\K)$ as a subgeometry (in the residue of a line as a trace geometry \cite{HVM-MV}, or equivalently, as a para of an equator geometry). 

Throughout this section write $\Delta=\sE_{8,8}(\K)$ and $\Gamma=\sF_{4,1}(\K,\H)$ as in the statement of Theorem~\ref{f4ine8}, where $\mathbb{H}$ is a quaternion division algebra over $\K$  (possibly inseparable if the characteristic is 2). Moreover we assume that if $\H$ is inseparable over $\K$, then at least one symp of $\Gamma$ does not span a singular subspace of $\Delta$ (as in the statement of Theorem~\ref{f4ine8}).


\subsection{Quaternion dual polar spaces of rank 3 in $\mathsf{E_{7,7}}(\K)$}\label{sec4}

Throughout this section, let $\Gamma_1\cong \mathsf{C_{3,3}}(\mathbb{H},\K)$ be a point residual of $\Gamma$ (a dual polar space of rank $3$), and $\Delta_1\cong\mathsf{E_{7,7}}(\K)$ the corresponding point residual of~$\Delta$. Since $\Gamma$ is fully embedded in $\Delta$, $\Gamma_1$ is fully embedded in $\Delta_1$. 

A first step to proving Theorem~\ref{f4ine8} is to prove a related statement in the point residuals. Specifically, we prove:

\begin{prop}\label{prop:residual}
Let $\K$ be a field and let $\H$ be a quaternion division algebra over $\K$. Then there exists a projectively unique embedded dual polar space $\Gamma_1\cong\mathsf{C_{3,3}}(\mathbb{H},\K)$ in the point residual $\Delta_1\cong \mathsf{E_{7,7}}(\K)$, provided that in the inseparable case at least one symp of $\Gamma_1$ is not contained in a singular subspace of $\Delta_1$. Moreover this embedding is isometric, a group isomorphic to $G(\K,\H)$ acts on $\Delta_1$ as a collineation group, and the fixed point structure of every nontrivial member of it is precisely $\Gamma_1$. 
\end{prop}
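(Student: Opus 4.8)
The statement of Proposition~\ref{prop:residual} is an existence-and-uniqueness result for an embedded quaternion dual polar space $\Gamma_1\cong\mathsf{C_{3,3}}(\H,\K)$ inside $\Delta_1\cong\mathsf{E_{7,7}}(\K)$, together with a description of its pointwise stabiliser as $G(\K,\H)$ and the claim that the fixed-point set of each nontrivial stabiliser element is \emph{exactly} $\Gamma_1$. I would structure the proof in four stages: (i) existence of the embedding; (ii) verification that it is isometric; (iii) construction of the stabiliser group and the identification with $G(\K,\H)$; and (iv) the sharp fixed-point statement. Throughout, the governing principle is to exploit the concrete model of $\mathsf{E_{7,7}}(\K)$ as the minuscule geometry (Section~\ref{sec2}), whose symps are $\mathsf{D_{6,1}}(\K)$ and whose point residuals are $\mathsf{E_{6,1}}(\K)$, and to use the quaternion Veronese variety of Section~\ref{quatalg} to provide coordinates for the embedded $\mathsf{C_{3,3}}(\H,\K)$.

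\textbf{Existence and uniqueness.} For existence I would build $\Gamma_1$ directly from the Veronesean map $\rho$ of Section~\ref{quatalg}: the quaternion Veronese variety gives a standard embedding whose line images are $5$-dimensional Witt-index-$1$ quadrics with equation $X_{-1}X_1=\Norm(X_0)$, and these are precisely the symps of a $\mathsf{C_{3,3}}(\H,\K)$ dual polar space. The task is to place this configuration inside the standard $27$-dimensional model of $\mathsf{E_{6,1}}(\K)\subseteq\mathsf{E_{7,7}}(\K)$, realising $\Gamma_1$ as a subspace. The inseparable caveat (at least one symp of $\Gamma_1$ not contained in a singular subspace of $\Delta_1$) is exactly what forces the embedded symps to be genuine polar spaces rather than degenerate singular subspaces; I would check that this condition is both necessary (as the remark after Theorem~\ref{f4ine8} indicates) and sufficient to pin down the embedding type. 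For \emph{projective} uniqueness I would argue that any two such embeddings induce the same symp-structure, hence agree on a generating set of points, and then invoke the rigidity of the Veronesean representation: two embeddings of $\mathsf{C_{3,3}}(\H,\K)$ agreeing on the symp data differ by a projectivity of the ambient $\PG(27,\K)$. This is where I expect to lean on existing classification results for Veronesean embeddings rather than reprove them.

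\textbf{Isometry and the stabiliser group.} Isometry means that the relations $=$, $\perp$, $\pperp$, special, and opposite agree between $\Gamma_1$ and $\Delta_1$. Since $\Gamma_1$ is \emph{fully} embedded (collinearity is inherited), the collinear and noncollinear cases are immediate; the real content is distinguishing, for two noncollinear points of $\Gamma_1$, whether they are symplectic or special in $\Delta_1$, and matching this with the dual-polar-space relation. Here Lemma~\ref{pointsympE77} is the key tool: it controls exactly how a point of $\mathsf{E_{7,7}}(\K)$ relates to a symp, and I would use it to show that two points symplectic in $\Gamma_1$ (sharing a quadric symp) are symplectic in $\Delta_1$, so no collapse of distances occurs. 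For the stabiliser, I would produce the collineations explicitly as central elations or long-root-group elements fixing $\Gamma_1$ pointwise: the multiplicative (resp.\ additive, in the inseparable case) group $G(\K,\H)$ acts on the Veronese coordinates, and this action fixes $\Gamma_1$ while moving the ambient geometry. Checking that the abstract group structure is $G(\K,\H)$ is then a direct computation with the Cayley--Dickson multiplication.

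\textbf{The sharp fixed-point statement and the main obstacle.} The delicate claim is that a nontrivial $\phi\in G(\K,\H)$ fixes \emph{precisely} $\Gamma_1$ and nothing more. One inclusion is the stabiliser construction; the reverse requires showing $\phi$ moves every point outside $\Gamma_1$. The natural mechanism is Lemma~\ref{elationsE7}: a collineation of $\mathsf{E_{7,1}}(\K)$ fixing each point of $p^\perp\cup p^{\pperp}$ is already a central elation, so an over-large fixed set would force $\phi$ to be too close to the identity. I would transport this to $\Delta_1\cong\mathsf{E_{7,7}}(\K)$ and argue that if $\phi$ fixed a point $x\notin\Gamma_1$, then combined with the fixed subspace $\Gamma_1$ it would fix enough of a residue or symp to be trivial, contradicting $\phi\neq\id$. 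I expect the \textbf{main obstacle} to be precisely this rigidity step: proving that the fixed structure cannot be strictly larger than $\Gamma_1$ without being everything, which requires careful control of how fixed points propagate through symps and point residuals. The separable and inseparable cases will likely need separate treatment here, since in the inseparable case $G(\K,\H)$ has exponent $2$ and the geometric behaviour of its elements differs, so I would handle that case by a parallel but distinct argument using the additive structure and the degenerate norm form.
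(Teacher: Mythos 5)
Your proposal has a genuine gap, and it starts with a level confusion. The quaternion Veronese variety of Section~\ref{quatalg} is a model for $\PG(2,\H)$, i.e.\ for the \emph{point residual} $\Gamma_p$ of $\Gamma_1$, not for $\Gamma_1$ itself: its line-images are Witt-index-$1$ quadrics in $\PG(5,\K)$, and these are the residues at $p$ of the symps of $\Gamma_1$ (the symps themselves are quadrangles $\mathsf{B_{2,1}}(\K,\H)$ with standard representation in $\PG(7,\K)$, Witt index~$2$); in the embedded picture they appear as \emph{ovoids} of the $\mathsf{D_{5,1}}(\K)$-symps of $\mathsf{E_{6,1}}(\K)$. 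Consequently your existence step --- placing the Veronese variety inside the $27$-dimensional model of $\mathsf{E_{6,1}}(\K)$ and ``realising $\Gamma_1$ as a subspace'' --- constructs at best the residue of $\Gamma_1$ at a single point; $\Gamma_1$ is a diameter-$3$ geometry that must be built inside $\Delta_1\cong\mathsf{E_{7,7}}(\K)$, of which the $\mathsf{E_{6,1}}(\K)$'s are only the point residuals. The paper does not attempt a from-scratch construction at all: its actual work (Lemmas~\ref{sing}, \ref{Q^*}, \ref{embres}, \ref{far} and Proposition~\ref{prop:veronese}) is to show that for any fully embedded $\Gamma_1$ satisfying the hypothesis, \emph{every} point residual is a standard inclusion of the quaternion Veronese variety in $\mathsf{E_{6,1}}(\K)$ --- via the dichotomy that each symp of $\Gamma_1$ either embeds isometrically in a symp of $\Delta_1$ or falls into a singular subspace, propagation arguments excluding the degenerate alternative, and the recognition theorem \cite[Main Result~4.3]{DSVM} --- and then the proposition ``follows immediately'' from \cite[Proposition~6.17]{npvv}, which supplies existence, projective uniqueness, the $G(\K,\H)$-action and the exact fixed-point statement in one stroke.

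Your remaining stages defer precisely the steps that carry the difficulty. The uniqueness sketch (``rigidity of the Veronesean representation'') presupposes what the paper's local analysis establishes, namely that no symp of $\Gamma_1$ can lie in a singular subspace (given the hypothesis) and that no two symps of $\Gamma_1$ land in the same symp of $\Delta_1$; this is where the inseparable hypothesis enters and where the real synthetic work sits, and ``I would check'' is not a substitute for it. The isometry claim is fine in spirit, but in the paper it is obtained from \cite[Lemma~6.16]{npvv}, not from Lemma~\ref{pointsympE77}. Most seriously, your mechanism for the sharp fixed-point statement invokes Lemma~\ref{elationsE7}, which is a statement about the long root geometry $\mathsf{E_{7,1}}(\K)$, not about the minuscule geometry $\mathsf{E_{7,7}}(\K)$; no transport between the two geometries is offered, and the statement that the fixed structure of a nontrivial element of $G(\K,\H)$ is \emph{exactly} $\Gamma_1$ is not something the present paper re-proves --- it is imported from \cite[Proposition~6.17]{npvv}. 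As written, the object built in your stage (i) is the wrong geometry, and stages (ii)--(iv) either import the hard content without justification or leave it as a placeholder.
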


Proposition~\ref{prop:residual} follows immediately from \cite[Proposition~6.17]{npvv} once we are able to show that every point residual of $\Gamma_1$ is a standard inclusion of the quaternion Veronese variety in the corresponding point residual of $\Delta_1$, and we achieve this goal in the following subsections (with the proof of Proposition~\ref{prop:residual} given at the end of Section~\ref{sec:global}).

\subsubsection{The embedding of symplecta in a point residual}

Let $\xi_1$ be a symp of $\Gamma_1$. Then $\xi_1$ is isomorphic to an orthogonal quadrangle $\mathsf{B_{2,1}}(\K,\mathbb{H})$ with standard representation in $7$-dimensional projective space over $\K$, and by  \cite[Lemma 3.19]{DSV2} $\xi_1$ is either (a) isometrically embedded in a unique symp $\zeta_1\cong\mathsf{D}_{6,1}(\K)$ of $\Delta_1$, or (b) is contained in a singular subspace of $\Delta_1$ (which has dimension at most $6$).  

In case (a), considering a representation of $\zeta_1$ in projective $11$-space $\PG(11,\K)$ it follows from \cite[Lemma~3.19]{DSV2} that $\xi_1$ arises as the intersection of $\zeta_1$ with a $7$-dimensional subspace $W$ of $\PG(11,\K)$. 

In case (b) since the singular subspaces of $\Delta_1$ have dimension at most 6,  $\xi_1$ can only embed in a singular subspace of $\Delta_1$ via a quotient of the universal embedding as a quadric and then $\kar\K=2$. However \cite[Proposition~3.18]{Pet-Mal:23} implies that $\xi_1$, viewed as a quadric in $7$-dimensional space, has empty nucleus whenever $\H$ is separable, and therefore case (b) can only arise in the inseparable case. 

Now, by assumption, in both cases there is at least one symp, say $\xi^*_1$, of $\Gamma_1$ that embeds isometrically in a symp $\zeta^*_1$ of $\Delta_1$. Thus, considering the natural embedding of $\zeta^*_1$ in $\PG(11,\K)$, there exists a subspace $W^*$ in $\PG(11,\K)$ such that $\xi^*_1=\zeta^*_1\cap W^*$. Note that, since $W^*$ has dimension $7$, every singular $5$-space of $\zeta_1^*$ intersects $\xi_1^*$ in exactly a line.  

Choose a point $p \in \xi_1^*$. Consider $\Delta_p:=\Res_{\Delta_1}(p)$, which is isomorphic to $\mathsf{E}_{6,1}(\K)$. We will work with the latter's standard embedding $\mathcal{E}_6(\K)$ in $\PG(26,\K)$.  Let $\Gamma_p$ be the point-line geometry in $\Delta_p$ with point set the set of lines of $\Gamma_1$ through $p$ and lines given by the symps of $\Gamma_1$ through $p$. Then $\Gamma_p$ is isomorphic to $\PG(2,\H)$, and we call its lines still symps of $\Gamma_p$.   
We denote by $\xi_2^*$ the symp of $\Gamma_p$ corresponding to $\xi_1^*$. Then $\xi_2^*$ embeds isometrically in the symp $\zeta_2^*$ of $\Delta_1'$ corresponding to $\zeta_1^*$ in such a way that, in the ambient projective $9$-space of $\zeta_2^*$, $\xi_2^*$ is the intersection of $\zeta_2^*$ with a $5$-space. It follows from a dimension argument that every singular $4$-space of $\zeta_2^*$ intersects $\xi_2^*$ in precisely a point. Hence $\xi_2^*$ is an ovoid of $\zeta_2^*$. 

We now show that, if there are symps of $\Gamma_1$ through $p$ which embed in singular subspaces (and so necessarily $\H$ is inseparable), then the latter are contained in $\zeta_2^*$. 

\begin{lemma}\label{sing}
With the above notation, suppose $\xi_1$ is a symp of $\Gamma_1$ containing $p$ and suppose $\xi_1$ embeds in a singular subspace. Let $\xi_2$ be the corresponding symp of $\Gamma_p$.  Then $\xi_2 \subseteq \zeta_2^*$.
\end{lemma}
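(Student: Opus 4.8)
The plan is to transfer the statement into the residue $\Delta_p\cong\mathsf{E_{6,1}}(\K)$ and then argue projectively inside the standard embedding $\mathcal{E}_6(\K)\subseteq\PG(26,\K)$. First I would note that passing to the residue at $p$ sends a singular subspace of $\Delta_1$ through $p$ to a singular subspace of $\Delta_p$; hence the hypothesis that $\xi_1$ is contained in a singular subspace of $\Delta_1$ yields a singular subspace $U$ of $\Delta_p$ with $\xi_2\subseteq U$. In particular any two points of $\xi_2$ are collinear in $\Delta_p$. Since $\xi_2$ and $\xi_2^*$ are two lines of $\Gamma_p\cong\PG(2,\H)$, they meet in a unique point $x$, and as $\xi_2^*\subseteq\zeta_2^*$ we have $x\in\zeta_2^*$; thus every point of $\xi_2$ is collinear with the point $x$ of $\zeta_2^*$.

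Next I would reduce the claim to a statement about spans. Because $\zeta_2^*$ is a symp of the minuscule geometry $\mathsf{E_{6,1}}(\K)$, in the standard embedding it is cut out of the point variety by its projective span, that is $\zeta_2^*=\mathcal{E}_6(\K)\cap\langle\zeta_2^*\rangle$. Since $\xi_2\subseteq\mathcal{E}_6(\K)$, it therefore suffices to prove the projective containment $\langle\xi_2\rangle\subseteq\langle\zeta_2^*\rangle$, whereupon $\xi_2\subseteq\mathcal{E}_6(\K)\cap\langle\zeta_2^*\rangle=\zeta_2^*$. The anchor is already at hand: the common point $x$ lies in $\langle\zeta_2^*\rangle$, and writing $\xi_2^*=\zeta_2^*\cap W$ with $W$ the $5$-space cutting out the ovoid $\xi_2^*$, we have a whole ovoid of $\zeta_2^*$ through $x$ against which the local behaviour of $\xi_2$ at $x$ can be compared.

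The heart of the argument is the inseparability. Since $\H$ is inseparable the norm form is totally degenerate, so the Veronese image of a line of $\PG(2,\H)$ carries a nontrivial nucleus, and it is exactly this nucleus that allows $\xi_2$ to collapse into the singular subspace $U$ (whereas for $\xi_2^*$ the embedding is isometric). I would compare the nucleus/tangent data of $\xi_2$ and of $\xi_2^*$ at their common point $x$: being two lines of $\PG(2,\H)$ through $x$, they share the corresponding local structure at $x$ in $\Gamma_p$, and I would use this to force the nucleus directions of $\xi_2$, and then all of $\langle\xi_2\rangle$, into the tangent space of $\zeta_2^*$ at $x$, hence into $\langle\zeta_2^*\rangle$. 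Combined with the constraint that $\xi_2\subseteq U$ is singular, this gives $\langle\xi_2\rangle\subseteq\langle\zeta_2^*\rangle$ and the lemma follows.

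The main obstacle is precisely this last step: two lines of $\PG(2,\H)$ meet only in the single point $x$, so a priori $\xi_2$ and $\zeta_2^*$ share only $x$ inside $\Delta_p$, and one must genuinely exploit the degeneracy of the inseparable norm form (the nucleus) to manufacture the extra projective incidences that pin $\langle\xi_2\rangle$ inside $\langle\zeta_2^*\rangle$. Controlling how the singular subspace $U$ is positioned relative to $\zeta_2^*$ — in particular ruling out that $\xi_2$ escapes $\zeta_2^*$ along $U$ — is where the real work lies, and is the reason the inseparable hypothesis is indispensable here.
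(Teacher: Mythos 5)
Your opening reductions are fine: $\xi_2$ does lie in a singular subspace of $\Delta_p$, it meets $\xi_2^*$ in the single point $x$, and since a symp of $\mathsf{E_{6,1}}(\K)$ equals the intersection of the variety with its projective span, the lemma is indeed equivalent to $\langle\xi_2\rangle\subseteq\langle\zeta_2^*\rangle$. But that equivalence is essentially a restatement of the conclusion, and the step that would prove it is exactly the one you leave unexecuted — as you yourself concede ("is where the real work lies"). Moreover, the mechanism you propose cannot deliver it: $\xi_2$ and $\xi_2^*$ are images of distinct lines of $\PG(2,\H)$ and share only the point $x$, and nothing in the hypotheses ties the tangent or nucleus data of $\xi_2$ at $x$, let alone its full span, to the $9$-space $\langle\zeta_2^*\rangle$; a singular subspace of $\Delta_p$ through $x$ can perfectly well leave $\langle\zeta_2^*\rangle$, and you never derive a contradiction from assuming that it does. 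You also misplace the role of inseparability: it only explains why the hypothesis of the lemma (a symp of $\Gamma_p$ lying in a singular subspace) can occur at all; the paper's proof of the lemma itself is characteristic-free incidence geometry and never invokes the nucleus.

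For comparison, the paper argues by contradiction with a global construction. Suppose $x'\in\xi_2\setminus\zeta_2^*$. Since $x'\perp x$, the set $x'^\perp\cap\zeta_2^*$ is a maximal singular subspace $S'$ of $\zeta_2^*$ containing $\xi_2\cap\zeta_2^*$; using the tangent hyperplane of $\xi_2$ at $x'$ one finds a $3$-space $U\subseteq S'$ meeting $\xi_2$ trivially, and the other maximal singular subspace $V\supseteq U$ of $\zeta_2^*$ contains a unique point $v$ of the ovoid $\xi_2^*$, with $v^\perp\cap S'=U$, so that $v$ is collinear to no point of $\zeta_2^*\cap\xi_2$. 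Next, for any $w\in\xi_2^*\setminus\{x,v\}$, the pair $\{w,x'\}$ is non-collinear, so the symp of $\Gamma_p$ through $w$ and $x'$ embeds isometrically in a symp $\zeta_2'$ of $\Delta_p$ meeting $\zeta_2^*$ in a $4$-space; inside $\zeta_2'$ one produces a point $v'$ of that symp collinear to $v$, whence the symp of $\Gamma_p$ through $v$ and $v'$ lies in a singular subspace, and its intersection point $z$ with $\xi_2$ is collinear to both $x$ and $v$, forcing $z\in\zeta_2^*\cap\xi_2$ collinear to $v$ — contradicting the choice of $v$. This interplay between the ovoid property of $\xi_2^*$ in $\zeta_2^*$ and the dichotomy (isometric versus singular embedding) for the symps of $\Gamma_p$ is the content missing from your sketch.
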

\begin{proof}
Let $x$ denote the unique point in $\xi_2^* \cap\xi_2$.  Suppose for a contradiction that $\xi_2$ contains a point $x'$ not contained in $\zeta_2^*$. We start with an observation. Take any point $y$ of $\xi_2^*\setminus\{x\}$. Then the points $x$ and $y$ are not $\Delta_p$-collinear. As such, $x'$ is not $\Delta_p$-collinear to $y$, since it is already $\Delta_p$-collinear to $x$ and does not belong to $\zeta_2^*$ by assumption.

Set $S:=\<\xi_2\>$. Since $x'$ is $\Delta_p$-collinear to $x$, it is collinear to a $4'$-space $S'$ of $\zeta_2^*$ (note that $S \cap \zeta_2^* \subseteq S'$). Considering the tangent hyperplane at $x'$ to $\xi_2$, which meets $S \cap \zeta_2^*$ in a hyperplane of it (disjoint from $x$), it follows that there is a $3$-space $U$ in $S'$ which contains no points of $\xi_2$. Let $V$ be the unique $4$-space of $\xi_2^*$ containing $U$. Then $V$ contains a unique point $v$ of $\xi_2^*$. By the above observation, $v$ is not $\Delta_p$-collinear to $x'$, i.e., $v\in V\setminus U$. Consequently, $v$ is not $\Delta_p$-collinear to any point of $\zeta_2^*\cap \xi_2$, since such a point belongs to $S'\setminus U$ and $v^\perp\cap S'=U$. 

Now let $w$ be any point of $\xi_2^*\setminus\{x,v\}$. Then again $w$ and $x'$ are not $\Delta_p$-collinear by the above, and hence the symp $\xi_2'$ of $\Gamma_p$ they determine embeds isometrically in a symp $\zeta_2'$ of $\Delta_p$. Since $w$ is $\Delta_p$-collinear to a $3$-space of $S'={x'}^{\perp}\cap \zeta_2^*$, it follows that $\zeta_2'$ meets $\zeta_2^*$ in a $4$-space $S^*$. 
The point $v$ is $\Delta_p$-collinear to a $3$-space $U^*$ of $S^*$. The unique $4'$-space $V'$ of $\zeta_2'$ containing $U^*$   contains a unique point $v'$ of $\xi_2'$. Now, since $\<v,U^*\>$ and $V'$ are $4'$-spaces sharing a $3$-space, they determine a singular $5$-space of $\Delta_p$. So $v'$ and $v$ are $\Delta_p$-collinear and hence the symp $\xi_{v,v'}$ of $\Gamma_p$ they determine embeds in a singular subspace.
Let $z$ be the unique intersection point of $\xi$ and $\xi_{v,v'}$. Then $z$ is $\Delta_p$-collinear to both $v$ and $x$. Therefore, $z$ belongs to $\zeta_2^* \cap \xi_2$ and is $\Delta_p$-collinear to $v$. However, as mentioned above, $v$ was chosen such that it is not $\Delta_p$-collinear to any point of $\xi_2^* \cap \xi$. We conclude from this contradiction that $\xi_2\subseteq \zeta_2^*$.
\end{proof}

\begin{lemma}\label{Q^*}
With the above notation, suppose some symp $\xi_2\neq\xi_2^*$ of $\Gamma_p$ is contained in $\zeta_2^*$. Then all symps of $\Gamma_p$ are contained in $\zeta_2^*$.
\end{lemma}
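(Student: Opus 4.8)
The plan is to exploit that $\Gamma_p\cong\PG(2,\H)$ is a projective plane whose lines are precisely its symps, so that any two symps meet in a single point, and then to propagate the containment from the two given symps $\xi_2^*$ and $\xi_2$ to \emph{every} symp by triangulating over the plane. Write $r$ for the unique point $\xi_2^*\cap\xi_2$, and recall that $\Delta_p\cong\mathsf{E_{6,1}}(\K)$ is strong of diameter~$2$, so that two distinct points of $\Delta_p$ are either collinear or symplectic, and a symplectic pair lies in a unique symp.

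The heart of the argument is the following claim: \emph{if $\ell$ is a symp of $\Gamma_p$ with $\ell\neq\xi_2^*,\xi_2$ meeting $\xi_2^*$ in a point $a$ and $\xi_2$ in a point $b$ with $a\neq b$, then $\ell\subseteq\zeta_2^*$.} To prove it I would invoke the dichotomy used at the start of this subsubsection (the point-residual version of \cite[Lemma~3.19]{DSV2}): either $\ell$ embeds isometrically as an ovoid in a \emph{unique} symp $\zeta_\ell\cong\mathsf{D_{5,1}}(\K)$ of $\Delta_p$, or $\ell$ is contained in a singular subspace of $\Delta_p$ (only possible in the inseparable case). In the ovoid case the points of $\ell$ are pairwise non-collinear in $\Delta_p$, so $a\pperp b$; since moreover $a,b\in\zeta_2^*$, the symp $\zeta_2^*$ contains this symplectic pair, and by uniqueness of the symp through a symplectic pair we get $\zeta_\ell=\xi(a,b)=\zeta_2^*$, whence $\ell\subseteq\zeta_2^*$. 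In the singular case, $\ell$ lies in a singular subspace and \cref{sing} applies verbatim to give $\ell\subseteq\zeta_2^*$.

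With the claim established, it remains to cover the plane. Every point of $\xi_2^*\cup\xi_2$ (including $r$) lies in $\zeta_2^*$ by hypothesis. For a point $z\notin\xi_2^*\cup\xi_2$ I would choose a line $\ell$ through $z$ meeting $\xi_2^*$ in a point $a\neq r$; then $\ell$ meets $\xi_2$ in a point $b$, and $a\neq b$ (else $a=b\in\xi_2^*\cap\xi_2=\{r\}$, contradicting $a\neq r$), so $\ell\neq\xi_2^*,\xi_2$ and the claim yields $z\in\ell\subseteq\zeta_2^*$. Hence all points of $\Gamma_p$ lie in $\zeta_2^*$, and since each symp of $\Gamma_p$ is just a set of points of $\Gamma_p$, every symp is contained in $\zeta_2^*$, as required.

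The main obstacle is the claim, and in particular its clean split into the ovoid case, where uniqueness of the symp through a symplectic pair does all the work, and the singular case, which is exactly the situation handled by \cref{sing}. The remaining incidence bookkeeping in $\PG(2,\H)$ is routine. The point needing care is verifying in the ovoid case that $a$ and $b$ are genuinely non-collinear in $\Delta_p$ — this is precisely the defining property of an ovoid of $\zeta_\ell$ — so that the non-collinearity inside $\zeta_2^*$ matches and forces $\zeta_\ell=\zeta_2^*$.
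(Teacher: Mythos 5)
Your proof is correct and takes essentially the same route as the paper's: the same dichotomy for a symp of $\Gamma_p$ (isometric/ovoid embedding versus containment in a singular subspace), with the singular case delegated to Lemma~\ref{sing} and the ovoid case settled by uniqueness of the symp of $\Delta_p$ through a symplectic pair of points lying in $\zeta_2^*$. The only difference is cosmetic bookkeeping at the end: the paper covers the remaining symps through the point $\xi_2^*\cap\xi_2$ by interchanging roles with an already-covered symp, whereas you cover all points of $\Gamma_p$ at once via lines avoiding $\xi_2^*\cap\xi_2$.
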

\begin{proof}
Let $x$ denote the unique point in $\xi_2^* \cap\xi_2$. Take any symp $\xi'_2$ of $\Gamma_p$ not through $x$. Then $\xi_2'$ meets $\xi_2^*$ and $\xi_2$ in distinct points $z^*$ and $z$, respectively. If $z$ and $z^*$ are $\Delta_p$-collinear, then $\xi'_2$ embeds in a singular subspace and hence Lemma~\ref{sing} implies that $\xi'_2\subseteq \zeta_2^*$. If $z$ and $z^*$ are not $\Delta_p$-collinear, then $\xi_2'$ embeds isometrically in the unique symp of $\Delta_p$ containing $z$ and $z^*$, being $\zeta_2^*$. So in both cases, $\xi_2'\subseteq \zeta_2^*$. Interchanging the roles of $\xi_2'$ and $\xi_2$, the same conclusion is obtained for any symp of $\Gamma_p$ containing $x$. The lemma follows.
\end{proof}

%

%

\begin{lemma}\label{embres}
With the above notation, and with the hypothesis of Theorem~\ref{f4ine8}. each symp of $\Gamma_p$ embeds isometrically in a symp of $\Delta_p$ and this correspondence is injective (that is, no two symps of $\Gamma_p$ embed in the same symp of $\Delta_p$).
\end{lemma}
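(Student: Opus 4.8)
The plan is to prove both assertions at once by reducing them to a single geometric fact: the projective plane $\Gamma_p\cong\PG(2,\H)$ cannot lie inside one symp of $\Delta_p\cong\mathsf{E_{6,1}}(\K)$. I would begin from the dichotomy already established for the symps of $\Gamma_p$ (via \cite[Lemma~3.19]{DSV2}): each symp $\xi_2$ of $\Gamma_p$ is either (a) isometrically embedded in a symp of $\Delta_p$, in which case the same dimension count that exhibited $\xi_2^*$ as an ovoid of $\zeta_2^*$ shows that $\xi_2$ is an ovoid of its containing symp, or (b) contained in a singular subspace, which by \cite[Proposition~3.18]{Pet-Mal:23} can occur only in the inseparable case. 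Thus in the separable case possibility (b) is vacuous and the embedding assertion is immediate.

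Next I would reduce both claims to the impossibility of $\Gamma_p\subseteq\zeta$ for a single symp $\zeta$ of $\Delta_p$. For the embedding assertion in the inseparable case, suppose some symp $\xi_2$ is of type (b). Then \cref{sing} gives $\xi_2\subseteq\zeta_2^*$, and since $\xi_2^*$ is of type (a) we have $\xi_2\neq\xi_2^*$, so \cref{Q^*} forces $\Gamma_p\subseteq\zeta_2^*$. For injectivity, suppose two distinct symps $\xi_2,\xi_2'$ of $\Gamma_p$ embed in a common symp $\zeta_2$; they meet in a single point $a$, and any third symp $\xi_2''$ of $\Gamma_p$ missing $a$ meets $\xi_2$ and $\xi_2'$ in two distinct points of $\zeta_2$. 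Having excluded type (b), $\xi_2''$ is of type (a), hence it is the unique symp through those two non-collinear points of $\zeta_2$ and therefore lies in $\zeta_2$; \cref{Q^*} again yields $\Gamma_p\subseteq\zeta_2$. So everything comes down to ruling out $\Gamma_p\subseteq\zeta$.

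To rule this out I would exploit that $\xi_2^*$ is an \emph{ovoid} of $\zeta_2^*$. Choose $a\in\Gamma_p\setminus\xi_2^*$; since $\Gamma_p\subseteq\zeta_2^*$ and every maximal singular subspace of $\zeta_2^*$ meets the ovoid $\xi_2^*$ in exactly one point, $a$ is collinear inside $\zeta_2^*$ to some $x_0\in\xi_2^*$. Consider the symp $ax_0$ of $\Gamma_p$ (the unique line of $\PG(2,\H)$ through $a$ and $x_0$). If $ax_0$ is of type (a), then it is an ovoid of some symp $\eta$, so no two of its points are collinear in $\eta$; a tangent-hyperplane computation shows no point of $ax_0$ other than $x_0$ is collinear to $x_0$ in $\eta$. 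If $\eta=\zeta_2^*$ this contradicts $a\perp x_0$, and if $\eta\neq\zeta_2^*$ then $ax_0\subseteq\zeta_2^*\cap\eta$ lies in a singular subspace, contradicting that $ax_0$ is an ovoid. This is the crux, and the main obstacle I anticipate is the remaining inseparable possibility that $ax_0$ is itself of type (b): there the tangent argument for $ax_0$ does not apply, and one must instead run the collinearity bookkeeping relative to the ovoid $\xi_2^*$ (which the standing hypothesis guarantees to be of type (a)), controlling the set $a^\perp\cap\xi_2^*$ of ovoid points collinear to $a$ so that the ovoid structure of $\xi_2^*$ alone produces the contradiction.
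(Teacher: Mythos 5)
Your reduction matches the paper's own first step: via \cref{sing} and \cref{Q^*} (plus the observation that any isometrically embedded symp can play the role of $\xi_2^*$), both assertions come down to ruling out $\Gamma_p\subseteq\zeta_2^*$, and your observation that the symp $ax_0$ of $\Gamma_p$ joining a point $a\in\Gamma_p\setminus\xi_2^*$ to the ovoid point $x_0$ of a maximal singular subspace through $a$ cannot be of type (a) is correct. In the separable case, where type (b) is impossible by \cite[Proposition~3.18]{Pet-Mal:23}, this does close the argument.

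In the inseparable case, however, your proof stops exactly where the substance of the lemma lies, and you acknowledge this yourself. Under the hypothesis $\Gamma_p\subseteq\zeta_2^*$, the conclusion that $ax_0$ is of type (b) is not absurd --- it is forced, and indeed every point $x\in\Gamma_p\setminus\xi_2^*$ then lies on at least \emph{two} type (b) symps (take two maximal singular subspaces through $x$ meeting only in $x$; each contains one point of the ovoid $\xi_2^*$, and the two resulting symps of $\Gamma_p$ contain collinear pairs). So merely exhibiting type (b) symps produces no contradiction, and the ``collinearity bookkeeping relative to $\xi_2^*$'' you gesture at is not an argument; nothing in the configuration ``ovoid of $\zeta_2^*$ plus points collinear to ovoid points'' is self-contradictory. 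The paper's proof needs three further ideas here: (1) through every $x\in\Gamma_p\setminus\xi_2^*$ there is also at least one type (a) symp $\xi_2'$ (pick maximal singular subspaces $U\ni x$ and $U'$ with $U\cap U'$ a $3$-space avoiding both $x$ and $U\cap\xi_2^*$; then $U'\cap\xi_2^*$ is not collinear to $x$); (2) given a type (b) symp $\xi_2\subseteq U$ through $x$, extend the tangent hyperplane of $\xi_2$ at $x$ to a $3$-space $S\subseteq U$ with $S\cap\xi_2=\{x\}$, let $V$ be the second maximal singular subspace through $S$ and $v=V\cap\xi_2^*$; then $v\notin S$, hence $v^\perp\cap\xi_2=\{x\}$, so at most one type (b) symp of $\Gamma_p$ passes through $v$; (3) interchange the roles of $(x,\xi_2^*)$ and $(v,\xi_2')$ in the counting of (1) to get at least two type (b) symps through $v$, the desired contradiction. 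Your proposal contains none of these steps --- in particular it never produces the auxiliary type (a) symp $\xi_2'$ on which the role-swap depends --- so the inseparable case, which is precisely the case the hypotheses of Theorem~\ref{f4ine8} are designed to handle, remains a genuine gap.
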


\begin{proof}
Suppose for a contradiction that either there is a symp of $\Gamma_p$ which embeds in a singular subspace of $\Delta_p$, or all symps of $\Gamma_p$ embed isometrically but there are two distinct symps which embed in the same symp of $\Delta_p$. Note that each symp which embeds isometrically plays the same role as $\xi_2^*$. In both cases, it follows from Lemmas~\ref{sing} and~\ref{Q^*} that all symps of $\Gamma_p$ are contained in $\zeta_2^*$. 

Consider a point $x$ of $\Gamma_p$ not contained in $\xi_2^*$. Then each maximal singular subspace of $\zeta_2^*$ containing $x$, contains a unique point of $\xi_2^*$. Let $U$ be such a maximal singular subspace. Then we can pick a maximal singular subspace $U'$ intersecting $U$ in a $3$-space that does not contain either of the two points $x$ or $U\cap\xi_2^*$. We see that $U'\cap\xi_2^*$ is not collinear to $x$.    This implies 
that there 
is at least one symp of $\Gamma_p$ through $x$ which embeds isometrically. Since we can find two maximal singular subspaces through $x$ only having $x$ in common, there are at least two which do not.

So we may select a symp $\xi_2$ of $\Gamma_p$ through $x$ which embeds in a singular subspace $U$ of $\zeta_2^*$, and one, say $\xi_2'$, also through $x$, which embeds isometrically. We may assume that $U$ is a maximal singular subspace, possibly not generated by $\xi_2$. However, there is a unique hyperplane of $\<\xi_2\>$ tangent to $\xi_2$ at $x$ (as $\xi_2$ comes from an embedded polar space of rank $2$---a symp of $\Gamma_1$). We can easily extend this hyperplane to a $3$-space $S$ of $U$ intersecting $\xi_2$ in only $x$. Let $V$ be the unique other maximal singular subspace of $\zeta_2^*$ containing $S$. Then $V$ contains a unique point $v$ of $\xi_2^*$. Since $v$ is not collinear to $\xi_2\cap\xi_2^*$, as $\xi^*$ is an ovoid of $\zeta_2^*$, the point $v$ is not contained in $S$. But then $v^\perp\cap\xi_2\subseteq v^\perp\cap U=S$, implying that there is only one symp of $\Gamma_p$ through $v$ which embeds in a singular subspace (and it contains $x$; hence $v\notin \xi'_2$). Interchanging the roles of $(x,\xi_2^*)$ and $(v,\xi_2')$, we deduce from the first paragraph that there are at least two symps through $v$ that are embedded in a singular subspace, a contradiction.

We obtain that our initial assumption is wrong, from which the lemma follows.
\end{proof}

\subsubsection{The global situation in any point residual}\label{sec:global}
 A \emph{classical} ovoid $O$ of a quadric is an ovoid that is the intersection of the quadric with a subspace $S$ of the ambient projective space. The \emph{nucleus} of $O$ is the intersection of all tangent hyperplanes to $O$ in $S$. Since we are dealing with ovoids that are residues of quadrics $Q$ of Witt index $2$ (quadrics that contain lines but no planes: the symps of $\Gamma_1$), and each embedding of $Q$ in a projective space is obtained from a quotient of the standard embedding as a quadric, the embeddings of $O$ that will occur here are also quotients of their standard representation as a quadric of $S$. 
 
 By the previous sections, we have shown the following proposition.

\begin{prop}\label{recap}
Let $\Gamma_p=(X,\cL)$ be a point-line geometry isomorphic to a projective plane, contained in $\mathsf{E_{6,1}}(\K)$ such that each member of $X$ is a point of $\mathsf{E_{6,1}}(\K)$ and such that each member of $\cL$ is either a classical ovoid of some symp, or the quotient of a classical ovoid contained in some singular subspace. Assuming $X$ itself is not contained in a singular subspace, each member of $\cL$ is then an ovoid of a symp of $\mathsf{E_{6,1}}(\K)$, and no two distinct members of $\cL$ are contained in the same symp of $\mathsf{E_{6,1}}(\K)$. In particular, each pair of points of $X$ is symplectic.
\end{prop}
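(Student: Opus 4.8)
The plan is to obtain the first two assertions directly from the three lemmas of this subsection, and then to deduce the symplecticity claim in one line. The point to notice first is that the hypotheses placed on $\Gamma_p=(X,\cL)$ --- a projective plane, embedded in $\mathsf{E_{6,1}}(\K)$ with point set consisting of points and line set consisting of classical ovoids of symps or quotients of classical ovoids inside singular subspaces, with $X$ not contained in a singular subspace --- are exactly the running hypotheses of Lemmas~\ref{sing}, \ref{Q^*} and~\ref{embres}. Crucially, the coordinatising algebra $\H$ never entered those arguments except through the projective-plane incidence and the classical-ovoid structure of the lines, so the lemmas apply verbatim in this abstract setting.

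First I would invoke Lemma~\ref{embres}. It asserts precisely that every member of $\cL$ embeds isometrically in a symp of $\mathsf{E_{6,1}}(\K)$, and that distinct members embed in distinct symps. Isometric embedding rules out the ``quotient inside a singular subspace'' alternative, and by the same dimension count that made $\xi_2^*$ an ovoid (every maximal singular subspace of the ambient symp meets the line in a single point) each such line is an ovoid of its symp. This yields the two displayed conclusions: each member of $\cL$ is an ovoid of a symp, and no two members share a symp.

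For the final sentence I would argue as follows. Let $x,y\in X$ be distinct. Since $\Gamma_p$ is a projective plane they lie on a unique line $\ell\in\cL$, and by the previous step $\ell$ is an ovoid of a symp $\zeta\cong\mathsf{D_{5,1}}(\K)$ of $\mathsf{E_{6,1}}(\K)$. Points of an ovoid are pairwise non-collinear, so $x$ and $y$ are non-collinear; since $\mathsf{E_{6,1}}(\K)$ is strong of diameter~$2$, non-collinear points are automatically symplectic, whence $x\pperp y$.

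The genuine difficulty is therefore not in this assembly but is already absorbed into Lemma~\ref{embres}, namely the exclusion of the two degenerate possibilities (a line collapsing into a singular subspace, or two lines sharing a symp). Both are dispatched by its contradiction argument, which feeds on Lemmas~\ref{sing} and~\ref{Q^*}: any line contained in the distinguished symp $\zeta_2^*$ forces all lines into $\zeta_2^*$, contradicting the hypothesis that $X$ is not confined to a singular subspace. Granting those exclusions, the present proposition is a transcription of Lemma~\ref{embres} together with the one-line symplecticity deduction above.
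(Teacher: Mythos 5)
Your overall route is the paper's own: Proposition~\ref{recap} is stated there with no separate proof beyond the remark that it has been ``shown by the previous sections'', i.e.\ it is intended as a recap of Lemma~\ref{embres} (which rests on Lemmas~\ref{sing} and~\ref{Q^*}), together with the one-line symplecticity observation you give (two distinct points of $X$ lie on a common line of $\cL$, which is an ovoid of a symp, so they are non-collinear points of a common symp, hence symplectic). So the assembly is the right one.

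The gap lies in your claim that the hypotheses of the proposition ``are exactly the running hypotheses'' of the three lemmas, and in the sentence you offer to justify it. Those lemmas are proved under the standing assumption that there is a distinguished member $\xi_2^*$ of $\cL$ which is a classical ovoid of a symp $\zeta_2^*$ of $\Delta_p$ (this is where ``the hypothesis of Theorem~\ref{f4ine8}'' enters the statement of Lemma~\ref{embres}), whereas the proposition assumes only that $X$ is not contained in a singular subspace. These are not the same statement, and the passage from one to the other is precisely the only place where the hypothesis on $X$ is used: if every member of $\cL$ were (a quotient of a classical ovoid) inside a singular subspace, then any two points of $X$, lying on a common line of $\cL$, would be collinear, so $X$ would be a set of pairwise collinear points, and such a clique in $\mathsf{E_{6,1}}(\K)$ is contained in a singular subspace --- contradicting the hypothesis. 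Without this bridge you have no $\xi_2^*$ to feed into the lemmas. Moreover, the shortcut you propose --- ``any line contained in $\zeta_2^*$ forces all lines into $\zeta_2^*$, contradicting the hypothesis that $X$ is not confined to a singular subspace'' --- is false as stated: $\zeta_2^*$ is a symp, i.e.\ a polar space isomorphic to $\mathsf{D_{5,1}}(\K)$, not a singular subspace, so confining all of $X$ to $\zeta_2^*$ contradicts nothing; this is exactly why the proof of Lemma~\ref{embres}, after reducing to the situation that all lines lie in $\zeta_2^*$, still needs its further combinatorial argument (the analysis of the points $x$ and $v$ and of how many lines through each lie in singular subspaces) to reach a contradiction. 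The repair is short: insert the clique argument above to produce $\xi_2^*$, then invoke Lemma~\ref{embres} (whose proof, as you correctly observe, never uses the quaternion coordinatisation) and conclude as you do.
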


Next we want to show that, with the notation of \cref{recap}, and with reference to \cref{quatalg}, the set $X$, viewed as a subset of points of $\PG(26,\K)$ is a quaternion Veronese variety in some $14$-dimensional subspace of $\PG(26,\K)$.    The following lemma paves the way to achieve that.


\begin{lemma}\label{far}
Let $q$ be a point of $\Gamma_p$ and $\xi_2,\xi_2'\in\cL$ symps of $\Gamma_p$. Let $\zeta_2$ and $\zeta_2'$ be the respective corresponding symps of $\Delta_p$. Then:
\begin{compactenum}[$(i)$]
\item If $q\notin \xi_2$, then $q$ is far from $\zeta_2$, that is, $q^{\perp} \cap \zeta_2=\varnothing$.
\item If $\xi_2\neq \xi_2'$ then $\zeta_2$ and $\zeta_2'$ share a unique point (namely the point $\xi_2 \cap \xi_2'$).
\end{compactenum}
\end{lemma}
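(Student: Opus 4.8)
The plan is to deduce both parts from the local structure of $\Delta_p\cong\mathsf{E_{6,1}}(\K)$, establishing $(i)$ from the ovoid property of the members of $\cL$ together with \cref{recap}, and then bootstrapping $(ii)$ from $(i)$ using only the strongness of $\mathsf{E_{6,1}}(\K)$ and the one-or-all axiom for polar spaces. I will use the following facts throughout. Since $\mathsf{E_{6,1}}(\K)$ is strong of diameter~$2$, any two distinct non-collinear points lie in a \emph{unique} symp, and its symps are polar spaces isomorphic to $\mathsf{D_{5,1}}(\K)$. By \cref{recap} any two distinct points of $\Gamma_p$ are symplectic, and each $\xi_2\in\cL$ is an ovoid of its corresponding symp $\zeta_2$, so the points of $\xi_2$ are pairwise non-collinear in $\zeta_2$ and $\xi_2$ meets every maximal singular subspace of $\zeta_2$ in exactly one point. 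Finally, by \cref{embres} distinct symps of $\Gamma_p$ correspond to distinct symps of $\Delta_p$.

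For $(i)$ I would invoke the point-symp dichotomy in $\mathsf{E_{6,1}}(\K)$ (see, e.g., \cite{Shu:11}): for a point $q\notin\zeta_2$, either $q^\perp\cap\zeta_2=\varnothing$, or $q^\perp\cap\zeta_2$ is a maximal singular subspace of $\zeta_2$. In the second case this subspace meets the ovoid $\xi_2$ in a point $r$; as $q\notin\xi_2\ni r$ we have $q\neq r$, so $q\perp r$ is a pair of distinct collinear points of $\Gamma_p$, contradicting \cref{recap}. Hence the first case holds and $q$ is far from $\zeta_2$, which moreover gives $q\notin\zeta_2$ since $q\in q^\perp$.

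For $(ii)$, write $t=\xi_2\cap\xi_2'$, a single point since $\Gamma_p\cong\PG(2,\H)$ is a projective plane, and note $t\in\zeta_2\cap\zeta_2'$. Suppose for contradiction that $\zeta_2\cap\zeta_2'$ contains some $u\neq t$. By \cref{embres} we have $\zeta_2\neq\zeta_2'$, so by strongness $\zeta_2\cap\zeta_2'$ cannot contain two non-collinear points, as these would determine a unique symp and force $\zeta_2=\zeta_2'$; hence $\Sigma:=\zeta_2\cap\zeta_2'$ is a singular subspace and contains the line $\ell:=tu$. As $\Sigma$ is singular and $\xi_2$ is an ovoid, $\Sigma\cap\xi_2=\{t\}$, so any $r\in\xi_2\setminus\{t\}$ satisfies $r\notin\Sigma\supseteq\ell$ and $r\notin\xi_2'$. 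Applying $(i)$ to $r$ and $\xi_2'$ gives $r^\perp\cap\zeta_2'=\varnothing$, so $r$ is collinear to no point of $\ell\subseteq\zeta_2'$. But $r$ and $\ell$ lie in the polar space $\zeta_2$ with $r\notin\ell$, so the one-or-all axiom forces $r^\perp\cap\ell\neq\varnothing$, a contradiction. Thus $\zeta_2\cap\zeta_2'=\{t\}$.

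I expect the main obstacle to be part $(i)$: the argument rests on the precise point-symp relation in $\mathsf{E_{6,1}}(\K)$, namely that a non-empty $q^\perp\cap\zeta_2$ must be a \emph{full} maximal singular subspace, for it is exactly this rigidity that turns the single ovoid-intersection point into a forbidden collinearity. Once $(i)$ is in hand, part $(ii)$ is essentially formal, using nothing beyond strongness and the one-or-all axiom.
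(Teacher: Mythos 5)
Your proof is correct, and part $(i)$ is essentially the paper's own argument: assume $q^\perp\cap\zeta_2$ is a maximal singular subspace, use the ovoid property of $\xi_2$ to find a point of $\Gamma_p$ inside it, and contradict \cref{recap}. Part $(ii)$, however, takes a genuinely different route. The paper invokes the $\mathsf{E_6}$-specific structural fact that two distinct symps of $\mathsf{E_{6,1}}(\K)$ meet either in a point or in a singular $4$-space: assuming the intersection is a $4$-space $W$, it selects $y\in\xi_2'\setminus W$, observes that $y\notin\xi_2$ is close to $\zeta_2$ (its perp meets $W$), and contradicts $(i)$. You avoid that classification entirely: strongness of $\mathsf{E_{6,1}}(\K)$ (uniqueness of the symp through a non-collinear pair) forces $\zeta_2\cap\zeta_2'$ to be a singular subspace, and if it contained a line $\ell$ then any $r\in\xi_2\setminus\{t\}$ would, by the Buekenhout--Shult one-or-all axiom applied inside $\zeta_2$, be collinear to a point of $\ell\subseteq\zeta_2'$, contradicting $(i)$. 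Both versions bootstrap $(ii)$ from $(i)$; yours trades the symp-intersection dichotomy for the one-or-all axiom, making it slightly longer but independent of that $\mathsf{E_6}$-specific input, while the paper's is shorter granted the standard fact.

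One small point, which the paper's proof elides as well: in $(i)$ your point-symp dichotomy is stated for $q\notin\zeta_2$, but the hypothesis only gives $q\notin\xi_2$, and your closing remark deduces $q\notin\zeta_2$ from the conclusion, which is circular. The case $q\in\zeta_2\setminus\xi_2$ is killed by the same ovoid argument: a maximal singular subspace of $\zeta_2$ through $q$ meets $\xi_2$ in a point distinct from $q$ and collinear to $q$, again contradicting \cref{recap}.
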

\begin{proof}\begin{compactenum}[$(i)$]
\item Suppose for a contradiction that $q$ is close to $\zeta_2$, that is, $q^\perp\cap \zeta_2$ is a $4'$-space $U$. Since $\xi_2$ is an ovoid of $\zeta_2$, there is some point $x\in\xi_2\cap U$, contradicting \cref{recap} that ensures that $q$ and $x$ are symplectic and not collinear.
\item
Suppose for a contradiction that $\zeta_2\cap \zeta_2'$ is a $4$-space $W$. Select $y\in\xi_2'\setminus W$. Then $y\notin\xi_2$ is close to $\zeta_2$, contradicting $(i)$. \qedhere \end{compactenum} 
\end{proof}

We can now show:
\begin{prop}\label{prop:veronese}
The set $X$, viewed as a subset of the ambient projective space $\PG(26,\K)$ of $\mathcal{E}_6(\K)$, is a quaternion Veronese variety in a subspace of dimension $14$.
\end{prop}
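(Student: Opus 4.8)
The plan is to recognise $(X,\cL)$ as a configuration of Mazzocca--Melone type and then invoke the combinatorial characterisation of quaternion Veronese varieties developed in the circle of ideas around \cite{Sch-Sch-Mal-Vic:23}. Concretely, by \cref{recap} each member of $\cL$ is an ovoid $\xi_2$ of a symp $\zeta_2\cong\mathsf{D_{5,1}}(\K)$ of $\Delta_p\cong\mathsf{E_{6,1}}(\K)$, realised in the standard embedding $\mathcal{E}_6(\K)$ as the quadric of Witt index~$1$ with equation $X_{-1}X_1=\Norm(X_0)$ inside a $5$-dimensional subspace $\Pi_{\xi_2}:=\<\xi_2\>$ of $\PG(26,\K)$. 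Since $\H$ is a division algebra the form $X_{-1}X_1-\Norm(X_0)$ is non-degenerate, so $\xi_2$ spans $\Pi_{\xi_2}$ and $\Pi_{\xi_2}\cap X=\xi_2$. I would then take $\Xi=\{\Pi_{\xi_2}\mid \xi_2\in\cL\}$ as the family of ``quadric subspaces'' and verify the Mazzocca--Melone axioms for the pair $(X,\Xi)$.

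First I would dispatch the incidence axioms. Because $\Gamma_p\cong\PG(2,\H)$, any two points of $X$ lie on a unique common member of $\cL$; by \cref{recap} such a pair is symplectic and hence lies in a unique symp of $\Delta_p$, so the two points determine a unique member of $\Xi$. The transversality of distinct members of $\Xi$ is exactly what \cref{far} supplies: part~$(ii)$ gives that the corresponding symps $\zeta_2,\zeta_2'$ meet in the single point $\xi_2\cap\xi_2'$, while part~$(i)$ (a point off a symp is far from it) rules out spurious collinearities; together these force $\Pi_{\xi_2}\cap\Pi_{\xi_2'}$ to be precisely the point $\xi_2\cap\xi_2'$ of $X$, with no residual intersection in the ambient $\PG(26,\K)$.

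With incidence and transversality in place, the remaining and most delicate step is the local (tangent) axiom: at a point $x$ lying on several members of $\cL$ one must show that the tangent spaces of the various quadrics $\xi_2$ at $x$ glue into a single subspace meeting $X$ only in $x$, of the correct bounded dimension. Here the plan is to pass to the point residual $\Res_{\Delta_p}(x)\cong\mathsf{D_{5,5}}(\K)$, in which the members of $\cL$ through $x$ induce the pencil $\PG(1,\H)$ with its quadric fibres, and to argue that this residual datum is again a (lower-dimensional) Veronesean configuration, so that the tangent subspaces are organised coherently. The inseparable/characteristic~$2$ behaviour of the norm form $\Norm$ is exactly where I expect the main obstacle to lie, and it is controlled by the standing hypothesis that no symp is contained in a singular subspace (guaranteeing, via \cite[Proposition~3.18]{Pet-Mal:23} and the preceding subsections, that every quadric $\xi_2$ embeds isometrically and no quadric degenerates).

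Once the Mazzocca--Melone axioms are verified, the characterisation theorem identifies $X$ projectively with the image of the Veronesean map $\rho$ of \cref{quatalg}, that is, with the quaternion Veronese variety; this is consistent with the standard inclusion of the quaternion Veronesean inside the split Cayley Veronesean $\mathcal{E}_6(\K)$. The dimension statement is then automatic: since $\rho$ maps $\H^3$ into $\K^3\times\H^3$, a vector space of dimension $3+3\cdot 4=15$, the quaternion Veronese variety spans a projective space of dimension exactly $14$, and hence $X$ spans a $14$-dimensional subspace of $\PG(26,\K)$, as claimed.
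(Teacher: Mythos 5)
Your overall strategy --- verify Mazzocca--Melone-type intersection axioms for the family of quadric subspaces and then invoke a characterisation of the quaternion Veronese variety --- is the same in spirit as the paper's, which cites \cite[Main Result~4.3]{DSVM} (that is the relevant characterisation here; \cite{Sch-Sch-Mal-Vic:23} concerns the third-row varieties). However, your transversality step has a genuine gap. From \cref{far} you know only that the symps $\zeta_2,\zeta_2'$ of $\Delta_p$, and hence the ovoids $\xi_2,\xi_2'$, share exactly one \emph{point}; this is a statement about point sets of the abstract geometry and gives no control over the intersection $\<\xi_2\>\cap\<\xi_2'\>$ of their linear spans inside $\PG(26,\K)$, which a priori could be a line or a plane meeting $X$ in only that one point. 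Parts $(i)$ and $(ii)$ of \cref{far} cannot ``force'' a conclusion about linear spans, since they never refer to the ambient projective space. The paper closes precisely this gap with a nontrivial property of the embedding $\mathcal{E}_6(\K)$, quoted from \cite[\S7.1]{Sch-Mal:17}: if two symps of $\mathsf{E_{6,1}}(\K)$ meet in a point, then their spans $\<\zeta_2\>$ and $\<\zeta_2'\>$ meet in a point. Since $\<\xi_2\>\subseteq\<\zeta_2\>$ and $\<\xi_2'\>\subseteq\<\zeta_2'\>$, and the common point of $\xi_2$ and $\xi_2'$ lies in both spans, this pins down $\<\xi_2\>\cap\<\xi_2'\>$ as exactly that point. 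Without an input of this kind about the ambient embedding, your transversality axiom remains unproven.

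Relatedly, your treatment of the tangent axiom is a plan rather than an argument: passing to $\Res_{\Delta_p}(x)$ and asserting that the residual datum is ``again a Veronesean configuration'' does not control how the tangent spaces of the various quadrics through $x$ sit inside $\PG(26,\K)$ --- once more a question about the embedding, not about the abstract geometry, and you flag it yourself as the delicate step. The paper avoids this entirely: its proof verifies only the span-intersection condition above (the other hypotheses of \cite[Main Result~4.3]{DSVM} being supplied by \cref{recap}, namely that the members of $\cL$ are classical ovoids of symps, no two in a common symp, with any two points of $X$ on a unique member) and then applies the characterisation. If you repair the transversality step via \cite[\S7.1]{Sch-Mal:17} and use the characterisation from \cite{DSVM} in place of one requiring a separately verified tangent axiom, your argument collapses to the paper's proof; your concluding dimension count agreeing with $\dim_{\K}(\K^3\times\H^3)-1=14$ is fine.
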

\begin{proof}
If two symps $\zeta_2,\zeta_2'$ of $\mathsf{E_{6,1}}(\K)$ intersect in a point, then also the subspaces $\<\zeta_2\>$ and $\<\zeta_2'\>$ intersect in a point (see \cite[\S7.1]{Sch-Mal:17}). Hence, by \cref{far}, for each pair of distinct symps $\xi_2,\xi_2'$ of $\Gamma_p$, the subspaces $\<\xi_2\>$ and $\<\xi_2'\>$ intersect in a unique point. Now the assertion follows from \cite[Main Result~4.3]{DSVM}. 
\end{proof}

We now give the proof of Proposition~\ref{prop:residual}. 

\begin{proof}[Proof of Proposition~\ref{prop:residual}]
By Proposition~\ref{prop:veronese} and \cite[Remark~5.3]{npvv} we have a standard inclusion of the quaternion Veronese variety in $\mathsf{E_{6,1}}(\K)$ in the point residual at $p$. Let $q$ be any point of $\Gamma_1$ collinear to $p$. Since the symps of $\Gamma_1$ through $pq$ are isometrically embedded, we can interchange the roles of $p$ and $q$ and obtain that all symps of $\Gamma_1$ through $q$ are isometrically embedded. Connectivity then implies that all symps of $\Gamma_1$ are isometrically embedded in $\Delta_1$ and in each point residual we have a standard inclusion of the quaternion Veronese variety in $\mathsf{E_{6,1}}(\K)$.

Now  \cite[Proposition~6.17]{npvv} shows existence and uniqueness of $\Gamma_1$ in $\Delta_1$. Moreover we obtain that $\Gamma_1$ is isometrically embedded in $\Delta_1$ (this follows from \cite[Lemma~6.16]{npvv}), and a group isomorphic to $G(\K,\H)$ acts on $\Delta_1$ as a collineation group, and the fixed point structure of every nontrivial member of it is precisely $\Gamma_1$. 
\end{proof}

%
%
%
%

We also note the following properties:
\begin{cor}\label{propres}
\begin{compactenum}[$(i)$] \item Every point of $\Delta_1$ is collinear to a least one point of $\Gamma_1$.
\item Each line of $\Delta_1$ carrying exactly one point $p$ of $\Gamma_1$ is contained in exactly one symp of $\Delta_1$ that contains a symp of $\Gamma_1$.
\end{compactenum}
\end{cor}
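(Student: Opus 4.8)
The plan is to work in the point residual $\Delta_p=\Res_{\Delta_1}(p)\cong\mathsf{E_{6,1}}(\K)$ at a point $p\in\Gamma_1$. By \cref{prop:veronese} the lines of $\Gamma_1$ through $p$ form a quaternion Veronese variety $X$, the symps of $\Gamma_1$ through $p$ become the conics $\xi_2\in\cL$, each an ovoid of a unique symp $\zeta_2$ of $\Delta_p$ by \cref{recap}, and distinct conics give distinct symps by \cref{embres}. Writing $\zeta(\xi_1)$ for the unique symp of $\Delta_1$ into which a symp $\xi_1$ of $\Gamma_1$ embeds, I would first record the reduction $\Gamma_1\cap\zeta(\xi_1)=\xi_1$: a point $p'\in\Gamma_1\cap\zeta(\xi_1)$ outside $\xi_1$ has, by gatedness of the quad $\xi_1$ in the diameter-$3$ dual polar space $\Gamma_1$, a gate $g\in\xi_1$ at distance $1$, so $p'\perp g$; passing to $\Res(g)$, the line $p'g$ is a point of $X$ off the conic of $\xi_1$, hence far from $\zeta_2$ by \cref{far}(i) and in particular not in $\zeta_2$, contradicting $p'g\subseteq\zeta(\xi_1)$.

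This gives statement (ii) almost at once. Since a symp of $\Gamma_1$ embeds in a unique symp of $\Delta_1$, any symp $\zeta\supseteq L$ of $\Delta_1$ that contains a symp of $\Gamma_1$ equals some $\zeta(\xi_1)$; as $p\in L\subseteq\zeta$, the reduction forces $\xi_1\ni p$, so in $\Delta_p$ this corresponds to a special symp $\zeta_2$ through $\ell:=L$, with $\ell\notin X$ because $L$ is not a line of $\Gamma_1$. Uniqueness is then immediate from \cref{far}(ii): two distinct special symps meet only in a point of $X$, so they cannot share the point $\ell\notin X$. For existence I would use that the points of $X$ are pairwise symplectic (\cref{recap}): if $\ell$ is collinear to two distinct points $z_1,z_2\in X$, these are symplectic, determine the unique symp $\zeta_2=\xi(z_1,z_2)$ with conic $X\cap\zeta_2$ through them, and $\ell$, being a common neighbour of the symplectic pair, lies in $\zeta_2$. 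Thus (ii) reduces to the assertion that every point of $\mathsf{E_{6,1}}(\K)$ off $X$ is collinear to at least two points of $X$.

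For (i) I would reduce to showing that every $y\in\Delta_1$ is contained in, or close to, some $\zeta(\xi_1)$. This suffices because, representing $\zeta(\xi_1)\cong\mathsf{D_{6,1}}(\K)$ as a hyperbolic quadric in $\PG(11,\K)$ with $\xi_1=\zeta(\xi_1)\cap W$ and $\dim W=7$, the set $y^{\perp}\cap\zeta(\xi_1)$ is either a hyperplane section (if $y\in\zeta(\xi_1)$) or a maximal singular $5$-space (if $y$ is close); in both cases a dimension count ($5+7-11\geq 0$) shows it meets $W$, hence meets $\xi_1\subseteq\Gamma_1$, in a point collinear to $y$. By \cref{pointsympE77} the only remaining points are those far from $\zeta(\xi_1)$, i.e.\ the set $Y$ of \cref{lemmaE78}, so it remains to exclude a point $y$ far from every symp of $\Gamma_1$. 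Such a $y$ is still symplectic to some $p\in\Gamma_1$ (take the unique point of a far $\zeta(\xi_1)$ collinear to $y$ and intersect its $\perp$ with $\xi_1$); in the symp $\sigma=\xi(y,p)$ the set $y^{\perp}\cap\sigma$ is a geometric hyperplane, hence meets every line of $\sigma$, and applied to a line of $\Gamma_1$ through $p$ contained in $\sigma$ it produces a point $\neq p$ of $\Gamma_1$ collinear to $y$ — provided such a line exists, i.e.\ provided $\sigma_p:=\xi(y,p)_p$ meets $X$.

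The crux of both parts is therefore the same covering property of the embedded quaternion Veronese variety $X$ in $\mathsf{E_{6,1}}(\K)$: every point off $X$ sees at least two points of $X$, and (for the last step of (i)) the symp $\sigma_p$ meets $X$. I expect this to be the main obstacle. I would isolate it as a single lemma and prove it from the explicit standard inclusion of $X$ recalled after \cref{prop:veronese} together with the far/in dichotomy of \cref{far}, then apply it uniformly in both settings; the remaining bookkeeping (that the collinear point produced is distinct from $p$, and that $\Gamma_1$ being a full subspace makes every line meeting it twice a line of $\Gamma_1$) is routine.
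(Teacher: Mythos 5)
Your reductions are sound as far as they go, but the proof has a genuine hole exactly where the mathematical content of the corollary lies. The paper's own proof is a citation: part $(i)$ \emph{is} the last assertion of \cite[Proposition~6.2]{npvv} (a statement about the embedding $\Gamma_1\subseteq\Delta_1$ proved there), and part $(ii)$ is \cite[Lemma~4.11]{npvv} read in the point residual at $p$. What you have done is re-derive, correctly, the reduction of both parts to a residual statement about the quaternion Veronese variety $X\subseteq\mathsf{E_{6,1}}(\K)$ --- the reduction $\Gamma_1\cap\zeta(\xi_1)=\xi_1$ via the gate of a quad, the uniqueness half of $(ii)$ via \cref{far}$(ii)$, and the case analysis for $(i)$ via \cref{pointsympE77} --- but you then stop precisely where the real work begins: the covering property (``every point of $\mathsf{E_{6,1}}(\K)$ off $X$ is collinear to at least two points of $X$'', and ``the symp $\sigma_p$ meets $X$'') is announced as a lemma you ``would isolate and prove'', not proved. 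Since that covering property is essentially equivalent to the statement being proved (indeed $(i)$ is literally a covering statement), deferring it leaves the proof empty at its core.

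Moreover, this missing lemma is not routine, which is presumably why the paper outsources it to \cite{npvv}. The generic argument one would reach for --- singular subspaces or hyperplane sections of quadrics are always nonempty --- fails here because the relevant quadrics involve the \emph{anisotropic} quaternion norm form: a conic of $X$ spans a $\PG(5,\K)$ in which it is the quadric $X_{-1}X_1=\Norm(X_0)$, and a hyperplane of that $\PG(5,\K)$ can miss this quadric entirely (over $\K=\mathbb{R}$ with Hamilton quaternions, $\Norm(s)+\lambda(s)+\mu=0$ may have no solution), while a tangent hyperplane meets it in a single point. So one must argue which hyperplanes of $\langle C\rangle$ actually arise as $\ell^\perp\cap\langle C\rangle$ for $\ell$ a point of the ambient $\mathsf{D_{5,1}}$-symp (a Witt-type computation using that the orthogonal complement of $\langle C\rangle$ carries a form Witt-equivalent to $\Norm_{\H}$), and separately one must rule out points far from every special symp; neither is done. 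Until that lemma is supplied (or replaced by the citations to \cite[Proposition~6.2 and Lemma~4.11]{npvv}), both $(i)$ and the existence half of $(ii)$ remain unproven.
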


\begin{proof}
$(i)$ is the last assertion of Proposition 6.2 of  \cite{npvv}, and $(ii)$ follows from Lemma 4.11 of  \cite{npvv} by considering the point residual at $p$. 
\end{proof}

\subsection{The embedding of $\Gamma$ in $\Delta$}

We return to our main goal of proving Theorem~\ref{f4ine8}. We first prove that the embedding of $\Gamma$ in $\Delta$ is isometric. 

\begin{lemma}
The metasymplectic space $\Gamma$ is isometrically embedded in $\Delta$.
\end{lemma}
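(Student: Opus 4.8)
The plan is to verify, for every pair of points $x,y$ of $\Gamma$, that their mutual position is the same whether computed in $\Gamma$ or in $\Delta$. Since the four positions (collinear, special, symplectic, opposite) partition all pairs in each geometry, it suffices to establish the four forward implications: if $x,y$ are collinear (resp.\ special, symplectic, opposite) in $\Gamma$, then they are collinear (resp.\ special, symplectic, opposite) in $\Delta$. The biconditionals then follow automatically, since the $\Delta$-classes are mutually exclusive and the $\Gamma$-classes exhaust all pairs. The collinear case is immediate: as $\Gamma$ is a full subspace of $\Delta$, any line of $\Delta$ meeting $\Gamma$ in two points is contained in $\Gamma$, so $x\perp y$ in $\Delta$ forces $x\perp y$ in $\Gamma$, while conversely lines of $\Gamma$ are lines of $\Delta$. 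In particular every $\Gamma$-path is a $\Delta$-path, so $d_\Delta(x,y)\le d_\Gamma(x,y)$ and $d_\Gamma(x,y)\le 1\iff d_\Delta(x,y)\le 1$.

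Next I would treat the two distance-$2$ cases by descending to a point residual, where \cref{prop:residual} supplies the local isometry. If $d_\Gamma(x,y)=2$, pick a common neighbour $z\in\Gamma$; then $d_\Delta(x,y)=2$ as well and $z$ is also a common $\Delta$-neighbour. Write $\bar x,\bar y$ for the lines $zx,zy$ regarded as points of $\Res_\Delta(z)\cong\mathsf{E_{7,7}}(\K)$ and of $\Res_\Gamma(z)\cong\mathsf{C_{3,3}}(\H,\K)$. The key device is the \emph{symp--residual correspondence}: because the symps through $z$ are exactly the preimages of the symps of the residual geometry, and because symps are convex (so any common neighbour of a symplectic pair lies in their symp), the pair $x,y$ is symplectic in $\Delta$ if and only if $\bar x\pperp \bar y$ in $\Res_\Delta(z)$, and likewise for $\Gamma$ and $\Res_\Gamma(z)$. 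Chaining this with \cref{prop:residual} (which gives $\bar x\pperp\bar y$ in $\Res_\Gamma(z)\iff \bar x\pperp \bar y$ in $\Res_\Delta(z)$) yields symplectic $\Rightarrow$ symplectic. Since a $\Gamma$-special pair is precisely a distance-$2$ pair that is \emph{not} $\Gamma$-symplectic, the same equivalences (read in the contrapositive direction for the symplectic relation) give special $\Rightarrow$ special.

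It remains to show that points opposite in $\Gamma$ are opposite in $\Delta$; equivalently, that no $\Gamma$-opposite pair drops to $\Delta$-distance $\le 2$. \textbf{This is the main obstacle}, because a common $\Delta$-neighbour witnessing $d_\Delta(x,y)=2$ need not lie in $\Gamma$, so the residual reduction of the previous paragraph does not apply directly. I would argue by contradiction: suppose $x,y\in\Gamma$ are $\Gamma$-opposite but $\Delta$-symplectic, with symp $\zeta\cong\mathsf{D_{7,1}}(\K)$ (see \cref{stars}), and descend to $\Res_\Delta(x)\cong\mathsf{E_{7,7}}(\K)$. There $\zeta$ becomes a residual symp $\bar\zeta\cong\mathsf{D_{6,1}}(\K)$, and the map $w\mapsto xw$ identifies the common $\Delta$-neighbours of $x$ and $y$ with a geometric hyperplane $\bar H$ of $\bar\zeta$ (the points of $\bar\zeta$ collinear to the trace of $y$; cf.\ the correspondences of \cref{pointsymp} and \cref{pointsympE77}). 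The goal is to produce a point of $\bar H$ lying in $\Res_\Gamma(x)\cong\mathsf{C_{3,3}}(\H,\K)$: such a point is a line $xw$ with $w\in\Gamma$, $w\perp x$ and $w\perp y$, i.e.\ a common $\Gamma$-neighbour, giving $d_\Gamma(x,y)=2$ and contradicting oppositeness. To locate it I would use \cref{propres} together with the residual isometry of \cref{prop:residual} and the fact established in the previous subsections that the symps of $\Gamma_1$ embed isometrically into the symps of $\Delta_1$, so that $\Gamma_1$ meets $\bar\zeta$ in a large subconfiguration forced to intersect the hyperplane $\bar H$.

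Finally, the residual $\Delta$-special subcase ($x\JJoin y$ in $\Delta$) is handled in the same spirit and is more rigid, since a special pair admits a unique common neighbour $\mathfrak{c}(x,y)$ and its residual image is correspondingly constrained; here \cref{KasShu}, \cref{spspop}, and the position dictionary of \cref{perp-perp} provide the needed control. Assembling the four forward implications then gives that the mutual position of any two points of $\Gamma$ is the same in $\Gamma$ as in $\Delta$, which is exactly the assertion that $\Gamma$ is isometrically embedded in $\Delta$.
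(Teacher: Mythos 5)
Your treatment of the collinear, symplectic and special cases is sound and is essentially the paper's own argument: both routes reduce these cases to the residual isometry of \cref{prop:residual} (the paper phrases the symplectic case via connectivity---all point residuals are isometrically embedded---and the special case via the residual at the centre $\mathfrak{c}(p,q)$, where the two lines become residually opposite points). The genuine gap is exactly where you flag ``the main obstacle'': the opposite case. Your plan there has two defects. First, a factual one: for a $\Delta$-symplectic pair $x,y$ with symp $\zeta=\xi(x,y)$, the common $\Delta$-neighbours do not correspond to a geometric hyperplane $\bar H$ of the residual symp $\bar\zeta$; since $y^\perp\cap\zeta$ is a geometric hyperplane of the polar space $\zeta$ and $x\notin y^\perp$, every line of $\zeta$ through $x$ meets it, so the common neighbours correspond to \emph{all} of $\bar\zeta$. (This error is harmless, as it only enlarges your target set.) Second, and fatally, you give no argument that $\Res_\Gamma(x)$ contains any point of $\bar\zeta$, i.e.\ that some line of $\Gamma$ through $x$ lies inside $\zeta$. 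Neither \cref{propres} nor \cref{prop:residual} forces this: \cref{propres}$(i)$ only produces points of the residual geometry collinear to points of $\bar\zeta$, and the analysis of the embedding in the residuals (e.g.\ \cref{far}) shows that the embedded geometry tends to \emph{avoid} symps that do not contain one of its own symps---points off such a symp are far from it. So ``forced to intersect $\bar H$'' is not a missing routine detail but an unsupported claim that runs against the grain of the established facts; the same criticism applies to the $\Delta$-special subcase, which you only gesture at via \cref{KasShu}, \cref{spspop} and \cref{perp-perp}.

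The paper avoids this difficulty entirely by converting the already-proved cases into the opposite case via \cref{spspop}. If $p,q$ are opposite in $\Gamma$, then inside $\Gamma$ one can choose a path $p\perp b\perp c\perp q$ with $p\JJoin c$ and $b\JJoin q$: take $b\in p^\perp\cap q^{\Join}$, which is nonempty by \cref{perp-perp} applied in $\Gamma$, and $c=\mathfrak{c}(b,q)$; then $p\JJoin c$ because by \cref{pointsymp} (applied in $\Gamma$) no point of a symp containing $p$ can be collinear to $q$. Since collinearity and specialness transfer from $\Gamma$ to $\Delta$ by the cases you have already established, this same path satisfies the hypotheses of \cref{spspop} in $\Delta$, which forces $p$ to be opposite $q$ in $\Delta$. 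No analysis of how $\Gamma$ meets the symps of $\Delta$ is needed; you should replace your contradiction argument by this one-step deduction.
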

\begin{proof}
Clearly, for two collinear points $p$ and $q$ of $\Gamma$ we have that, if the point residual at $p$ contains an isometrically embedded symp, then so does the point residual at $q$ (because by Proposition~\ref{prop:residual} all symps through $p$ are isometrically embedded). By connectivity, it follows that all symps and all point residuals are isometrically embedded. Hence symplectic points in $\Gamma$ are also symplectic in $\Delta$. Let $p$ and $q$ be special points of $\Gamma$. Then in the point residual of their centre, say $c$, the lines $cp$ and $cq$ correspond to opposite points of $\Res_\Gamma(c)$. Since point residuals are isometrically embedded, this implies that $cp$ is opposite in $cq$ in $\Res_\Delta(c)$, and so $\{p,q\}$ is a special pair of $\Delta$. 

Finally suppose $p$ and $q$ are opposite in $\Gamma$. Then \cref{spspop}, which holds in both $\Gamma$ and $\Delta$,  implies that $p$ is opposite $q$ in $\Delta$, and the lemma is proved.   
\end{proof}

Our next goal is to show that $\Gamma$ is also convex in $\Delta$, in the building-theoretic sense, that is, closed under projections. Since the embedding is isometric, it is easy to see by extending galleries in $\Gamma$ that this is equivalent to the property that every apartment in $\Delta$ containing two given opposite chambers of $\Gamma$, viewed as flags of $\Delta$, contains the unique apartment of $\Gamma$ containing these two chambers.  

\begin{lemma}\label{convexemb}
The embedding of $\Gamma$ in $\Delta$ is convex.
\end{lemma}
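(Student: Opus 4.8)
The plan is to use the reduction recorded just before the statement: since the embedding is already known to be isometric, it suffices to show that every apartment $\Sigma$ of $\Delta$ containing two opposite chambers $C,C'$ of $\Gamma$ (viewed as flags of $\Delta$) contains the unique apartment $\Sigma_{\Gamma}$ of $\Gamma$ through $C$ and $C'$. The structural fact I would exploit is that apartments of $\Delta$ are convex in the building-theoretic sense, hence closed under the building-theoretic projection of their own simplices. Thus, if every simplex of $\Sigma_{\Gamma}$ can be produced by a finite sequence of projections applied to faces of $C$ and $C'$, and if each such projection, computed inside $\Gamma$, agrees with the corresponding projection computed inside $\Delta$, then every simplex of $\Sigma_{\Gamma}$ automatically lies in $\Sigma$, yielding $\Sigma_{\Gamma}\subseteq\Sigma$.

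For the generation step I would recall that $\Sigma_{\Gamma}$, being the unique apartment of the $\sF_4$-building through the opposite chambers $C,C'$, is the union of the chambers on minimal galleries from $C$ to $C'$, and that every simplex appearing in it is obtained by iterated projection of faces of $C'$ onto faces of $C$ and conversely. The essential point is then that these projections transfer from $\Gamma$ to $\Delta$. This holds because all the relations entering the intrinsic descriptions are preserved by the isometric embedding: symplectic pairs, special pairs together with their centres $\mathfrak{c}(\cdot,\cdot)$, perps, and opposition have the same meaning in $\Gamma$ and in $\Delta$, and each symp of $\Gamma$ embeds isometrically in a symp of $\Delta$. Consequently the coordinate-free formulas for projections in \cref{projline} and \cref{projplane'}, and at the residual level the descriptions in \cref{pointsymp}, \cref{pointline} and \cref{perp-perp}, return the same element whether read in $\Gamma$ or in $\Delta$; in particular $\proj^{\Gamma}_{F}(F')=\proj^{\Delta}_{F}(F')$ for simplices $F,F'$ of $\Gamma$.

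To organise the bookkeeping I would pass to point residuals: fixing a type-$1$ vertex $p$ of $C$, the residue $\Res_{\Delta}(p)\cong\mathsf{E_{7,7}}(\K)$ contains $\Res_{\Gamma}(p)\cong\mathsf{C_{3,3}}(\H,\K)$ isometrically by \cref{prop:residual}, so the argument recurses into lower rank, with base case the convexity of a single symp of $\Gamma$ inside a symp of $\Delta$, which is immediate since the former arises as a section of the latter by a subspace. Running over the type-$1$ vertices of $\Sigma_{\Gamma}$ then recovers all the higher simplices. The hard part will be precisely this type-matching: one must correlate the simplex types of the $\sF_4$-apartment $\Sigma_{\Gamma}$ with the flag-types they determine in $\Delta$, and verify that each projection needed to generate $\Sigma_{\Gamma}$ is of one of the intrinsic geometric kinds (or reduces, in a residue, to such a kind) whose value is forced by the isometrically transferred symp/perp/centre data, so that no step can a priori escape $\Gamma$. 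Once this is checked, convexity follows because $\Sigma$ is convex and contains the generators $C$ and $C'$.
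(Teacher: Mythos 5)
Your proposal is correct and follows essentially the same route as the paper's proof: reduce, via isometry, to checking that building-theoretic projections between the two opposite chambers of $\Gamma$ agree whether computed in $\Gamma$ or in $\Delta$ (using \cref{projline} and \cref{projplane'} together with the preservation of collinearity, special pairs, symplectic pairs and opposition), and then generate the whole $\Gamma$-apartment by iterating projections, so that it lands inside the convex hull of the two chambers in $\Delta$. The only difference is organizational: the paper verifies the four cases (point, line, plane, symp) directly in $\Delta$ rather than recursing through point residuals, which is exactly the "type-matching" bookkeeping you deferred.
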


\begin{proof}
Consider two opposite chambers $C=\{p,L,\alpha,\xi\}$ and $D=\{q,M,\beta,\zeta\}$ of $\Gamma$, with $p,q$ points, $L,M$ lines, $\alpha,\beta$ planes, and $\xi,\zeta$ symps of $\Gamma$. Let $\sigma$ and $\omega$ be the symps of $\Delta$ containing $\xi$ and $\zeta$, respectively. We first show that the projection of $C$ onto each of the members of $D$ coincide whether considered in $\Gamma$ or $\Delta$. Note that, since the embedding is isometric, collinearity of points, but also opposition of arbitrary flags in $\Gamma$ is the same as in $\Delta$. 
\begin{compactenum}[$(i)$]
\item Set $\{q,M',\beta',\omega'\}=\proj_q(C)$ in $\Delta$.   The line $M'$ is the unique line in $\Delta$ through $q$ containing a point collinear to some point of $L$. The plane $\beta'$ is the unique plane through $q$ containing a line all points of which are collinear to some point of $\alpha$. Finally, the symp $\omega'$ is the unique symp of $\Delta$ through $q$ intersecting $\sigma$ (necessarily in a unique point, say $t$). If $\zeta'$ is the unique symp of $\Gamma$ through $q$ intersecting $\xi\subseteq\sigma$, then this intersection point must clearly be $t$ and $\zeta'\subseteq\omega'$. This implies that  $\{q,M',\beta',\zeta''\}=\proj_q(C)$ in $\Gamma$.
\item Similarly, since being a special pair is the same in $\Gamma$ as in $\Delta$, and since projecting planes and symplecta from a line to an opposite line is determined by the mutual position of certain points in the same way in $\Gamma$ as in $\Delta$ by \cref{projline}, we conclude that $\proj_M(C)$ is the same in $\Delta$ as in $\Gamma$. 
\item In the same vein,  $\proj_\beta(C)$ is the same whether considered in $\Gamma$ or in $\Delta$, now using \cref{projplane'}.  
\item The last case is also easy:  since projection from a symp to an opposite is just given by being symplectic, it immediately follows that  $\proj_\zeta(C)$ in $\Gamma$ is the pointwise restriction of $\proj_\omega(C)$ to $\xi$.  
\end{compactenum}    
Now since $\proj_q(C)$ is, in $\Gamma$, opposite $\proj_p(D)$, and likewise for $\proj_M(C)$ and $\proj_L(D)$, for $\proj_\beta(C)$$\proj_\alpha(D)$, and for $\proj_\zeta(C)$ and $\proj_\xi(D)$, the same is true in $\Delta$, and we can apply the foregoing to these pairs of opposite chambers. Continuing like this we obtain the whole apartment in $\Gamma$ spanned by $C$ and $D$, and that this is a convex set in $\Delta$, also contained in the convex hull of $C$ and $D$ and hence in an apartment of $\Delta$.  
\end{proof}

Using the group $G(\K,\H)$ pointwise fixing a point residual of $\Gamma$ inside the corresponding point residual of $\Delta$, and the identity group, pointwise fixing a plane of $\Gamma$, we can apply  \cite[Proposition~4.16]{Tits:74}, in just the same way as done in the proof of  \cite[Proposition~4.16]{DSV2}, or \cite[Proposition~6.17]{npvv}
, to obtain:
\begin{prop}
Suppose the field $\K$ admits a quaternion division algebra (separable or inseparable) $\H$. Then $\mathsf{E_{8,8}}(\K)$ admits a full embedding of $\mathsf{F_{4,1}}(\K,\H)$, unique up to a projectivity if we assume in the inseparable case that at least one symp---and then automatically each symp---is not contained in a singular subspace. Also, this embedding arises as the fixed point set of each nontrivial collineation of a group of collineations isomorphic to the group $G(\K,\H)$. The embedding is isometric and convex and has fix diagram $\sE_{8;4}$.
\end{prop}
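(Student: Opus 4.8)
The plan is to assemble the global embedding and its stabiliser out of the purely local data already established in the residues, exactly along the lines of \cite[Proposition~4.16]{DSV2} and \cite[Proposition~6.17]{npvv}. The one genuinely new ingredient is \cref{prop:residual}: for every point $p$ of $\Gamma$ the point residual $\Res_\Delta(p)\cong\mathsf{E_{7,7}}(\K)$ carries a projectively unique, isometrically embedded copy of $\Res_\Gamma(p)\cong\mathsf{C_{3,3}}(\H,\K)$, and a group isomorphic to $G(\K,\H)$ acts on $\Res_\Delta(p)$ with fixed point set exactly this copy. First I would feed these residual actions, together with the trivial action on the residue of a plane of $\Gamma$, into Tits' extension principle \cite[Proposition~4.16]{Tits:74}. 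Concretely, fixing a flag consisting of a point $p$ and a plane $\pi\ni p$ of $\Gamma$, I would prescribe a chosen element of $G(\K,\H)$ on $\Res_\Delta(p)$ and the identity on $\Res_\Delta(\pi)$. These two prescriptions agree on the common residue $\Res_\Delta(\{p,\pi\})$ (the chosen element fixes $\Res_\Gamma(p)$ pointwise, and in particular fixes the residue of $\pi$, while the identity does so trivially), so \cite[Proposition~4.16]{Tits:74} produces a unique collineation of $\Delta$ extending them. Letting the $G(\K,\H)$-element vary yields a faithful action of $G(\K,\H)$ on $\Delta$ by collineations.

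Next I would identify the global fixed structure. Each nontrivial $\theta\in G(\K,\H)$ fixes every point of $\Gamma$, since it does so in each residual $\Res_\Delta(p)$ with $p\in\Gamma$; conversely, I must show it fixes nothing else. If $x$ is a fixed point collinear to some $p\in\Gamma$, then the line $px$ is a fixed point of $\Res_\Delta(p)$, hence lies in the residual fixed set $\Res_\Gamma(p)$, so $px$ is a line of $\Gamma$ and $x\in\Gamma$; an arbitrary fixed point is reduced to this case using \cref{propres}$(i)$ (every point of a residual is collinear to a point of $\Gamma_1$) together with connectivity of the point graph. This shows the fixed point set is exactly $\Gamma$, which in particular establishes existence of the embedding (its fixed set \emph{is} a full embedded metasymplectic space). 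For uniqueness up to a projectivity, given two such embeddings \cref{prop:residual} supplies a projectivity between corresponding point residuals; these local projectivities are compatible around a flag and glue, via the same extension principle, to a projectivity of $\Delta$ carrying one embedding to the other, under the stated inseparable-case hypothesis. The isometry and convexity of the embedding have already been established (the isometry lemma above, and \cref{convexemb}), so nothing further is required there.

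Finally, for the fix diagram I would argue that, since $\Gamma$ is fully, isometrically and convexly embedded, its set of fixed simplices is a subcomplex of the building, so the types of the fixed vertices can be read off directly from the embedded metasymplectic geometry: the four object-types of $\mathsf{F_{4,1}}(\K,\H)$ correspond to the $\sE_8$-nodes $1,6,7,8$ (tracking types down through the point residuals via \cref{stars}, starting from the fact that the points of $\Gamma$ are the type-$8$ vertices of $\Delta=\mathsf{E_{8,8}}(\K)$). Encircling exactly these four nodes gives the fix diagram $\sE_{8;4}$, matching the Tits index of relative type $\sF_4$ with quaternion anisotropic kernel inside $\sE_8$.

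The main obstacle I anticipate is not any single computation but the careful bookkeeping needed to apply \cite[Proposition~4.16]{Tits:74}: verifying that the prescribed residual actions are mutually compatible on the overlapping residues around the chosen flag, and, above all, promoting the \emph{local} statement ``fixed set $=\Gamma_1$ in each residual'' to the \emph{global} statement ``fixed set $=\Gamma$''---that is, ruling out extra fixed points, for which the collinearity input of \cref{propres}$(i)$ and connectivity of $\Delta$ are essential. A secondary point to watch is the inseparable case, which genuinely needs the hypothesis that at least one symp is not contained in a singular subspace; without it the embedding degenerates into the $\mathsf{E_{6,1}}(\K)$ situation noted after \cref{f4ine8}.
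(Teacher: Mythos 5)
Your proposal is correct and takes essentially the same route as the paper: the paper's own (one-sentence) proof likewise feeds the residual action of $G(\K,\H)$ from Proposition~\ref{prop:residual} on a point residual, together with the identity on the residue of a plane, into Tits' extension principle \cite[Proposition~4.16]{Tits:74} ``in just the same way'' as \cite[Proposition~4.16]{DSV2} and \cite[Proposition~6.17]{npvv}, with isometry and convexity already supplied by the preceding lemmas. Your additional verifications (compatibility of the two prescriptions, promotion of the local fixed-point identification to the global one via Corollary~\ref{propres}$(i)$ and connectivity, uniqueness by gluing residual projectivities, and reading off the fix diagram through Lemma~\ref{stars}) are exactly the bookkeeping the paper delegates to those citations.
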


This shows a large part of \cref{f4ine8}. It remains to show that every collineation $\theta$ of $\Delta$ pointwise fixing $\Gamma$ is domestic with opposition diagram $\mathsf{E_{8;4}}$ if $\theta$ is nontrivial. We prove this in the following subsection.

\subsection{Domesticity of Class I automorphisms}

In the following lemma we require another type of equator geometry (defined in \cite[Definition~3.6 ]{DSVM2}). Let $\Delta_1=\mathsf{E_{7,7}}(\K)$ and let $\xi,\xi'$ be opposite symps. Then the \textit{equator geometry $E(\xi,\xi')$} is the set of points of $\Delta_1$ collinear to a $5'$-space of $\xi$, and also to one of $\xi'$. By  \cite[Lemma~3.7]{DSVM2}, $E(\xi,\xi')$ is isomorphic to $\mathsf{D_{6,6}}(\K)$. 
Also for the lemma, a \emph{spread (of lines)} of a polar space is a partition of its point set in lines.

%

\begin{lemma}\label{inres}
Let $\Gamma_1=(X,\cL)\cong\mathsf{C_{3,3}}(\H,\K)$ be a quaternion dual polar space isometrically embedded in $\Delta_1\cong\mathsf{E_{7,7}}(\K)$ and let $\theta$ be a collineation of $\Delta_1$ pointwise fixing $\Gamma_1$. Then
\begin{compactenum}[$(i)$]
\item No point is mapped onto a collinear or opposite one, and every point is collinear to at least one fixed point.
\item If a point is mapped onto a symplectic one, then the corresponding symp is fixed by~$\theta$.
\item Let $\xi_1$ be a symp of $\Delta_1$ mapped to an opposite symp under the action of $\theta$. Then  $X\cap E(\xi_1,\xi_1^\theta)$ is the point set of an induced quadratic dual polar space $\Gamma_1'\cong\mathsf{C_{3,3}}(\L,\K)$, with $\K\leq\L\leq\H$. The set of symps of $\Gamma_1'$ is in natural bijective correspondence with the set of lines of $\xi_1$ mapped by $\theta$ to their projection onto $\xi_1^\theta$, and forming a spread of $\xi_1$. 
\end{compactenum}
\end{lemma}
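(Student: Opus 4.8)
The plan is to treat the three parts in order, using throughout the following principle: since $\theta$ fixes $\Gamma_1=(X,\cL)$ pointwise and is a collineation, a point $p$ of $\Delta_1$ and its image $p^\theta$ occupy the same mutual position (equal, collinear, symplectic, or opposite) with respect to every fixed point $r\in X$, because $r=r^\theta$. Recall also that $\mathsf{E_{7,7}}(\K)$ is strong, so these four are the only relations that can occur between two points.

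For $(i)$, the assertion that every point is collinear to a fixed point is exactly Corollary~\ref{propres}$(i)$, as $X$ is pointwise fixed. Now let $p$ be arbitrary and pick a fixed point $r\perp p$. Then $r=r^\theta\perp p^\theta$, so $r$ is a common neighbour of $p$ and $p^\theta$, whence these lie at distance at most~$2$; in particular $p^\theta$ is not opposite~$p$. To exclude $p^\theta\perp p$ with $p\neq p^\theta$, I would pass to the residue $\Res_{\Delta_1}(r)\cong\mathsf{E_{6,1}}(\K)$, in which $\theta$ induces a collineation fixing pointwise the quaternion Veronese variety $\Gamma_r$ furnished by \cref{prop:veronese}, all of whose points are pairwise symplectic. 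There the lines $rp$ and $rp^\theta$ become points, and $p\perp p^\theta$ translates into a configuration forbidden for a collineation fixing a full Veronesean pointwise; the needed lower-rank fact is of the same nature as the present statement and follows from the analysis of \cref{sec4}. The real point is that ruling out collinearity requires that $\Gamma_1$ be rich enough to \emph{separate} points, which is why this step is pushed into the residue.

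For $(ii)$, suppose $p^\theta$ is symplectic to $p$, and set $\xi=\xi(p,p^\theta)$, a convex symp. By Corollary~\ref{propres}$(i)$ there is a fixed point $r\perp p$, and by the position principle $r\perp p^\theta$ as well; since $\xi$ is convex and $r$ is a common neighbour of the symplectic pair $p,p^\theta$, it follows that $r\in\xi$. Thus $p^\perp\cap X$ is non-empty and contained in $\xi$, is fixed pointwise, and lies in $\xi\cap\xi^\theta$ together with $p^\theta=\theta(p)\in\xi^\theta$. As two distinct symps of $\mathsf{E_{7,7}}(\K)$ meet in at most a singular subspace, it suffices to produce two non-collinear points in $\xi\cap\xi^\theta$. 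Using the richness of the embedded dual polar space $\Gamma_1$ one checks that $p^\perp\cap X$ is not contained in a singular subspace, which yields two non-collinear fixed points and forces $\xi^\theta=\xi$.

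For $(iii)$, first fix the combinatorial skeleton. For each $x\in\xi_1$ the image $x^\theta$ lies in the opposite symp $\xi_1^\theta$; by $(i)$ and strongness $x^\theta$ is symplectic to $x$, and by $(ii)$ the symp $\zeta_x=\xi(x,x^\theta)$ is $\theta$-fixed. Opposite symps of a building are canonically isomorphic via projection, so with $\mathrm{pr}\colon\xi_1\to\xi_1^\theta$ the projection and $\theta\colon\xi_1\to\xi_1^\theta$ the restriction of $\theta$, the composite $\sigma=\mathrm{pr}^{-1}\circ\theta$ is a collineation of $\xi_1\cong\mathsf{D_{6,1}}(\K)$. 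A fixed point of $\sigma$ would satisfy $x^\theta=\mathrm{pr}(x)$, the unique point of $\xi_1^\theta$ collinear to~$x$ (\cref{pointsympE77}), contradicting $(i)$; hence $\sigma$ is fixed-point-free, while a line $L\subseteq\xi_1$ is $\sigma$-fixed exactly when $\theta(L)=\proj_{\xi_1^\theta}(L)$, which is precisely the family named in the statement. Next, since $\theta$ interchanges $\xi_1$ and $\xi_1^\theta$ it stabilises the equator $E:=E(\xi_1,\xi_1^\theta)\cong\mathsf{D_{6,6}}(\K)$, so $X\cap E$ is the fixed-point set of the induced collineation $\theta|_E$. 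I would then identify $\theta|_E$, $\sigma$, and the fixed symps $\zeta_x$ with the classical field-reduction (Galois descent) picture: a fixed-point-free collineation of the hyperbolic polar space $\xi_1$ whose fixed lines form a spread arises from a quadratic extension $\L$, the intermediate position $\K\le\L\le\H$ being forced by the ambient $\H$-structure of $\Gamma_1$, and the fixed points in the equator assemble into the quadratic dual polar space $\Gamma_1'\cong\mathsf{C_{3,3}}(\L,\K)$ whose symps correspond to the spread lines. The main obstacle is exactly this last identification: the hard part is to prove that the $\sigma$-fixed lines actually partition $\xi_1$ into a spread and that the fixed configuration in $E\cong\mathsf{D_{6,6}}(\K)$ is precisely a quadratic dual polar space over a field $\L$ sandwiched between $\K$ and $\H$. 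This is where the quaternion datum of $\Gamma_1$ must be tracked through the opposite-symp and equator geometry and converted into the algebraic invariant $\L$, and it constitutes the technical heart of the lemma.
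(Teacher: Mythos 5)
The decisive gap is in $(iii)$, and you flag it yourself: the ``identification with the classical field-reduction picture'' that you defer is the entire content of the argument, not a routine verification. Two specific ideas are missing. First, the map $\sigma$ (the paper's $\theta_{\xi_1}$) is not merely fixed-point-free: because the $\theta$-fixed symp $\zeta_x=\xi(x,x^\theta)$ meets $\xi_1$ in a \emph{line} (it cannot meet it in a $5$-space, since then points of $\xi_1^\theta\cap\zeta_x$ would be collinear to a $5'$-space of $\xi_1$, contradicting opposition of $\xi_1$ and $\xi_1^\theta$), $\sigma$ sends every point of $\xi_1$ to a collinear point, hence is point-domestic. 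This is what permits the appeal to the classification of fixed-point-free point-domestic collineations of polar spaces, \cite[Theorem~8]{PVMclass}; it is this theorem -- which you never invoke or reprove -- that produces the quadratic extension $\L$, the dual polar space $\mathsf{C_{3,3}}(\L,\K)$ of fixed maximal singular subspaces, and the spread. Second, transferring that structure to the equator requires proving a bijection between the $\theta$-fixed points lying in $E(\xi_1,\xi_1^\theta)$ and the $\sigma$-fixed $5'$-spaces of $\xi_1$ (via $p\mapsto p^\perp\cap\xi_1$ and, conversely, the unique equator point in the $6$-space over a fixed $5'$-space), together with the existence of at least one fixed point in the equator; the paper obtains the latter from the fact that every maximal singular subspace of $\zeta_x$ contains a pointwise fixed line (\cite{npvv}), and without it one cannot even rule out that the fixed maximal singular subspaces of $\sigma$ are $5$-spaces rather than $5'$-spaces, nor begin the argument that $\L\leq\H$. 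Note also that your premise that $\theta$ ``interchanges $\xi_1$ and $\xi_1^\theta$'' and hence stabilises $E(\xi_1,\xi_1^\theta)$ is unjustified: $\theta(\xi_1^\theta)=\xi_1^{\theta^2}$ need not equal $\xi_1$, as $\theta$ is not assumed to be an involution; one must argue about the set of fixed points lying in the equator directly, as the paper does.

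Parts $(i)$ and $(ii)$ have smaller but still real gaps. In $(i)$ your exclusion of opposition (via a common fixed neighbour) is correct and elementary, but the exclusion of collinearity is delegated to a ``forbidden configuration'' for collineations of $\mathsf{E_{6,1}}(\K)$ fixing a Veronese variety pointwise, which you attribute to the analysis of \cref{sec4}; that section concerns only the embedding $\Gamma_1\subseteq\Delta_1$ and contains no statement about collineations, so in effect you are assuming a lower-rank analogue of the very statement under proof (the paper instead cites \cite[Proposition~6.2 and Lemma~6.7]{npvv}). In $(ii)$, the assertion that $p^\perp\cap X$ is not contained in a singular subspace is exactly the point requiring proof, and ``one checks'' is not an argument. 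It can be repaired: the line joining $p$ to a fixed point meets $X$ in a single point (otherwise $p\in X$), so by \cref{propres}$(ii)$ it lies in a symp $\zeta$ of $\Delta_1$ containing a symp $\xi_\Gamma$ of $\Gamma_1$, and $p^\perp\cap\xi_\Gamma$ is a geometric hyperplane of a thick generalized quadrangle, hence contains two non-collinear fixed points; but this, or the paper's citation of \cite[Corollary~6.1]{npvv}, must actually be supplied.
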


\begin{proof}
We first observe that, by  \cite[Proposition~6.2]{npvv}
, $\theta$ is domestic with opposition diagram $\mathsf{E_{7;4}}$. Then $(i)$ follows from this and \cite[Lemma~6.7]{npvv}
. The same proposition in \cite{npvv} implies that every point $p$ is collinear to some fixed point $x$. Hence $x$ is contained in the symp $\xi(p,p^\theta)$. Now $(ii)$ follows from \cite[Corollary~6.1]{npvv}. Now we show $(iii)$.

Pick an arbitrary point $p\in\xi_1$. By $(i)$, $p$ is mapped onto a symplectic point. Set $\zeta_1:=\xi(p,p^\theta)$.  Then $\zeta_1$ is fixed under $\theta$. By Definition~3.7, Proposition~3.8 and Lemma~4.3 in \cite{npvv}, every maximal singular subspace of $\zeta_1$ contains a pointwise fixed line. 
Also, $\zeta_1\cap\xi_1$ is at least a line, but if it were a $5$-space, then each intersection point of $\xi_1^\theta$ and $\zeta_1$ would be collinear with a $5'$-space of $\xi_1$, contradicting the fact that $\xi_1$ is opposite $\xi_1^\theta$. Hence $\zeta_1\cap\xi_1$ is a line $L$.  Obviously $L^\theta=\zeta_1\cap \xi_1^\theta$, and $L$ is $\zeta_1$-opposite $L^\theta$ as each point of $L\subseteq\xi_1$ is collinear to at most one point of $L^\theta\subseteq\xi_1^\theta$, and hence to exactly one point since $L,L^\theta\subseteq\zeta_1$. By $(i)$, $p^\theta\in L^\theta$ is not collinear to $p$ and so its projection $p'$ onto $\xi_1$ belongs to $L\setminus\{p\}$. It follows that the mapping $\theta_{\xi_1}:{\xi_1}\to{\xi_1}:p\mapsto p'$ is point-domestic without fixed points. By \cite[Theorem 8]{PVMclass}, the set of fixed maximal singular subspaces forms the point set of a dual polar space isomorphic to $\mathsf{C_{3,3}}(\L,\K)$, for some field $\L$ quadratic over $\K$. Now, we claim that the fixed point structure of $\theta$  in $E({\xi_1},\xi_1^\theta)$ is isomorphic to the fixed point structure of $\theta_{\xi_1}$ formed by the fixed $5'$-spaces of $\xi_1$. Indeed, each fixed point $p$ in $E(\xi_1,\xi_1^\theta)$ gives rise to a $5'$ space $W=p^\perp\cap\xi_1$ in $\xi_1$, and a unique $5'$-space $W'=p^\perp\cap\xi_1^\theta$ of $\xi_1^\theta$, which coincides with $W^\theta$. Suppose for a contradiction that the image of $W'$ under the projection of $\xi_1^\theta$ onto $\xi_1$ is distinct from $W$. Then some point $x'$ of $W'$ is collinear to a point $x\in\xi_1\setminus W$. Consequently, by \cref{pointsympE77}, $x'$ is opposite every point of $W\setminus x^\perp$, which is nonempty. This contradicts $x'\perp p\perp x$ and hence $W$ is fixed under $\theta_{\xi_1}$. Conversely, suppose a $5'$-space $W$ of $\xi_1$ is fixed under $\theta_{\xi_1}$. Let $\widetilde{W}$ be the unique $6$-space containing $W$. Then $\widetilde{W}$ contains a unique point $w\in E(\Sigma,\Sigma^\theta)$. Then, by the above argument, $w^\perp\cap\Sigma^\theta=W^\theta$. The $6$-space generated by $w$ and $W^\theta$ is the unique $6$-space containing $W^\theta$ and is hence the image under $\theta$ of $\widetilde{W}$. It follows that $w^\theta\perp w$ and hence, by $(i)$, $w=w^\theta$. It remains to show that the maximal subspaces of $\Sigma$ fixed under $\theta_{\xi_1}$ are really $5'$-spaces (and not $5$-spaces). By the foregoing, this is equivalent to showing that $E(\xi_1,\xi_1^\theta)$ contains at least one fixed point under $\theta$.  Let $U$ be a $5$-space of $\zeta_1$ through $L$. Then, as noted above, $U$ contains a pointwise fixed line $M$. Let $y\in M$. Then $y\perp L$ and hence $y^\perp\cap\xi_1$ is a $5'$-space. Consequently $y^\perp\cap\xi_1^\theta$ is also a $5'$-space and $y\in X\cap E(\xi_1,\xi_1^\theta)$. The claim is now completely proved. 

Hence the fixed point structure of $\theta$ in $E(\xi_1,\xi_1^\theta)$, which equals  $X\cap E(\xi_1,\xi_1^\theta)$, is isomorphic to $\mathsf{C_{3,3}}(\L,\K)$. Obviously, since it is contained in $X$, we find that $\L\leq\H$. 

The last assertion of $(iii)$ now also follows from \cite[Theorem 8]{PVMclass}. 
\end{proof}

Now we can show the main result of this subsection.

\begin{prop}\label{classIisdom}
Let $\theta$ be a nontrivial collineation of $\Delta\cong\mathsf{E_{8,8}}(\K)$ pointwise fixing precisely an isometrically embedded metasymplectic space $\Gamma=(X,\cL)\cong\mathsf{F_{4,1}}(\K,\H)$, with $\H$ a quaternion division algebra over $\K$, possibly inseparable. Then $\theta$ is domestic with opposition diagram~$\sE_{8;4}$. 
\end{prop}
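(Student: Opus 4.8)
My plan is to determine the opposition diagram of $\theta$ among the five possibilities of \cref{fig:Dynkin}. Since $|\K|>2$ the building is large and $\theta$ is capped \cite{PVM:19}, so its opposition diagram is one of $\sE_{8;0},\sE_{8;1},\sE_{8;2},\sE_{8;4},\sE_{8;8}$; because $\theta\neq\id$, \cite{AB:09} excludes $\sE_{8;0}$. It then suffices to establish two facts: \emph{(a)} some vertex of type $6$ is mapped to an opposite vertex, which excludes $\sE_{8;1}$ and $\sE_{8;2}$ (whose encircled nodes lie in $\{1,8\}$); and \emph{(b)} $\theta$ is domestic, which excludes $\sE_{8;8}$. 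As $\sE_{8;4}$ is then the only surviving diagram, this proves the proposition. The engine in both steps is to replace $\Delta$ by an opposition-faithful subgeometry of $\sE_7$-type on which $\theta$, or its relevant residual twist, acts, so that the completed $\sE_7$ classification of \cite{npvv} can be imported; the two such subgeometries are the point residual at a fixed point and the equator geometry of a fixed pair of opposite points. Throughout I may freely use the embedding facts of \cref{f4ine8} (isometry and convexity) already established.

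For \emph{(a)} I would fix a point $p\in X$ and pass to $\Delta_1=\Res_\Delta(p)\cong\sE_{7,7}(\K)$. The induced collineation $\theta_p$ fixes the dual polar space $\Gamma_1\cong\mathsf{C_{3,3}}(\H,\K)$ pointwise, and by \cref{inres} it is domestic with opposition diagram $\sE_{7;4}$; in particular \cref{inres}(iii) furnishes a symp $\xi_1$ of $\Delta_1$ with $\xi_1$ opposite $\xi_1^{\theta}$. Using the associated equator geometry $E(\xi_1,\xi_1^{\theta})\cong\sD_{6,6}(\K)$ together with \cref{pointsymp,spspop}, I would promote this opposition, which a priori is internal to the residue, to an honest opposition in $\Delta$, exhibiting a type-$6$ vertex mapped to an opposite one. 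The subtlety here is precisely the passage from residue-opposition to opposition in $\Delta$: two simplices sharing the fixed point $p$ are never globally opposite, so the witness must be constructed from the equator and not read off naively from $\theta_p$.

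For \emph{(b)}, the heart of the matter, I would argue by contradiction. If a chamber is mapped to an opposite chamber then, by capping, its type-$8$ vertex $x$ is mapped to an opposite point $x^{\theta}$, and the opposition of the two chambers is then controlled, in the standard way, by the behaviour of $\theta$ on $\Res_\Delta(x)\cong\sE_{7,7}(\K)$ relative to $\Res_\Delta(x^{\theta})$. Since $x$ is collinear or symplectic to points of the fixed metasymplectic space $\Gamma$, I would use these fixed points together with \cref{pointsymp} and \cref{pointsympE77} to force one face of the chamber to fall short of opposition to its image. Where a genuine $\sE_7$ input is needed I would route it through the equator of a fixed opposite pair $p,q\in\Gamma$: such a pair exists because the metasymplectic space $\Gamma$ has diameter $3$, and $p,q$ are opposite in $\Delta$ by the isometry of the embedding in \cref{f4ine8}. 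Then $\theta$ stabilises $E(p,q)\cong\sE_{7,1}(\K)$, fixes the metasymplectic equator $E(p,q)\cap\Gamma$ pointwise, and by \cite{npvv} acts there as a domestic collineation, so that no chamber of the opposition-faithful subgeometry $E(p,q)$ \cite{DSVM2} is sent to an opposite one; combined with \cref{inres}(i), this is what drives the contradiction.

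The principal obstacle is that neither the point residuals nor the equators see all of $\Delta$ at once: a residual sees only flags through a fixed point, while an equator $E(p,q)$ sees only points symplectic to both $p$ and $q$. The real work, therefore, is the coverage-and-transport bookkeeping: checking that every chamber of $\Delta$ meets one of these opposition-faithful $\sE_7$-subgeometries in enough of its faces that both the exact depth of the diagram (part \emph{(a)}) and global domesticity (part \emph{(b)}) can be certified there and transported back, all while keeping track of the correspondence between $\sE_8$- and $\sE_7$-types induced by each embedding. This systematic exploitation of equator geometries is exactly what the introduction advertises as replacing the minuscule-geometry shortcuts available in the $\sE_6$ and $\sE_7$ cases.
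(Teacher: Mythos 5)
Your overall frame (cappedness gives five possible diagrams; nontriviality excludes $\sE_{8;0}$, domesticity excludes $\sE_{8;8}$, and the two small diagrams are excluded separately) is sound, but the heart of your argument, step \emph{(b)}, has a genuine gap: you route the $\sE_7$ input through the equator $E(p,q)$ of a \emph{fixed} opposite pair $p,q\in\Gamma$. This fails for two reasons. First, the pointwise-fixed structure of $\theta$ on $E(p,q)\cong\mathsf{E_{7,1}}(\K)$ is exactly $E(p,q)\cap X$, the equator of the metasymplectic space $\Gamma$ --- a rank-$3$ geometry --- and none of the results you cite asserts that a collineation of $\mathsf{E_{7,1}}(\K)$ pointwise fixing such a structure is domestic (the relevant input, \cite[Proposition~7.1]{npvv}, requires a pointwise-fixed $\mathsf{F_{4,1}}(\K,\L)$). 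Second, and more fundamentally, domesticity of $\theta$ on every such $E(p,q)$ controls nothing away from $\Gamma$: if a chamber is mapped to an opposite, its point $x$ satisfies $x$ opposite $x^\theta$, and $x$ need not lie in any equator of fixed points; there is no coverage statement, and your ``coverage-and-transport bookkeeping'' is precisely the missing proof. The paper works instead at the bad point $x$ itself: it forms $E(x,x^\theta)$ --- which $\theta$ does \emph{not} stabilise --- together with the induced automorphism $\theta_x\colon z\mapsto\xi(x^\theta,z^\theta)\cap E(x,x^\theta)$, and the bulk of the proof shows that the fixed points of $\theta_x$ in $E(x,x^\theta)$ form a fully, isometrically embedded $\mathsf{F_{4,1}}(\K,\L)$ with $\K\leq\L\leq\H$ (first producing a fixed point symplectic to $x$ via the hyperplane of non-opposite points and \cref{KasShu} --- note your phrase ``collinear or symplectic'' is already wrong, since $x$ opposite $x^\theta$ forbids collinearity with fixed points --- then building the $\mathsf{B_{3,1}}(\K,\L)$ symps and $\mathsf{C_{3,3}}(\L,\K)$ point residuals from \cref{inres} and spreads). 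Only then do \cite[Proposition~7.1]{npvv} and \cite[Theorem~3.28 and Proposition~3.29]{Tits:74} give that $\theta_x$ is domestic with diagram $\mathsf{E_{7;3}}$ and hence that no chamber through $x$ goes to an opposite, the desired contradiction. None of this construction appears in your sketch.

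Step \emph{(a)} has a second gap. Every object you build inside $\Res_\Delta(p)$ --- including $E(\xi_1,\xi_1^\theta)$, whose points are lines of $\Delta$ through $p$ --- is incident with the fixed point $p$, hence so is its image under $\theta$; and two simplices incident with a common vertex are never opposite in $\Delta$ (Weyl distances between chambers through $p$ lie in the proper parabolic $W_{S\setminus\{8\}}$, whose elements are far too short to reach the coset $W_{S\setminus\{6\}}w_0$ required for opposition of type-$6$ vertices). You flag exactly this subtlety, but propose no mechanism that actually escapes the star of $p$, so no witness is produced. Fortunately \emph{(a)} is unnecessary: once domesticity is established, the diagrams $\sE_{8;1}$ and $\sE_{8;2}$ are excluded by their classification in \cite[Theorems~1 and~4]{PVMexc} --- such automorphisms are central collineations or products of two perpendicular long root elations, whose pointwise-fixed sets contain high-dimensional singular subspaces of $\Delta$ and so cannot be precisely the metasymplectic space $\Gamma$. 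This is how the paper concludes that the diagram is $\sE_{8;4}$.
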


\begin{proof}
Suppose, for a contradiction, that $C$ is a chamber mapped to an opposite chamber by~$\theta$. Then its point $p$ is mapped onto an opposite point. Clearly $p$ is not collinear to any point of $\Gamma$. We claim it is symplectic to at least one point of $\Gamma$. Indeed, suppose not.  Considering an arbitrary line of $\Gamma$, we see that at least one point $x$ of $\Gamma$ is special to~$p$.   
Now, by \cref{inres}$(i)$
,  each point of $\Res_{\Delta}(x)$ is collinear to some fixed point in this residue, that is, each line through $x$ lies in a plane containing a fixed line.  Hence the line $cx$, with $c=\mathfrak{c}(x,p)$, is contained in a plane $\pi$ containing a fixed line $L$ through $x$. By the second axiom of hexagonic geometry (\cref{KasShu}), there is a point $y\in L$ symplectic to $p$ and $y$ belongs to $\Gamma$. The claim is proved. 

So we established a point $y\pperp p$ with $y$ in $\Gamma$. Note that $y=y^\theta\pperp p^\theta$, and so $y\in p^{\pperp}\cap (p^\theta)^{\pperp}=E(p,p^\theta)$.  Set $\xi:=\xi(p,y)$. Let $z$ be any point of $p^\perp\cap y^\perp$. Since $z\perp p$, we know that $z$ does not belong to $\Gamma$. But   \cref{inres}$(i)$ yields a plane $\alpha\ni x$ containing a line $K\ni y$ of $\Gamma$. Now \cref{inres}$(ii)$ yields a (unique) symplecton $\zeta$ of $\Delta$ containing $\alpha$ and fixed under $\theta$. It follows from \cref{spspop}, the assumption that $p$ is opposite $p^\theta$, and the fact that both $p^\perp\cap \zeta$ and $(p^\theta)^\perp\cap\zeta$ contain a line, that $p^\perp\cap \zeta$ and $(p^\theta)^\perp\cap\zeta$ are $\zeta$-opposite lines, say $R$ and $R^\theta$, respectively. Since $R^\perp\cap (R^\theta)^\perp$ is isomorphic to $\mathsf{D_{5,1}}(\K)$, for which a classical ovoid in the perp of a line canonically lives in a $3$-space and hence is $2$-dimensional,  \cite[Proposition~3.8]{npvv} 
implies that $X\cap R^\perp\cap(R^\theta)^\perp$ is a polar space, containing~$y$, canonically isomorphic to $\mathsf{B_{3,1}}(\K,\L)$, for some quadratic extension $\L$ of $\K$ contained in $\H$.  This quadratic extension is determined by the point residual at $p$ of $X\cap R^\perp\cap(R^\theta)^\perp$, which is isomorphic to $\mathsf{B_{2,1}}(\K,\L)\cong \mathsf{C_{2,2}}(\L,\K)$. 

We claim that every symp $\zeta'$ of $\Delta$ containing two symplectic fixed points $y_1,y_2$ in $E(p,p^\theta)$ can play the role of $\zeta$ in the previous paragraph. Indeed, we may without loss of generality assume $y_2=y\in  \zeta'$. Since $y\pperp y_1$, the symps $\xi(y,p)$ and $\xi(y_1,p)$ intersect in a line $K$ (this follows from the isomorphism between $\Res_\Delta(p)$  and $E(p,p^\theta)$, see \cite{HVM-MV}). The unique points $z_1$ and $z_2$ of $K$ collinear to $y_1$ and $y$, respectively, coincide as both points are not opposite $p^\theta$ (by \cref{spspop}). Hence $z_1=z_2$ can play the role of $z$ in the previous paragraph, and it is now easy to find a plane $\alpha$ in $\zeta'$ containing a fixed line. This proves the claim.

Now note that $R\subseteq p^\perp\cap y^\perp\subseteq\xi$. Exhausting all points $z$ of $p^\perp\cap y^\perp$ as above, we obtain a line spread of $p^\perp\cap y^\perp$. Now Lemma~\ref{inres} implies that the fixed point structure $Y$ of $\Res_{\Delta}(y)$ symplectic to both $p$ and $p^\theta$ (which is equivalent to each point of $Y$ being close to both $\xi$ and $\xi^\theta$, which is equivalent to $Y$ belonging to the equator with poles the symps  $y^\perp\cap\xi$ and $y^\perp\cap\xi^\theta$ of $\Res_{\Delta}(y)$) is a dual polar space isomorphic to $\mathsf{C_{3,3}}(\L',\K)$, with $\K\leq\L'\leq\H$. Moreover, the last assertion of Lemma~\ref{inres}$(iii)$ implies that the point residual at $p$ of $X\cap R^\perp\cap(R^\theta)^\perp$ is a symp of that dual polar space, implying that  $\L$ and $\L'$ coincide.

Since each fixed point symplectic to $p$ is also symplectic to $p^\theta$, we have shown that the fixed points in $E(p,p^\theta)$ form an isometrically embedded geometry $\Gamma'=(Z,\cM)$ with the following properties.
\begin{compactenum}[$(i)$]
\item Each symplectic pair of points is contained in a unique convex subgeometry isomorphic to  $\mathsf{B_{3,1}}(\K,\L)$.
\item The point residual of $\Gamma'$ at each point $z\in Z$ is isomorphic to $\mathsf{C_{3,3}}(\L,\K)$. 
\end{compactenum}

It follows rather easily that $\Gamma'$ is convexly (in the building-theoretic sense) embedded in~$\Gamma$, because it is isometrically embedded and projections of objects onto other objects are determined by the mutual positions between the points of these objects, see the proof of \cref{convexemb}. Since $\Gamma'$ contains opposite chambers of $\Gamma$, it follows that it corresponds to a subbuilding of the building corresponding to $\Gamma$. From the two properties mentioned above, we then deduce that $\Gamma'$ is a parapolar space isomorphic to $\mathsf{F_{4,1}}(\K,\L)$, isometrically embedded in $E(p,p^\theta)$. 

We now consider the automorphism $\theta_p$ of $E(p,p^\theta)$ mapping each point $x$ onto $\xi(p^\theta,x^\theta)\cap E(p,p^\theta)$. Clearly, $\theta_p$ pointwise fixes $Z$, and so by \cite[Proposition~7.1]{npvv} 
$\theta_p$ is domestic with opposition diagram $\mathsf{E_{7;3}}$. It follows, using Theorem 3.28 and Proposition 3.29 of \cite{Tits:74}, that $C$ is not mapped to an opposite, a contradiction.

Thus $\theta$ is domestic, and since $\theta$ fixes no chamber it has opposition diagram~$\sE_{8;4}$ (by the classification of automorphisms with opposition diagram $\sE_{8;1}$ and $\sE_{8;2}$ in \cite{PVMexc}). 
\end{proof}

We note down an interesting consequence of the previous arguments, summarising the situation. We use the notation of the previous proof. Note that $\theta$ does not necessarily stabilise $E(p,p^\theta)$, but $\theta_p$ of course does. 
\begin{cor}\label{coruniclass}
Let $p$ be a point such that $p^\theta$ is opposite $p$. Then the fix structure of $\theta$ and $\theta_p$ in $E(p,p^\theta)$ is a parapolar space isomorphic to $\mathsf{F_{4,1}}(\K,\L)$, isometrically and fully embedded, with fix diagram $\mathsf{E_{7;4}}$. 
\end{cor}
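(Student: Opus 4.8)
The plan is to read the corollary off the proof of Proposition~\ref{classIisdom}, after observing that the construction carried out there of the fixed subgeometry $\Gamma'=(Z,\cM)$ inside $E(p,p^\theta)$ used only the hypothesis that $p^\theta$ is opposite $p$. Indeed, the existence of a point $y\in\Gamma$ symplectic to $p$, the symp $\xi=\xi(p,y)$, the $\zeta$-opposite lines $R,R^\theta$, the polar space $X\cap R^\perp\cap(R^\theta)^\perp\cong\mathsf{B_{3,1}}(\K,\L)$, and finally the identification $\Gamma'\cong\mathsf{F_{4,1}}(\K,\L)$ (isometrically embedded in $E(p,p^\theta)\cong\mathsf{E_{7,1}}(\K)$) were all obtained from $p$ alone; the stronger assumption that an entire chamber is mapped to an opposite one entered solely at the final contradiction step, through $\theta_p$ and \cite[Theorems~3.28 and~3.29]{Tits:74}, and plays no role in the construction itself. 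Since $\theta$ fixes precisely $\Gamma$, the set $Z=X\cap E(p,p^\theta)$ is exactly the set of points of $E(p,p^\theta)$ fixed by $\theta$, so that the fix structure of $\theta$ in $E(p,p^\theta)$ is indeed $\Gamma'\cong\mathsf{F_{4,1}}(\K,\L)$, with $\L\leq\H$ quadratic over $\K$.

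Next I would treat $\theta_p$. By construction it pointwise fixes $Z$, hence is a collineation of $\mathsf{E_{7,1}}(\K)$ pointwise fixing a subspace isomorphic to $\mathsf{F_{4,1}}(\K,\L)$. Invoking \cite[Proposition~7.1]{npvv} then shows that $\theta_p$ is domestic with opposition diagram $\mathsf{E_{7;3}}$, that it fixes \emph{precisely} $\Gamma'$, and that the corresponding fix diagram is $\mathsf{E_{7;4}}$. Consequently the fixed point sets of $\theta$ and of $\theta_p$ in $E(p,p^\theta)$ coincide, both equal to $Z$, and carry the common induced geometry $\Gamma'\cong\mathsf{F_{4,1}}(\K,\L)$ with the stated fix diagram.

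Finally I would strengthen ``isometrically embedded'' to ``isometrically and fully embedded'' by a cardinality argument. The lines of $\Gamma'\cong\mathsf{F_{4,1}}(\K,\L)$ and those of $E(p,p^\theta)\cong\mathsf{E_{7,1}}(\K)$ are all projective lines over $\K$, hence all of cardinality $|\K|+1$. Given two collinear points of $\Gamma'$, the points of the $\Gamma'$-line through them are pairwise collinear and, since the embedding is isometric, remain pairwise collinear in $E(p,p^\theta)$; as the latter is a partial linear space they lie on a single line of $E(p,p^\theta)$, which must coincide with the $\Gamma'$-line because both have $|\K|+1$ points. Hence each line of $\Gamma'$ is a full line of $E(p,p^\theta)$, giving the full embedding. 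The only ingredient external to the proof of Proposition~\ref{classIisdom} is the $\mathsf{E_7}$ characterisation \cite[Proposition~7.1]{npvv}, and accordingly the main point requiring care is the appeal to it to guarantee that $\theta_p$ fixes nothing beyond $\Gamma'$ and to pin down its fix diagram as $\mathsf{E_{7;4}}$.
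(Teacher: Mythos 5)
Your reading matches the paper's own treatment exactly: the paper offers no separate proof of this corollary (it is prefaced only by the remark that it summarises the preceding arguments), and, as you observe, every step of the proof of Proposition~\ref{classIisdom} up to and including the construction of $\Gamma'=(Z,\cM)\cong\mathsf{F_{4,1}}(\K,\L)$ inside $E(p,p^\theta)$ and the fact that $\theta_p$ pointwise fixes $Z$ uses only the hypothesis that $p$ is opposite $p^\theta$; the chamber hypothesis enters solely in the final contradiction via \cite[Theorem~3.28 and Proposition~3.29]{Tits:74}. Your handling of $\theta_p$ through \cite[Proposition~7.1]{npvv} (nontriviality via the opposition diagram $\mathsf{E_{7;3}}$, exactness of the fixed structure, fix diagram $\mathsf{E_{7;4}}$) is also what the paper intends, and you rightly single this appeal out as the one external ingredient; note only that the paper itself cites that proposition just for domesticity and the opposition diagram, the remaining claims resting on the $\mathsf{E}_7$ classification in \cite{npvv}.

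One step of your write-up is, however, invalid as stated: in the fullness argument you infer that the points of a $\Gamma'$-line, being pairwise collinear in $E(p,p^\theta)$, ``lie on a single line'' because $E(p,p^\theta)$ is a partial linear space. Partial linearity only says two points lie on \emph{at most} one common line; pairwise collinear points may form a triangle spanning a singular plane, and $E(p,p^\theta)\cong\mathsf{E_{7,1}}(\K)$ contains plenty of planes, so this inference fails. The cardinality comparison is also unnecessary, because fullness is already built into the construction: the lines of $\Gamma'$ arise in the proof of Proposition~\ref{classIisdom} as pointwise fixed \emph{actual} lines of $\Delta$ (lines of the polar spaces $X\cap R^\perp\cap(R^\theta)^\perp\cong\mathsf{B_{3,1}}(\K,\L)$, which are full lines of fixed symps). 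Equivalently, $Z=X\cap E(p,p^\theta)$ is the intersection of two subspaces of $\Delta$ --- the point set $X$ of the fully embedded $\Gamma$ (Theorem~\ref{f4ine8}) and the equator geometry --- so every line of $\Delta$ meeting $Z$ in two points lies entirely in $Z$. With that step replaced, your proof coincides with the paper's.
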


\section{Class II automorphisms}\label{sec:equator}




In this section we use the notation of \cref{fixingchamber}. The main theorem of this section is as follows, giving an explicit description, condition for existence, and domesticity of Class II automorphisms.

\begin{theorem}\label{thm:equatordomestic}
Every automorphism pointwise fixing an equator geometry in $\sE_{8,8}(\K)$ is domestic. Moreover, every automorphism pointwise fixing an equator geometry and fixing no chamber is conjugate to an element of the form $x_{\varphi}(a)x_{-\varphi}(1)$ with $a\in\K^{\times}$ and $aX^2+aX-1$ irreducible over~$\K$, and has opposition diagram~$\sE_{8;4}$. 
\end{theorem}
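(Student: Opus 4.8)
The plan is to reduce everything to the long root $\mathsf{A_1}$ subgroup $L=\langle U_{\varphi},U_{-\varphi}\rangle$ attached to the highest root $\varphi$. Since $\varphi$ is orthogonal to the subsystem of type $\sE_7$ generated by $\alpha_1,\dots,\alpha_7$, the group $L$ centralises this $\sE_7$ subgroup and therefore pointwise fixes its Lie incidence geometry, which is exactly an equator geometry $E(p,q)\cong\mathsf{E_{7,1}}(\K)$ (the homology $h_{\varphi}(c)$ of Theorem~\ref{fixedchambers} already fixes this $\sE_7$ pointwise). Conversely, by the theory of equators and imaginary lines recalled in \cref{equatorgeom} (see \cite{DSVM2,Jan-Mal:25}), the pointwise stabiliser of $E(p,q)$ is precisely $L\cong\mathsf{SL}_2(\K)$, and its action on the imaginary line $\cI(p,q)$ of poles is the natural action of $\PSL_2(\K)$ on $\PG(1,\K)$, with kernel the central element $h_{\varphi}(-1)$ (nontrivial exactly when $\kar\K\neq 2$). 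As all equator geometries are conjugate under $\Aut(\Delta)$, I may assume the fixed equator is this standard one, so that any automorphism pointwise fixing an equator is conjugate to an element of $L$.

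First I would prove that every element of $L$ is domestic. A non-identity element of $L\cong\mathsf{SL}_2(\K)$ is, up to conjugacy in $L$, one of: a unipotent element $x_{\varphi}(a)$; an element $h_{\varphi}(-1)x_{\varphi}(a)$ (when $\kar\K\neq2$); a split semisimple element $h_{\varphi}(c)$; or an anisotropic semisimple element, meaning one whose characteristic polynomial on the root $\mathsf{SL}_2$ is irreducible over $\K$. The first three fix a chamber and are domestic by the results already invoked in \cref{fixingchamber}: central collineations have diagram $\sE_{8;1}$ and are domestic by \cite{PVMexc}, the homologies $h_{\varphi}(c)$ with $c\neq0,1$ by \cite[Theorem~4.7]{PVMexc}, and $x_{\varphi}(1)h_{\varphi}(-1)$ by \cite[Proposition~8.11]{npvv}. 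For the anisotropic case I would pass to the quadratic splitting field $\L$ of the characteristic polynomial: viewing $\theta\in\mathsf{SL}_2(\K)\subseteq\mathsf{SL}_2(\L)$, it becomes diagonalisable over $\L$ and hence $\sE_8(\L)$-conjugate to a homology $h_{\varphi}(\lambda)$ with $\lambda\neq0,1$, which is domestic in $\sE_8(\L)$ by \cite[Theorem~4.7]{PVMexc}. Because $\sE_8(\K)$ embeds in $\sE_8(\L)$ sending chambers to chambers and preserving the Weyl distance (hence opposition), domesticity over $\L$ descends to domesticity over $\K$, establishing the first assertion.

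Next I would identify the no-chamber elements. An element of $L$ fixing a pole is, by the $\mathsf{SL}_2$-theory, conjugate inside $L$ into the Borel subgroup $U_{\varphi}\langle h_{\varphi}(c)\rangle\subseteq B$ and hence fixes a chamber; so if $\theta$ fixes no chamber it fixes no pole, i.e.\ it acts fixed-point-freely on $\PG(1,\K)$ and is anisotropic. Conjugating by a suitable diagonal element $h_{\lambda}(t)$ to rescale $U_{-\varphi}$, I may normalise $\theta$ to the form $x_{\varphi}(a)x_{-\varphi}(1)$ with $a\in\K^{\times}$. Represented in the root $\mathsf{SL}_2$ by the matrix with rows $(1+a,\,a)$ and $(1,\,1)$, its fixed points on $\PG(1,\K)$ satisfy $z^2-az-a=0$; the projective substitution $z=1/w$ (which fixes neither $0$ nor $\infty$, since $a\neq0$) turns this into $aw^2+aw-1=0$. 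Hence $\theta$ has no fixed pole precisely when $aX^2+aX-1$ is irreducible over $\K$, which is the stated condition.

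Finally the opposition diagram is forced by elimination: $\theta$ is nontrivial, so it is not $\sE_{8;0}$ (as $\OOpp(\theta)\neq\varnothing$ by \cite{AB:09}); it is domestic, so it is not $\sE_{8;8}$; and every automorphism with diagram $\sE_{8;1}$ or $\sE_{8;2}$ is a (product of perpendicular) root elation and thus fixes a chamber, which $\theta$ does not. Therefore $\theta$ has diagram $\sE_{8;4}$. The step I expect to be the main obstacle is the rigidity statement in the first paragraph: showing that the pointwise stabiliser of the equator is \emph{exactly} $L$ --- equivalently, that an automorphism fixing both the equator geometry and every pole already lies in $\{1,h_{\varphi}(-1)\}$ --- together with checking that the anisotropic descent is watertight, namely that opposition of $\K$-chambers in $\sE_8(\K)$ is genuinely the restriction of opposition in $\sE_8(\L)$. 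The remaining ingredients are either direct citations of the $\sE_6$/$\sE_7$ classifications or elementary $\mathsf{SL}_2$-computations.
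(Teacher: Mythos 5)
Your proposal is correct, and its skeleton is the same as the paper's: both proofs use the rigidity of equators and imaginary lines to reduce to the highest-root rank-one group $\langle U_{\varphi},U_{-\varphi}\rangle$, classify its elements up to conjugacy, dispatch the chamber-fixing classes $x_{\varphi}(1)$, $h_{\varphi}(c)$, $x_{\varphi}(1)h_{\varphi}(-1)$ with exactly the citations you give, and obtain the diagram $\sE_{8;4}$ by the same elimination argument. The genuine differences concern the remaining class. For domesticity, the paper treats $g_4=x_{\varphi}(a)x_{-\varphi}(1)$ directly over $\K$ by a displacement bound: it is a product of two long root elations, so $\disp(g_4)\leq 2\ell(s_{\varphi})=114<120=\ell(w_0)$; you instead split the anisotropic element over the quadratic extension $\L$, where it becomes conjugate to a homology $h_{\varphi}(\lambda)$ (domestic by \cite[Theorem~4.7]{PVMexc}), and descend domesticity along the embedding $\sE_8(\K)\hookrightarrow\sE_8(\L)$. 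Your descent is watertight: by the Bruhat decomposition $B(\L)wB(\L)\cap G(\K)=B(\K)wB(\K)$, so the embedding preserves Weyl distance (hence opposition) and is $G(\K)$-equivariant. The paper's estimate is more self-contained and covers all $a\neq 0$ uniformly, while your route makes conceptually transparent why these automorphisms exist exactly when $\K$ admits a quadratic extension (cf.\ Corollary~\ref{cor:C}(2)). For the no-chamber criterion, the paper computes the action on the rank-one residue $B\cup Bs_8B$ via \cite[Lemma~8.2]{PVMexc} and gets an if-and-only-if, whereas you compute fixed poles on the imaginary line; your chain (no chamber $\Rightarrow$ no fixed pole $\Rightarrow$ $aX^2+aX-1$ irreducible) is precisely the implication the statement requires. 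Two minor points: your normalisation to $x_{\varphi}(a)x_{-\varphi}(1)$ needs a prior conjugation inside $L$ (e.g.\ by a suitable $x_{-\varphi}(u)$, using that an anisotropic matrix has both off-diagonal entries nonzero) before the torus rescaling by $h_{\lambda}(t)$; and the rigidity input you flag as the main obstacle is likewise taken as an input, not reproved, in the paper (via the description of imaginary lines and the faithful $\PSL_2(\K)$-action from \cite{Jan-Mal:25}), so your proposal is not at a disadvantage there.
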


\begin{proof}
The points of the long root geometry $\sE_{8;8}(\K)$ can be identified with the cosets $G_0/P$ where $P=\bigsqcup_{w\in W_{\sE_7}}BwB$. Let $M$ denote the set of minimal length double coset representatives in $W_{\sE_7}\backslash W/W_{\sE_7}$. Then $M$ consists of precisely $5$ elements, $e, s_8,w_1,w_2,s_{\varphi}$ (arranged in increasing length, with $\varphi$ the highest root), and $G_0=\bigsqcup_{w\in M}PwP$. Points $g_1P$ and $g_2P$ are collinear (respectively symplectic, at special distance, opposite) if, and only if, $g_1^{-1}g_2\in Ps_8P$ (respectively $g_1^{-1}g_2\in Pw_1P$, $Pw_2P$, $Ps_{\varphi}P$). 

Since $G_0$ acts transitively on pairs of opposite points, we may assume, up to conjugation, that the poles of the equator geometry are $P$ and $s_{\varphi}P$. As described in the paragraph before Lemma~\ref{elationsE7}, the imaginary line $\cI(P,s_{\varphi}P)$ is the union of $\{P\}$ with the orbit containing $s_{\varphi}P$ of the long root subgroup with centre $P$. 
Thus
$$
\mathcal{I}(P,s_{\varphi}P)=\{P\}\cup\{x_{\varphi}(a)s_{\varphi}P\mid a\in\K\},
$$
and the stabiliser of $\mathcal{I}(P,s_{\varphi}P)$ (and hence the stabiliser of $E(P,s_{\varphi}P)$) is
$$
G_{\varphi}=\langle U_{\varphi},U_{-\varphi}\rangle\cong \mathsf{PSL}_2(\K).
$$ 
Thus it is sufficient to prove that every element of the group $G_{\varphi}$ acts domestically on~$\Delta$. Let $G_{\varphi}'=\langle G_{\varphi},H_{\alpha_8}\rangle$, where $H_{\alpha_8}=\{h_{\alpha_8}(t)\mid t\in \K^{\times}\}$. We first claim that each element $g\in G_{\varphi}$ can be conjugated, by an element of $G_{\varphi}'$, to one of the following forms:
$$
g_1=h_{\varphi}(t),\quad g_2=x_{\varphi}(1),\quad g_3=x_{\varphi}(1)h_{\varphi}(-1),\quad g_4=x_{\varphi}(a)x_{-\varphi}(1)
$$ 
where $t\in\K^{\times}$ and $a\in\K$. 

By the Bruhat decomposition in $\langle U_{-\varphi},U_{\varphi}\rangle$, each element $g$ can be written, up to conjugation in $G_{\varphi}$, as either $g=x_{\varphi}(a)h_{\varphi}(t)$ or $g=x_{\varphi}(a)h_{\varphi}(t)s_{\varphi}$ ($a\in\K$, $t\in\K^{\times}$). Consider $g=x_{\varphi}(a)h_{\varphi}(t)$. If $a=0$ then $g=g_1$, so suppose that $a\neq 0$. Since $\langle\varphi,\alpha_8\rangle=1$ it follows from~(\ref{eq:somerelations}) that 
$$
g'=h_{\alpha_8}(a)^{-1}gh_{\alpha_8}(a)=x_{\varphi}(1)h_{\varphi}(t).
$$
If $t=1$ then $g'=g_2$ and if $t=-1$ then $g'=g_3$. If $t\neq\pm 1$ then by~(\ref{eq:somerelations}) we compute
$$
x_{\varphi}(t^{-1}/(t-t^{-1}))g'x_{\varphi}(t^{-1}/(t-t^{-1}))^{-1}=h_{\varphi}(t)=g_1.
$$
Now consider $g=x_{\varphi}(a)h_{\varphi}(t)s_{\varphi}$. We have
$
h_{\alpha_8}(t)^{-1}gh_{\alpha_8}(t)=x_{\varphi}(at^{-1})s_{\varphi},
$
and since $s_{\varphi}=x_{\varphi}(1)x_{-\varphi}(-1)x_{\varphi}(1)$ the element $g$ is conjugate to
$
x_{-\varphi}(-1)x_{\varphi}(b)
$
with $b\in\K$. If $b=0$ then the element is conjugate to $g_2$, and if $b\neq 0$ then conjugating by $s_{\varphi}h_{\alpha_8}(b^{-1})$ gives the form~$g_4$, completing the proof of the claim. 

We now show that the elements $g_1,g_2,g_3,g_4$ act domestically. The element $g_1$ is domestic with diagram $\sE_{8;0}$ (if $t=1$) or $\sE_{8;4}$ (if $t\neq 1$) by \cite[Theorem~4.7]{PVMexc}, the element $g_2$ is domestic with diagram $\sE_{8;1}$ by \cite[Theorem~2.1]{PVMexc}, and the element $g_3$ is domestic with diagram $\sE_{8;4}$ by \cite[Proposition~8.11]{npvv} (unless $\mathsf{char}(\K)=2$ in which case $g_3=g_2$ has diagram $\sE_{8;1}$). Now, $g_4$ is a product of two long root elations, each with displacement $\ell(s_{\varphi})$ (by \cite[Theorem~2.1]{PVMexc}, assuming $a\neq 0$). Hence
$
\mathsf{disp}(g_4)\leq 2\ell(s_{\varphi}).
$
We have $s_{\varphi}=w_{\sE_7}w_0$ and so $\mathsf{disp}(g_4)\leq 114<\ell(w_0)$, showing that $g_4$ is domestic.

The elements~$g_1,g_2$ and $g_3$ in the proof of Theorem~\ref{thm:equatordomestic} fix the base chamber $B$ of $\Delta=G_0/B$. Consider $g_4=x_{\varphi}(a)x_{-\varphi}(1)$. This element is conjugate to the element $\theta'=x_{-\alpha_8}(a)x_{\alpha_8}(1)$, which stabilises the residue $B\cup Bs_8B$. By \cite[Lemma~8.2]{PVMexc} the automorphism $\theta'$ (and hence $g_4$) fixes a chamber of $\Delta$ if and only if $\theta'$ fixes a chamber of the residue $B\cup Bs_8B$. If $a=0$ then $\theta'$ fixes~$B$. If $a\neq 0$ then $\theta'$ does not fix~$B$, and the remaining chambers of the residue $B\cup Bs_8B$ are of the form $x_{\alpha_8}(z)s_8B$ with $z\in\K$. We compute 
\begin{align*}
\theta' x_{\alpha_8}(z)s_8B&=\begin{cases}
B&\text{if $az+a+1=0$}\\
x_{\alpha_8}((z+1)/(az+a+1))s_8B&\text{if $az+a+1\neq 0$}
\end{cases}
\end{align*}
It follows that $\theta'$ (and hence $g_4$) fixes a chamber of $\Delta$ if and only if $aX^2+aX-1$ has a root~$z\in \K$. If $aX^2+aX-1$ is irreducible, then since $g_4$ does not fix a chamber the classifications of automorphisms with diagram $\sE_{8;1}$ and $\sE_{8;2}$ from \cite[Theorems~2.4 and~5.1]{PVMexc} imply that $g_4$ necessarily has diagram~$\sE_{8;4}$. 
\end{proof}

\begin{rem}\label{rem:highrank}
The technique used in Theorem~\ref{thm:equatordomestic} to prove that elements of the form $\theta=x_{\varphi}(a)x_{-\varphi}(1)$ (with $a\neq 0$) are domestic extends to all types of sufficiently high rank, since it shows that the numerical displacement of $\theta$ is bounded by $2\ell(s_{\varphi})$ which is linear in the rank, while $\ell(w_0)$ is quadratic in the rank. For low rank the element $\theta$ is typically not domestic. Indeed, for the simply laced diagrams, $\theta$ is domestic for $\sA_n$ with $n\geq 5$, $\sD_n$ with $n\geq 6$, and $\sE_n$ with $n\geq 7$ (and non-domestic otherwise). The statements for $\sA_n$ and $\sD_n$ are easily checked, and the fact that $\theta$ is not domestic for $\sE_6$ follows from the classification of domestic automorphisms. In type $\mathsf{E}_7$ the crude estimate above is not sufficient to prove that $\theta$ is domestic, however the element $\theta$ is indeed domestic (see the third paragraph of \cite[Section 6.1]{npvv}, where a geometric proof is given using the minuscule geometry $\mathsf{E}_{7,7}(\mathbb{K})$ -- such a proof is not possible in the $\sE_8$ case because there is no minuscule geometry).   
\end{rem}

\begin{rem}
The elements  $\theta=x_{\varphi}(1)h_{\varphi}(-1)$ (with $\mathsf{char}(\mathbb{K})\neq 2$), $\theta=h_{\varphi}(c)$ (with $c\neq 0,1,-1$), and $\theta=h_{\varphi}(-1)$ (with $\mathsf{char}(\mathbb{K})\neq 2$) appearing in Theorem~\ref{fixedchambers} also fix an equator geometry in $\mathsf{E}_{8,8}(\mathbb{K})$. These elements fix $1$ point, $2$ points, and all points, respectively, of the imaginary line associated to the equator geometry. 
\end{rem}

\section{Proof of Theorem~\ref{nofixedchamber}}\label{sec:class}

So far we have proved that automorphisms of Class I and Class II provide examples of domestic automorphisms of the building $\sE_8(\K)$ fixing no chamber, and hence necessarily have opposition diagram $\sE_{8;4}$. We now show that these are the only examples. 
We work in the non-strong parapolar space and also long root subgroup geometry $\Delta:=\mathsf{E_{8,8}}(\K)$. 
%
%

Then we can state our main achievement of this section as:

\begin{theorem}\label{classification}
A domestic collineation of $\Delta$ fixing no chamber is either of Class \emph{I} or~\emph{II}.
\end{theorem}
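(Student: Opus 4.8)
The plan is to imitate, in reverse, the equator-geometry argument used to prove \cref{classIisdom}, thereby reducing the classification to the known classification of domestic automorphisms of $\mathsf{E_{7,1}}(\K)$ from \cite{npvv}. Let $\theta$ be a domestic collineation of $\Delta=\mathsf{E_{8,8}}(\K)$ fixing no chamber. Since the automorphisms realising the diagrams $\sE_{8;1}$ and $\sE_{8;2}$ all fix a chamber by \cite{PVMexc}, while $\sE_{8;8}$ is non-domestic, $\theta$ must have opposition diagram $\sE_{8;4}$; as node $8$ is encircled, $\theta$ maps some point $p$ to an opposite point $p^\theta$. I would then form the equator geometry $E:=E(p,p^\theta)\cong\mathsf{E_{7,1}}(\K)$ and the induced collineation $\theta_p\colon x\mapsto\xi(p^\theta,x^\theta)\cap E$, which is well defined since each symp through $p^\theta$ meets $E$ in a unique point. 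The first task is to prove that $\theta_p$ is domestic on $E$; this is the converse reading of the last paragraph of the proof of \cref{classIisdom}, and uses \cref{spspop}, \cref{perp-perp}, and Propositions~3.28--3.29 of \cite{Tits:74} to match opposition of chambers in $\Delta$ with opposition in $E$.

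Next I would run the $\sE_7$ classification on $\theta_p$, organising the argument around whether $\theta_p$ is trivial. If $\theta_p$ is nontrivial, the fact that $\theta$ has diagram exactly $\sE_{8;4}$ should force the opposition diagram of $\theta_p$ to be $\sE_{7;3}$ (the reverse of the diagram computation producing $\sE_{7;3}$ from $\sE_{8;4}$ inside \cref{classIisdom}), so that by \cite{npvv} the fixed-point set of $\theta_p$ is a fully and isometrically embedded metasymplectic space $\mathsf{F_{4,1}}(\K,\L)$ for some quadratic extension $\L/\K$, as recorded in \cref{coruniclass}. If instead $\theta_p$ is trivial, the identity $\xi(p^\theta,x^\theta)\cap E=x$ for all $x\in E$, together with domesticity and the absence of a fixed chamber, should be pushed to show that $\theta$ fixes $E$ pointwise.

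The decisive step is the globalisation. In the nontrivial case I would first upgrade $\mathrm{Fix}(\theta_p)$ to genuine fixed points, proving $\mathrm{Fix}(\theta)\cap E=\mathsf{F_{4,1}}(\K,\L)$. Letting $p$ range over the many points sent to an opposite, the resulting local fixed metasymplectic spaces should patch, through the point-residual and symp recognition of the quaternion Veronese variety developed in \cref{sec4}, into a single fully and isometrically embedded $\mathsf{F_{4,1}}(\K,\H)$ with $\H$ a quaternion division algebra having the various $\L$ as quadratic subextensions; the uniqueness half of \cref{f4ine8} then identifies $\mathrm{Fix}(\theta)$ with this space, so $\theta$ is of Class~I. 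In the trivial case, having shown that $\theta$ fixes $E$ pointwise, it remains to prove that $\theta$ acts without fixed points on the imaginary line $\cI(p,p^\theta)$ of poles: a fixed pole together with a fixed flag of $E$ would assemble into a fixed chamber of $\Delta$, against hypothesis, so $\theta$ is of Class~II.

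I expect this globalisation to be the main obstacle. Concretely, the difficulties are (i) upgrading equator-fixed points $\mathrm{Fix}(\theta_p)$ to honestly $\theta$-fixed points, that is, proving $\mathrm{Fix}(\theta)\cap E=\mathrm{Fix}(\theta_p)$ rather than a proper inclusion; (ii) showing that the quadratic extensions $\L$ arising from different opposite pairs $(p,p^\theta)$ cohere into one quaternion algebra $\H$, and that the union of the local fixed spaces is exactly a convex, isometric $\mathsf{F_{4,1}}(\K,\H)$ and not something larger; and (iii) excluding the possibility that the induced map $\theta_p$ is a chamber-fixing or ``wrong-diagram'' domestic automorphism of $E$, which is precisely what pins its diagram to $\sE_{7;3}$ and guarantees the clean Class~I / Class~II alternative independently of the choice of $p$.
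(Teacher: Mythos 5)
Your overall strategy---attach to a non-domestic point $p$ an induced automorphism of an $\sE_7$-structure and feed it to the classification in \cite{npvv}---is indeed the skeleton of the paper's proof (the paper works in the point residual $\Res_\Delta(p)\cong\mathsf{E_{7,7}}(\K)$ with the projection-induced map rather than in the equator $E(p,p^\theta)$, but both carry the same $\sE_7$ building, so this difference is cosmetic). The genuine gap is your case analysis. You claim that if $\theta_p$ is nontrivial then its opposition diagram is $\sE_{7;3}$ and therefore its fixed-point set is a metasymplectic space, quoting \cref{coruniclass}. This is false: the classification in \cite{npvv} of automorphisms with diagram $\sE_{7;3}$ is a \emph{trichotomy}, consisting of (1) the chamber-free case with metasymplectic fixed structure, (2) a generalised homology fixing a non-thick building with thick $\sE_6$ frame, and (3) a product of three perpendicular axial elations; cases (2) and (3) fix chambers of the $\sE_7$ building. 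Crucially, these cannot be discarded using the hypothesis that $\theta$ fixes no chamber of $\Delta$: a chamber fixed by the induced map (whether $\proj_p\circ\theta$ on the residue, or $x\mapsto\xi(p^\theta,x^\theta)\cap E$ on the equator) does \emph{not} yield a $\theta$-fixed chamber of $\Delta$, because $p$ itself is moved and the induced map is twisted by a projection. Eliminating these two chamber-fixing cases is precisely the content of the paper's \cref{type2} and \cref{type3}, supported by \cref{projplane}, \cref{types}, \cref{3opposite} and \cref{hypel}; this is the technical heart of the whole proof and is entirely absent from your proposal. Note also that \cref{coruniclass} is proved only for automorphisms already known to be of Class I, so invoking it here is circular.

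The labels of your dichotomy are also incorrect. In the paper's Type 1 (the genuinely chamber-free case) \emph{both} Class I and Class II emerge, via the two sub-cases of the $\sE_{7;4}$ analysis in \cref{type1}; conversely, a Class II automorphism fixing $E(a,b)$ pointwise has a \emph{nontrivial} induced map $\theta_p$ for any non-domestic $p\notin\cI(a,b)$, and then $\mathrm{Fix}(\theta_p)$ contains $E(a,b)\cap E(p,p^\theta)$, a geometry of type $\mathsf{D_{6,2}}(\K)$ which fits inside no metasymplectic space---so your implication ``nontrivial $\Rightarrow$ Class I'' is simply wrong. The trivial branch is likewise unproven: $\theta_p(x)=x$ for all $x\in E(p,p^\theta)$ only says $x^\theta\in\xi(p^\theta,x)$, not $x^\theta=x$, so pointwise fixity of the equator (and hence Class II) does not follow without a real argument. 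Finally, a small but necessary point: to pin the induced diagram to $\sE_{7;3}$ you must choose $p$ inside a non-domestic simplex of type $\{1,6,7,8\}$ (which exists by cappedness, as $|\K|\geq 3$); an arbitrary point mapped to an opposite only gives a diagram with encircled set contained in $\{1,6,7\}$, possibly trivial.
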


The proof of Theorem~\ref{classification} is achieved in the following subsections. 
  
\subsection{A lemma and a case distinction}

Recall our assumption $|\K|\geq 3$. The following lemma is about projective planes. A \emph{projectivity} between two projective spaces defined over the same field $\K$ is a collineation between these spaces that corresponds to a $\K$-linear bijective map between the respective underlying $\K$-vector spaces. 

\begin{lemma}\label{projplane}
Let $\pi$ and $\pi'$ be two projective planes over $\K$ containing the respective lines $L$ and $L'$, and respective points $p\notin L$ and $p'\notin L'$. Let $\varphi:L'\to L$ be a given projectivity. Let $\theta:\pi\to\pi'$ be a projectivity mapping $p$ to $p'$.  Set $\theta(L)=M'$, suppose $L'\neq M'$. For each point $q\in \pi\setminus(L\cup\theta^{-1}(L'))$ define the map $\varphi_q:L\to L: z\mapsto \varphi(L'\cap\theta(qz))$. Then the following hold.
\begin{compactenum}[$(i)$]
\item If $\varphi_p$ is the identity, then there exists $q\in \pi\setminus(L\cup\theta^{-1}(L'))$ such that $\varphi_q$ has exactly two fixed points. 
\item If $\varphi_p$ is the identity, then there exists $q\in \pi\setminus(L\cup\theta^{-1}(L'))$ such that $\varphi_q$ has exactly one fixed point. 
\item If $\varphi_p$ has no fixed points, then there exists $q\in \pi\setminus(L\cup\theta^{-1}(L'))$ such that $\varphi_q$ is not the identity and has at least one fixed point. 
\item If $\varphi(L'\cap M')$ is the unique fixed point of $\varphi_p$, then there exists $q\in \pi\setminus(L\cup\theta^{-1}(L'))$ such that $\varphi_q$ has exactly two fixed points. 
\item If $\varphi_p$ has exactly two fixed points among which $\varphi(L'\cap M')$, then there exists $q\in \pi\setminus(L\cup\theta^{-1}(L'))$ such that $\varphi_q$ has exactly one fixed point. 
\end{compactenum}
\end{lemma}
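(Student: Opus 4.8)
The plan is to reduce all five statements to a single classical picture: a projectivity between two lines, together with the family of join-lines of corresponding points. First I would rewrite the fixed-point condition. Put $N=\theta^{-1}(L')$ and $\psi=\theta^{-1}\circ\varphi^{-1}\colon L\to N$, a projectivity; since $L'\neq M'=\theta(L)$ we have $N\neq L$, and I set $O=L\cap N$. Unwinding the definition, $\varphi_q(z)=z$ is equivalent to $\varphi^{-1}(z)\in\theta(\overline{qz})$, hence to $q,z,\psi(z)$ being collinear; so the fixed points of $\varphi_q$ are exactly the $z\in L$ for which $q$ lies on the join-line $\ell_z:=\overline{z\,\psi(z)}$. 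I would also record the identity $\varphi(L'\cap M')=\psi^{-1}(O)$, which follows from $\psi^{-1}=\varphi\circ\theta$ and $\theta(O)=\theta(L)\cap\theta(N)=M'\cap L'$; write $z^*:=\varphi(L'\cap M')=\psi^{-1}(O)$.

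Next comes the dichotomy, according to whether $\psi$ is a perspectivity, equivalently $\psi(O)=O$, equivalently $z^*=O$. In the perspectivity case the join-lines $\ell_z$ ($z\neq O$) all pass through a common center $C\notin L\cup N$ with $C\neq O$, and since $z^*=O$ one checks directly that $O$ is a fixed point of every $\varphi_q$ (for $q\notin L\cup N$). Consequently $\varphi_q=\id$ iff $q=C$; and for $q\neq C$ the fixed points are precisely $O$ together with the point $L\cap\overline{qC}$, so $\varphi_q$ has two fixed points when $q\notin\overline{OC}$ and the single fixed point $O=z^*$ when $q\in\overline{OC}\setminus\{O,C\}$. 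In the non-perspective case $z^*\neq O$, and the join-lines $\ell_z$ are the tangents of a non-degenerate conic $\mathcal{C}$ (Steiner's theorem), with $L=\ell_{z^*}$ and $N=\ell_{O}$ among them; each tangent touches $\mathcal{C}$ in a $\K$-point, so $\mathcal{C}$ has $|\K|+1$ points. Here, for $q\notin L\cup N$, the number of fixed points of $\varphi_q$ equals the number of tangents of $\mathcal{C}$ through $q$, hence lies in $\{0,1,2\}$ and is never all of $L$ (so $\varphi_q\neq\id$), while $z^*$ is fixed by no $\varphi_q$ because $\ell_{z^*}=L\not\ni q$. Thus the point $z^*=\varphi(L'\cap M')$ is exactly what separates the two cases: it is always among the fixed points of $\varphi_q$ ($q\neq C$) in the perspective case, and never a fixed point in the non-perspective case.

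This makes the deductions immediate. The hypothesis that $\varphi_p$ has no fixed point (part $(iii)$) forces the non-perspective case with $p$ internal to $\mathcal{C}$; the hypotheses that $z^*$ is the unique fixed point of $\varphi_p$ (part $(iv)$) or that $z^*$ lies among exactly two fixed points of $\varphi_p$ (part $(v)$) force the perspective case with, respectively, $p\in\overline{OC}\setminus\{O,C\}$ and $p\notin\overline{OC}\cup L\cup N$; and $\varphi_p=\id$ (parts $(i),(ii)$) forces the perspective case with $p=C$. In each situation the required $q$ is produced by position and a point count using $|\K|\geq 3$: for $(i)$ and $(iv)$ take $q\notin\overline{OC}\cup L\cup N$, which exists since the three lines $L,N,\overline{OC}$ are concurrent at $O$ and cover only $3|\K|+1$ points, leaving $|\K|(|\K|-2)\geq 3$ points off them; for $(ii)$ and $(v)$ take $q\in\overline{OC}\setminus\{O,C\}$, of which there are $|\K|-1\geq 2$; and for $(iii)$ take $q\in\mathcal{C}\setminus(L\cup N)$, of which there are $|\K|-1\geq 2$ since $L,N$ are tangents meeting $\mathcal{C}$ in two distinct points. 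Each such $q$ yields exactly the asserted number of fixed points by the descriptions above.

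The main obstacle is the careful bookkeeping of the persistent fixed point $O=z^*$ and of the degenerate join-line $\ell_O$ in the perspective case: it is precisely this extra fixed point that makes the generic count equal to two rather than one, and that ties the hypotheses phrased in terms of $\varphi(L'\cap M')$ to the correct case. The remaining delicate point is verifying that $\mathcal{C}$ is non-degenerate over $\K$ and carries a rational point, together with the elementary but necessary counts. If one prefers to avoid citing the classical facts (concurrency of a perspectivity's join-lines and Steiner's conic envelope), I would instead observe that $z\mapsto\ell_z$ is given by a pencil of quadratic forms on $L\cong\PG(1,\K)$, which is injective and degenerates to a projectivity onto a pencil of lines exactly in the perspective case; this renders both the dichotomy and the intersection counts $|\mathcal{C}\cap q^{\perp}|\in\{0,1,2\}$ completely self-contained.
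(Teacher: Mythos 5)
Your proposal is correct in outline and takes a genuinely different route from the paper. Both arguments rest on the same translation---$z$ is fixed by $\varphi_q$ precisely when $q$ lies on the join-line of $z$ and $\psi(z)$ (the paper phrases this in $\pi'$, via the lines joining $\varphi^{-1}(z)$ to $\theta(z)$)---but the paper then works by bare hands: for each item it constructs an explicit $q$ as the intersection of two concrete lines and checks the fixed points directly, invoking a perspectivity (namely $\rho\colon L'\to M'$, $z'\mapsto\theta(\varphi(z'))$, which is your picture transported by $\theta$) only in item $(v)$. You instead organise everything around the dichotomy for $\psi=\theta^{-1}\circ\varphi^{-1}\colon L\to N$: either $\psi$ is a perspectivity with centre $C$, which you correctly identify as the exact situation in which $z^*=\varphi(L'\cap M')$ can be a fixed point (and then $z^*=O$ is fixed by every $\varphi_q$), or the join-lines form a Steiner envelope. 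This explains conceptually why the hypotheses of $(i)$, $(ii)$, $(iv)$, $(v)$ versus $(iii)$ split into two regimes, and your treatment of the perspectivity case---hence of items $(i)$, $(ii)$, $(iv)$, $(v)$, including the counts using $|\K|\geq 3$---is complete, characteristic-free, and arguably cleaner than the paper's case-by-case constructions.

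There is, however, a genuine gap in item $(iii)$: your construction takes $q$ on a nondegenerate point conic $\mathcal{C}$ whose tangents are the join-lines, and uses that a point of $\mathcal{C}$ lies on exactly one tangent. In characteristic $2$ no such point conic exists: the tangents of any nondegenerate conic are concurrent (they pass through the nucleus), so they can never be the join-line family of a non-perspectivity; the dual Steiner theorem produces a nondegenerate conic only in the dual plane, and the locus of points of $\pi$ lying on exactly one join-line is then a line (the dual of the nucleus), not a conic. Since the paper's $\K$ is an arbitrary field with $|\K|\geq 3$---and characteristic $2$, including inseparable quaternion algebras, is central to the paper---this case cannot be excluded. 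Fortunately the repair is immediate inside your own framework, because $(iii)$ only asks for $\varphi_q\neq\id$ together with at least one fixed point. In the non-perspective case one has $\ell_{z^*}=L$ and $\ell_{O}=N$, so for any admissible $q$ (which avoids $L\cup N$) neither $z^*$ nor $O$ is fixed; in particular $\varphi_q\neq\id$ automatically, in every characteristic. Now pick any $z\in L\setminus\{O,z^*\}$; then $\ell_z\notin\{L,N\}$, so $\ell_z\setminus(L\cup N)$ has at least $|\K|-1\geq 2$ points, and any such $q$ satisfies $\varphi_q(z)=z$ and $\varphi_q\neq\id$, as required. (Alternatively one can run your whole non-perspective analysis in the dual plane, where the conic and the $\{0,1,2\}$ intersection counts survive in characteristic $2$; note that the paper's own construction for $(iii)$ is characteristic-free.)
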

\begin{proof} Set $\{x'\}=L'\cap M'$ and $x=\varphi(x')$ and note that, if $\varphi_p$ fixes $x$, then $\{x\}=L\cap\theta^{-1}(L')$.
\begin{compactenum}[$(i)$]
\item Suppose that $\varphi_p$ is the identity. Choose arbitrarily $q$ in $\pi\setminus(L\cup px\cup \theta^{-1}(L'))$. Note that this is possible since $|\K|\geq3$. Then it is easy to verify that $\varphi_q$ fixes $x$ and $qx\cap L$, but no other point of $L$. Hence $\varphi_q$ has exactly two fixed points. 
\item Suppose again that $\varphi_p$ is the identity. Choose arbitrarily $q$ in $px\setminus\{p,x\}$. Then it is easy to verify that $\varphi_q$ fixes $x$, but no other point of $L$. Hence $\varphi_q$ has exactly one fixed point. 
\item Suppose that $\varphi_p$ has no fixed points. Select $z\in L\setminus\{x\}$ with $\varphi_p(z)\neq x$. Let $q\in\pi$ be such that $\{\theta(q)\}=p'x'\cap \varphi^{-1}(z)\theta(z)$. Then $\varphi_q(x)\neq x$ and $\varphi_q(z)=\varphi(L'\cap\theta(qz))=\varphi(L'\cap \theta(q)\theta(z))=\varphi(L'\cap \varphi^{-1}(z)\theta(z))=\varphi(\varphi^{-1}(z)=z$.  
\item Select $z'_1$ and $z'_2\neq z'_1$ in $L'\setminus\{x'\}$. Let $q\in\pi$ be such that $\theta(q)$ is the intersection of the line $p'z_1'$ with the line joining $z_2'$ with $\theta(\varphi(z_2'))$. Then one easily checks that $\varphi_q$ fixes $x$ and $\varphi(z_2')$, but not $\varphi(z_1')$. Since $\varphi_q$ stems from a linear map, it fixes exactly two points. 
\item Define the following projectivity $\rho:L'\to M':z\mapsto \theta(\varphi(z))$. Then $x'$ is fixed and hence $\rho$ is a perspectivity with centre $c'$ on the line $K'$, where $\theta^{-1}(K')\cap L$ is the second fixed point of $\varphi_p$. Since the latter is not trivial, the point $c'$ does not coincide with $p'$. Now taking for $q$ the inverse image under $\theta$ of any point of $x'z'\setminus\{x',z'\}$, it follows that $\theta_q$ only fixes $x$, as is straightforward to verify.   
\end{compactenum}
This completes the proof of the lemma. 
\end{proof}

We return to proving Theorem~\ref{classification}. So let $\theta$ be an automorphism of $\mathsf{E_8}(\K)$ with opposition diagram $\mathsf{E_{8;4}}$ fixing no chamber. Select a chamber $C$ of $\mathsf{E_8}(\K)$ realising the opposition diagram, that is, such that some simplex of $C$ (of type $\{1,6,7,8\}$) of size $4$ is mapped onto an opposite simplex (such a chamber exists by cappedness, as $|\K|\geq 3$). We now view $\theta$ as a collineation of $\Delta=\mathsf{E_{8,8}}(\K)$. Then such a simplex consists of a point $p$, a line $L$, a plane $\pi$ and a symp $\xi$. Let $\theta_p$ be the collineation of $\Res_\Delta(p)$ mapping an arbitrary simplex in $\Res_\Delta(p)$ to the projection onto $p$ of its image under $\theta$. It follows from \cite[Proposition 3.29]{Tits:74} that $\theta_p$ has opposition diagram $\mathsf{E_{7;3}}$. So by \cite{npvv}, there are three possibilities.  
\begin{compactenum}[(1)] \item Either $\theta_p$ does not fix a chamber---and then its fixed structure is an isometrically embedded subgeometry $\Gamma'$ isomorphic to $\mathsf{F_{4,4}}(\K,\L)$, with $\L$ a quadratic extension of $\K$, and the points of $\Gamma'$ are lines of $\Res_\Delta(p)\cong\mathsf{E_{7,7}}(\K)$, \item or $\theta_p$  does fix a chamber and it is a generalised homology with fixed structure a non-thick building with thick frame of type $\mathsf{E_6}$, \item or it is a certain product of three perpendicular (commuting) axial elations of $\mathsf{E_{7,7}}(\K)$ with axes three symps pairwise intersecting in a line, and globally intersecting in a point. \end{compactenum} 

Regarding the third possibility, we will need the following result. 

\begin{lemma}\label{3opposite}
Let $\theta_p$ be the product of three perpendicular commuting axial elations of $\mathsf{E_{7,7}}(\K)$ with axes three symps pairwise intersecting in a line, and with global intersection a point $p$ and opposition diagram $\mathsf{E}_{7;3}$. Then every point opposite $p$ is mapped to an opposite point by~$\theta$. 
\end{lemma}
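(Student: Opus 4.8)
The plan is to argue entirely inside $\Delta_1=\mathsf{E_{7,7}}(\K)$ and to show directly that, for the central point $p$ of the three axes, every point $q$ opposite $p$ has image $q^{\theta_p}$ opposite $q$ (so ``mapped to an opposite point'' is read as $q^{\theta_p}$ opposite $q$). Write $\theta_p=e_1e_2e_3$, where $e_i$ is the axial elation with axis the symp $\xi_i$, the three symps pairwise meeting in a line and jointly meeting in $p$; the elations commute, so the composition order is irrelevant. First I would locate $q$ relative to the three axes. Since each $\xi_i$ contains $p$, and $p$ is opposite $q$, \cref{pointsympE77} applies with point $q$ and symp $\xi_i$: there is a unique point $x_i\in\xi_i$ collinear to $q$, every point of $\xi_i$ not collinear to $x_i$ is opposite $q$, and the remaining points of $\xi_i$ are symplectic to $q$. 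In particular $q$ is \emph{far} from each $\xi_i$, so it lies in the region on which the elations act nontrivially.

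The key step is to certify oppositeness by producing a special path $q\perp a\perp b\perp q^{\theta_p}$ realising the hypotheses $q\JJoin b$ and $a\JJoin q^{\theta_p}$ of \cref{spspop}, which then forces $q$ opposite $q^{\theta_p}$. Concretely, I would use the explicit description of the axial elations from \cite{npvv} to compute the images $q^{e_1}$, $q^{e_1e_2}$, $q^{\theta_p}$ in turn: applying $e_i$ moves $q$ along the imaginary-line/para structure it governs while fixing $x_i$ and the points of $\xi_i$ close to $q$, so each elementary move contributes a controlled special-distance displacement. Because the three axes pairwise share only a line and jointly only the point $p$, these local displacements lie in essentially independent directions, so the composite displaces $q$ maximally and the endpoints $a,b$ of the certifying path inherit the required $\JJoin$-relations from the points $x_1,x_2,x_3$ and $p$ via \cref{pointsympE77} and \cref{spspop}.

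The hard part will be the bookkeeping forced by the \emph{overlapping} axes: the three symps are not in general position, so the three elementary moves interact, and I must rule out the degenerate possibility that a partial image $q^{e_1}$ or $q^{e_1e_2}$ drifts into some $x_j^{\perp}$, which would collapse the displacement and land $q^{\theta_p}$ at special or symplectic distance from $q$ rather than opposite. I expect to control all of these incidences simultaneously by choosing the certifying path canonically from $x_1,x_2,x_3$ and $p$, invoking \cref{pointsympE77} at each of the three axes and \cref{lemmaE78} to pin down the symps through the intermediate images. As an alternative that matches the forward reference to the Chevalley setup of \cref{fixingchamber}, I would instead realise $\theta_p$ as a product $x_{\beta_1}(1)x_{\beta_2}(1)x_{\beta_3}(1)$ of three orthogonal long root elations fixing the base point $p=eP_{\sE_7}$, parametrise the $27$-dimensional family of points opposite $p$ by the opposite unipotent radical, and verify by a single generic $\mathsf{MAGMA}$ computation that the Bruhat double coset of $q^{-1}q^{\theta_p}$ is the opposition element for every value of the parameters; here the obstacle is uniformity over the whole family, which is tractable because the unipotent $\theta_p$ has a large centraliser and perturbs only a few of the $27$ coordinates.
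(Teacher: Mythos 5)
Your primary (geometric) route founders on a point of scope: the relation $\JJoin$ and \cref{spspop} are simply not available in $\sE_{7,7}(\K)$. The standing hypothesis of the subsection containing \cref{spspop} restricts $\Gamma$ to metasymplectic spaces and the long root subgroup geometries $\mathsf{E_{6,2}}(\K)$, $\mathsf{E_{7,1}}(\K)$, $\mathsf{E_{8,8}}(\K)$; by contrast $\mathsf{E_{7,7}}(\K)$ is a \emph{strong} parapolar space of diameter $3$, so special pairs do not exist there, and a certifying path $q\perp a\perp b\perp q^{\theta_p}$ with $q\JJoin b$ and $a\JJoin q^{\theta_p}$ cannot even be formulated. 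Your own (correct) appeal to \cref{pointsympE77} already displays the trichotomy in $\mathsf{E_{7,7}}(\K)$: collinear, symplectic, or opposite — there is no special relation to exploit. Beyond this, the step you yourself identify as ``the hard part'' (controlling the interaction of the three overlapping axes and ruling out collapsed displacements) is precisely the content of the lemma, and ``I expect to control all of these incidences'' leaves it undone.

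Your alternative route is, in outline, the paper's actual proof — but the uniformity issue you flag as the obstacle is resolved there by a short conceptual Bruhat computation, not by machine. The paper conjugates $\theta_p$ (via \cite[Theorem~8.5]{npvv}) to $x_{\varphi_1}(1)x_{\varphi_2}(1)x_{\varphi_3}(1)$ with $\varphi_1=\varphi_{\sE_7}$, $\varphi_2=\varphi_{\sD_6}$, $\varphi_3=\alpha_7$, identifies the points opposite $p$ with the cosets $gw_0P_7$, $g\in P_7$, and then computes, for \emph{all} $g\in P_7$ simultaneously, that $w_0^{-1}g^{-1}\theta_p gw_0\in P_7x_{-\varphi_1}(1)x_{-\varphi_2}(1)x_{-\varphi_3}(1)P_7\subseteq P_7U_{\varphi_1}s_{\varphi_1}U_{\varphi_1}U_{\varphi_2}s_{\varphi_2}U_{\varphi_2}U_{\varphi_3}s_{\varphi_3}U_{\varphi_3}P_7\subseteq P_7s_{\varphi_1}s_{\varphi_2}s_{\varphi_3}P_7$, using the commutativity of the three root elations, the rank-one identity $x_{-\varphi}(1)\in U_{\varphi}s_{\varphi}U_{\varphi}$, and the mutual orthogonality of the $\varphi_i$ (so the positive unipotents are absorbed into $P_7$); finally $s_{\varphi_1}s_{\varphi_2}s_{\varphi_3}=w_{\mathsf{E}_6}w_0$ by \cite[Lemma~3.5]{PVMexc}, and $w_{\mathsf{E}_6}\in P_7$ gives the double coset $P_7w_0P_7$, i.e.\ opposition. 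This disposes of the entire $27$-parameter family at once, so no generic $\mathsf{MAGMA}$ verification — which, as you note, would itself need a genuine argument for symbolic uniformity — is required. If you carry out this double-coset calculation your second route becomes the paper's proof; as written, neither of your routes is complete, and the first is unworkable in $\mathsf{E_{7,7}}(\K)$ as stated.
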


\begin{proof}
By \cite[Theorem 8.5]{npvv} $\theta_p$ is conjugate, in the $\sE_7$ Chevalley group, to an element of the form $\theta_p=x_{\varphi_1}(1)x_{\varphi_2}(1)x_{\varphi_3}(1)$ (in Chevalley generators as in Section~\ref{fixingchamber}, where $\varphi_1$ is the highest root, $\varphi_2$ is the highest root of the $\mathsf{D}_6$ diagram, and $\varphi_3=\alpha_7$). The type $7$ points opposite the fixed base point are identified with the $P_7$ cosets in $P_7w_0P_7$ (where $P_7$ is the standard parabolic subgroup of type $\mathsf{E}_6$). For such a point $gw_0P_7$ (with $g\in P_7$) we have
\begin{align*}
w_0^{-1}g^{-1}\theta_p gw_0&\in P_7x_{-\varphi_1}(1)x_{-\varphi_2}(1)x_{-\varphi_3}(1)P_7\\
&\subseteq P_7U_{\varphi_1}s_{\varphi_1}U_{\varphi_1}U_{\varphi_2}s_{\varphi_2}U_{\varphi_2}U_{\varphi_3}s_{\varphi_3}U_{\varphi_3}P_7\\&\subseteq P_7s_{\varphi_1}s_{\varphi_2}s_{\varphi_3}P_7,
\end{align*}
where we use the fact that the root elations commute (see \cite[Theorem~8.1]{npvv} for similar calculations). By \cite[Lemma~3.5]{PVMexc} we have $s_{\varphi_1}s_{\varphi_2}s_{\varphi_3}=w_{\mathsf{E}_6}w_0$, and hence $w_0g^{-1}\theta_p gw_0\in P_7w_0P_7$, and so $\theta_p(gw_0P_7)$ is opposite $gw_0P_7$. 
\end{proof}

We treat the three cases (1), (2) and (3) separately below. 

Note that, similarly to these cases, if $q$ is a non-domestic point of $\Delta$ for $\theta$, and there does not exist a non-domestic simplex of size $4$ containing $q$, then $\theta_q$ is either trivial, or it has opposition diagram either $\mathsf{E_{7;1}}$ (and $\theta_q$ is an axial elation in $\Res_\Delta(q)$) or $\mathsf{E_{7;2}}$ (and $\theta_q$ is the product of two perpendicular such elations). We will use this fact below. 

A non-domestic point $q$ will be said \emph{to have Type $1$ or $2$} if it is contained in non-domestic simplex of size $4$ and case (1) or (2), respectively, above applies. It \emph{has Type $3$} in all other cases. 

In the sequel, the \emph{upper residue} of a plane $\pi$ of $\Delta$ is the residue of that plane in the parapolar space, that is, the point residual at $\pi$ of the point residual at $L$ of the point residual at $x\in\pi$, where $L$ is a line in $\pi$ through $x$. This conforms to the star of the simplex $\{x,L,\pi\}$ in the building. Similarly, one defines the upper residue of a line. We now have the following useful result. 

\begin{lemma}\label{types}
Suppose $\alpha$ is a plane of $\Delta$ through $p$ and fixed by $\theta_p$. Then, if $p$ has Type~$1$, then so has every non-domestic point of $\alpha$. If, moreover, $\theta_p$ does not fix the upper residue of $\alpha$ elementwise, then each non-domestic point of $\alpha$ has the same type (with the above definition of type) as $p$. 
\end{lemma}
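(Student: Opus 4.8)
The plan is to read the type of a non-domestic point off the projectivity that $\theta$ induces on the pencil of lines of $\alpha$ through that point, and then to show this invariant is transported along $\alpha$. A fixed plane $\alpha$ through $p$ is a fixed line $\ell_\alpha$ of the point residual $\Res_\Delta(p)\cong\mathsf{E_{7,7}}(\K)$, and the lines of $\alpha$ through $p$ form a projective line on which $\theta_p$ induces an element of $\PGL_2(\K)$. First I would set up the dictionary between the three cases~(1)--(3) of the $\mathsf{E_{7;3}}$ classification of \cite{npvv} and the fixed-point behaviour of this induced map on a fixed line that is not pointwise fixed: a Type~$1$ collineation (fixing the chamber-free geometry $\mathsf{F_{4,4}}(\K,\L)$) acts as an elliptic projectivity, with no $\K$-rational fixed point and its pair of fixed points defined exactly over the quadratic extension $\L$; the Type~$2$ collineation, being a generalised homology, acts with precisely two fixed points lying on its frame; and a Type~$3$ collineation, being a product of root elations and hence unipotent, acts with precisely one fixed point. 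In each case the number of $\K$-rational fixed points faithfully records the way $\theta_r$ fixes a chamber in $\Res_\Delta(r)$, so it is a genuine invariant of the type.

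Next I would compare the induced data at $p$ and at a non-domestic point $q\in\alpha$. Since $p$ and $q$ are collinear and both lie in $\alpha$, the residues $\Res_\Delta(p)$ and $\Res_\Delta(q)$ share the upper residue of $\alpha$, and the projectivity induced on the pencil at $p$ is carried to the pencil at $q$ by a perspectivity determined by this common structure. This is precisely the configuration modelled by \cref{projplane}, with $\pi,\pi'$ the two residual planes, $\theta$ in the role of the given projectivity, $\varphi$ the perspectivity through the upper residue, and the family $\varphi_q$ recording the induced projectivity at the varying point~$q$. The second hypothesis, that $\theta_p$ does not fix the upper residue of $\alpha$ elementwise, is exactly the statement that this perspectivity is nondegenerate, i.e.\ that $\varphi_p\neq\id$; granting this, the cases $(iii)$--$(v)$ of \cref{projplane} are used to exclude any change in the $\K$-rational fixed-point count along $\alpha$, since such a change would, through the produced point $q$, create a simplex whose image under $\theta$ is incompatible with $\theta$ being domestic with opposition diagram $\sE_{8;4}$ and fixing no chamber. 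Hence the count, and with it the type, is constant, proving the second assertion. Conversely, when $\varphi_p=\id$, cases $(i)$ and $(ii)$ exhibit points whose induced maps have one and two fixed points respectively, so the type genuinely may vary, which is why this hypothesis cannot be omitted.

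For the first assertion I would argue that Type~$1$ propagates unconditionally, because the quadratic extension $\L$ is already visible in the elliptic action of $\theta_p$ on $\ell_\alpha$ itself and does not rely on the upper residue. A Type~$1$ collineation has no fixed chamber, so the induced map on $\ell_\alpha$ has its fixed-point pair defined over $\L\neq\K$; since the transport to the pencil at any non-domestic $q\in\alpha$ is realised by a $\K$-rational perspectivity, and a $\K$-rational map cannot convert an $\L$-irrational fixed-point pair into a $\K$-rational one, the induced map at $q$ is again elliptic with the same field~$\L$. This exhibits inside $\Res_\Delta(q)$ a fixed metasymplectic space $\mathsf{F_{4,4}}(\K,\L)$, so $q$ lies in a non-domestic simplex of size $4$ for which case~(1) applies, i.e.\ $q$ has Type~$1$; crucially this argument survives even when the upper residue of $\alpha$ is fixed elementwise, which accounts for the asymmetry between the two statements of the lemma.

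The main obstacle I expect is the bookkeeping required to match the abstract data of \cref{projplane} with the geometric residues, and in particular to verify that the points $q$ under consideration are genuinely non-domestic, so that $\theta_q$ is defined and carries one of the opposition diagrams $\mathsf{E_{7;3}}$, $\mathsf{E_{7;1}}$, $\mathsf{E_{7;2}}$ or is trivial, as recorded before the lemma, rather than being domestic. A closely related technical point is to confirm that the maps induced by $\theta_p$ and by $\theta_q$ on the common upper residue of $\alpha$ coincide: this needs the relevant flag containing $\alpha$ to be suitably controlled and an application of \cite[Proposition~3.29]{Tits:74} to pin down the induced opposition diagrams, after which the fixed-point trichotomy established in the first paragraph delivers the equality of types.
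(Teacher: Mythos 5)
Your argument has a genuine gap at its centre, namely the transport of the pencil action from $p$ to $q$. The pencil of lines of $\alpha$ through $p$ is a line of $\Res_\Delta(p)$, and the pencil at $q$ is a line of $\Res_\Delta(q)$; these lie in the \emph{lower} residue of $\alpha$ and are different structures, and no $\theta$-equivariant ``$\K$-rational perspectivity'' between them is constructed in your proposal, nor does a natural one exist. The only structure that $\Res_\Delta(p)$ and $\Res_\Delta(q)$ have in common is the \emph{upper} residue of $\alpha$ (the flags of objects containing $\alpha$), and this is exactly what the paper's proof uses: since building-theoretic projection between points is type preserving, $\theta_p$ and $\theta_q$ literally coincide there, so the type must be read off that shared structure rather than off the pencils. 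Note that if your unconditional transport existed, rational fixed-point counts would be preserved in all cases and the second assertion of the lemma would hold without its hypothesis; the hypothesis is needed precisely because the comparable object is the common element induced on the upper residue, which must be nontrivial before ``homology versus unipotent'' can separate Types 2 and 3. Your identification of that hypothesis with ``$\varphi_p\neq\id$'' in \cref{projplane} is also incorrect: $\varphi_p$ is defined on a line \emph{inside} $\alpha$ via the special relation and the image plane $\alpha^\theta$, and its triviality is independent of whether the objects \emph{above} $\alpha$ are fixed elementwise. Finally, even as a pointwise invariant the pencil count is deficient: for Types 2 and 3 the line $\ell_\alpha$ of the residual can be pointwise fixed (the lemma's hypothesis does not prevent this, as it concerns the upper residue and not the pencil), in which case the count does not determine the type.

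Your use of \cref{projplane} moreover reverses the paper's logic. In the paper, \cref{types} is proved first, purely via the upper residue, and only afterwards is it combined with \cref{projplane} in Sections~\ref{type1}--\ref{type3}: there \cref{projplane} \emph{produces} points $q\in\alpha$ whose maps $\varphi_q$ have fixed-point behaviour different from that of $\varphi_p$, and this variation contradicts the constancy of type furnished by \cref{types}, yielding the desired contradictions (e.g.\ to $L'\neq M'$). So \cref{projplane} exhibits change along $\alpha$; it cannot be used to exclude it, and deducing \cref{types} from it in the way you describe is circular. The paper's actual proof is short: (i) $\theta_p=\theta_q$ on the upper residue of $\alpha$; (ii) $p$ has Type 1 if and only if no chamber of this upper residue is fixed --- in every other case $\theta_p$ fixes a chamber of $\Res_\Delta(p)$, whose building-theoretic projection onto $\alpha$ is a fixed chamber of the upper residue, while for Type 1 the fixed simplices form an $\mathsf{F_4}$ building of rank too small to contain such a chamber --- and this gives the first assertion unconditionally; (iii) when the common induced element is nontrivial, it is a homology if the point has Type 2 and unipotent if it has Type 3, and no nontrivial element is both. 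Your fixed-point trichotomy and your closing ``technical point'' contain the right ingredients, but they are attached to the wrong, unshared structure; running the dichotomies on the upper residue itself is what makes the argument work.
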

\begin{proof}
Note that $\theta_p$ and $\theta_q$ coincide over the upper residue of $\alpha$ because the projection between points is type preserving.

If $p$ has Type $1$, then in the upper residue of $\alpha$, there are no fixed chambers, whereas this is not the case in any of the other types, because there is always some fixed chamber $C$ in $\Res_\Delta(p)$ , and so the (building theoretic) projection of $C$ onto $\pi$ yields a fixed chamber containing $\pi$.  Hence $q$ also has Type 1.

Now suppose that $\theta_p$ does not fix the upper residue of $\alpha$ elementwise. If $p$ has Type $2$, then in the upper residue of $\alpha$, there is a nontrivial homology induced, whereas in Type $3$, this is a unipotent element. Now, a unipotent element is only a homology if it is the identity. The lemma follows.
\end{proof}

\subsection{Type $1$: $\theta_p$ does not fix a chamber}\label{type1}
Since in this case the fixed structure of $\theta_p$ is an isometrically embedded subgeometry $\Gamma'$ isomorphic to $\mathsf{F_{4,4}}(\K,\L)$, with $\L$ a quadratic extension of $\K$, and the points of $\Gamma'$ are lines of $\Res_\Delta(p)\cong\mathsf{E_{7,7}}(\K)$, there exists a fixed plane $\alpha$ of $\theta_p$. Set $L=\alpha\cap (p^\theta)^{\Join}$ and $L'=\alpha^\theta\cap p^{\Join}$. Set $M'=L^\theta$. Assume for a contradiction that $L'\neq M'$. Then, since $\Gamma'$ does not contain lines through $p$, the projectivity $L\to L$ defined by $x\mapsto L\cap((px)^\theta\cap L')^\perp$ does not have fixed points. However, by \cref{projplane}$(iii)$, setting $\varphi:L'\to L:x\mapsto L\cap x^\perp$, we can choose $q\in\alpha$ such that the analogously defined map does have fixed points, contradicting the fact that $q$ also has Type 1 by \cref{types}.  We have shown that $L^\theta=\alpha^\theta\cap p^{\Join}$.

Now let $\zeta$ be a symp through $p$ fixed by $\theta_p$. Then $\zeta\cap\zeta^\theta$ is a point $u$. Also, each line $M$ of $\zeta$ through $p$ is contained in a fixed plane, so the previous paragraph implies that $(M\cap (p^\theta)^{\Join})^\theta=M^\theta\cap p^{\Join}$. It follows that $p^\perp\cap (p^\theta)^{\Join}\cap\zeta$, which equals $p^\perp\cap u^\perp$, is mapped onto $(p^\theta)^\perp\cap p^{\Join}\cap \zeta^\theta$, which equals $(p^\theta)^\perp\cap u^\perp$. But since $\zeta$ is hyperbolic, $u$ and $p$ are the only points of $\zeta$ collinear to $p^\perp\cap u^\perp$; likewise $u$ and $p^\theta$ are the only points collinear to $(p^\theta)^\perp\cap u^\perp$. This implies $\{u,p\}^\theta=\{u,p^\theta\}$, yielding $u^\theta=u$. We have shown:

\begin{lemma}\label{inE}
With the above notation and conventions, if $\theta_p$ fixes a symp $\xi$, then it fixes the unique point of $\xi$ symplectic to $p^\theta$.  
It follows that the equator geometry $E(p,p^\theta)=p^{\pperp}\cap(p^\theta)^{\pperp}$ contains a fully isometrically embedded pointwise fixed subgeometry $\Gamma''\cong\mathsf{F_{4,1}}(\K,\L)$.  
\end{lemma}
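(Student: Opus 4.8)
The plan is to transport the fixed structure $\Gamma'$ of $\theta_p$ in the point residual $\Res_\Delta(p)\cong\mathsf{E_{7,7}}(\K)$ across to the equator geometry $E(p,p^\theta)\cong\mathsf{E_{7,1}}(\K)$, using the correspondence between symplecta through $p$ and points of the equator. First I would recall, from the definition of the equator and the explicit apartments in \cite{HVM-MV,DSVM2}, that every symplecton $\zeta$ through $p$ meets $E(p,p^\theta)$ in exactly one point $u(\zeta)$, namely the unique point of $\zeta$ symplectic to $p^\theta$, and that $\zeta\mapsto u(\zeta)$ is a bijection from the symplecta through $p$ onto the point set of $E(p,p^\theta)$, with inverse $u\mapsto\xi(p,u)$. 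Under the residue correspondence the symplecta through $p$ are precisely the symplecta of $\Res_\Delta(p)$, which by \cref{stars} are the type-$1$ objects of the $\mathsf{E_7}$-building sitting at $p$; as the points of $E(p,p^\theta)$ are also its type-$1$ objects, the bijection $\zeta\mapsto u(\zeta)$ is type-preserving and identifies the geometry of symplecta of $\mathsf{E_{7,7}}(\K)$ (itself a copy of $\mathsf{E_{7,1}}(\K)$) with the equator.

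Second, the already established first assertion of the lemma says that every $\theta_p$-fixed symplecton $\zeta$ through $p$ yields a $\theta$-fixed point $u(\zeta)\in E(p,p^\theta)$. Each symplecton of $\Gamma'\cong\mathsf{F_{4,4}}(\K,\L)$ lies, by the isometric embedding $\Gamma'\hookrightarrow\Res_\Delta(p)$, in a unique symplecton of $\Res_\Delta(p)$, and that ambient symplecton is $\theta_p$-fixed, being the only one through a pointwise fixed symplecton. Writing $Z$ for the set of points $u(\zeta)$ arising this way, the first assertion gives that $Z$ consists of points of $E(p,p^\theta)$ fixed by $\theta$. Now the symplecta of $\Gamma'$ are precisely its type-$1$ objects, and the type-$1$ objects of a metasymplectic space $\mathsf{F_{4,4}}(\K,\L)$, with their natural incidence, form exactly the point set of the dual metasymplectic space $\mathsf{F_{4,1}}(\K,\L)$ (both are the type-$1$ vertices of the building $\mathsf{F_4}(\K,\L)$). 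Thus $Z$ is abstractly the point set of $\mathsf{F_{4,1}}(\K,\L)$, and I would take $Z$, together with the lines of $E(p,p^\theta)$ it carries, as the desired subgeometry $\Gamma''$.

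Finally, it remains to check that $\zeta\mapsto u(\zeta)$ carries the $\mathsf{F_{4,1}}(\K,\L)$-geometry on the symplecta of $\Gamma'$ onto the geometry induced on $Z$ by $E(p,p^\theta)$, and that the embedding is full and isometric. The decisive point is that the lines of $E(p,p^\theta)$ are, by definition, the families of symplecta through $p$ sharing a common maximal singular subspace; hence two symplecta of $\Gamma'$ are collinear as points of $\mathsf{F_{4,1}}(\K,\L)$ precisely when $u(\zeta_1)\perp u(\zeta_2)$, which gives fullness. The symplectic, special and opposite relations are matched in the same spirit, reading off the mutual position of $u(\zeta_1)$ and $u(\zeta_2)$ from that of $\zeta_1$ and $\zeta_2$ by means of \cref{spspop,pointsymp} and the explicit apartments of \cite{HVM-MV}; because $\Gamma'$ is isometrically embedded in $\Res_\Delta(p)$, these relations transfer faithfully and yield that $\Gamma''$ is isometrically embedded. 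I expect this relation-matching to be the main technical obstacle, in particular separating the special from the symplectic case, where one must control common neighbours inside the equator; the explicit descriptions of equator geometries in \cite{HVM-MV,DSVM2} provide exactly what is needed.
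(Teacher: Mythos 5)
There is a genuine gap: you declare the first assertion of the lemma ``already established'' and only argue the second (``It follows \dots'') part, but the first assertion is precisely the substantive content to be proved, and nothing in your proposal establishes it. That $\theta_p$ fixes the symp $\zeta$ only means $\proj_p(\zeta^\theta)=\zeta$; this is a statement about projections and gives no direct control on $\theta$ fixing any actual point of $\Delta$. Writing $u$ for the unique point of $\zeta$ symplectic to $p^\theta$, one checks that $\zeta\cap\zeta^\theta=\{u\}$, but a priori $u^\theta$ is merely \emph{some} point of $\zeta^\theta$ symplectic to $p^\theta$, and $\zeta^\theta$ contains many such points; your argument nowhere rules out $u^\theta\neq u$. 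The paper's proof is devoted exactly to closing this gap: it first shows, for every $\theta_p$-fixed plane $\alpha$ through $p$, that $(\alpha\cap(p^\theta)^{\JJoin})^\theta=\alpha^\theta\cap p^{\JJoin}$, by a fixed-point analysis of projectivities --- since $\Gamma'$ contains no lines through $p$, the projectivity of $L=\alpha\cap(p^\theta)^{\JJoin}$ induced by $\theta$ is fixed-point free, and \cref{projplane}$(iii)$ combined with \cref{types} then produces a Type~1 point $q\in\alpha$ whose analogously defined projectivity \emph{does} have a fixed point, a contradiction. Feeding this into the fixed symp $\zeta$ (every line of $\zeta$ through $p$ lies in a fixed plane), one gets that $p^\perp\cap u^\perp$ is mapped onto $(p^\theta)^\perp\cap u^\perp$, and since in the hyperbolic polar space $\zeta$ only $p$ and $u$ are collinear to all of $p^\perp\cap u^\perp$ (and only $p^\theta$ and $u$ to all of $(p^\theta)^\perp\cap u^\perp$), it follows that $\{u,p\}^\theta=\{u,p^\theta\}$, whence $u^\theta=u$. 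None of this machinery appears in your proposal, so the construction of $Z$ has no starting point.

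Your remaining steps --- the bijection $\zeta\mapsto u(\zeta)$ between symps through $p$ and points of $E(p,p^\theta)$, the identification of the $\theta_p$-fixed symps with the type-$1$ vertices of the fixed $\mathsf{F_4}$-building (yielding the $\mathsf{F_{4,1}}(\K,\L)$ structure), and the transfer of incidence and mutual position into the equator geometry --- are essentially how the paper deduces the second assertion from the first, so that portion of the plan is sound, although the isometry verification is left schematic. But as written the proposal is circular at its base: the claim you cite as ``already established'' is the claim to be proved.
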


Now consider two opposite points $x,y$ of $\Gamma''$. Then the collineation $\theta'$ induced in $E(x,y)$ by $\theta$ is domestic (as every singular subspace of $\Delta$ is domestic) and does not fix any chamber (as this would induce a fixed chamber in $\Delta$ containing $x$). By \cite{npvv}, there are three (nested) possibilities.

\begin{itemize}
\item \emph{The opposition diagram of $\theta'$ is $\mathsf{E_{7;3}}$}. In this case, the opposition diagram of $\theta'_p$ (which is the restriction of $\theta_p$ to $E(x,y)$) is one of $\mathsf{D_{6;0}}$, $\mathsf{D_{6;1}^1}$, $\mathsf{D_{6;1}^2}$ or $\mathsf{D_{6;2}^1}$ (because these are the opposition diagrams of which the encircled nodes are a subset of those encircled in the opposition diagram $\mathsf{D_{6;2}^1}$ obtained from  $\mathsf{E_{7;3}}$ by taking the residue of a node of type $1$). In all these cases \cite[Theorem 1]{PVMclass} implies that $\theta'_p$ pointwise  fixes planes of the corresponding polar space. Let $\pi$ be such a plane. This conforms to a maximal singular $3$-space of the associated geometry of type $\mathsf{D_{6,6}}$, hence to a maximal singular $4$-space $U$ containing $p$ of $E(x,y)$, and $\theta'_p$ fixes all symps and paras containing $U$.  The latter relates to a vertex of the branching type in $\Res_\Delta(p)$, hence to a plane $\pi$ of $E(p,p^\theta)$. Since all symps throught $U$ are fixed, \cref{inE} implies that $\pi$ is a pointwise fixed plane in $E(p,q)\cap E(x,y)$, contradicting the fact that the fixed points in the latter form the equator in an $\mathsf{F_{4,1}}(\K,\L)$, which does not contain lines (see for instance  \cite{Pet-Mal:23}). 
\item \emph{The opposition diagram of $\theta'$ is $\mathsf{E_{7;4}}$}. Since no chamber is fixed, \cite{npvv} implies that this case splits in two possibilities.
\begin{itemize} \item $\theta'$ pointwise fixes an equator geometry of $E(x,y)$ (as geometry isomorphic to $\mathsf{E_{7,1}}(\K)$), say $E(a,b)\cap E(x,y)$, with $a$ and $b$ two opposite points of $E(x,y)$. Select a point $z$ in $\Gamma''$ symplectic to both $x$ and $y$. Then $z\in E(a,b)$ as the only fixed points of $E(x,y)$ are those of $E(a,b)$. Set $\xi=\xi(x,z)$. Set $\zeta=\xi\cap E(a,b)$ and let $\zeta'$ be the intersection of $\xi$ with the point set of $\Gamma''$.

First we note that, in the geometry $E(a,b)$, two opposite points $x,y$ and their equator are pointwise fixed. Hence all symps of $E(a,b)$ through $x$ or $y$ are stabilised, which implies that the residue of $x$ is pointwise fixed. This, in turn,  implies that $\zeta\cap (z^\perp\cap x^\perp)\subseteq E(a,b)\cap (y^{\Join}\cap x^\perp)$ is pointwise fixed.  

 We now argue in the ambient projective space $W$ of $\xi$, which is allowed as $\theta$ stabilised $\xi$ and extends uniquely to a collineation in $W$. Note that $\xi\cong\mathsf{D_{7,1}}(\K)$, $\zeta\cong\mathsf{D_{5,1}}(\K)$ and $\zeta'\cong\mathsf{B_{3,1}}(\K,\L)$. Hence, in $W$, the dimensions of $\<\zeta\>$ and $\<\zeta'\>$ are $9$ and $7$, respectively. Consequently, $\<\zeta\>\cap\<\zeta'\>$ is at least $3$-dimensional and contains the line $\<x,z\>$. Since $\theta$ pointwise fixes $\zeta'$, it pointwise fixes $\<\zeta\>\cap\<\zeta'\>$. Since it also pointwise fixes $\<\zeta\cap z^\perp\cap x^\perp\>$, which is complementary to $\<x,z\>$ in $\<\zeta\>$, we deduce that $\theta$ pointwise fixes $\zeta$. 
 
 Now let $\widetilde{\zeta}$ be a symp of $E(a,b)$ containing $x$ and adjacent to $\zeta$, that is, intersecting $\zeta$ in a maximal singular subspace (here $4$-dimensional) $U$. Set $\{\widetilde{z}\}=\widetilde{\zeta}\cap E(x,y)$. Then, as above, $\theta$ pointwise fixes $\widetilde{\zeta}\cap\widetilde{z}^\perp\cap x^\perp$. Together with the fact that $U$ is pointwise fixed, this implies that $\theta$ pointwise fixes the tangent hyperplane $H_x$ at $x$ of $\widetilde{\zeta}$, and also $\widetilde{z}$. Hence $\theta$ induces in $\<\widetilde{q}\>$ a homology with axis $H_x$ and centre $\widetilde{z}$. But since there are lines through $\widetilde{z}$ intersecting $\widetilde{\zeta}$ in exactly one other point, this has to be the identity. We conclude that $\theta$ pointwise fixes $\widetilde{\zeta}$. Going on like this, we find that $\theta$ fixes each point of $E(a,b)\cap (x^\perp\cup x^{\pperp})$.  Then \cref{elationsE7} implies that $\theta$ induces a central elation in $E(a,b)$ with centre $x$, which fixes the point $y$ opposite $x$. We conclude that $\theta$ pointwise fixes $E(a,b)$
 and $\theta$ is domestic of Class II. 
 
 \item $\theta'$ pointwise fixes a fully and isometrically embedded polar space $\Delta'''$ isomorphic to $\mathsf{C_{3,1}}(\H,\K)$, for some quaternion division algebra over $\K$. Note that `isometric' here means that collinear points of  $\mathsf{C_{3,1}}(\H,\K)$ are symplectic in $\Delta$, and opposite points of $\mathsf{C_{3,1}}(\H,\K)$ are also opposite in $\Delta$.  The geometry $\Delta'''$ isometrically contains the subgeometry $\Gamma'''$, induced by $E(x,y)\cap E(p,p^\theta)$, and isomorphic to $\mathsf{C_{3,1}}(\L,\K)$. This shows $\K\leq\L\leq\H$. 

Now consider any line $K$ through $x$ fixed by $\theta$. Then the corresponding para in $E(x,y)$ is fixed, and hence belongs to $\Delta'''$. Since the paras of $\Delta'''$ correspond to planes of the associated polar space $\mathsf{C_{3,1}}(\H,\K)$, we find that some plane of the subgeometry $\mathsf{C_{3,1}}(\L,\K)$ corresponding to $\Gamma'''$ is opposite the plane corresponding to $K$. Hence the corresponding line $K'$ through $x$ is opposite $K$ in $\Res_\Delta(x)$, and is pointwise fixed (since it is contained in $\Gamma''$), just like its projection $K''$ onto $y$. But now $K''$ is opposite $K$ (use \cite[Proposition~3.29]{Tits:74}). It follows that $\theta$ pointwise fixes $K$. The fixed lines through $x$ in a fixed symp correspond to the fixed paras in $E(x,y)$ through a fixed point, and hence constitute a geometry isomorphic to $\mathsf{B_{2,1}}(\K,\H)$. Hence the fixed point structure of $\theta$ induced in any stabilised symp through $x$ is a geometry isomorphic to $\mathsf{B_{3,1}}(\K,\H)$. 

We now claim that every fixed point $z$ is opposite some point of $\Gamma''$. Indeed, if $z$ is collinear to $x$, we exhibited an opposite fixed point collinear to $y$ above; if $z$ is special to $x$, then a similar argument exhibits an opposite fixed point collinear to $x$ using   \cref{spspop}. Since $x$ was chosen arbitrarily, we may assume that $z$ is symplectic to all points of $\Gamma''$. Hence $z$ belongs to $\Delta'''$, which clearly contains points opposite any of its own points. The claim is proved.  Hence the fixed point structure in the residue of each fixed point is the same as the one in $x$, and is isomorphic to $\mathsf{C_{3,1}}(\H,\K)$. 

Next we claim that also the fixed point structure in each fixed symp $\xi^*$ is the same, and isomorphic to $\mathsf{B_{3,1}}(\K,\H)$.   Indeed, clearly not all fixed points are close to $\xi^*$, hence at least one is far and induces a fixed point $z$ in $\xi^*$. We may treat $z$ as $x$ above and the claim follows.  Let $\Gamma$ be the geometry of fixed points and (pointwise) fixed lines. 

The previous paragraph now implies that for two fixed symplectic points $a,b$, the convex closure in $\Gamma$ is a subquadric of $\xi(a,b)$ isomorphic to $\mathsf{B_{3,1}}(\K,\H)$. Hence $\Gamma$ is a parapolar space all point residuals are isomorphic to  $\mathsf{C_{3,1}}(\H,\K)$. Since we can now derive that the fix diagram is $\mathsf{E_{8;4}}$, we know that $\Gamma$ is associated to a building of $\mathsf{F_4}$, and we hence conclude that $\Gamma$ is a metasymplectic space isomorphic to $\mathsf{F_{4,1}}(\K,\H)$, isometrically and fully embedded in $\Delta$. Hence $\theta$ is of Class I.
\end{itemize}
\end{itemize}   
\begin{rem}Note that, if $\theta$ is of Class I, pointwise fixing a metasymplectic space $\Gamma$, then $\theta$ does not fix any vertex of the associated building apart from those of $\Gamma$. Indeed, such vertex corresponds to a singular subspace $U$(because we already proved that every stabilised symp belongs to $\Gamma$). It is easy to see that there exists a fixed point $f$ opposite some point of $U$. Then also the projection $U'$ of $x$ onto $U$ is stabilised, and this is a singular subspace of dimension one less than the dimension of $U$. Repeating this argument a few times leads to a fixed point in $U$, and our proof above implies that $U$ belongs to $\Gamma$.
\end{rem}

\subsection{Type $2$: $\theta_p$ is a generalised homology}\label{type2}
By \cite{PVMclass}, $\theta_p$ pointwise fixes a non-thick building with thick frame of type $\mathsf{E_6}$; in Chevalley notation it is the homology $h_{\omega_7}(c)$, with $c\in\K\setminus\{0,1\}$, see \cite[Theorem~2$(ii)$]{npvv}.  Hence, according to the last paragraph of \cref{equatorgeom}, we can describe the fixed point set of $\theta_p$, acting on the symps through $p$, hence acting on $\mathsf{E_{7,1}}(\K)$, as the point sets of two opposite paras $\Pi$ and $\Pi'$ (isomorphic to $\mathsf{E_{6,1}}(\K)$, together with the point set of their equator geometry $E(\Pi,\Pi')$. As point set of $\Res_\Delta(p)$, it is the union of two opposite points and their traces. Let $\pi$ be a plane through $p$ that corresponds to a line in $\Res_\Delta(p)$ intersecting both the mentioned traces.   
Then $\pi$ is fixed by $\theta_p$, but the upper residue of $\pi$ is not fixed pointwise, because $\pi$ corresponds to a symp in $\mathsf{E_{7,1}}(\K)$ containing lines that are not contained in $\Pi\cup\Pi'\cup E(\Pi,\Pi')$. 
We can now use the same technique as in the first paragraph of \cref{type1}, using \cref{projplane}$(ii)$ and \cref{types} to conclude that $\pi\cap (p^\theta)^{\Join}$ is mapped onto $\pi^\theta\cap p^{\Join}$, and consequently that the unique point symplectic to $p^\theta$ of each symp through $\pi$ fixed by $\theta_p$ is fixed by $\theta$. But every symp through $p$ fixed by $\theta_p$ contains such a plane $\pi$, so we conclude that 
$E(p,p^\theta)$ contains a fixed point set $\Gamma''$ isomorphic to the long root geometry of a non-thick building of type $\mathsf{E_7}$ having thick frame type $\mathsf{E_6}$, in particular the union of two paras $\Pi$ and $\Pi'$ and their equator geometry $E(\Pi,\Pi')$. 

Consider two opposite points $x,y$ of $\Gamma''$, more exactly in $E(\Pi,\Pi')$. The equator geometry $E(x,y)$ (viewed inside $\Delta$) is stabilised by $\theta$ and hence, as before, $\theta$ induces in $E(x,y)$ a domestic  collineation not fixing any chamber. Recall from \cite[Theorem~1]{npvv} that there are three possibilities for the fixed point sets of such collineations, but all of them are subspaces, that is, if two collinear points are fixed, then the joining line is fixed pointwise. However, if we restrict $E(x,y)$ to $E(p,p^\theta)$, then, by \cite[Proposition~6.15]{DSV2}, we obtain a stabilised geometry $\mathsf{D_{6,2}}(\K)$. The intersection of $\mathsf{D_{6,2}}(\K)$ with $\Gamma''$ is the restriction of $\Pi\cup\Pi'\cup E(\Pi,\Pi')$ to $E(x,y)$, which is readily seen to contain points of $\Pi$ and points of $E(\Pi,\Pi')$ which are collinear. But the line through these points is not fixed pointwise, and that is a contradiction. (More exactly, the fixed point structure consists of two opposite paras $\Upsilon$ and $\Upsilon'$ of $E(x,y)$ isomorphic to $\mathsf{A_{5,2}}(\K)$ together with their equator geometry $E(\Upsilon,\Upsilon')$ (defined similarly as $E(\Pi,\Pi')$), and this is not a subspace.)

Hence this case does not lead to an example.

\subsection{Type $3$: $\theta_p$ is the product of three pairwise perpendicular axial elations}\label{type3}
Since \begin{compactenum}[$\bullet$]\item this is the only case remaining, and \item using \cite[Proposition 3.29]{Tits:74}, the opposition diagram of $\theta_q$, for any non-domestic point $q$, is one of $\mathsf{E_{7;0}}$, $\mathsf{E_{7;1}}$, $\mathsf{E_{7;2}}$ or $\mathsf{E_{7;3}}$, and \item each collineation of $\mathsf{E_{7,1}}(\K)$ with opposition diagram $\mathsf{E_{7;1}}$ or $\mathsf{E_{7;2}}$ is the product of one or two (perpendicular) central elations, \end{compactenum}we may assume that 
\begin{quote} (*) \  \ \ \ \ \ \ \ \emph{for any non-domestic point $q$, the collineation $\theta_q$ is unipotent. } \end{quote}

This implies for instance, that each fixed line either has exactly one fixed point (and then we call the action of $\theta_q$ on $L$ an \emph{elation}), or is pointwise fixed. 

We first prove the following claim about hyperbolic polar spaces.

\begin{lemma}\label{hypel}
Let $\varphi$ be a collineation of $\mathsf{D}_{n,1}(\K)$ fixing some point $x$ and all lines through it. If $\varphi$ is unipotent, then it is the product of two axial elations. Their axes are not coplanar if, and only if, $\varphi$ is not an axial elation itself if, and only if, $\varphi$ maps some point to a non-collinear point. 
\end{lemma}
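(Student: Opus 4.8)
The plan is to reduce the statement to a computation with the Eichler (Siegel) transformations of the orthogonal group. First I would identify $\mathsf{D}_{n,1}(\K)$ with a hyperbolic quadric $Q$ in $\PG(2n-1,\K)$ carrying a quadratic form $q$ with associated bilinear form $b$ on a $2n$-dimensional space, and record that a unipotent collineation is induced by a unipotent isometry $T\in\mathsf{O}(q)$. Fix the point $x=\langle u\rangle$ with $u$ singular. The hypothesis that $\varphi$ stabilises every line through $x$ says exactly that $\varphi$ induces the identity on the residue $\Res(x)\cong\mathsf{D}_{n-1,1}(\K)$, hence that $T$ acts as a scalar on $u^{\perp}/\langle u\rangle$; since $T$ is unipotent this scalar is $1$, and likewise $Tu=u$. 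A short argument then shows that $T$ is an Eichler transformation $E_{u,w}$ for a vector $w\in u^{\perp}$, well defined modulo $\langle u\rangle$: explicitly $E_{u,w}$ fixes $u$, sends $v\mapsto v-b(w,v)u$ on $u^{\perp}$, and sends a vector $e$ with $b(u,e)=1$ to $e+w-q(w)u$. This normal form is the structural heart of the proof.

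Next I would use that the maps $E_{u,v}$ (for $v\in u^{\perp}$) form an abelian group with $E_{u,v}E_{u,v'}=E_{u,v+v'}$, isomorphic to the additive group of the nondegenerate space $u^{\perp}/\langle u\rangle$, and that $E_{u,v}$ is a long root (axial) elation precisely when $v$ is singular, its centre being the singular line $\langle u,v\rangle$ and its axis $\langle u,v\rangle^{\perp}$. Since $u^{\perp}/\langle u\rangle$ has Witt index $n-1\ge 1$, every nonzero vector $w$ (and we may assume $\varphi\neq\id$, so $w\neq 0$) is a sum $v_1+v_2$ of two singular vectors: it suffices to choose a singular $v_2$ with $b(w,v_2)=q(w)$, which is possible because the singular vectors span and $b(w,-)\neq 0$, and then put $v_1=w-v_2$. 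Thus $\varphi=E_{u,v_1}E_{u,v_2}$ is a product of two axial elations, proving the first assertion. Moreover $q(w)=q(v_1)+b(v_1,v_2)+q(v_2)=b(v_1,v_2)$, so the two centre lines $\langle u,v_1\rangle$ and $\langle u,v_2\rangle$ span a totally singular plane, i.e. the axes are coplanar, if and only if $q(w)=0$.

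Finally I would settle the chain of equivalences by a one-line collinearity computation: for a point $z$ of $Q$ one finds $b(z,Tz)=-q(w)\,b(u,z)^2$. Hence $\varphi$ sends $z$ to a non-collinear point for some $z$ (take $b(u,z)\neq 0$) exactly when $q(w)\neq 0$; and when $q(w)=0$ every point is sent to a collinear one and $\varphi=E_{u,w}$ is a single long root elation, i.e. an axial elation. Combining this with the previous paragraph yields the chain: the axes are not coplanar $\iff q(w)\neq 0\iff\varphi$ is not itself an axial elation $\iff\varphi$ moves some point to a non-collinear point. The main obstacle I anticipate is the normal-form step, namely turning \emph{``fixes $x$ and all lines through $x$, and is unipotent''} into the Eichler form $E_{u,w}$ while keeping the argument valid in characteristic $2$ (where reflections and transvections behave differently, but the Eichler transformations remain uniform); a secondary point is to read \emph{coplanar} precisely as ``the two centre lines lie in a common totally singular plane'', which is the interpretation under which coplanarity matches $q(w)=0$.
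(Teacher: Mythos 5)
Your proof is correct and is essentially the paper's own argument in coordinate-free dress: the paper chooses hyperbolic coordinates in which every unipotent collineation fixing $x$ and all lines through $x$ is given by an explicit matrix $M_{A,B}$, which is exactly your Eichler transformation $E_{u,w}$ (with $q(w)=A\cdot B$), and its factorisation $M_{A,B}=M_{A,0}M_{0,B}$ is exactly your splitting $w=v_1+v_2$ into singular vectors. The closing chain of equivalences is settled the same way in both proofs: axial elations are point-domestic, and a point $z$ with $b(u,z)\neq 0$ is moved to a non-collinear point precisely when $q(w)\neq 0$ (the paper checks this on one explicit point, while your identity $b(z,Tz)=-q(w)\,b(u,z)^2$ does it for all $z$ at once).
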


\begin{proof}
We may represent $\mathsf{D}_{n,1}(\K)$ as a hyperbolic quadric $Q$ in $\PG(2n-1,\K)$ and choose coordinates $(x_{-n},\ldots,x_{-1}.x_1,\ldots,x_n)$ in such a way that $Q$ has equation \[x_{-n}x_n+x_{-n+1}x_{n-1}+\cdots+x_{-2}+x_2+x_{-1}+x_1=0.\] One verifies easily that a generic unipotent collineation of $Q$ fixing all lines through the point $(1,0,0,\ldots,0)$ stems from a linear map with matrix
\[M_{A,B}:=\begin{pmatrix} 1 & A & A\cdot B & B\\ 0 & I_{n-1} & -B^{\mathsf{T}} & 0 \\ 0 & 0 & 1 & 0\\ 0 & 0 & -A^{\mathsf{T}} & I_{n-1}\end{pmatrix},\] where $A$ and $B$ are two arbitrary $(n-1)$-tuples with entries in $\K$, and $A\cdot B$ is the usual scalar product. One now computes that $M_{A,B}=M_{A,0}M_{0,B}$. The matrix $M_{A,0}$ represents an axial elation with axis generated by the points $(1,0,\ldots,0)$ and $(1,0,\ldots,0,A)$, whereas $M_{0,B}$ represents an axial elation with axis generated by $(1,0,\ldots,0)$ and $(1,B,0,\ldots,0)$. One verifies that $\varphi$ is an axial elation itself if, and only if, $A\cdot B=0$. The latter is also equivalent to  $(1,0,\ldots,0,A)$ and $(1,B,0,\ldots,0)$ being collinear. The last equivalence follows from noting that an axial collineation is point-domestic and the image of the point whose coordinates are all $0$ except for $x_1=1$ is $(A\cdot B,-B,1,A)$. 
\end{proof}

Now the fixed point set of $\theta_p$, viewed as a collineation in $\mathsf{E_{7,1}}(\K)$, is a para $\Pi$ (all points symplectic to three given pairwise symplectic points not contained in a common symp; this para corresponds to the global intersection point of the three axes when $\theta$ is viewed as a collineation of $\mathsf{E_{7,7}}(\K)$). To $\Pi$ corresponds a line $L$ of $\Delta$ through $p$, which on its turn corresponds to a point $p_L$ of $\mathsf{E_{7,7}}(\K)\cong\Res_\Delta(p)$. Each symp $\xi_L$ through $p_L$ is stabilised and, by \cref{hypel}, the restriction of $\theta_p$ to such symp is a product of two axial elations the axes $A_L$ and $B_L$ of which are two non-coplanar lines through $p_L$. Hence all lines through $p_L$ are stabilised with induced action either the identity or a translation.  Hence, if $\alpha$ is a plane of $\Delta$ containing $L$, then the same argument as in the first paragraph of \cref{type1}, using \cref{projplane} and (*), shows that $\theta$ maps  $\alpha\cap (p^\theta)^{\Join}$ to $\alpha^\theta\cap p^{\Join}$.

Now let $\xi_L$ be any symp through $p_L$ in $\Res_\Delta(p)$, and let $M_L$ be a line in $\xi_L$ intersecting, with above notation, the axis $A_L$ in some point $y_L$, and not collinear to $p_L$. Then, by (*), the map sending a point $x\in M_L$ to the projection of $x^{\theta_p}$ to $M_L$ is a translation with fixed point  $y_L$. Let $\alpha$ be the plane in $\Delta$ through $p$ corresponding to the line $M_L$. Set $K=\alpha\cap (p^\theta)^{\Join}$ and $K'=\alpha^\theta\cap p^{\Join}$. 
Then, by \cref{perp-perp}, the foregoing implies that the mapping $K\to K:x\mapsto (K'\cap (px)^\theta)^{\pperp}\cap K$ is a translation with (unique) fixed point the point on $K$ corresponding to $y_L$. If $K'\neq K^\theta$, then using \cref{projplane}$(iv)$, we can select a non-domestic point $q$ in $\alpha$ such that the analogously defined map for $q$ and $q^\theta$ has exactly two fixed points. However, composing with a suitable axial elation (using the other axis of $\theta_q$), and by virtue of (*), we then obtain a unipotent map in a symp fixing exactly two points on some line, a contradiction. We conclude that $K'=K^\theta$. Since $\xi_L$ is, as a subgeometry, generated by $p_L^\perp\cap \xi_L$ and $M_L$, this implies, if $\xi$ is the symp through $p$ corresponding to $\xi_L$,  that $\xi\cap (p^\theta)^{\Join}$ is mapped onto $\xi^\theta\cap p^{\Join}$. Since the projection of $\xi^\theta$ onto $p$ is exactly $\xi$, the same argument as in the second paragraph of \cref{type1} shows that $\xi\cap \xi^\theta$ is a fixed point of $\theta$. 

Now let $M_L$ be any line in $\Res_\Delta(p)$ containing points opposite $p_L$. Then, 
with the same notation for $\alpha, K$ and $K'$ as in the previous paragraph, and using \cref{3opposite}, the mapping $K\to K:x\mapsto (K'\cap (px)^\theta)^{\Join}\cap K$ is a translation.  Similarly as in the previous paragraph, this leads, using \cref{projplane}$(iv)$, to a contradiction if $K'\neq K^\theta$. Varying $M_L$, this now implies that $\Xi:=p^\perp\cap(p^\theta)^{\Join}$ is mapped onto $\Xi':=(p^\theta)^\perp\cap p^{\Join}$.  Now, since hyperbolic lines in hyperbolic quadrics have size 2, we have
\[E(p,p^\theta)=\{x\pperp p\mid x^\perp\cap p^\perp\subseteq \Xi\}=\{x\pperp p^\theta\mid x^\perp\cap (p^\theta)^\perp\subseteq \Xi'\}=E(p,p^\theta)^\theta,\] and so we conclude that $E(p,p^\theta)$ is stabilised by $\theta$. The line $L$, which is ``sympwise'' fixed, corresponds to the pointwise fixed para $\Pi$, by which we now also denote the corresponding para in $E(p,p^\theta)$.  

Since the opposition diagram of $\theta_p$ is $\mathsf{E_{7;3}}$, there exists some point $q\in E(p,p^\theta)$ mapped to an opposite point $q^\theta$. Clearly, $q$ is not close to $\Pi$. 
Then \cref{paraimline} yields a unique point $q^*\in\cI(q,q^\theta)\cap\Pi$. Now, we may assume that $q$ has Type 3, and so $\theta$ fixes a chamber in $E(q,q^\theta)=E(q,q^*)$, yielding a fixed chamber through $q^*$ in $\Delta$, a contradiction.   Hence this case does not lead to an example.

This completes the proof of \cref{classification}.

Now  \cref{nofixedchamber} follows from \cref{classIisdom}, \cref{thm:equatordomestic} and \cref{classification}.

\section{Density Theorems and further applications}\label{sec:applications}

As noted in the introduction, the present paper completes the classification of domestic automorphisms of large spherical buildings of exceptional type, of rank at least~$3$. In this section we record some applications and consequences of this classification. 

The first application, in Section~\ref{sec:density}, provides ``density theorems'' for groups $G$ acting strongly transitively on large spherical buildings of exceptional type. Specifically, we use the theory of domesticity and opposition diagrams to show that each conjugacy class in $G$ intersects a union of a very small number of $B$-cosets (with $B$ the stabiliser of a fixed choice of base chamber of the building). 

In Section~\ref{sec:kangaroo} we focus on Class I automorphisms of large $\sE_8$ buildings, and show that the displacement spectrum of points in the long root geometry $\sE_{8,8}(\K)$ of such an automorphism has a ``spectral gap'' property, meaning that certain distances in the spectrum are skipped. 

Finally, in Section~\ref{sec:biclass} we give a natural extension of the concept of uniclass automorphism, and use the classification of domestic automorphisms to classify those type preserving automorphisms of Moufang spherical buildings whose displacement spectra contains the identity and only one other conjugacy class in the Weyl group. 

\subsection{Density Theorems}\label{sec:density}

In this subsection we discuss Density Theorems for groups acting on spherical buildings (and in particular prove Corollary~\ref{cor:C}). More precisely, let $\Omega$ be a large spherical building of irreducible type $(W,S)$ of rank at least~$3$, considered as a chamber system with Weyl distance function $\Delta:\Omega\times\Omega\to W$. Let $G$ be a group of type preserving automorphisms of $\Omega$ (that is, if $\delta(C,D)=s$ then $\delta(C^{\theta},D^{\theta})=s$, for $s\in S$). Furthermore, assume that $G$ acts strongly transitively on $\Omega$ (that is, $G$ is transitive on pairs $(C,\Sigma)$ with $\Sigma$ an apartment, and $C$ a chamber of $\Sigma$). Let $C_0\in\Omega$ be a fixed choice of chamber, and let $\Sigma_0$ be a fixed choice or apartment containing~$C_0$. Let $B$ (respectively $N$) be the subgroup of $G$ fixing $C_0$ (respectively stabilising $\Sigma_0$). Then $(B,N)$ is a $BN$-pair (or Tits system) in $G$, with Coxeter system~$(W,S)$.

Let $\mathcal{O}_{\mathsf{tp}}(W,S)$ denote the set of all possible opposition diagrams of type preserving automorphisms of a large spherical building of type $(W,S)$. These diagrams are classified in \cite{PVMclass}, specifically they are the diagrams appearing in \cite[Tables~1--5]{PVMclass} with $\pi_{\theta}=\mathrm{id}$ (in the notation of \cite{PVMclass}). For each diagram $\Gamma\in \mathcal{O}_{\mathsf{tp}}(W,S)$ let $J(\Gamma)$ denote the subset of $S$ consisting of the encircled nodes, and let $w_{\Gamma}=w_{S\backslash J(\Gamma)}w_S$, where for subsets $K\subseteq S$ we write $w_K$ for the longest element of the parabolic subgroup~$W_K=\langle K\rangle$. We can now state a general Density Theorem.

\begin{theorem}\label{thm:density}
In the above setup, if $\mathcal{C}$ is a conjugacy class in $G$ then $\mathcal{C}$ intersects at least one coset $w_{\Gamma}B$ with $\Gamma\in\mathcal{O}_{\mathsf{tp}}(W,S)$. 
\end{theorem}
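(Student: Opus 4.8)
The plan is to convert the coset-intersection assertion into a statement about the displacement spectrum $\disp(\theta)$ of a representative $\theta\in\mathcal{C}$, and then to show that the element $w_{\Gamma_\theta}$ attached to the opposition diagram $\Gamma_\theta$ of $\theta$ genuinely occurs as a displacement. First I would prove the reduction that $\mathcal{C}\cap w_\Gamma B\neq\varnothing$ if and only if $w_\Gamma\in\disp(\theta)$. Identifying chambers with $G/B$, an element $g\theta g^{-1}$ lies in the left coset $w_\Gamma B$ exactly when it sends $C_0$ to the single chamber $w_\Gamma C_0$. If $w_\Gamma=\delta(C,C^\theta)$ for some chamber $C$, write $C=g_1^{-1}C_0$ using transitivity of $G$ on chambers; then $g_1C^\theta$ sits at Weyl distance $w_\Gamma$ from $C_0$, and since $B=\mathrm{Stab}_G(C_0)$ acts transitively on the chambers at a fixed Weyl distance from $C_0$ (these form the single orbit $Bw_\Gamma B/B$), some $b\in B$ carries $g_1C^\theta$ to $w_\Gamma C_0$; then $g:=bg_1$ satisfies $g\theta g^{-1}C_0=w_\Gamma C_0$, i.e. $g\theta g^{-1}\in w_\Gamma B$. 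The reverse implication is immediate because $G$ acts by isometries of the Weyl distance. As $\theta$ is type preserving and $\Omega$ is large, $\theta$ is capped by \cite{PVM:19}, so $\Gamma_\theta\in\mathcal{O}_{\mathsf{tp}}(W,S)$; hence it will suffice to prove $w_{\Gamma_\theta}\in\disp(\theta)$.

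For that, write $J=J(\Gamma_\theta)$. Cappedness (via the union property) furnishes a single flag $F$ of type $J$ with $F^\theta$ opposite $F$. For any chamber $C\supseteq F$ the gate property of the residue $\Res_\Omega(F^\theta)$ gives $\delta(C,C^\theta)=\delta(C,\proj_{F^\theta}C)\cdot\delta(\proj_{F^\theta}C,C^\theta)$ with lengths adding and with second factor $e_C:=\delta(\proj_{F^\theta}C,C^\theta)\in W_{S\setminus J}$. Since $F$ and $F^\theta$ are opposite, $\proj_{F^\theta}\colon\Res_\Omega(F)\to\Res_\Omega(F^\theta)$ is the canonical isomorphism of opposite residues, and a short Coxeter computation (using that the opposition involution $w_0(\cdot)w_0$ stabilises $S\setminus J$) shows that the first factor is independent of $C$ and equals $w_{S\setminus J}w_0=w_{\Gamma_\theta}$. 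Thus $\delta(C,C^\theta)=w_{\Gamma_\theta}\,e_C$, and the task becomes to produce some $C\supseteq F$ with $e_C=1$.

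The key will be that in fact $e_C=1$ for \emph{every} $C\supseteq F$. I would consider the automorphism $\rho:=\proj_F\circ\theta$ of the spherical building $\Res_\Omega(F)$ of type $S\setminus J$, where $\proj_F$ is the inverse of $\proj_{F^\theta}$. Using \cite[Propositions 3.28 and 3.29]{Tits:74}, $\rho$ maps a simplex of type $K$ of $\Res_\Omega(F)$ to an opposite simplex if and only if $\theta$ maps the corresponding flag of type $J\cup K$ of $\Omega$ to an opposite flag. But $J$ is the maximal type of a flag mapped by $\theta$ to an opposite flag, and since opposite flags are vertexwise opposite no type strictly larger than $J$ can be mapped to an opposite; hence $\rho$ maps no simplex to an opposite one. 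By the Abramenko--Brown theorem \cite[Proposition 4.2]{AB:09} this forces $\rho=\mathrm{id}$. Therefore $\proj_{F^\theta}C=C^\theta$, so $e_C=1$ and $\delta(C,C^\theta)=w_{\Gamma_\theta}$ for every $C\supseteq F$, giving $w_{\Gamma_\theta}\in\disp(\theta)$ and, together with the first paragraph, the theorem.

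The step I expect to be the main obstacle is the correspondence in the last paragraph between opposition of simplices inside $\Res_\Omega(F)$ and opposition of the enlarged flags in $\Omega$, together with the clean identification $\proj_F=\proj_{F^\theta}^{-1}$ of opposite residues; these are precisely what force $\rho$ to be trivial and hence realise the displacement $w_{\Gamma_\theta}$. As a consistency check, for the diagram $\mathsf{E}_{8;4}$ one has $S\setminus J=\{2,3,4,5\}$ of type $\mathsf{D}_4$, so $w_{\Gamma_\theta}=w_{\mathsf{D}_4}w_0$, matching the middle coset appearing in Corollary~\ref{cor:C}.
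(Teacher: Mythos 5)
Your proposal is correct, and its outer skeleton coincides with the paper's proof: both reduce the coset statement to showing $w_{\Gamma_\theta}\in\mathsf{disp}(\theta)$ and then use strong transitivity (equivalently, the Bruhat decomposition and $B$-transitivity on chambers at fixed Weyl distance) to conjugate $\theta$ into $w_{\Gamma_\theta}B$. The difference lies in how the displacement $w_{\Gamma_\theta}$ is realised: the paper simply cites \cite[Theorem~2.6]{PVMclass} for the existence of a chamber $C$ with $\delta(C,C^\theta)=w_{\Gamma_\theta}$, whereas you re-derive this fact from first principles --- cappedness produces a type-$J$ flag $F$ mapped to an opposite, the gate property and the Coxeter computation $\min(W_{S\setminus J}w_0)=w_{S\setminus J}w_0$ (valid since $J$ is stable under the opposition involution) give $\delta(C,C^\theta)=w_{\Gamma_\theta}\,e_C$ for $C\supseteq F$, and the induced automorphism $\rho=\proj_F\circ\theta$ of $\Res_\Omega(F)$ is opposition-free by Tits \cite[Propositions~3.28, 3.29]{Tits:74} together with the maximality of $J$, hence trivial by Abramenko--Brown, forcing $e_C=1$. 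This inlined argument is essentially the standard proof of the cited displacement theorem, so you have traded a citation for self-containedness; as a bonus your version shows the stronger statement that \emph{every} chamber containing $F$ is displaced by exactly $w_{\Gamma_\theta}$. Two small points of care in the general (non-$\sE_8$) setting: when the opposition involution $\pi_0$ is nontrivial, ``vertexwise opposite'' should be read orbit-wise (a flag mapped to an opposite has its sub-flags of $\pi_0$-orbit type mapped to opposites; individual vertices of non-self-opposite type never are), though the conclusion you need --- that no flag of type strictly containing $J$ is mapped to an opposite --- holds anyway by the definition of the opposition diagram of a capped automorphism; and $\rho$ may fail to be type preserving (its type map is $\pi_0|_{S\setminus J}$) on a possibly reducible residue, so you are invoking Abramenko--Brown in the same general form in which the paper cites it, which is legitimate but worth stating explicitly.
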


\begin{proof}
Let $\mathcal{C}$ be a conjugacy class in $G$, and let $\theta\in\mathcal{C}$. Then $\theta$ acts on $\Omega$ as a type preserving automorphism. Let $\Gamma$ be the opposition diagram of~$\theta$. By \cite[Theorem~2.6]{PVMclass} there is a chamber $C\in\Omega$ with $\delta(C,\theta (C))=w_{\Gamma}$. Since $G$ acts strongly transitively we have $C=gC_0$ for some $g\in G$ and hence $g^{-1}\theta g\in Bw_{\Gamma}B$. So there is $b_1,b_2\in B$ with $g^{-1}\theta g=b_1w_{\Gamma} b_2$, and hence $(gb_1)^{-1}\theta (gb_1)=w_{\Gamma}b_2b_1^{-1}\in w_{\Gamma}B$ as required. 
\end{proof}

For some nontrivial opposition diagrams $\Gamma$ it is known that every automorphism with this diagram necessarily fixes a chamber. In such an instance one may remove $\Gamma$ from the set $\mathcal{O}_{\mathsf{tp}}(W,S)$ and Theorem~\ref{thm:density} remains valid. For example, it is shown in \cite{PVMexc} that every type preserving domestic automorphism of a large building of type $\sE_6$ necessarily fixes a chamber, and therefore for type $\sE_6$ Theorem~\ref{thm:density} can be refined to the statement that every conjugacy class in $G$ intersects $B\cup w_0B$. 

Corollary~\ref{cor:C} is now immediate. 

\begin{proof}[Proof of Corollary~\ref{cor:C}] 
By \cite{PVMexc} all automorphisms with opposition diagram either $\sE_{8;1}$ or $\sE_{8;2}$ necessarily fix a chamber, and so (1) follows from Theorem~\ref{thm:density}. For (2), note that, if $\K$ is quadratically closed, then by Theorem~\ref{nofixedchamber} all automorphisms with opposition diagram $\sE_{8;4}$ also fix a chamber, and hence every domestic automorphism fixes a chamber, and the result follows. 
\end{proof}

For completeness, we also state a corresponding result for buildings of type $\sE_7$. 

\begin{cor}\label{cor:densityE7}
Let $G$ be a Chevalley group of type $\mathsf{E}_7$ over a field~$\mathbb{K}$ with $|\K|>2$, and let $\mathcal{C}$ be a conjugacy class in $G$. Then
\begin{compactenum}[$(1)$]
\item $\mathcal{C}\cap (B\cup \cup w_{\mathsf{D}_4}B\cup s_2s_5s_7w_0B\cup w_0B)\neq\emptyset$;
\item if $\mathbb{K}$ is quadratically closed then $\mathcal{C}\cap (B\cup w_0B)\neq\emptyset$. 
\end{compactenum}
Here $w_0$ is the longest element of~$W$, $w_{\sD_4}$ is the longest element of the standard $\sD_4$ parabolic subgroup, and we use Bourbaki labelling for the simple reflections. 
\end{cor}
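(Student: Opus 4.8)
The plan is to apply the general Density Theorem (\ref{thm:density}) to the strongly transitive, type-preserving action of the Chevalley group $G$ on $\Omega=\sE_7(\K)$, following the proof of Corollary~\ref{cor:C}. First I would recall from \cite{PVMclass} that the opposition diagrams in $\mathcal{O}_{\mathsf{tp}}(W,S)$, with $(W,S)$ of type $\sE_7$, are $\sE_{7;0}$ (the identity), $\sE_{7;1}$, $\sE_{7;2}$, $\sE_{7;3}$, $\sE_{7;4}$, and $\sE_{7;7}$ (the unique non-domestic diagram). For each such $\Gamma$ one computes the coset representative $w_{\Gamma}=w_{S\backslash J(\Gamma)}w_0$: clearly $w_{\sE_{7;0}}=e$ and $w_{\sE_{7;7}}=w_0$, while for the two diagrams relevant to the statement one has $S\backslash J(\sE_{7;3})=\{2,3,4,5\}$ (the standard $\sD_4$ parabolic), giving $w_{\sE_{7;3}}=w_{\sD_4}w_0$, and $S\backslash J(\sE_{7;4})=\{2,5,7\}$, giving $w_{\sE_{7;4}}=s_2s_5s_7w_0$ since $s_2,s_5,s_7$ are pairwise non-adjacent and hence commute. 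This is a routine but careful Weyl-group bookkeeping that pins down the correct coset representatives.

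For part $(1)$ I would invoke \cite{PVMexc}, where every automorphism with opposition diagram $\sE_{7;1}$ is shown to be a central collineation and every automorphism with diagram $\sE_{7;2}$ a product of two perpendicular root elations. In both cases $\theta$ is conjugate into $U^+\subseteq B$ and hence fixes a chamber, so any conjugacy class containing such an automorphism already meets $B$. Following the remark after \ref{thm:density}, these two diagrams may therefore be deleted from $\mathcal{O}_{\mathsf{tp}}(W,S)$. Theorem~\ref{thm:density} then guarantees that $\mathcal{C}$ meets $w_{\Gamma}B$ for some $\Gamma\in\{\sE_{7;0},\sE_{7;3},\sE_{7;4},\sE_{7;7}\}$, which is exactly $B\cup w_{\sD_4}w_0B\cup s_2s_5s_7w_0B\cup w_0B$.

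For part $(2)$ I would use the full classification of domestic automorphisms of large $\sE_7$ buildings from \cite{npvv}. The key point is that every chamber-free automorphism with diagram $\sE_{7;3}$ or $\sE_{7;4}$ has a fixed structure defined over a proper quadratic extension $\L/\K$ or over a quaternion division algebra $\H$ over $\K$. If $\K$ is quadratically closed, then it admits no proper quadratic extension, and consequently no quaternion division algebra (such an algebra always contains a quadratic subextension of $\K$, separable or, in characteristic $2$, possibly inseparable, both excluded here). Hence every automorphism with diagram $\sE_{7;3}$ or $\sE_{7;4}$ also fixes a chamber, so that every domestic automorphism of $\sE_7(\K)$ fixes a chamber. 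Deleting these diagrams as well leaves only $\sE_{7;0}$ and $\sE_{7;7}$, and \ref{thm:density} yields $\mathcal{C}\cap(B\cup w_0B)\neq\emptyset$.

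The only non-formal step, and hence the main obstacle, is the field-theoretic input for part $(2)$: one must extract from the $\sE_7$ classification in \cite{npvv} the precise existence criteria for chamber-free domestic automorphisms of each type, and verify that all of them fail when $\K$ is quadratically closed. This plays exactly the role that Theorem~\ref{nofixedchamber} plays in the proof of Corollary~\ref{cor:C}$(2)$; the remainder is a direct application of the density machinery together with the coset computations above.
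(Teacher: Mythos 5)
Your proposal is correct and is essentially the paper's own argument: the paper's proof consists of the single remark that the result follows as in Corollary~\ref{cor:C}, since automorphisms with diagrams $\sE_{7;1}$, $\sE_{7;2}$ always fix a chamber and, over a quadratically closed field, so do those with diagrams $\sE_{7;3}$, $\sE_{7;4}$ (by the $\sE_7$ classification), which is exactly the structure you give, with the diagram list, coset computations, and field-theoretic input spelled out explicitly. Note that your computation produces the coset $w_{\sD_4}w_0B$ for the diagram $\sE_{7;3}$ (in line with the representative $w_{\sD_4}w_0B$ appearing in Corollary~\ref{cor:C} for $\sE_{8;4}$, whose complement of encircled nodes is the same $\sD_4$ subdiagram $\{2,3,4,5\}$), so the expression ``$w_{\sD_4}B$'' in the stated corollary, with its stray repeated $\cup$, should be read as a typo for $w_{\sD_4}w_0B$; your version is the correct one.
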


\begin{proof}
The proof is similar to Corollary~\ref{cor:C}, as all automorphisms with opposition diagram $\sE_{7;1}$ or $\sE_{7;2}$ necessarily fix a chamber, and if $\K$ is quadratically closed then all automorphisms with opposition diagram either $\sE_{7;3}$ or $\sE_{7;4}$ also fix a chamber (by~\cite{npvv}). 
\end{proof}

\subsection{Spectral properties of Class I collineations}\label{sec:kangaroo}

Domestic collineations in buildings of type $\mathsf{E_7}$ have rather nice \emph{kangaroo} properties, that is, the displacement spectra of vertices of certain types are rather restricted (certain distances are ``skipped''). This in particular holds for the vertices corresponding to the polar node---the points of the long root subgroup geometry. Moreover, these collineations behave ``locally'' like involutions in the sense that, if some vertex $v$ is mapped onto a non-opposite vertex $w$, then any vertex incident with both $v$ and $w$ and  uniquely defined by $v$ and $w$, is fixed. We show that analogous things hold in the case of domestic collineation of $\mathsf{E_8}(\K)$ of Class I. 

Throughout, we consider a domestic collineation $\theta$ of Class I, acting on $\Delta=\mathsf{E_{8,8}}(\K)$. 

Firstly, we consider the displacement of the points. 
\begin{lemma}\label{nospecialE8}
No point of $\mathsf{E_{8,8}}(\K)$ is mapped onto a special point.
\end{lemma}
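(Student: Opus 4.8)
The plan is to argue by contradiction. Suppose $p$ is a point with $p^\theta$ special to $p$, and let $c=\mathfrak{c}(p,p^\theta)$ be the unique common neighbour. The heart of the argument is a reduction to the point residual at $c$: I would first establish that $c$ is fixed by $\theta$, so that $c\in\Gamma$ (recall that for a Class~I collineation the fixed vertices are exactly those of $\Gamma$). Then $\theta$ induces a collineation $\theta_c$ of $\Res_\Delta(c)\cong\mathsf{E_{7,7}}(\K)$ (by \cref{stars}) which pointwise fixes $\Res_\Gamma(c)\cong\mathsf{C_{3,3}}(\H,\K)$, a quaternion dual polar space embedded exactly as in \cref{prop:residual}. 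Since $p\JJoin p^\theta$ with centre $c$, the points $cp$ and $cp^\theta$ of this residue are opposite — special distance in the point graph corresponds to opposition in the residue of the common neighbour, the standard feature of these hexagonic geometries already used in establishing that $\Gamma$ is isometrically embedded. But $\theta_c(cp)=cp^\theta$, so $\theta_c$ would map a point to an opposite point, contradicting \cref{inres}$(i)$. Thus the entire statement reduces to proving that $c$ is fixed.

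If $p$ is collinear to some fixed point $f\in\Gamma$, this is immediate: since $f=f^\theta$ and $\theta$ is a collineation, $f$ is collinear to $p^\theta$ as well, so $f$ is a common neighbour of $p$ and $p^\theta$; by the uniqueness of $c$ we get $f=c$, and hence $c$ is fixed. Therefore the whole lemma reduces further to the assertion that \emph{a point mapped to a special point is necessarily collinear to a fixed point}. This is the local ``involution-like'' behaviour advertised at the start of the section, specialised to special pairs.

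The remaining case, where $p$ is collinear to no point of $\Gamma$, is the one I expect to be the main obstacle. Here I would open exactly as in the proof of \cref{classIisdom}: using a line of $\Gamma$ together with the hexagonic axiom \cref{KasShu} and \cref{inres}$(i)$, one produces a fixed point $y\in\Gamma$ symplectic to $p$, and then $y$ is symplectic to $p^\theta$ as well since $y$ is fixed. The difficulty is to convert this symplectic fixed point into a contradiction with the special image~$p^\theta$. My plan is to pass to $\Res_\Delta(y)\cong\mathsf{E_{7,7}}(\K)$, in which $\theta$ induces a collineation $\theta_y$ fixing $\Res_\Gamma(y)\cong\mathsf{C_{3,3}}(\H,\K)$ pointwise, and to examine the symps $\xi(y,p)$ and $\xi(y,p^\theta)$, which correspond to $\mathsf{D_{6,1}}$-symps of this residue by \cref{stars}. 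The relation $p\JJoin p^\theta$ pins down the mutual position of these two symps, and I expect that \cref{inres}$(i)$ and $(iii)$ (no point mapped to collinear or opposite, and the description of the fixed structure on an equator when a symp goes to an opposite symp) make this position incompatible with $\theta_y$ having $\Res_\Gamma(y)$ as its full fixed set — equivalently, that the symplectic fixed point $y$ forces a \emph{collinear} fixed point of $p$ after all, against the case hypothesis. Carrying out this comparison inside the $\mathsf{D_{7,1}}$-symp geometry of $\mathsf{E_{8,8}}(\K)$, rather than merely in an abstract $\mathsf{E_{7,7}}$ residue, is the technical crux that the plan would have to grind through.
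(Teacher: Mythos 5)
Your reduction is correct as far as it goes, and it is a clean way to organise the easy part: if $p$ is collinear to a fixed point $f$, then $f$ is a common neighbour of $p$ and $p^\theta$, hence $f=\mathfrak{c}(p,p^\theta)=c$ by uniqueness, so $c\in\Gamma$; then $\theta_c$ maps the point $cp$ of $\Res_\Delta(c)\cong\mathsf{E_{7,7}}(\K)$ to the opposite point $cp^\theta$ while pointwise fixing $\Res_\Gamma(c)\cong\mathsf{C_{3,3}}(\H,\K)$, contradicting \cref{inres}$(i)$. Your production of a fixed point $y$ symplectic to $p$ in the remaining case (via \cref{KasShu} and \cref{inres}$(i)$, as in the proof of \cref{classIisdom}) is also fine. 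But the entire content of the lemma sits in that remaining case, and there you only record an expectation: you never derive a contradiction from the configuration $y=y^\theta$, $y\pperp p$, $y\pperp p^\theta$, $p\JJoin p^\theta$. Moreover, the tools you point to are too weak to do it. \cref{inres} constrains $\theta_y$ only through $(i)$ points mapped to collinear or opposite points, $(ii)$ points mapped to symplectic points, and $(iii)$ symps mapped to \emph{opposite} symps. Nothing there forbids $\theta_y$ from mapping the symp of $\Res_\Delta(y)$ corresponding to $\xi(y,p)$ onto a symp meeting it in a single point, which is exactly what the special relation $p\JJoin p^\theta$ can produce (for instance if $c\perp y$, then $c\in\xi(y,p)\cap\xi(y,p^\theta)$ by convexity, and two symps of $\mathsf{E_{7,7}}(\K)$ may well meet in just one point). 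So no contradiction follows from \cref{inres} alone, and the ``technical crux'' you defer is precisely the lemma.

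It is worth noting that the paper closes this case by a mechanism of a different nature, which indicates how much is missing. From the special pair and its centre one constructs, using \cite[Proposition~3.30]{Tits:74} in the point residual together with \cref{spspop}, a line $L$ through $p$ mapped to an \emph{opposite} line; the induced map on the upper residue of $L$ is then a domestic duality of an $\mathsf{E_6}$ building, and the classification of opposition diagrams of dualities \cite{PVM:19,PVMsmall} produces a non-domestic plane $\alpha$ through $L$; the induced duality on $\alpha$ admits $p$ as an absolute point, which yields a point $q\in\alpha$ mapped to an opposite point such that $\theta_q$ maps a point of $\Res_\Delta(q)$ to a symplectic one; this contradicts \cite[Lemma~7.15]{npvv}, which is applicable because of \cref{coruniclass}. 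In other words, the contradiction ultimately comes from the fixed structure in the equator of a \emph{non-domestic} point combined with the theory of dualities, not from the behaviour of $\theta_y$ at a symplectic fixed point. To complete your plan you would have to import essentially this chain of results (or find a genuinely new argument); as written, the decisive step is absent.
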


\begin{proof}
Suppose for a contradiction that the point $x$ is mapped onto the point $y$ that is special to $x$. Let $x\perp z\perp y$. Then we find a line $L$ through $x$ containing points special to both $z$ and $z^{\theta^{-1}}$ (this follows from \cite[Proposition~3.30]{Tits:74} applied to $\Res_\Delta(x)$). \cref{spspop} implies that $L$ is not domestic. The collineation $\theta_L$ of the upper residue of $L$ defined by projecting the image of an object incident with $L$ from $L^\theta$ back to $L$, is a domestic duality. By the classification of opposition diagrams of dualities in buildings of type $\mathsf{E_6}$ in \cite{PVM:19} and \cite{PVMsmall}, we find a non-domestic plane $\alpha$ through $L$. The mapping $\theta_\alpha$ on the lower residue now, and similarly defined as $\theta_L$, is a duality, for which the point $x$ is absolute, that is, $x$ is incident with its image. It follows that the line $x^{\theta_\alpha}=:K\subseteq\alpha$ through $x$ contains a point $p$ mapped to an opposite. Since $p$ is special to $y$ by definition of $K$, the lines $px$ and $p^\theta y$ are not opposite. Since $\alpha$ is opposite $\alpha^\theta$, it follows that $px$ and $(px)^{\theta_p}$ are symplectic. But using \cref{coruniclass}, this contradicts \cite[Lemma~7.15]{npvv}, which says that $\theta_p$, viewed as a collineation of $\Res_\Delta(p)$, does not map points to symplectic ones. 
\end{proof}

\begin{lemma}\label{nocollinearE8}
No point of $\mathsf{E_{8,8}}(\K)$ is mapped onto a collinear point.
\end{lemma}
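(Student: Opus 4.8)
The plan is to mirror the proof of \cref{nospecialE8}. Suppose, for a contradiction, that some point $x$ is mapped onto a collinear point $y:=x^\theta$ with $y\neq x$, and write also $w:=x^{\theta^{-1}}$. Applying $\theta^{\pm1}$ to the collinearity $x\perp y$ (which $\theta$ preserves) shows that both $y=x^\theta$ and $w$ are collinear to $x$, so they determine two directions in the residue $\Res_\Delta(x)\cong\mathsf{E_{7,7}}(\K)$. The first step is to manufacture a non-domestic line through $x$. By \cite[Proposition~3.30]{Tits:74}, applied inside $\Res_\Delta(x)$, there is a line $L\ni x$ all of whose points are special to both $y$ and $w$. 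For such a point $a\in L$ we then have $a\JJoin y$, and applying $\theta$ to $a\JJoin w$ gives $a^\theta\JJoin w^\theta=x$; since in addition $a\perp x\perp y$ and $y\perp a^\theta$ (as $a^\theta\in L^\theta\ni y$), the path $a\perp x\perp y\perp a^\theta$ together with $a\JJoin y$ and $x\JJoin a^\theta$ forces, by \cref{spspop}, that $a$ is opposite $a^\theta$. As in \cref{nospecialE8} this makes $L$ non-domestic.

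With a non-domestic line $L$ through $x$ in hand, I would run the same residue reduction as in \cref{nospecialE8}. The induced collineation $\theta_L$ on the upper residue of $L$, a building of type $\mathsf{E_6}$, is a domestic duality, so the classification of opposition diagrams of dualities of $\mathsf{E_6}$ buildings in \cite{PVM:19,PVMsmall} yields a non-domestic plane $\alpha$ through $L$. The induced duality $\theta_\alpha$ on the lower residue of $\alpha$ has $x$ as an absolute point, so the line $K:=x^{\theta_\alpha}\subseteq\alpha$ through $x$ carries a point $p$ mapped onto an opposite point. Since $p$ is collinear to $y=x^\theta$, the lines $px$ and $p^\theta y$ are not opposite; because $\alpha$ is opposite $\alpha^\theta$, this pins down the mutual position of $px$ and its image $(px)^{\theta_p}$ under the collineation $\theta_p$ of $\Res_\Delta(p)\cong\mathsf{E_{7,7}}(\K)$.

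Finally, using \cref{coruniclass} to identify the fixed structure of $\theta_p$, I would invoke the kangaroo properties of domestic collineations of $\mathsf{E_{7,7}}(\K)$ from \cite{npvv}: these forbid $\theta_p$ from mapping a point to a collinear or symplectic one, the symplectic case being exactly \cite[Lemma~7.15]{npvv} used in \cref{nospecialE8}. This gives the desired contradiction and shows that no point of $\mathsf{E_{8,8}}(\K)$ is mapped onto a collinear point. The step I expect to be the main obstacle is the very first one, namely producing the non-domestic line through $x$: the collinear configuration is more degenerate than the special one treated in \cref{nospecialE8}, since there is no common neighbour $\mathfrak{c}(x,y)$ to serve as a centre. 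One must instead use the two auxiliary directions $y$ and $w$ symmetrically, and verify carefully that the line $L$ supplied by \cite[Proposition~3.30]{Tits:74} genuinely realises opposition under $\theta$ (via the \cref{spspop} path above) rather than merely special or symplectic position, so that $\theta_L$ is truly a duality and the $\mathsf{E_6}$/$\mathsf{E_7}$ reduction can proceed as written.
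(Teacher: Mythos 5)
Your first paragraph reproduces the paper's opening moves correctly: the line $L$ through $x$ whose points other than $x$ are special to both $y=x^\theta$ and $w=x^{\theta^{-1}}$, and the \cref{spspop} argument showing that each such point $a$ is opposite $a^\theta$, are exactly how the paper produces a non-domestic point collinear to $x$ (only note that $x$ itself is collinear, not special, to $y$, so ``all of whose points'' must exclude $x$). From there, however, the proposal breaks in two places. First, the claim that $L$ is non-domestic is false: since $x\in L$ is collinear to $x^\theta\in L^\theta$, the vertices $L$ and $L^\theta$ cannot be opposite --- in any apartment containing the flags $\{x,L\}$ and $\{x^\theta,L^\theta\}$, an opposite pair of lines corresponds to a pair of root-pairs $\{\mu,\mu'\}$, $\{-\mu,-\mu'\}$, between which only special and opposite point-pairs occur, never collinear ones. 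So $\theta_L$ is not a duality of the upper residue, and the entire $\mathsf{E_6}$-reduction of your second paragraph (the engine of \cref{nospecialE8}) cannot start. This is precisely the verification you flagged as the main obstacle, and it fails: the points of $L$ realise opposition, but $L$ itself never does in the collinear case.

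Second, the endgame aims at the wrong relation. For \emph{any} point $p\perp x$ with $p$ opposite $p^\theta$ (your $a$ from the first paragraph already serves; no plane $\alpha$ is needed), $p$ cannot be collinear to $y$: from $x\perp p$ we get $y=x^\theta\perp p^\theta$, so $p\perp y$ would force $d(p,p^\theta)\le 2$. In fact $p\JJoin y$ with centre $x$, and likewise $x\JJoin p^\theta$ (neither can be collinear nor symplectic to the other, by \cref{pointsymp} applied to the relevant symps through $p$ or $p^\theta$). Precisely because of this configuration, the projection of $(px)^\theta=p^\theta y$ back onto $p$ is the line $px$ itself: the line $px=L$ is \emph{fixed} by $\theta_p$, not mapped to a collinear or symplectic point. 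Hence the kangaroo lemmas of \cite{npvv} that you invoke give no contradiction at all. What does is \cref{coruniclass}: the fix diagram of $\theta_p$ is $\mathsf{E_{7;4}}$, so $\theta_p$ fixes no point of $\Res_\Delta(p)\cong\mathsf{E_{7,7}}(\K)$, contradicting the fixedness of $px$. This is the paper's proof, and it is much shorter than your plan: your first paragraph, then the observation that $px$ is fixed under $\theta_p$, then \cref{coruniclass}. The duality detour is both broken and unnecessary.
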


\begin{proof}
Let, for a contradiction,  the point $x$ be mapped onto the collinear point. Selecting a line $L$ through $x$ containing points that are special to both $x^\theta$ and $x^{\theta^{-1}}$, we find a point $p$ on $L$ mapped onto an opposite. Then the line $px$ is fixed under $\theta_p$. However, \cref{coruniclass} asserts that the fix diagram of $\theta_p$ on $\Res_\Delta(p)$ is $\mathsf{E_{7;4}}$, and so there are no fixed points, a contradiction.
\end{proof}

Now we consider the displacement of the symps. 

\begin{cor}\label{notadjacentE8}
No symp of $\mathsf{E_{8,8}}(\K)$ is mapped onto an adjacent one.
\end{cor}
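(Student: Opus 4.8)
The plan is to argue by contradiction: assuming some symp $\xi$ is mapped to an adjacent symp $\xi^\theta$, I will extract a point whose image is forbidden by \cref{nocollinearE8}, or else a fixed singular subspace too large to fit inside the fixed metasymplectic space. Recall from \cref{stars} that the symps of $\Delta=\mathsf{E_{8,8}}(\K)$ are polar spaces isomorphic to $\mathsf{D_{7,1}}(\K)$, so their maximal singular subspaces are $6$-dimensional projective spaces. By the definition of adjacency, $U:=\xi\cap\xi^\theta$ is such a maximal singular subspace, and in particular $\xi\neq\xi^\theta$.

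First I would record the incidences of the points of $U$. For $x\in U$ we have $x\in\xi$, and since $x\in\xi^\theta=\theta(\xi)$ we also have $x^{\theta^{-1}}\in\xi$. Thus $x$ and $x^{\theta^{-1}}$ lie in the common symp $\xi$, so, points of a symp being pairwise collinear or symplectic (never special nor opposite), they are collinear, symplectic, or equal. \cref{nocollinearE8} rules out collinearity, so for every $x\in U$ the points $x$ and $x^{\theta^{-1}}$ are either equal or symplectic, the containing symp being $\xi(x,x^{\theta^{-1}})=\xi$ in the latter case.

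Next I would split into two cases. If every point of $U$ is fixed, then the $6$-dimensional singular subspace $U$ is pointwise fixed, hence contained as a subspace in the fixed metasymplectic space $\Gamma\cong\mathsf{F_{4,1}}(\K,\H)$; but the symps of $\Gamma$ are isomorphic to $\mathsf{B_{3,1}}(\K,\H)$, so the singular subspaces of $\Gamma$ have dimension at most $2$, a contradiction. Otherwise some $x\in U$ is not fixed, and then $y:=x^{\theta^{-1}}$ is a point with $y^\theta=x$ symplectic to $y$, lying together with its image in the symp $\xi=\xi(y,y^\theta)$. Here I would invoke the ``local involution'' behaviour of Class I collineations announced at the start of this section: a vertex sent to a non-opposite vertex, together with its image, determines a fixed vertex, so the symp $\xi(y,y^\theta)$ is stabilised by $\theta$. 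This forces $\xi^\theta=\xi(y,y^\theta)^\theta=\xi(y,y^\theta)=\xi$, contradicting $\xi\neq\xi^\theta$.

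The main obstacle is precisely this last step, the symplectic case. Unlike \cref{nospecialE8} and \cref{nocollinearE8}, adjacency does not directly produce a point mapped to a collinear or special point: indeed, using \cref{pointsymp} together with the two preceding lemmas one checks that every point of $\xi$ is necessarily mapped to a symplectic or fixed point (no point of $\xi^\theta$ can be opposite a point $x\in\xi$, since $x$ is collinear to a hyperplane of $U\subseteq\xi^\theta$), so one cannot finish by a displacement count alone. The real content is therefore to show that a point mapped to a symplectic point lies with its image in a $\theta$-stable symp, which I expect to establish from the Class I structure (every point is collinear to a fixed point, as in the proof of \cref{classIisdom}, together with \cref{coruniclass}). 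As a partial sanity check, one already sees that whenever the maximal singular subspaces $\xi\cap\xi^\theta$ and $\xi\cap\xi^{\theta^{-1}}$ meet, their intersection is pointwise fixed by the collinearity argument above, which in the generic case already yields the contradiction.
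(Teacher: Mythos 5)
Your Case 1 (all of $U=\xi\cap\xi^\theta$ pointwise fixed) is correct: a pointwise fixed $6$-space would be a singular subspace of the fixed structure $\Gamma\cong\mathsf{F_{4,1}}(\K,\H)$, whose singular subspaces are at most planes. The genuine gap is Case 2. The statement you invoke there --- a point mapped onto a symplectic point lies with its image in a $\theta$-stabilised symp --- is precisely \cref{sympfixedE8}, which in the paper is stated \emph{after} the present corollary and whose proof \emph{uses} it (via \cref{lemmaE78} one reduces to excluding adjacent or equal image symps, and then \cref{notadjacentE8} is cited). So appealing to the ``local involution behaviour announced at the start of the section'' is circular: that announcement is a summary of the results of this very section, not something available to you at this point. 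Your fallback (``I expect to establish [it] from the Class I structure'') is not an argument, and neither \cref{classIisdom} nor \cref{coruniclass} yields it directly; this implication is the hard content. Your closing sanity check also does not close the gap: generically two maximal singular subspaces of the same family of a $\mathsf{D_{7,1}}(\K)$ polar space meet in a single point, and a single fixed point produces no contradiction with your tools --- fixed points are exactly what one expects, since $\Gamma$ is nonempty.

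What the paper does is, in effect, turn your sanity check into the whole proof, plus one ingredient you are missing. Since $U$ and $U^\theta$ are maximal singular subspaces of the polar space $\xi^\theta\cong\mathsf{D_{7,1}}(\K)$ belonging to the same natural family of generators, they meet in a subspace of even projective dimension, in particular in at least a point $p^\theta$; then $p$ and $p^\theta$ both lie in the singular subspace $U$, hence are collinear or equal, hence equal by \cref{nocollinearE8}. This produces a fixed point $p\in\xi\cap\xi^\theta$ unconditionally, with no case distinction and no need for \cref{sympfixedE8}. The missing ingredient is how to derive a contradiction from that fixed point: in $\Res_\Delta(p)$ the induced collineation pointwise fixes a fully embedded dual polar space $\mathsf{C_{3,3}}(\H,\K)$ (the residue of $\Gamma$ at $p$), and \cite[Corollary~6.11]{npvv} asserts that such a collineation of $\mathsf{E_{7,7}}(\K)$ cannot map a symp to an adjacent one; since two symps through $p$ are adjacent in $\Delta$ if and only if they are adjacent in the residue, the pair $\xi,\xi^\theta$ gives the contradiction. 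To rescue your outline you would have to either import this residue argument, or supply an independent (non-circular) proof of \cref{sympfixedE8}; as written, Case 2 is unproven.
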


\begin{proof}
Suppose for a contradiction that the symp $\xi$ is mapped onto an adjacent symp $\xi^\theta$. Then the $6$-space $U=\xi\cap\xi^\theta$ is mapped onto another $6$-space $U^\theta$ of $\xi^\theta$. Since $U$ and $U^\theta$ belong to the same natural system of generators of $\xi^\theta$, they intersect in at least one point $p^\theta$. Then $p\in U$ and hence, by \cref{nocollinearE8}, $p=p^\theta$. In the residue of $p$, however, we see a pointwise fixed fully embedded dual polar space isomorphic to $\mathsf{C_{3,3}}(\H,\K)$. But \cite[Corollary~6.11]{npvv} asserts that a collineation pointwise fixing such a dual polar space cannot map symps to adjacent ones. This contradiction concludes the proof of the corollary. 
\end{proof}

\begin{lemma}\label{sympfixedE8}
If a point $x$ is mapped onto a symplectic one, then the corresponding symp $\xi$ is fixed. 
\end{lemma}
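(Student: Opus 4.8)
The plan is to reduce the statement to its analogue in a point residual, namely \cref{inres}$(ii)$, which is exactly the required assertion inside $\mathsf{E_{7,7}}(\K)$. Write $\xi:=\xi(x,x^\theta)$. The crucial reduction is that \emph{it suffices to produce a fixed point $f$ collinear to $x$}: such an $f$ satisfies $f=f^\theta\perp x^\theta$ as well, so $f\in x^\perp\cap(x^\theta)^\perp\subseteq\xi$. Passing to $\Res_\Delta(f)\cong\mathsf{E_{7,7}}(\K)$, the Class I collineation $\theta$ induces a collineation $\theta_f$ pointwise fixing the quaternion dual polar space $\Res_\Gamma(f)\cong\mathsf{C_{3,3}}(\H,\K)$ (by \cref{prop:residual}). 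The lines $fx$ and $fx^\theta$ are points of $\Res_\Delta(f)$ with $\theta_f(fx)=fx^\theta$; since $x\pperp x^\theta$ they are not collinear, and as both lie on the symp $\overline\xi$ of $\Res_\Delta(f)$ determined by $\xi$, they are symplectic there. Then \cref{inres}$(ii)$ shows $\overline\xi$ is fixed by $\theta_f$, which says precisely that $\theta$ stabilises~$\xi$.

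It then remains to find a fixed point collinear to $x$. By the dichotomy established in the proof of \cref{classIisdom} (any point collinear to no point of $\Gamma$ is symplectic to a point of $\Gamma$), either $x$ is already collinear to a fixed point, and we are done, or $x$ is symplectic to some $y\in\Gamma$ while being collinear to no fixed point. Assume the latter. Choose a common neighbour $z\in x^\perp\cap y^\perp$; then $z$ is not fixed (otherwise $z\in\Gamma$ would be a fixed point collinear to $x$), and since $z\perp y=y^\theta$ we have $z^\theta\perp y$, so by \cref{nospecialE8} and \cref{nocollinearE8} the point $z^\theta$ is symplectic or opposite to $z$, the common neighbour $y$ ruling out opposition; hence $z^\theta\pperp z$. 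Now $y\in z^\perp\cap(z^\theta)^\perp\subseteq\xi(z,z^\theta)$, so in $\Res_\Delta(y)\cong\mathsf{E_{7,7}}(\K)$ the point $yz$ is mapped to the symplectic point $yz^\theta$, and \cref{inres}$(ii)$ (applied in this residue) yields a $\theta$-stabilised symp $\zeta:=\xi(z,z^\theta)$ of $\Delta$ through the fixed point $y$ and through $z$.

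To conclude I would exploit $\zeta$. If $x\in\zeta$ then also $x^\theta\in\zeta$, forcing $\xi=\zeta$ to be fixed. If $x\notin\zeta$, then $x\perp z\in\zeta$ and \cref{pointline} gives a line of $\zeta$ all of whose points are collinear to $x$; the target is then to show that the singular subspace $x^\perp\cap\zeta$ meets the fixed sub-polar-space $\zeta\cap\Gamma$ (which is nonempty, containing $y$), producing a fixed point collinear to $x$ and completing the existence step.

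The main obstacle is exactly this last configurational point: verifying that the singular subspace $x^\perp\cap\zeta$ of the hyperbolic polar space $\zeta\cong\mathsf{D_{7,1}}(\K)$ necessarily contains a point of the fixed structure $\zeta\cap\Gamma\cong\mathsf{B_{3,1}}(\K,\H)$. Resolving it—either by an intersection–dimension count for the isometrically embedded sub-polar-space in $\PG(13,\K)$, or by choosing the auxiliary point $z$ carefully so that the resulting line of $\zeta$ is forced to carry a fixed point—is where the real work lies; everything else is bookkeeping with the residue reductions and the displacement restrictions recorded in \cref{nospecialE8}, \cref{nocollinearE8} and \cref{notadjacentE8}.
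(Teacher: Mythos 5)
Your first two paragraphs are sound: the reduction showing that a fixed point $f$ collinear to $x$ suffices (via \cref{inres}$(ii)$ applied in $\Res_\Delta(f)$), and the construction of the $\theta$-stabilised symp $\zeta=\xi(z,z^\theta)$ through the fixed point $y$, are both correct. But the proof is not complete: the step you yourself flag as ``where the real work lies'' is a genuine gap, and neither of your suggested repairs closes it. The dimension count fails because $x^\perp\cap\zeta$ need not be a maximal singular subspace of $\zeta$: for $x\notin\zeta$ collinear to $z\in\zeta$, working in $\Res_\Delta(z)\cong\mathsf{E_{7,7}}(\K)$ and using the close/far dichotomy for a point versus a symp there (cf.\ \cref{pointsympE77}), the intersection $x^\perp\cap\zeta$ is either a $6$-space or a \emph{single line} through $z$. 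In the latter case no counting argument applies: the fixed structure $\zeta\cap\Gamma\cong\mathsf{B_{3,1}}(\K,\H)$ spans only a $9$-dimensional subspace of the ambient $\PG(13,\K)$ of $\zeta\cong\mathsf{D_{7,1}}(\K)$, and a line of $\zeta$ need not even meet that subspace, let alone the quadric inside it. The alternative of ``choosing $z$ carefully'' is not substantiated by any argument. There is also a circularity lurking in the whole strategy: the natural way to see that a point $x$ with $x\pperp x^\theta$ is collinear to a fixed point is to \emph{first} know that $\xi(x,x^\theta)$ is stabilised (then it contains a fixed sub-polar space $\mathsf{B_{3,1}}(\K,\H)$, and the tangent hyperplane at $x$ must meet a quadric containing lines), which is exactly the statement being proved.

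The paper's proof avoids producing any fixed point near $x$ and is purely local at $x$. It works in $\Res_\Delta(x)$: if $L$ is a line through $x$ coplanar with a unique line $M$ of $\xi$ and $L^\theta\subseteq\xi$, then some $y\in L\setminus\{x\}$ has $y^\theta\in\xi$ collinear to only one point of $M$, and one checks that $y$ and $y^\theta$ are then special, contradicting \cref{nospecialE8}; hence the symp of $\Res_\Delta(x)$ corresponding to $\xi^{\theta^{-1}}$ lies in the complement of the set $Y$ of \cref{lemmaE78}, so $\xi^{\theta^{-1}}$ is adjacent or equal to $\xi$, and \cref{notadjacentE8} excludes adjacency. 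To salvage your route you would need an independent, non-circular proof that in the case $x\notin\zeta$ one of the stabilised symps you construct meets $x^\perp$ in a fixed point; as written, that existence step is missing.
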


\begin{proof}
Let $L$ be line through $x$ such that some point of $L$ is collinear to only a line of $\xi$ (hence $L$ is coplanar with some unique line of $\xi$).  Suppose, for a contradiction, that $L^\theta$ is contained in $\xi$. At least one point $y^\theta$ of $L^\theta$ distinct from $x^\theta$ is not collinear to all points of the unique line $M$ through $x$ coplanar to $L$. Then $y$ and $y^\theta$ are special, contradicting  \cref{nospecialE8}. Hence, by \cref{lemmaE78}, $\xi$ is, in $\Res_\Delta(x)$, adjacent or equal to $\xi^{\theta^{-1}}$.  \cref{notadjacentE8} completes the proof of the lemma, noting that two symps in a point residual of $\Delta$ are adjacent if, and only if, they are adjacent in $\Delta$.
\end{proof}

\subsection{Automorphisms with restricted displacement spectra}\label{sec:biclass}

Our results on domestic automorphisms of spherical buildings have been successfully applied in \cite{Ney-Par-Mal:23} to classify all uniclass automomorphisms of Moufang spherical buildings that are not anisotropic.  A natural question that arises is whether there are more automorphisms having a similar restricted displacement. Specifically, rather than restricting to displacements in a unique conjugacy class (the uniclass condition), one can consider displacements in a restricted number of conjugacy classes of the Weyl group. Of course, if the longest elements is one of these classes, then the automorphism is not domestic, and so our results on domesticity will not be of help. So assume no chamber is mapped onto an opposite. Let us also restrict to type preserving automorphisms. If no chamber is fixed, then from our classifications of domestic automorphisms we see that ``most'' domestic automorphisms are uniclass, and the few that are not appear to have more than two classes in their displacement spectra (see for instance  \cref{equatorexamples} below). Hence we are lead to consider domestic automorphisms that fix at least one chamber and have, besides the identity, displacements in exactly one other conjugacy class of the Weyl group. This seems to be a worthwhile classification, since the result is somewhat unpredictable and nontrivial.

\begin{cor}
Let $\theta$ be a nontrivial type preserving automorphism of a Moufang spherical building $\Omega$ of rank at least $2$ with the property that $\theta$ fixes a chamber and $\mathsf{disp}(\theta)$ is contained in at most $2$ conjugacy classes in the Weyl group. Then $\disp(\theta)=\{1\}\cup\Cl(s)$ for some simple generator $s$ of the Weyl group. Moreover, $\theta$ is either a long root elation, or one of the following cases occurs (and, conversely, each such case does give rise to such an automorphism). In each case of a non-simply laced diagram, we mention, with obvious notation, the class of the Weyl group belonging to the displacement (using standard Bourbaki labelling).
\begin{compactenum}[$(i)$]
\item $\theta$ is a homology in a projective space fixing a hyperplane pointwise (of dimension at least $2$);
\item $\theta$ pointwise fixes a Baer subplane in a (Moufang) projective plane;
\item $\theta$ pointwise fixes a geometric hyperplane in a polar space of rank $r$  at least $2$ (up to duality if the rank is equal to $2$)---class $\Cl(s_r)$;
\item $\theta$ pointwise fixes an ideal Baer sub polar space of a (thick) polar space of rank $3$---class $\Cl(s_1)$;
\item  $\theta$ is an involution in a symplectic polar space pointwise fixing a hyperbolic line and the perp of it, or, equivalently, a imaginary line and its equator geometry---class $\Cl(s_1)$;
\item $\theta$ is an involution pointwise fixing an extended equator geometry and its tropics geometry in the corresponding geometry of type $\mathsf{F_{4,4}}$, or, equivalently pointwise fixing a weak subbuilding with thick frame of type $\mathsf{B_4}$, or, equivalently,  a homology $h_{\omega_4}(-1)$ in Chevalley notation, in a split building of type $\mathsf{F_4}$---class $\Cl(s_4)$;
\item $\theta$ is an involution in a building of relative type $\mathsf{F_4}$ and absolute type $\mathsf{E_6}$ pointwise fixing a corresponding split subbuilding of type $\mathsf{F_4}$---class $\Cl(s_4)$;
\item $\theta$ is a collineation of order $3$ and fixes a subhexagon $\Gamma$, where either $\Omega$ corresponds to a split Cayley hexagon and $\Gamma$ to a non-thick ideal subhexagon (this occurs when the underlying field has nontrivial cubic roots of unity), or $\Omega$ corresponds to a triality hexagon (Moufang hexagon of type $\mathsf{^3D_4}$) and $\Gamma$ to an ideal split Cayley subhexagon---class $\Cl(s_2)$.
\end{compactenum} 
\end{cor}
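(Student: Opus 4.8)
The plan is to first pin down the conjugacy class that occurs in the displacement, and only then to identify the automorphisms type by type, leaning on the (now complete) classification of domestic chamber-fixing automorphisms.

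\emph{The class is that of a simple generator.} Since $\theta$ fixes a chamber $C_0$ we have $1\in\disp(\theta)$, so the hypothesis forces $\disp(\theta)=\{1\}\cup\Cl(w)$ for a single $w\neq 1$. To see $w$ is conjugate to a simple generator, choose a chamber $D$ with $\delta(D,D^\theta)\in\Cl(w)$ and a minimal gallery $C_0=E_0,E_1,\ldots,E_m=D$. Since $E_0$ is fixed and $E_m=D$ is not, there is a last fixed chamber $E_i$; say $E_{i+1}$ is $s$-adjacent to $E_i$ for some $s\in S$. As $\theta$ is type preserving and $E_i^\theta=E_i$, the chamber $E_{i+1}^\theta$ is $s$-adjacent to $E_i$, so $E_{i+1}$ and $E_{i+1}^\theta$ are two distinct chambers of the $s$-panel through $E_i$ (distinct because $E_{i+1}$ is the first non-fixed chamber). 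Hence $\delta(E_{i+1},E_{i+1}^\theta)=s$, giving $s\in\disp(\theta)$; as $\Cl(s)$ is non-trivial this forces $\Cl(w)=\Cl(s)$ and $\disp(\theta)=\{1\}\cup\Cl(s)$.

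\emph{Reduction to the irreducible domestic case.} If $\Omega$ is reducible then, $\theta$ being type preserving, it splits as a product over the irreducible factors, and $\disp(\theta)$ factorises accordingly; since every element of $\Cl(s)$ is supported on a single factor, $\theta$ is nontrivial on exactly one factor and we reduce to $(W,S)$ irreducible of rank $\geq 2$. Because $s$ is a reflection and $\mathrm{rank}\geq 2$, the longest element $w_0$ is not conjugate to $s$, so $w_0\notin\disp(\theta)$ and $\theta$ is \emph{domestic}. The maximal element of $\disp(\theta)$ is the longest element $t_s$ of $\Cl(s)$, and by \cite[Theorem~2.6]{PVMclass} this equals $w_\Gamma=w_{S\setminus J(\Gamma)}w_S$ for the opposition diagram $\Gamma$ of $\theta$. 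Thus the decisive selection principle is that $w_\Gamma$ must itself be a reflection. This already explains why the long root elations occur in all types (for the single-node long-root diagram one has $w_\Gamma=w_{S\setminus\{j\}}w_S=s_\varphi$, a reflection, e.g. $w_{\sE_7}w_0=s_\varphi$ in type $\sE_8$), while diagrams with two or more ``spread out'' encircled nodes give a product of commuting reflections and are excluded.

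\emph{Classification and converse.} For each irreducible type the condition that $w_\Gamma\in\Cl(s)$ be the longest reflection in its class leaves only a short list of admissible diagrams; for each one invokes the classification of domestic automorphisms fixing a chamber --- in the higher-rank cases \cite{PVMexc,npvv} and Theorem~\ref{fixedchambers}, in the classical cases \cite{PVMclass}, and in the rank~$2$ Moufang cases (planes, quadrangles, hexagons) \cite{PVMexc2} together with the Moufang polygon analysis. Matching the fixed structures of the representatives with their geometric descriptions produces exactly the long root elations together with $(i)$--$(viii)$; for the non-simply-laced diagrams the class $\Cl(s_j)$ is read off from the node carrying the reflection (e.g. $\Cl(s_4)$ in the $\sF_4$ and relative-$\sF_4$ cases, $\Cl(s_2)$ for the $\mathsf{^3D_4}$ hexagons). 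The converse, that each listed automorphism has displacement exactly $\{1\}\cup\Cl(s)$, is verified by computing its maximal displacement $t_s$ and observing that the fixed structure is a full convex subbuilding, so every non-fixed chamber is moved by the single transverse reflection. The main obstacle is precisely this exact-displacement bookkeeping: it is not enough that $w_\Gamma\in\Cl(s)$, one must rule out a \emph{second} nontrivial class in $\disp(\theta)$, and doing this uniformly across the twisted forms (relative $\sF_4$ of absolute type $\sE_6$, and the triality hexagons of type $\mathsf{^3D_4}$) and the low-rank Moufang polygons --- where cappedness may fail and one cannot argue purely through opposition diagrams --- is where the real care is required.
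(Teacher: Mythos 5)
Your opening step (pinning $\disp(\theta)=\{1\}\cup\Cl(s)$ via the last fixed chamber along a gallery) and your general plan (turn the longest displacement into a constraint on the opposition diagram, then quote the domesticity classifications) do match the paper's strategy. But there is a genuine error at the pivot of your reduction: the claim that ``because $s$ is a reflection and the rank is at least $2$, the longest element $w_0$ is not conjugate to $s$, so $\theta$ is domestic.'' This is false in type $\sA_2$: in $W(\sA_2)\cong\Sym(3)$ the longest element $w_0=s_1s_2s_1$ is itself a reflection, conjugate to both simple generators. Consequently Moufang projective planes admit \emph{non-domestic} automorphisms whose displacement is exactly $\{1\}\cup\Cl(s_1)=\{1,s_1,s_2,w_0\}$ --- central homologies and Baer collineations --- and these account for cases $(i)$ (planar case) and $(ii)$ of the statement, which your argument would wrongly exclude. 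The paper is explicit on this point: in its $\sA_2$ analysis it observes that $\Cl(s_1)$ ``contains the longest word and confirms that $\theta$ is not (necessarily) domestic,'' and there it abandons the opposition-diagram route in favour of a direct argument with the forbidden distances $s_1s_2$ and $s_2s_1$ combined with \cite[Proposition~3.3]{PVMclass}. (Type $\sA_2$ is the only irreducible type of rank $\geq 2$ whose longest element is a reflection, so this is precisely the one case your reduction cannot see --- but it is a case the theorem must cover.) A related imprecision: the paper identifies the encircled node set exactly as the polar type, $J=\wp$, by choosing the crystallographic structure so that $s$ is a \emph{long} root reflection and using $s_\varphi=w_{S\setminus\wp}w_0$; your weaker principle that ``$w_\Gamma$ must be a reflection'' leaves the long/short-root ambiguity in the non-simply-laced types unresolved.

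A second, lesser gap is the converse direction. You assert that each listed automorphism has no second nontrivial class in its displacement because ``the fixed structure is a full convex subbuilding, so every non-fixed chamber is moved by the single transverse reflection''; this is not an argument, and the premise is false for several of the fixed structures that actually occur (geometric hyperplanes and ideal Baer sub polar spaces are not subbuildings of $\Omega$). The paper does concrete work here: for Baer collineations of rank-$3$ polar spaces it runs through all mutual positions of a chamber and its image to check every displacement lies in $\Cl(s_1)$; for the $\sF_4$ cases it uses that $\theta$ is an involution (so $\disp(\theta)$ is a union of involution classes) and then a length bound to exclude all classes except $\Cl(s_1)$ and $\Cl(s_4)$; for Moufang octagons it invokes \cite{PTV:15} and \cite{Jos-Mal:95}. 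You correctly flag this bookkeeping as the main obstacle, but flagging it is not doing it.
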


\begin{proof}
Suppose that $\theta$ is nontrivial, fixes a chamber, and has displacement spectra contained in at most $2$ conjugacy classes of the Weyl group. Clearly $\theta$ maps some chamber to an adjacent chamber, say of Weyl distance~$s$. Thus $s\in\disp(\theta)$, and so $\disp(\theta)\subseteq\{1\}\cup\Cl(s)$. It follows from \cite[Proposition~2.1]{Ney-Par-Mal:23} that $\mathsf{disp}(\theta)=\{1\}\cup\Cl(s)$. We claim that this implies that the opposition diagram of $\theta$ has only one orbit encircled. 

To prove this, we first make the following observations. Let $\Phi$ be an irreducible reduced crystallographic root system with positive system $\Phi^+$ and highest root $\varphi$. Let $\wp$ be the polar type (see \cite[Section~1.1]{PVMexc}). We have $s_{\varphi}=w_{S\backslash\wp}w_0$ (to prove this, note that the inversion sets of both sides are equal, using properties of the highest root and polar type; see \cite{PVMexc}). 

Now, associate a crystallographic system $\Phi$ to the building $\Omega$, choosing between duality classes in such a way that $s=s_{\alpha_i}$ for a long simple root $\alpha_i\in\Phi$. With this choice, the longest member of $\disp(\theta)=\{1\}\cup\Cl(s)$ is $s_{\varphi}$. On the other hand, if the opposition diagram of $\theta$ has nodes $J$ encircled, then the longest member of $\disp(\theta)$ is $w_{S\backslash J}w_0$ (see \cite[Theorem~2.6]{PVMclass}). Thus $s_{\varphi}=w_{S\backslash J}w_0$. But as noted above we have $s_{\varphi}=w_{S\backslash\wp}w_0$, and so $J=\wp$, and hence the claim.

Consider the $\mathsf{A}_n$ case. If $n\geq 3$, then the opposition diagram is $\mathsf{^2A}_{n;1}^1$ and by \cite[Theorem~4.3]{TTV1}, $\theta$ is either a central elation, or a central homology. Now let $n=2$. Then the conjugacy class in the Weyl group $W(\mathsf{A_2})\cong\Sym(3)=\<s_1,s_2\>$ of a generator $s_1$ is $\{s_1,s_2,s_1s_2s_1\}$, which contains the longest word and confirms that $\theta$ is not (necessarily) domestic. Hence a `forbidden' distance is $s_1s_2$, which means that, if we map a point $x$ to different point $x^\theta$, then the line $\<x,x^\theta\>$ must be fixed, as otherwise the chamber $\{x,
\<x,x^\theta\>\}$ has distance $s_1s_2$ to $\{x^\theta,\<x,x^\theta\>^\theta\}$. By \cite[Proposition~3.3]{PVMclass}, $\theta$ is either a central long root elation, a central homology, or a Baer collineation, and each these also satisfy the dual condition that the point of intersection of a line and its (distinct) image is fixed (displacement distance $s_2s_1$ is not attained).  This concludes the $\mathsf{A}_n$ case. 

Consider the $\mathsf{B}_n$ case, $n\geq 2$. Here the opposition diagram is never full, and is one of $\mathsf{B}_{n;1}^1$ or $\mathsf{B}_{n;1}^2$. So $\theta$ is domestic, and we can apply the results of \cite{PVMclass,TTVM2}. First suppose $n=2$. Then \cite[Theorem~2.1]{TTVM2} implies the assertion (noting tat a \emph{large full subquadrangle} is a geometric hyperplane) and it is easy to check that in each case $\disp(\theta)$ is really the union of the identity and one more class (the cases of an ovoid or spread elementwise fixed corresponds to uniclass collineations).  

Now suppose $n\geq 3$. If the opposition diagram is $\mathsf{B}_{n;1}^1$, then \cite[Theorem~1]{PVMclass} implies that $\theta$ is a symmetry (following Dieudonn\'e \cite{Die:73}, we call a collineation of a polar space pointwise fixing a geometric hyperplane, a \emph{symmetry}; these include central elations). Clearly all symmetries have the trivial and only one non-trivial class in their displacement spectra. Now assume the opposition diagram is $\mathsf{B}_{n;1}^2$. Then \cite[Theorem~3]{PVMclass} implies that $\theta$ is either an axial long root elation, a Baer collineation in rank~$3$, or a (generalised) homology in a symplectic polar space in characteristic different from~$2$ pointwise fixing a hyperbolic line and its perp. We check whether $\disp(\theta)$, for $\theta$ a Baer collineations in rank~$3$ (which is a point-domestic collineation), consists of two conjugacy classes among which the trivial one. We leave the other (easier) cases to the reader. The technique of doing so is similar to the proofs in \cite{Ney-Par-Mal:23} that establish certain automorphisms to be uniclass. Call the generators of the Weyl group $s_1,s_2,s_3$ with standard Bourbaki labelling. Let $C$ be a chamber, which we will represent by a triple $\{p,L,\pi\}$, with $p$ a point, $L$ a line and $\pi$ a plane of the polar space $\Omega$ of rank 3. Recall from \cite[\S7]{TTVM} that a Baer collineation is a collineation with fix stucture an ideal Baer sub polar space, which is a subset $B$ of points of $\Omega$ with the properties that in each plane $\alpha$ which contains at least two points of   $B$, the set $B$ induces a Baer subplane of $\alpha$, and $B$, together with all secants (that is, lines of $\Omega$ intersecting $B$ in at least two points) form a polar space of rank $3$. Now suppose first that $p\in B$. If $L$ is a secant, then both $L$ and $\pi$  are fixed, leading to trivial displacement. Suppose $L$ is not fixed. Then either $\pi$ is fixed (and the displacement is $s_2\in\Cl(s_1)$), or $\pi^\theta$ intersects $\pi$ in exactly $p$ (as otherwise a point of the intersection is collinear to $L$ and $L^\theta$, hence to a plane to which it does not belong, a contradiction to the rank).  Then the displacement of $\theta$ is  $s_3s_2s_3\in\Cl(s_1)$. Now suppose that $p$ is not fixed. Then $p^\theta$ is collinear to $p$. If $p^\theta\in L$, then $\pi$ is fixed and the displacement is $s_1\in\Cl(s_1)$. Since all planes through $p$ and $p^\theta$ are stabilised, the line $L$ is mapped onto a collinear one, and as before $\pi$ is either fixed (leading to the displacement $s_2s_1s_2\in\Cl(s_1)$) or $\pi\cap\pi^\theta$ is the single point $L\cap L^\theta$, in which case the displacement is $s_3s_2s_1s_2s_3\in\Cl(s_1)$. This completes the proof for the Baer collineations.

Consider the $\mathsf{D}_n$ case, $n\geq 4$. Then $\theta$ has opposition diagram $\mathsf{D}_{n;1}^1$ or $\mathsf{D}_{n;1}^2$. In the former case, \cite[Theorem~2.6]{PVMclass} asserts that $\theta$ is a type rotating involution with fix structure a split building of type $\mathsf{B}_{n-1}$ (and which is a geometric hyperplane), however since this automorphism is not type preserving it is not considered here. In the latter case \cite[Theorem~3(2)]{PVMclass} implies that $\theta$ is a long root elation. 

Consider the $\mathsf{E}_n$ case, $n\in\{6,7,8\}$. Them $\theta$ has opposition diagram $\mathsf{^2E_{6;1},E_{7;1}}$ or $\mathsf{E_{8;1}}$. By \cite[Theorem~1]{PVMexc}, $\theta$ is a long root elation. 

Consider the $\mathsf{F}_4$ case. If $s\in\{s_1,s_2\}$ then we have opposition diagram $\mathsf{F}_{4,1}^1$, and it is a long root elation. If $s\in \{s_3,s_4\}$ then we have opposition diagram $\mathsf{F}_{4,4}^1$. Such automorphisms are classified in~\cite{Lam-Mal:24}, they are either the homology $h_{\omega_4}(-1)$ (in the split case), or a diagram automorphism $\sigma$ of $\sE_6$ (in a twisted case). In either case they are involutions. So by \cite[Proposition~2.1]{Ney-Par-Mal:23} their displacements are unions of full conjugacy classes of involutions in the Weyl group. Now, by a direct calculation in the Weyl group of type $\sF_4$, all classes of involutions have elements longer than $15$ except for the classes of $s_1$ and $s_4$ (see \cite[Theorem~1.9]{Ney-Par-Mal:23} for a description of the involution classes). Since the numerical displacement of our automorphism is $15$, it forces it to only contain these classes (with the identity) -- but it does not contain the class of $s_1$ (otherwise there is a type $1$ vertex mapped to an opposite). So it indeed has the correct displacement. 

Consider the $\mathsf{G_2}$ case.  According to \cite[Theorem~1]{PVMexc2}, the collineations with opposition diagram $\mathsf{G_{2;1}^1}$ and $\mathsf{G_{2;1}^2}$ fixing a chamber occur precisely if $\theta$ is either a (central or axial) long root elation, or $\theta$ is a collineation of order $3$ and fixes a subhexagon $\Gamma$, where either $\Omega$ corresponds to a split Cayley hexagon and $\Gamma$ to a non-thick ideal subhexagon (this occurs when the underlying field has nontrivial cubic roots of unity), or $\Omega$ corresponds to a triality hexagon (Moufang hexagon of type $\mathsf{^3D_4}$) and $\Gamma$ to an ideal split Cayley subhexagon. In all these case, it is easily checked that $\disp(\theta)$ is the union of two conjugacy classes, one of which is the identity.  

Finally consider the $\mathsf{^2F_4}$ case, that is, the case of Moufang octagons. We appeal to Theorems~2.7 and~2.8 of \cite{PTV:15} to see that $\theta$ either fixes a large suboctagon, or $\theta$ is a long root elation.  Proposition~4.3 and \S5.1 of \cite{Jos-Mal:95} imply that $\Omega$ has a unique class of full or ideal suboctagons, and Proposition~5.1 of that paper proves that no such suboctagon is pointwise fixed by a nontrivial collineation.  
\end{proof}

\begin{rem}\label{equatorexamples}
We will now show that the displacement set of a Class II automorphism of a large $\sE_8$ building contains at least three elements from distinct classes in the Weyl group (and precisely the same argument applies to the automorphisms of $\sE_7$ buildings from \cite[Theorem~1(iii)]{npvv}). If $\theta$ is of Class II then by Theorem~\ref{thm:equatordomestic} $\theta$ is conjugate to an element of the form $x_{\alpha_4}(a)x_{-\alpha_4}(1)$ for some $a\in\K$ with $aX^2+aX-1$ irreducible over~$\K$ (in Bourbaki labelling). This element preserves the the residues of the base chamber of types $\{4\}$, $\{3,4,5\}$, and $\{2,3,4,5\}$ (these residues are buildings of types $\sA_1$, $\sA_3$, and $\sD_4$ respectively). By Remark~\ref{rem:highrank} the restriction of $\theta$ to these residues is not domestic, and hence $\disp(\theta)$ contains the elements $s_4,w_{\{3,4,5\}}$, and $w_{\sD_4}$. These elements all lie in different conjugacy classes in the Weyl group (indeed, $w_{\sD_4}$ is of minimal length in its conjugacy class by \cite[Theorem~1.9]{Ney-Par-Mal:23} and so it is not conjugate to either $s_4$ or $w_{\{3,4,5\}}$, and $s_4$ and $w_{\{3,4,5\}}$ are not conjugate by parity of length). 
\end{rem}

\textbf{Addresses of the authors}\\

James Parkinson\\
School of Mathematics and Statistics, The University of Sydney, Carslaw Building, F07, Sydney, NSW 2006, Australia\\
\texttt{jamesp@maths.usyd.edu.au}

Hendrik Van Maldeghem\\
Department of Mathematics, Computer Science, and Statistics, Ghent University, Krijgs\-laan 281-S9, 9000 Ghent, Belgium\\
\texttt{Hendrik.VanMaldeghem@UGent.be}

\end{document}